\numberwithin{equation}{section}
\DeclareMathOperator{\dist}{dist}
\DeclareMathOperator{\sgn}{sgn}
\DeclareMathOperator{\argmax}{argmax}
\DeclareMathOperator{\wf}{wf}
\DeclareMathOperator{\ttop}{top}
\DeclareMathOperator{\btm}{btm}
\DeclareMathOperator{\cwf}{cwf}
\newcommand{\fP}{{\mathfrak P}}
\newcommand{\fA}{{\mathfrak A}}
\newcommand{\bZ}{{\mathbb Z}}
\DeclareMathOperator{\Imaginary}{Im}
\DeclareMathOperator{\Ai}{Ai}
\DeclareMathOperator{\Adm}{Adm}
\DeclareMathOperator{\TV}{TV}
\DeclareMathOperator{\mix}{mix}
\DeclareMathOperator{\crf}{crf}
\DeclareMathOperator{\rf}{rf}
\DeclareMathOperator{\alt}{alt}
\DeclareMathOperator{\fl}{fl}
\DeclareMathOperator{\diam}{diam}
\DeclareMathOperator{\north}{no}
\DeclareMathOperator{\so}{so}
\DeclareMathOperator{\ea}{ea}
\DeclareMathOperator{\we}{we}
\newtheorem{thm}{Theorem}[section]
\newtheorem{prop}[thm]{Proposition}
\newtheorem{lem}[thm]{Lemma}
\newtheorem{cor}[thm]{Corollary}
\theoremstyle{remark}
\newtheorem{rem}[thm]{Remark}
\theoremstyle{definition}
\newtheorem{definition}[thm]{Definition}
\newtheorem{assumption}[thm]{Assumption}
\newtheorem*{thm1}{Theorem}
\title{Edge Statistics for Lozenge Tilings of Polygons, II: Airy Line Ensemble}
    \author{Amol Aggarwal$^{1, 2}$}
    \address{$^1$Columbia University, NY}
    \address{$^2$Clay Mathematics Institute} 
    \email{amolaggarwal@math.columbia.edu}
    \author{Jiaoyang Huang$^3$}
    \address{$^3$University of Pennsylvania, PA}
    \email{huangjy@wharton.upenn.edu}
\begin{document}

\begin{abstract}
		
		We consider uniformly random lozenge tilings of simply connected polygons subject to a technical assumption on their limit shape. We show that the edge statistics around any point on the arctic boundary, that is not a cusp or tangency location, converge to the Airy line ensemble. Our proof proceeds by locally comparing these edge statistics with those for a random tiling of a hexagon, which are well understood. To realize this comparison, we require a nearly optimal concentration estimate for the tiling height function, which we establish by exhibiting a certain Markov chain on the set of all tilings that preserves such concentration estimates under its dynamics. 
		
	\end{abstract}

\maketitle
{
  \hypersetup{linkcolor=black}
  \tableofcontents
}

	\section{Introduction}

	\label{Introduction}

	A central feature of random lozenge tilings is that they exhibit boundary-induced phase transitions. Depending on the shape of the domain, they can admit \emph{frozen} regions, where the associated height function is flat almost deterministically, and \emph{liquid} regions, where the height function appears more rough and random; the curve separating these two phases is called an \emph{arctic boundary}. We refer to the papers \cite{TSP} and \cite{ECM} for some of the earlier analyses of this phenomenon in lozenge tilings of hexagonal domains, and to the book \cite{RT} for a comprehensive review. A thorough study of arctic boundaries on arbitrary polygons was pursued in \cite{LSCE,DMCS}, where it was shown that their limiting trajectories are algebraic curves. 
	
	After realizing that these phase boundaries exist and admit limits, the next question is to understand their fluctuations, known as the \emph{edge statistics}. On domains of diameter order $n$, the general prediction is that their fluctuations are of order $n^{1/3}$ and $n^{2/3}$ in the directions transverse and parallel to their limiting trajectories, respectively. Upon scaling by these exponents, it is further predicted that the boundary converges to the \emph{Airy$_2$ process}, a universal scaling limit introduced in \cite{SIDP} that is believed to govern various phenomena related to the Kardar--Parisi--Zhang universality class. See \cite{EFLS} for a detailed survey. 
	
	Following the initial works \cite{SFRM,NPRTRM,ACP} (where it was first proven in the related context of domino tilings for the Aztec diamond), this prediction has been established for random lozenge tilings of various families of domains. For example, we refer to \cite{CFPALG,RSPPP,SFFC} for certain $q$-weighted random plane partitions; \cite{DP} for tilings of hexagons; and \cite{ARS,UEFDIPS} for tilings of trapezoids (hexagons with cuts along a single side). These results are all based on exact and analyzable formulas, specific to the domain of study, for the correlation kernel for which the tiling forms a determinantal point process. Although for lozenge tilings of arbitrary polygonals such explicit formulas are not known, it is believed under such generality that convergence to the Airy$_2$ process under the above scaling still holds; see \cite[Conjecture 18.7]{RT} and \cite[Conjecture 9.1]{DMCS}. 
	
	In this paper we prove this statement for simply connected polygonal domains subject to a certain technical assumption on their limit shape that we believe to hold generically (see \Cref{pa} and \Cref{ptypical} below). Under the interpretation of lozenge tilings as non-intersecting random Bernoulli walks, we in fact more broadly consider the family of Bernoulli walks around the arctic boundary (not only the extreme one); we prove under the above scaling that it converges to the \emph{Airy line ensemble}, a multi-level generalization of the Airy$_2$ process. An informal formulation of this result is provided as follows; we refer to \Cref{walkspconverge} below for a more precise statement.
	
	\begin{thm1}[\Cref{walkspconverge} below]
		
		\nonumber  
		
		\label{paths1}

		Consider a uniformly random lozenge tiling of a simply connected polygonal domain, whose arctic boundary does not exhibit any of the four configurations depicted in \Cref{curve}. Under appropriate rescaling, the family of associated non-intersecting Bernoulli walks in a neighborhood of any point (that is neither a cusp nor a tangency location) of the limiting arctic boundary converges to the Airy line ensemble.
		
	\end{thm1}

	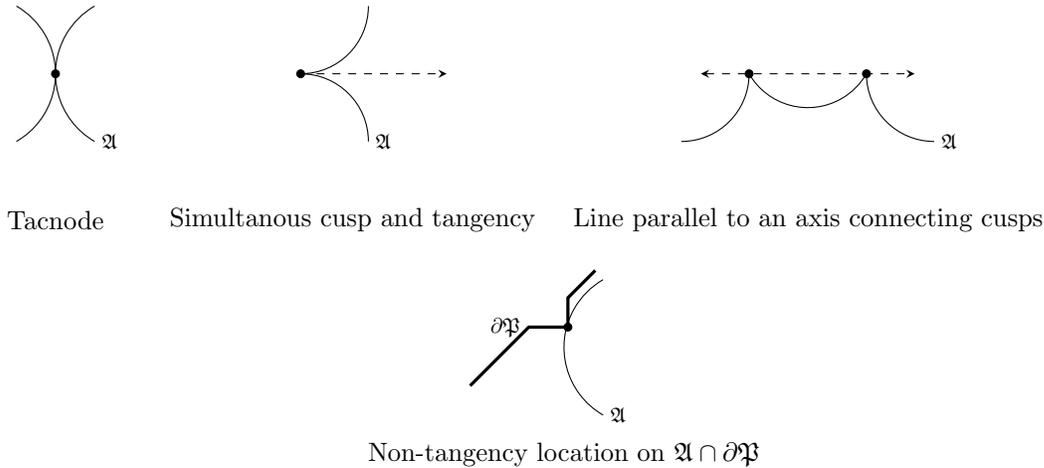
\begin{figure}
		
		\begin{center}		
			
			\begin{tikzpicture}[
				>=stealth,
				auto,
				style={
					scale = .52
				}
				]
				
				\draw[black] (-5, 0) arc(-60:60:2);
				\draw[black] (-3, 0) node[right, scale = .8]{$\mathfrak{A}$} arc(240:120:2);
				\filldraw[fill=black] (-4, 1.732) circle [radius = .1];
				
				\draw[] (4, 0) node[right, scale = .8]{$\mathfrak{A}$} arc(0:90:1.732);
				\draw[] (4, 3.464) arc(0:-90:1.732);
				\filldraw[fill = black] (2.268, 1.732) circle[radius = .1];
				\draw[dashed, ->] (2.268, 1.732) -- (6, 1.732);
				
				\draw[black] (12, 0) arc(-90:0:1.732);
				\draw[black] (13.732, 1.732) arc(-150:-30:1.732);
				\draw[black] (16.732, 1.732) arc(180:270:1.732) node[right, scale = .8]{$\mathfrak{A}$};
				
				\filldraw[fill = black] (13.732, 1.732) circle [radius = .1];
				\filldraw[fill = black] (16.732, 1.732) circle [radius = .1];

				\draw[black] (6.07, -7) node[right, scale = .8]{$\mathfrak{A}$} arc(240:120:2);
				
				\draw[black] (13.6, -6.3) node[left, scale = .8]{$\mathfrak{A}$} arc(0:90:2);
				\draw[very thick] (2.6, -6.25) -- (4.1, -4.75)  node[left, scale = .8]{$\partial \mathfrak{P}$}-- (5.1, -4.75) -- (5.1, -4) -- (5.8, -3.3);
				
				\draw[very thick, dashed]  (13.9, -4.75)  -- (12.9, -4.75) -- (12.9, -4) ;
				\draw[very thick, dashed]  (12.9, -4.75) -- (13.65, -4) ;
				\draw[very thick] (13.9, -7.25) -- (13.9, -4.75)  node[right, scale = .8]{$\partial \mathfrak{P}$}-- (14.65, -4) -- (13.65, -4) -- (13.65, -3.25)-- (12.9, -4) -- (11, -4);
				
				\filldraw[fill = black] (5.1, -4.75) circle [radius = .1];

				\filldraw[fill = black] (12.9, -4.75) circle [radius = .1];

				\draw[dashed, <->] (12.5, 1.732) -- (17.964, 1.732);

				\draw[] (-4, -2) circle [radius = 0] node[]{Tacnode};
				\draw[] (3.6, -2) circle [radius = 0] node[]{Simultanous cusp and tangency};
				\draw[] (15.25, -2) circle [radius = 0] node[]{Line parallel to an axis connecting cusps};
				\draw[] (9, -8) circle [radius = 0] node[]{Non-tangency location on $\mathfrak{A} \cap \partial \mathfrak{P}$, and non-tangency discontinuity of $\nabla H^*$ on $\mathfrak{A}$};
				
			\end{tikzpicture}
			
		\end{center}
		
		\caption{\label{curve} Depicted above are the four scenarios for arctic curve $\mathfrak{A}$ forbidden by \Cref{pa}.}
		
	\end{figure} 
	
	In the above theorem, we forbade specific (presumably non-generic) behaviors for singular points of the arctic boundary associated with the domain. These include the presence of tacnodes and cuspidal turning points; see \Cref{pa} for the exact condition. At some of these non-generic singularities, the edge scaling limit is more exotic; see \cite{RM,TCP,TNSDP,THCAF,PDTP} for more information. Still, it is believed that such behaviors should not disrupt the convergence to the Airy line ensemble elsewhere along the arctic boundary; that our theorem does not apply for these non-generic polygons therefore seems to be an artifact of our proof method. Generic singularities along the arctic boundary (which do appear in almost any polygonal domain) are ordinary cusps. The scaling limits at such points are believed to be given by the Pearcey process \cite{RSPPP}; we do not address this intriguing question here.

	The above theorem can be viewed as a \emph{universality} result for random lozenge tilings, since it shows that their statistics converge to the Airy line ensemble at any point (that is not a cusp or tangency location) around the arctic boundary, regardless of the polygonal shape bounding the domain. Recently, universality results for lozenge tilings at other points inside the domain (where different limiting statistics appear) have been established. For example, it was shown that local statistics in the interior of the liquid region converge to the unique translation-invariant, ergodic Gibbs measure of the appropriate slope for hexagons \cite{DP,gorin2008nonintersecting,borodin2010q}, domains covered by finitely many trapezoids \cite{ARS,gorin2017bulk}, bounded perturbations of these \cite{LLTS}, and finally for general domains \cite{ULS}. It was also recently shown in \cite{ERT} that, at tangency locations between the arctic boundary and sides of general domains, the limiting statistics converge to the corners process of the Gaussian Unitary Ensemble. Both of these phenomena were proven to be quite robust, and they in fact apply on domains beyond polygonal ones.
	
	Although the Airy$_2$ process also serves as the edge scaling limit in random lozenge tilings for certain classes of domains beyond polygons, the precise conditions under which it appears seem subtle. They are not determined by information about the macroscopic shape of the domain alone; microscopic perturbations of it can affect the edge statistics. Indeed, placing a single microscopic defect on an edge of a hexagonal domain corresponds to inserting a new walk in the associated non-intersecting Bernoulli walk ensemble. At the point where this new walk meets the arctic boundary for the original hexagon, the edge statistics should instead be given by the Airy$_2$ process with a wanderer, introduced in \cite{PWC}. 
	
	We now outline our proof of the theorem. We will show a concentration estimate for the tiling height function on a simply connected polygon $\mathsf{P}$ (satisfying \Cref{pa} below) of diameter order $n$, stating that with high probability it is within $n^{\delta}$ of its limit shape, for any $\delta > 0$. Given such a bound, we establish the theorem by locally comparing the random tiling of $\mathsf{P}$ with that of a hexagon, around their arctic boundaries. More specifically, the concentration estimate implies that the extreme paths in the non-intersecting Bernoulli walk ensemble $\mathsf{X}$ associated with a random tiling of $\mathsf{P}$ remains close to its limiting trajectory. We then match the slope and curvature of this limiting curve with those of the arctic boundary for a suitably chosen hexagon $\mathsf{P}'$. Using this, we exhibit a coupling between $\mathsf{X}$ and the non-intersecting Bernoulli walk ensemble associated with a random tiling of $\mathsf{P}'$, in such a way that they likely nearly coincide, up to error $\mathrm{o}(n^{1/3})$, around their arctic boundaries. Known results for random tilings on hexagonal domains \cite{DP,ARS,UEFDIPS}, coming from their exact solvability, show that the edge statistics of the random tiling of $\mathsf{P}'$ are given by the Airy line ensemble. It follows that the same holds for the random tiling of $\mathsf{P}$. 
	
 The remainder of this paper is devoted to proving the above mentioned concentration estimate, given by \Cref{mh} below. Such a concentration phenomenon is ubiquitous in random matrix theory, and is known as \emph{eigenvalue rigidity}. In the context of Wigner matrices, it was first proven in \cite{erdHos2012rigidity}, and later for more general classes of random matrices \cite{erdHos2013local,knowles2013isotropic,alex2014isotropic,he2018isotropic,bao2017convergence, ajanki2019stability, bao2020spectral}. To show this, we begin with a ``preliminary'' concentration bound, \Cref{estimategamma} below, for a family of $n$ non-intersecting random discrete bridges conditioned to start and end at specified locations (equivalently, random lozenge tilings of a strip). Assuming the initial and ending data for these Bernoulli walks are such that the limiting arctic boundary has at most one cusp (see the left and middle of \Cref{walkscusp} for examples), this bound states that with high probability the associated height function is within $n^{\delta}$ of its limit shape. Its proof is presented in part I of this series \cite{H1}, which proceeds by first using results of \cite{HFRTNRW} to approximate the random bridge model by a family of non-intersecting Bernoulli random walks with space and time dependent drifts. The latter walk model can then be studied through a dynamical version of the loop equations and an analysis of the complex Burgers equation through the characteristic method.

	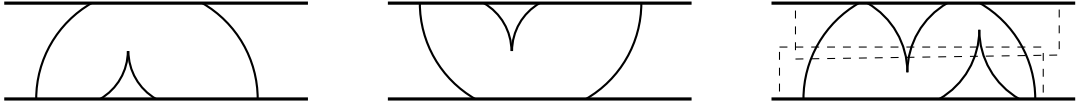
\begin{figure}
		
		\begin{center}		
			
			\begin{tikzpicture}[
				>=stealth,
				auto,
				style={
					scale = .425
				}
				]
				
				\draw[black, very thick] (-4, 0) -- (5.5, 0); 
				\draw[black, very thick] (-4, 3) -- (5.5, 3);
				
				\draw[black, thick] (-3, 0) arc (180:120:3.464);
				\draw[black, thick] (3.928, 0) arc (0:60:3.464);
				
				\draw[black, thick] (-1, 0) arc (-60:0:1.732);
				\draw[black, thick] (.75, 0) arc (240:180:1.732);
				
				\draw[black, very thick] (8, 0) -- (17.5, 0); 
				\draw[black, very thick] (8, 3) -- (17.5, 3);
				
				\draw[black, thick] (9, 3) arc (180:240:3.464);
				\draw[black, thick] (15.928, 3) arc (0:-60:3.464);
				
				\draw[black, thick] (11, 3) arc (60:0:1.732);
				\draw[black, thick] (12.75, 3) arc (120:180:1.732);
				
				\draw[black, very thick] (20, 0) -- (29.5, 0); 
				\draw[black, very thick] (20, 3) -- (29.5, 3);
				
				\draw[black, thick] (21, 0) arc (180:120:3.464);
				\draw[black, thick] (28.25, 0) arc (0:60:3.464);
				
				\draw[black, thick] (25.25, 0) arc (-60:0:2.5);
				\draw[black, thick] (27.75, 0) arc (240:180:2.5);
				
				\draw[black, thick] (23, 3) arc (60:0:2.5);
				\draw[black, thick] (25.5, 3) arc (120:180:2.5);
				
				\draw[dashed] (20.75, 1.25) -- (29, 1.375) -- (29, 3) -- (20.75, 3) -- (20.75, 1.375);
				\draw[dashed] (20.25, 1.625) -- (28.5, 1.625) -- (28.5, 0) -- (20.25, 0) -- (20.25, 1.625);
				
			\end{tikzpicture}
			
		\end{center}
		
		\caption{\label{walkscusp} Shown to the left and middle are arctic boundaries exhibiting a single cusp. Shown to the right is an arctic boundary exhibiting two cusps that point in opposite directions, and a decomposition of that strip into overlapping regions that each have (at most) one cusp.}
		
	\end{figure} 

	Imposing that the arctic boundary has at most one cusp is a substantial constraint; it does not hold for lozenge tilings of most polygons. Its origin can be heuristically attributed to the fact that, while disjoint families of non-intersecting Bernoulli walks often merge, merged ones do not separate, unless they are driven by a diverging drift (which much less amenable to analysis). As such, when comparing the bridge model to a family of non-intersecting Bernoulli walks with drift, one must ensure that these Bernoulli walks only merge and never separate. If the arctic boundary has only one cusp, then by suitably orienting the Bernoulli walks, this cusp can be interpreted as a location where families of Bernoulli walks merge; for example, this is the case if we orient the Bernoulli walks associated with the left and middle diagrams in \Cref{walkscusp} north and south, respectively. If the arctic boundary for the bridge model exhibits two cusps ``pointing in opposite directions,'' as in the right side of \Cref{walkscusp}, then any choice of orientation will lead to at least one cusp serving as a point where the Bernoulli walks separate. This issue was circumvented in \cite{HFRTNRW} by restricting to a family of domains in which all cusps point in the same direction. However, on generic polygons, cusps pointing in opposite directions do appear, so this point must be addressed here.

	To that end, we decompose our domain into a bounded number of (possibly overlapping) subregions that each have at most one cusp; see the right side of \Cref{walkscusp}. We then introduce a Markov chain, called the alternating dynamics (a form of the block dynamics), that uniformly resamples the tiling in one subregion and leaves it fixed in the others. Known estimates \cite{ADCC} for mixing times of Glauber dynamics, together with the censoring inequality of \cite{EUM}, imply that this Markov chain mixes to the uniform measure in time that is polynomial in $n$ (for example, $\mathcal{O} (n^{22})$).
	
	Initiating the alternating dynamics from a profile approximating the limit shape, we show that the $n^{\delta}$ concentration bound is with high probability preserved at each step of the alternating dynamics (from which the result follows by running these dynamics until they mix). The preliminary concentration result alone is insufficient to prove this, since the $n^{\delta}$ error it admits could in principle accumulate at each step. To overcome this, we introduce deterministic barrier functions, which we refer to as \emph{tilted profiles}, and show (with the assistance of the preliminary concentration bound) that they likely bound the tiling height function from above and below throughout the dynamics. To prove that such tilted profiles exist, we exhibit them by perturbing solutions to the complex Burgers equation in a specific way.
	
	The remainder of this paper is organized as follows. In \Cref{Walks} we define the model and state our main results. In \Cref{CompareProbability} we state the concentration result for the tiling height function on the polygon $\mathsf{P}$ and establish the theorem assuming it. In \Cref{TimeDynamics} we state the preliminary concentration bound for non-intersecting Bernoulli walks; introduce the alternating dynamics Markov chain; and bound its mixing time. In \Cref{Profile1} we introduce and discuss properties of tilted height functions. In \Cref{ProofH} we establish the concentration result for the tiling height function on $\mathsf{P}$. In \Cref{WalkLimit}, we give the proof for the existence of tilted height functions. 
	
\subsection*{Notation}	Throughout, we let $\overline{\mathbb{C}} = \mathbb{C} \cup \{ \infty \}$, $\mathbb{H}^+ = \big\{ z \in \mathbb{C} : \Imaginary z > 0 \big\}$, and $\mathbb{H}^- = \{ z \in \mathbb{C} : \Imaginary z < 0 \}$ denote the  compactified complex plane, upper complex plane, and lower complex plane, respectively. We further denote by $|u - v|$ the Euclidean distance between any elements $u, v \in \mathbb{R}^2$. For any subset $\mathfrak{R} \subseteq \mathbb{R}^2$, we let $\partial \mathfrak{R}$ denote its boundary, $\overline{\mathfrak{R}}$ denote its closure, and $\diam \mathfrak{R} = \sup_{r, r' \in \mathfrak{R}} |r - r'|$ denote its diameter. For any additional subset $\mathfrak{R}' \subseteq \mathbb{R}^2$, we let $\dist (\mathfrak{R}, \mathfrak{R}') = \sup_{r \in \mathfrak{R}} \inf_{r' \in \mathfrak{R}'} |r - r'|$ denote the distance between $\mathfrak{R}$ and $\mathfrak{R}'$. For any real number $c \in \mathbb{R}$, we also define the rescaled set $c \mathfrak{R} = \{ cr : r \in \mathfrak{R} \}$, and for any $u \in \mathbb{R}^2$, we define the shifted set $\mathfrak{R} + u = \big\{ r + u : r \in \mathfrak{R} \big\}$. For any $u \in \mathbb{R}^2$ and $r \geq 0$, let $\mathfrak{B}_r (u) = \mathfrak{B} (u; r) = \big\{ v \in \mathbb{R}^2 : |v - u| \leq r \big\}$ denote the disk centered at $u$ of radius $r$.

	\subsection*{Acknowledgements}

	The work of Amol Aggarwal was partially supported by a Clay Research Fellowship. The work of Jiaoyang Huang was partially supported by the Simons Foundation as a Junior Fellow at the Simons Society of
	Fellows and NSF grant DMS-2054835. The authors heartily thank Shirshendu Ganguly, Vadim Gorin and Lingfu Zhang for very helpful comments on this paper, as well as Erik Duse for highly enlightening discussions on \cite{LSCE,DMCS}.

	\section{Results} 
	
	\label{Walks}

	\subsection{Tilings and Height Functions}
	
	\label{FunctionWalks}
	
	We denote by $\mathbb{T}$ the \emph{triangular lattice}, namely, the graph whose vertex set is $\mathbb{Z}^2$ and whose edge set consists of edges connecting $(x, y), (x', y') \in \mathbb{Z}^2$ if $(x' - x, y' - y) \in \{ (1, 0), (0, 1), (1, 1)\}$. The axes of $\mathbb{T}$ are the lines $\{ x = 0 \}$, $\{ y = 0 \}$, and $\{ x  = y \}$, and the faces of $\mathbb{T}$ are triangles with vertices of the form $\big\{ (x, y), (x + 1, y), (x + 1, y + 1) \big\}$ or $\big\{ (x, y), (x, y + 1), (x + 1, y + 1) \big\}$. A \emph{domain} $\mathsf{R} \subseteq \mathbb{T}$ is a simply connected induced subgraph of $\mathbb{T}$. The \emph{boundary} $\partial \mathsf{R} \subseteq \mathsf{R}$ is the set of vertices $\mathsf{v} \in \mathsf{R}$ adjacent to a vertex in $\mathbb{T} \setminus \mathsf{R}$.

	A \emph{dimer covering} of a domain $\mathsf{R} \subseteq \mathbb{T}$ is defined to be a perfect matching on the dual graph of $\mathsf{R}$. A pair of adjacent triangular faces in any such matching forms a parallelogram, which we will also refer to as a \emph{lozenge} or \emph{tile}. Lozenges can be oriented in one of three ways; see the right side of \Cref{tilinghexagon} for all three orientations. We refer to the topmost lozenge there (that is, one with vertices of the form $\big\{ (x, y), (x, y + 1), (x + 1, y + 2), (x + 1, y + 1) \big\}$) as a \emph{type $1$} lozenge. Similarly, we refer to the middle (with vertices of the form $\big\{ (x, y), (x + 1, y), (x + 2, y + 1), (x + 1, y + 1) \big\}$) and bottom (vertices of the form $\big\{ (x, y), (x, y + 1), (x + 1, y + 1), (x + 1, y) \big\}$) ones there as \emph{type $2$} and \emph{type $3$} lozenges, respectively. A dimer covering of $\mathsf{R}$ can equivalently be interpreted as a tiling of $\mathsf{R}$ by lozenges of types $1$, $2$, and $3$. Therefore, we will also refer to a dimer covering of $\mathsf{R}$ as a \emph{(lozenge) tiling}. We call $\mathsf{R}$ \emph{tileable} if it admits a tiling.
	
	Associated with any tiling of $\mathsf{R}$ is a \emph{height function} $\mathsf{H}: \mathsf{R} \rightarrow \mathbb{Z}$, namely, a function on the vertices of $\mathsf{R}$ that satisfies 
	\begin{flalign*}
		\mathsf{H} (\mathsf{v}) - \mathsf{H} (\mathsf{u}) \in \{ 0, 1 \}, \quad \text{whenever $\mathsf{u} = (x, y)$ and $\mathsf{v} \in \big\{ (x + 1, y), (x, y - 1), (x + 1, y + 1) \big\}$},
	\end{flalign*}
	
	\noindent for some $(x, y) \in \mathbb{Z}^2$. We refer to the restriction $\mathsf{h} = \mathsf{H}|_{\partial \mathsf{R}}$ as a \emph{boundary height function}. For any boundary height function $\mathsf{h} : \partial \mathsf{R} \rightarrow \mathbb{Z}$, let $\mathscr{G} (\mathsf{h})$ denote the set of all height functions $\mathsf{h}: \mathsf{R} \rightarrow \mathbb{Z}$ with $\mathsf{R}|_{\partial \mathsf{R}} = \mathsf{h}$. 
	
	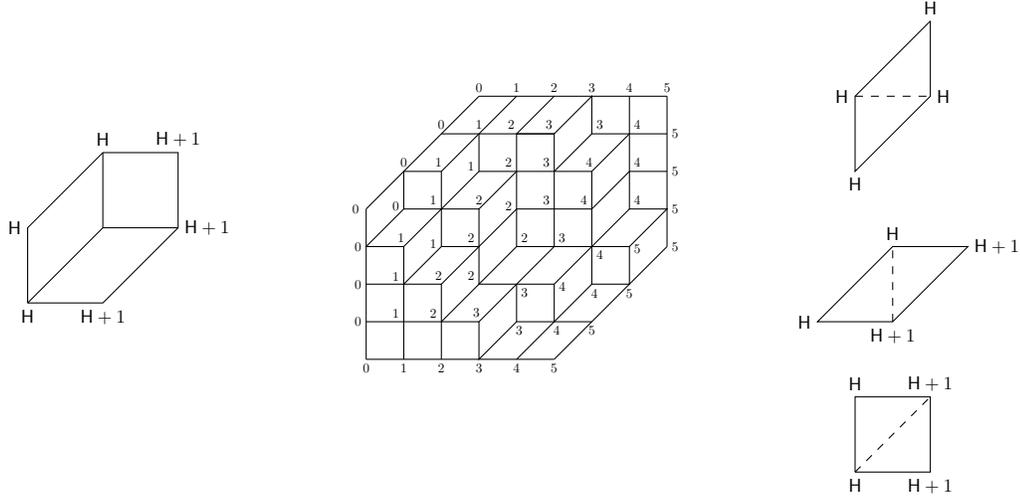
\begin{figure}
		
		\begin{center}		
			
			\begin{tikzpicture}[
				>=stealth,
				auto,
				style={
					scale = .5
				}
				]		
				
				\draw[-, black] (15, 9) node[above, scale = .7]{$\mathsf{H}$}-- (15, 7) node[right, scale = .7]{$\mathsf{H}$} -- (13, 5) node[below, scale = .7]{$\mathsf{H}$} -- (13, 7) node[left, scale = .7]{$\mathsf{H}$} -- (15, 9); 
				\draw[-, dashed, black] (13, 7) -- (15, 7);
				
				\draw[-, black] (14, 3) node[above, scale = .7]{$\mathsf{H}$} -- (16, 3) node[right, scale = .7]{$\mathsf{H} + 1$} -- (14, 1) node[below, scale = .7]{$\mathsf{H} + 1$} -- (12, 1) node[left, scale = .7]{$\mathsf{H}$}-- (14, 3);
				\draw[-, dashed, black] (14, 1) -- (14, 3);
				
				\draw[-, black] (13, -3) node[below, scale = .7]{$\mathsf{H}$} -- (13, -1) node[above, scale = .7]{$\mathsf{H}$} -- (15, -1) node[above, scale = .7]{$\mathsf{H} + 1$} -- (15, -3) node[below, scale = .7]{$\mathsf{H} + 1$} -- (13, -3);
				\draw[-, dashed, black] (13, -3) -- (15, -1);
				
				\draw[-, black] (-9, 1.5) node[below, scale = .7]{$\mathsf{H}$} -- (-7, 1.5) node[below, scale = .7]{$\mathsf{H} + 1$} -- (-5, 3.5) node[right, scale = .7]{$\mathsf{H} + 1$} -- (-5, 5.5) node[above, scale = .7]{$\mathsf{H} + 1$} -- (-7, 5.5) node[above, scale = .7]{$\mathsf{H}$} -- (-9, 3.5) node[left, scale = .7]{$\mathsf{H}$} -- (-9, 1.5);
				\draw[-, black] (-9, 1.5) -- (-7, 3.5) -- (-7, 5.5);
				\draw[-, black] (-7, 3.5) -- (-5, 3.5);
				
				\draw[-, black] (0, 0) -- (5, 0);
				\draw[-, black] (0, 0) -- (0, 4);
				\draw[-, black] (5, 0) -- (8, 3);
				\draw[-, black] (0, 4) -- (3, 7);
				\draw[-, black] (8, 3) -- (8, 7); 
				\draw[-, black] (8, 7) -- (3, 7);	
				
				\draw[-, black] (1, 0) -- (1, 3) -- (0, 3) -- (1, 4) -- (1, 5) -- (2, 5) -- (4, 7);
				\draw[-, black] (0, 2) -- (2, 2) -- (2, 0);
				\draw[-, black] (0, 1) -- (3, 1) -- (3, 0) -- (4, 1) -- (6, 1);
				\draw[-, black] (2, 6) -- (5, 6) -- (6, 7) -- (6, 6) -- (8, 6);
				\draw[-, black] (4, 6) -- (5, 7);
				\draw[-, black] (1, 4) -- (2, 4) -- (2, 5); 
				\draw[-, black] (2, 4) -- (3, 5) -- (3, 6);
				\draw[-, black] (1, 3) -- (2, 4);
				\draw[-, black] (2, 2) -- (3, 3) -- (3, 4) -- (2, 4) -- (2, 3) -- (1, 2);
				\draw[-, black] (2, 3) -- (3, 3) -- (3, 2) -- (4, 3) -- (4, 6) -- (5, 6) -- (5, 3) -- (4, 3);
				\draw[-, black] (3, 5) -- (5, 5) -- (6, 6);
				\draw[-, black] (7, 7) -- (7, 4) -- (8, 4) -- (7, 3) -- (7, 2) -- (6, 2) -- (4, 0);
				\draw[-, black] (8, 5) -- (7, 5) -- (6, 4) -- (6, 5) -- (7, 6);
				\draw[-, black] (3, 3) -- (4, 4) -- (6, 4);
				\draw[-, black] (3, 4) -- (4, 5);
				\draw[-, black] (6, 5) -- (5, 5);
				\draw[-, black] (7, 4) -- (6, 3) -- (7, 3);
				\draw[-, black] (2, 1) -- (3, 2) -- (5, 2) -- (6, 3);
				\draw[-, black] (3, 1) -- (5, 3) -- (6, 3) -- (6, 4); 
				\draw[-, black] (4, 1) -- (4, 2); 
				\draw[-, black] (5, 1) -- (5, 2); 
				\draw[-, black] (6, 2) -- (6, 3); 
				
				\draw[] (0, 0) circle [radius = 0] node[below, scale = .5]{$0$};
				\draw[] (1, 0) circle [radius = 0] node[below, scale = .5]{$1$};
				\draw[] (2, 0) circle [radius = 0] node[below, scale = .5]{$2$};
				\draw[] (3, 0) circle [radius = 0] node[below, scale = .5]{$3$};
				\draw[] (4, 0) circle [radius = 0] node[below, scale = .5]{$4$};
				\draw[] (5, 0) circle [radius = 0] node[below, scale = .5]{$5$};
				
				\draw[] (0, 1) circle [radius = 0] node[left, scale = .5]{$0$};
				\draw[] (1, 1) circle [radius = 0] node[above = 3, left = 0, scale = .5]{$1$};
				\draw[] (2, 1) circle [radius = 0] node[above = 3, left = 0, scale = .5]{$2$};
				\draw[] (3, 1) circle [radius = 0] node[left = 1, above = 0, scale = .5]{$3$};
				\draw[] (4, 1) circle [radius = 0] node[right = 1, below = 0, scale = .5]{$3$};
				\draw[] (5, 1) circle [radius = 0] node[right = 1, below = 0, scale = .5]{$4$};
				\draw[] (6, 1) circle [radius = 0] node[below, scale = .5]{$5$};
				
				\draw[] (0, 2) circle [radius = 0] node[left, scale = .5]{$0$};
				\draw[] (1, 2) circle [radius = 0] node[above = 3, left = 0, scale = .5]{$1$};
				\draw[] (2, 2) circle [radius = 0] node[left = 1, above = 0, scale = .5]{$2$};
				\draw[] (3, 2) circle [radius = 0] node[left = 3, above = 0, scale = .5]{$2$};
				\draw[] (4, 2) circle [radius = 0] node[below = 3, right = 0, scale = .5]{$3$};
				\draw[] (5, 2) circle [radius = 0] node[below = 1, right = 0, scale = .5]{$4$};
				\draw[] (6, 2) circle [radius = 0] node[right = 1, below = 0, scale = .5]{$4$};
				\draw[] (7, 2) circle [radius = 0] node[below, scale = .5]{$5$};
				
				\draw[] (0, 3) circle [radius = 0] node[left, scale = .5]{$0$};
				\draw[] (1, 3) circle [radius = 0] node[left = 1, above = 0, scale = .5]{$1$};
				\draw[] (2, 3) circle [radius = 0] node[above = 1, left = 0, scale = .5]{$1$};
				\draw[] (3, 3) circle [radius = 0] node[left = 3, above = 0, scale = .5]{$2$};
				\draw[] (4, 3) circle [radius = 0] node[right = 3, above = 0, scale = .5]{$2$};
				\draw[] (5, 3) circle [radius = 0] node[right = 3, above = 0, scale = .5]{$3$};
				\draw[] (6, 3) circle [radius = 0] node[below = 3, right = 0, scale = .5]{$4$};
				\draw[] (7, 3) circle [radius = 0] node[below = 1, right = 0, scale = .5]{$5$};
				\draw[] (8, 3) circle [radius = 0] node[right, scale = .5]{$5$};
				
				\draw[] (0, 4) circle [radius = 0] node[left = 1, scale = .5]{$0$};
				\draw[] (1, 4) circle [radius = 0] node[above = 1, left = 0, scale = .5]{$0$};
				\draw[] (2, 4) circle [radius = 0] node[left = 3, above = 0, scale = .5]{$1$};
				\draw[] (3, 4) circle [radius = 0] node[above, scale = .5]{$2$};
				\draw[] (4, 4) circle [radius = 0] node[above = 1, left = 0, scale = .5]{$2$};
				\draw[] (5, 4) circle [radius = 0] node[left = 3, above = 0, scale = .5]{$3$};
				\draw[] (6, 4) circle [radius = 0] node[left = 3, above = 0, scale = .5]{$4$};
				\draw[] (7, 4) circle [radius = 0] node[right = 3, above = 0, scale = .5]{$4$};
				\draw[] (8, 4) circle [radius = 0] node[right, scale = .5]{$5$};
				
				\draw[] (1, 5) circle [radius = 0] node[above, scale = .5]{$0$};
				\draw[] (2, 5) circle [radius = 0] node[left = 1, above = 0, scale = .5]{$1$};
				\draw[] (3, 5) circle [radius = 0] node[above = 2, left = 0, scale = .5]{$1$};
				\draw[] (4, 5) circle [radius = 0] node[left = 3, above = 0, scale = .5]{$2$};
				\draw[] (5, 5) circle [radius = 0] node[left = 3, above = 0, scale = .5]{$3$};
				\draw[] (6, 5) circle [radius = 0] node[left = 1, above = 0, scale = .5]{$4$};
				\draw[] (7, 5) circle [radius = 0] node[right = 3, above = 0, scale = .5]{$4$};
				\draw[] (8, 5) circle [radius = 0] node[right, scale = .5]{$5$};			
				
				\draw[] (2, 6) circle [radius = 0] node[above, scale = .5]{$0$};
				\draw[] (3, 6) circle [radius = 0] node[above, scale = .5]{$1$};
				\draw[] (4, 6) circle [radius = 0] node[left = 2, above = 0, scale = .5]{$2$};
				\draw[] (5, 6) circle [radius = 0] node[left = 2, above = 0, scale = .5]{$3$};
				\draw[] (6, 6) circle [radius = 0] node[right = 3, above = 0, scale = .5]{$3$};
				\draw[] (7, 6) circle [radius = 0] node[right = 3, above = 0, scale = .5]{$4$};
				\draw[] (8, 6) circle [radius = 0] node[right, scale = .5]{$5$};
				
				\draw[] (3, 7) circle [radius = 0] node[above, scale = .5]{$0$};
				\draw[] (4, 7) circle [radius = 0] node[above, scale = .5]{$1$};
				\draw[] (5, 7) circle [radius = 0] node[above, scale = .5]{$2$};
				\draw[] (6, 7) circle [radius = 0] node[above, scale = .5]{$3$};
				\draw[] (7, 7) circle [radius = 0] node[above, scale = .5]{$4$};
				\draw[] (8, 7) circle [radius = 0] node[above, scale = .5]{$5$};
				
			\end{tikzpicture}
			
		\end{center}
		
		\caption{\label{tilinghexagon} Depicted to the right are the three types of lozenges. Depicted in the middle is a lozenge tiling of a hexagon. One may view this tiling as a packing of boxes (of the type depicted on the left) into a large corner, which gives rise to a height function (shown in the middle). }
		
	\end{figure}

	For a fixed vertex $\mathsf{v} \in \mathsf{R}$ and integer $m \in \mathbb{Z}$, one can associate with any tiling of $\mathsf{R}$ a height function $\mathsf{H}: \mathsf{R} \rightarrow \mathbb{Z}$ as follows. First, set $\mathsf{H} (\mathsf{v}) = m$, and then define $\mathsf{H}$ at the remaining vertices of $\mathsf{R}$ in such a way that the height functions along the four vertices of any lozenge in the tiling are of the form depicted on the right side of \Cref{tilinghexagon}. In particular, we require that $\mathsf{H} (x + 1, y) = \mathsf{H} (x, y)$ if and only if $(x, y)$ and $(x + 1, y)$ are vertices of the same type $1$ lozenge, and that $\mathsf{H} (x, y) - \mathsf{H} (x, y + 1) = 1$ if and only if $(x, y)$ and $(x, y + 1)$ are vertices of the same type $2$ lozenge. Since $\mathsf{R}$ is simply connected, a height function on $\mathsf{R}$ is uniquely determined by these conditions (and the value of $\mathsf{H}(\mathsf{v}) = m$).
	
	We refer to the right side of \Cref{tilinghexagon} for an example; as depicted there, we can also view a lozenge tiling of $\mathsf{R}$ as a packing of $\mathsf{R}$ by boxes of the type shown on the left side of \Cref{tilinghexagon}. In this case, the value $\mathsf{H} (\mathsf{u})$ of the height function associated with this tiling at some vertex $\mathsf{u} \in \mathsf{R}$ denotes the height of the stack of boxes at $\mathsf{u}$. Observe in particular that, if there exists a tiling $\mathscr{M}$ of $\mathsf{R}$ associated with some height function $\mathsf{H}$, then the boundary height function $\mathsf{h} = \mathsf{H} |_{\partial \mathsf{R}}$ is independent of $\mathscr{M}$ and is uniquely determined by $\mathsf{R}$ (except for a global shift, which was above fixed by the value of $\mathsf{H}(\mathsf{v}) = m$).

	\subsection{Non-Intersecting  Bernoulli Walk Ensembles} 
	
	\label{WalkModel} 
	
	In this section we explain the correspondence between tilings and non-intersecting Bernoulli walk ensembles, to which end we begin by defining the latter. A  \emph{Bernoulli walk} is a sequence $\mathsf{q} = \big( \mathsf{q} (s), \mathsf{q} (s + 1), \ldots , \mathsf{q} (t) \big) \in \mathbb{Z}_{\geq 0}^{t - s + 1}$ such that $\mathsf{q} (r + 1) - \mathsf{q} (r) \in \{ 0, 1 \}$ for each $r \in [s, t - 1]$; viewing $r$ as a time index, $\big( \mathsf{q} (r) \big)$ denotes the space-time trajectory for a discrete walk, which may either not move or jump to the right at each step. For this reason, the interval $[s, t]$ is called the \emph{time span} of the Bernoulli walk $\mathsf{q}$, and a step $(r,r + 1)$ of this Bernoulli walk may be interpreted as an ``non-jump'' or a ``right-jump'' if $\mathsf{q} (r + 1) = \mathsf{q} (r)$  or $\mathsf{q} (r + 1) = \mathsf{q} (r) + 1$, respectively. A family of Bernoulli walks $\mathsf{Q} = \big( \mathsf{q}_l, \mathsf{q}_{l + 1}, \ldots , \mathsf{q}_m \big)$ is called \emph{non-intersecting} if $\mathsf{q}_i (r) < \mathsf{q}_j (r)$ whenever $l \leq i < j \leq m$ and $r$ is the in time span of $\mathsf{q}_i$ and $\mathsf{q}_j$. 
	
	Now fix some tileable domain $\mathsf{R} \subset \mathbb{T}$, with a height function $\mathsf{H}: \mathsf{R} \rightarrow \mathbb{T}$ corresponding to a tiling $\mathscr{M}$ of $\mathsf{R}$. We may interpret $\mathscr{M}$ as a family of non-intersecting Bernoulli walks by first omitting all type $1$ lozenges from $\mathscr{M}$, and then viewing any type $2$ or type $3$ tile as a right-jump or non-jump of a Bernoulli walk, respectively; see \Cref{walksfigure} for a depiction. 
	
	It will be useful to set more precise notation on this correspondence. Since $\partial_x \mathsf{H} (x, t) \in \{ 0, 1 \}$ for all $(x, t)$, there exist integers $\mathsf{x}_{a(t)} (t) < \mathsf{x}_{a(t) + 1} (t) < \cdots < \mathsf{x}_{b(t)} (t)$ such that 
	\begin{flalign*}
		\partial_x \mathsf{H} (x, t) = \displaystyle\sum_{i = a(t)}^{b(t)} \textbf{1} \Big( x \in \big[ \mathsf{x}_i (t), \mathsf{x}_i (t) + 1 \big] \Big),
	\end{flalign*}

	\noindent which are those such that $\mathsf{H} \big( \mathsf{x}_i (t) + 1, t \big) = \mathsf{H} \big( \mathsf{x}_i (t), t \big) + 1$. This fixes the locations of $\mathsf{x}_i (t)$, but in this way the indices $a(t)$ and $b(t)$ are defined up to an overall shift; we will fix this shift by stipulating that $\mathsf{H} \big( \mathsf{x}_{a(t)}+1, t \big) = a(t)$. This defines a Bernoulli walk $\mathsf{x}_i = \big( \mathsf{x}_i (t) \big)$,\footnote{It might in fact be a union of disconnected walks, but this point will have no effect on our discussion.} and a non-intersecting ensemble of Bernoulli walks $\mathsf{X} = \big( \mathsf{x}_l, \mathsf{x}_{l + 1}, \ldots , \mathsf{x}_m \big)$ that are indexed through the height function $\mathsf{H}$.

	\begin{figure}	
		\begin{center}		
			\begin{tikzpicture}[
				>=stealth,
				auto,
				style={
					scale = .55
				}
				]
				
				\draw[-, black] (1.5, 1) -- (1.5, 2) -- (1.5, 3) -- (2.5, 4) -- (2.5, 5); 
				\draw[-, black] (2.5, 2) -- (3.5, 3) -- (3.5, 4); 
				\draw[-, black] (4.5, 2) -- (4.5, 3) -- (5.5, 4);
				\draw[-, black] (5.5, 0) -- (5.5, 1) -- (6.5, 2) -- (6.5, 3) -- (7.5, 4) -- (7.5, 5); 
				\draw[-, black] (6.5, 0) -- (7.5, 1) -- (7.5, 2) -- (8.5, 3) -- (8.5, 4) -- (9.5, 5);
				\draw[-, black] (9.5, 1) -- (9.5, 2) -- (9.5, 3) -- (10.5, 4) -- (10.5, 5);
				
				\filldraw[fill=black] (1.5, 1) circle [radius = .1] node[below = 2, scale = .6]{$\mathsf{q}_{-2} (1)$};	
				\filldraw[fill=black] (2.5, 2) circle [radius = .1] node[left = 2, below, scale = .6]{$\mathsf{q}_{-1} (2)$};	
				\filldraw[fill=black] (4.5, 2) circle [radius = .1] node[right = 2, below, scale = .6]{$\mathsf{q}_0 (2)$};	
				\filldraw[fill=black] (5.5, 0) circle [radius = .1] node[left = 2, below, scale = .6]{$\mathsf{q}_1 (0)$};	
				\filldraw[fill=black] (6.5, 0) circle [radius = .1] node[right = 2, below, scale = .6]{$\mathsf{q}_2 (0)$};	
				\filldraw[fill=black] (9.5, 1) circle [radius = .1] node[below = 2, scale = .6]{$\mathsf{q}_3 (1)$};
				
				\filldraw[fill=black] (1.5, 1) circle [radius = .1];
				\filldraw[fill=black] (5.5, 1) circle [radius = .1];
				\filldraw[fill=black] (7.5, 1) circle [radius = .1];
				\filldraw[fill=black] (9.5, 1) circle [radius = .1];	
				
				\filldraw[fill=black] (1.5, 2) circle [radius = .1];
				\filldraw[fill=black] (2.5, 2) circle [radius = .1];
				\filldraw[fill=black] (4.5, 2) circle [radius = .1];
				\filldraw[fill=black] (6.5, 2) circle [radius = .1];
				\filldraw[fill=black] (7.5, 2) circle [radius = .1];
				\filldraw[fill=black] (9.5, 2) circle [radius = .1];
				
				\filldraw[fill=black] (1.5, 3) circle [radius = .1];
				\filldraw[fill=black] (3.5, 3) circle [radius = .1];
				\filldraw[fill=black] (4.5, 3) circle [radius = .1];
				\filldraw[fill=black] (6.5, 3) circle [radius = .1];
				\filldraw[fill=black] (8.5, 3) circle [radius = .1];
				\filldraw[fill=black] (9.5, 3) circle [radius = .1];
				
				\filldraw[fill=black] (2.5, 4) circle [radius = .1];
				\filldraw[fill=black] (3.5, 4) circle [radius = .1];
				\filldraw[fill=black] (5.5, 4) circle [radius = .1];
				\filldraw[fill=black] (7.5, 4) circle [radius = .1];
				\filldraw[fill=black] (8.5, 4) circle [radius = .1];
				\filldraw[fill=black] (10.5, 4) circle [radius = .1];
				
				\filldraw[fill=black] (2.5, 5) circle [radius = .1] node[above = 2, scale = .6]{$\mathsf{q}_{-2} (5)$};
				\filldraw[fill=black] (3.5, 4) circle [radius = .1] node[left = 2, above, scale = .6]{$\mathsf{q}_{-1} (4)$};
				\filldraw[fill=black] (5.5, 4) circle [radius = .1] node[right = 2, above, scale = .6]{$\mathsf{q}_0 (4)$};
				\filldraw[fill=black] (7.5, 5) circle [radius = .1] node[left = 2, above, scale = .6]{$\mathsf{q}_1 (5)$};
				\filldraw[fill=black] (9.5, 5) circle [radius = .1] node[left = 2, above, scale = .6]{$\mathsf{q}_2 (5)$};
				\filldraw[fill=black] (10.5, 5) circle [radius = .1] node[above = 2, scale = .6]{$\mathsf{q}_3 (5)$};
				
				\draw[-, black, thick] (15, 1) -- (15, 3) -- (16, 4) -- (16, 5) -- (17, 5) -- (18, 6) -- (18, 5) -- (17, 4) -- (18, 4) -- (19, 5) -- (19, 4) -- (20, 4) -- (20, 5) -- (21, 6) -- (21, 5) -- (22, 5) -- (23, 6) -- (23, 5) -- (25, 5) -- (25, 4) -- (24, 3) -- (24, 1) -- (23, 1) -- (23, 2) -- (21, 0) -- (19, 0) -- (18, -1) -- (18, 0) -- (18, 1) -- (19, 2) -- (18, 2) -- (18, 3) -- (17, 2) -- (16, 2) -- (16, 1) -- (15, 1);
				
				\draw[-, black, dashed] (16.5, 2) node[below = 0, scale = .6]{$\mathsf{q}_{-1}$} -- (17.5, 3) -- (17.5, 4); 
				\draw[-, black, dashed] (18.5, 2) node[left = 1, below = 0, scale = .6]{$\mathsf{q}_0$} -- (18.5, 3) -- (19.5, 4);
				\draw[-, black, dashed] (19.5, 0) node[below, scale = .6]{$\mathsf{q}_1$} -- (19.5, 1) -- (20.5, 2) -- (20.5, 3) -- (21.5, 4) -- (21.5, 5); 
				\draw[-, black, dashed] (20.5, 0) node[below, scale = .6]{$\mathsf{q}_2$} -- (21.5, 1) -- (21.5, 2) -- (22.5, 3) -- (22.5, 4) -- (23.5, 5);
				
				\draw[-, black, dashed] (15. 5, 1) node[below, scale = .6]{$\mathsf{q}_{-2}$} -- (15.5, 2) -- (15.5, 3) -- (16.5, 4) -- (16.5, 5); 
				\draw[-, black] (18, 3) -- (18, 4);
				\draw[-, black] (16, 2) -- (17, 3) -- (17, 4) -- (18, 5); 
				\draw[-, black] (18, 2) -- (18, 3) -- (19, 4) -- (19, 5);
				\draw[-, black] (19, 0) -- (19, 1) -- (20, 2) -- (20, 3) -- (21, 4) -- (21, 5); 
				\draw[-, black] (20, 0) -- (21, 1) -- (21, 2) -- (22, 3) -- (22, 4) -- (23, 5);
				\draw[-, black ](23, 1) -- (23, 2) -- (23, 3) -- (24, 4);
				\draw[-, black, dashed] (23.5, 1) node[below, scale = .6]{$\mathsf{q}_3$} -- (23.5, 2) -- (23.5, 3) -- (24.5, 4) -- (24.5, 5);
				
				\draw[-, black] (15, 3) -- (16, 3); 
				\draw[-, black] (16, 4) -- (17, 4); 
				\draw[-, black] (16, 3) -- (17, 4);
				\draw[-, black] (17, 5) -- (17, 4);
				\draw[-, black] (17, 2) -- (18, 3) -- (18, 2);
				\draw[-, black] (19, 5) -- (18, 4);
				\draw[-, black] (20, 5) -- (20, 4) -- (19, 4);
				\draw[-, black] (20, 4) -- (19, 3) -- (19, 2) -- (18, 1) -- (18, 0);
				\draw[-, black] (18, 2) -- (19, 2);
				\draw[-, black] (18, 3) -- (19, 3);
				\draw[-, black] (18, 0) -- (19, 1) -- (19, 2) -- (20, 3) -- (20, 4) -- (21, 5);
				\draw[-, black] (20, 0) -- (20, 1) -- (21, 2) -- (21, 3) -- (22, 4) -- (22, 5) -- (21, 5);
				\draw[-, black] (21, 0) -- (22, 1) -- (22, 2) -- (23, 3) -- (23, 4) -- (24, 5) -- (23, 5);
				\draw[-, black] (19, 0) -- (21, 0);
				\draw[-, black] (19, 1) -- (20, 1);
				\draw[-, black] (20, 2) -- (22, 2);
				\draw[-, black] (20, 3) -- (21, 3);
				\draw[-, black] (21, 4) -- (23, 4);
				\draw[-, black] (22, 3) -- (23, 3);
				\draw[-, black] (18, 0) -- (18, -1) -- (19, 0); 
				\draw[-, black] (17, 5) -- (18, 6) -- (18, 5);  
				\draw[-, black] (20, 5) -- (21, 6) -- (21, 5);  
				\draw[-, black] (22, 5) -- (23, 6) -- (23, 5);  
				\draw[-, black] (21, 1) -- (22, 1);
				\draw[-, black] (17, 3) -- (18, 3);
				\draw[-, black] (24, 3) -- (25, 4) -- (25, 5) -- (24, 5);
				\draw[-, black] (23, 1) -- (24, 1); 
				\draw[-, black] (23, 2) -- (24, 2); 
				\draw[-, black] (23, 3) -- (24, 3); 
				\draw[-, black] (24, 4) -- (25, 4);
				\draw[-, black] (16, 2) -- (16, 3);		
			\end{tikzpicture}	
		\end{center}
		\caption{\label{walksfigure} Depicted to the left is an ensemble $\mathsf{Q} = \big( \mathsf{q}_{-2}, \mathsf{q}_{-1}, \mathsf{q}_0, \mathsf{q}_1, \mathsf{q}_2, \mathsf{q}_3 \big)$ consisting of six non-intersecting Bernoulli walks. Depicted to the right is an associated lozenge tiling. }		
	\end{figure}
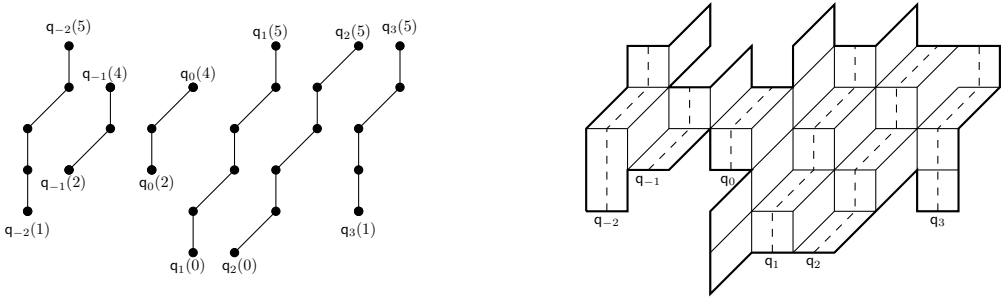

	\subsection{Limit Shapes and Arctic Boundaries}

	\label{HeightLimit} 
	
	To analyze the limits of height functions of random tilings, it will be useful to introduce continuum analogs of the notions considered in \Cref{FunctionWalks}. So, set 
	\begin{flalign} 
		\label{t}
		\mathcal{T} = \big\{ (s, t) \in (0, 1) \times \mathbb{R}_{< 0}: s + t > 0 \big\} \subset \mathbb{R}^2, 
	\end{flalign} 

	\noindent and its closure $\overline{\mathcal{T}} = \big\{ (s, t) \in [0, 1] \times \mathbb{R}_{\leq 0} :  s + t \ge 0 \big\}$. We interpret $\overline{\mathcal{T}}$ as the set of possible gradients, also called \emph{slopes}, for a continuum height function; $\mathcal{T}$ is then the set of ``liquid'' slopes, whose associated tilings contain tiles of all types. For any simply connected open subset $\mathfrak{R} \subset \mathbb{R}^2$, we say that a function $H : \mathfrak{R} \rightarrow \mathbb{R}$ is \emph{admissible} if $H$ is $1$-Lipschitz and $\nabla H(u) \in \overline{\mathcal{T}}$ for almost all $u \in \mathfrak{R}$. We further say a function $h: \partial \mathfrak{R} \rightarrow \mathbb{R}$ \emph{admits an admissible extension to $\mathfrak{R}$} if $\Adm (\mathfrak{R}; h)$, the set of admissible functions $H: \mathfrak{R} \rightarrow \mathbb{R}$ with $H |_{\partial \mathfrak{R}} = h$, is not empty.
	
	We say that a sequence of domains $\mathsf{R}_1, \mathsf{R}_2, \ldots \subset \mathbb{T}$ \emph{converges} to a simply connected subset $\mathfrak{R} \subset \mathbb{R}^2$ if $n^{-1} \mathsf{R}_n \subseteq \mathfrak{R}$ for each $n \geq 1$ and  $\lim_{n \rightarrow \infty} \dist (n^{-1} \mathsf{R}_n, \mathfrak{R}) = 0$. We further say that a sequence $\mathsf{h}_1, \mathsf{h}_2, \ldots $ of boundary height functions on $\mathsf{R}_1, \mathsf{R}_2, \ldots $, respectively, \emph{converges} to a boundary height function $h : \partial \mathfrak{R} \rightarrow \mathbb{R}$ if $\lim_{n \rightarrow \infty} n^{-1} \mathsf{h}_n (n v) = h (v)$ if $v$ is any point in $n^{-1} \mathsf{R}_n$ for all sufficiently large $n$.  
	
	To state results on the limiting height function of random tilings, for any $x \in \mathbb{R}_{\geq 0}$ and $(s, t) \in \overline{\mathcal{T}}$ we denote the \emph{Lobachevsky function} $L: \mathbb{R}_{\geq 0} \rightarrow \mathbb{R}$ and the \emph{surface tension} $\sigma : \overline{\mathcal{T}} \rightarrow \mathbb{R}^2$ by 
	\begin{flalign}
		\label{sigmal} 
		L(x) = - \displaystyle\int_0^x \log |2 \sin z| \mathrm{d} z; \qquad \sigma (s, t) = \displaystyle\frac{1}{\pi} \Big( L \big(\pi (1-s) \big) + L (- \pi t) + L \big( \pi (s + t) \big) \Big).
	\end{flalign}
	
	\noindent For any $H \in \Adm (\mathfrak{R})$, we further denote the \emph{entropy functional}
	\begin{flalign}
		\label{efunctionh} 
		\mathcal{E} (H) = \displaystyle\int_{\mathfrak{R}} \sigma \big( \nabla H (z) \big) \mathrm{d}z.
	\end{flalign}
	
	The following variational principle of \cite{VPT}  states that the height function associated with a uniformly random tiling of a sequence of domains corresponding to $\mathfrak{R}$ converges to the maximizer of $\mathcal{E}$ with high probability.

	\begin{lem}[{\cite[Theorem 1.1]{VPT}}]
		
		\label{hzh} 
		
		Let $\mathsf{R}_1, \mathsf{R}_2, \ldots \subset \mathbb{T}^2$ denote a sequence of tileable domains, with associated boundary height functions $\mathsf{h}_1, \mathsf{h}_2, \ldots $, respectively. Assume that they converge to a simply connected subset $\mathfrak{R} \rightarrow \mathbb{R}^2$ with piecewise smooth boundary, and a boundary height function $h : \partial \mathfrak{R} \rightarrow \mathbb{R}$, respectively. Denoting the height function associated with a uniformly random tiling of $\mathsf{R}_n$ by $\mathsf{H}_n$, we have
		\begin{flalign*}
			\displaystyle\lim_{n \rightarrow \infty} \mathbb{P} \bigg( \displaystyle\max_{\mathsf{v} \in \mathsf{R}_n} \big| n^{-1} \mathsf{H}_n (\mathsf{v}) - H^* (n^{-1} \mathsf{v}) \big| > \varepsilon \bigg) = 0,
		\end{flalign*}  
	
		\noindent where $H^*$ is the unique maximzer of $\mathcal{E}$ on $\mathfrak{R}$ with boundary data $h$,
		\begin{flalign}
			\label{hmaximum}
			H^* = \displaystyle\argmax_{H \in \Adm (\mathfrak{R}; h)} \mathcal{E} (H).
		\end{flalign}
	\end{lem}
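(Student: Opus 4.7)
The plan is to establish a large deviation principle (LDP) for the normalized height function on scale $n^2$, with rate function $\mathcal{E}(H^*) - \mathcal{E}(H)$, and then to deduce uniqueness of the maximizer $H^*$ from strict convexity of the surface tension $\sigma$. Concentration around $H^*$ then follows from a compactness argument.

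For the LDP, fix $H \in \Adm(\mathfrak{R}; h)$ and let $Z_n(H, \varepsilon)$ denote the number of tilings of $\mathsf{R}_n$ whose scaled height functions $n^{-1} \mathsf{H}_n(n \cdot)$ are uniformly $\varepsilon$-close to $H$. The goal is to show
\begin{flalign*}
\lim_{\varepsilon \to 0} \lim_{n \to \infty} n^{-2} \log Z_n(H, \varepsilon) = \mathcal{E}(H).
\end{flalign*}
For the upper bound I would partition $\mathfrak{R}$ into small mesoscopic squares $B_k$ on each of which $H$ is close to an affine function with slope $(s_k, t_k) \in \overline{\mathcal{T}}$. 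Standard asymptotics for tilings of a parallelogram with prescribed affine boundary conditions---obtainable either from the Kasteleyn determinant or from Schur function formulas---bound the number of tilings of $B_k$ consistent with the prescribed boundary by $\exp\bigl(n^2 |B_k| \sigma(s_k, t_k)(1 + \oo(1))\bigr)$. Multiplying over $k$ and refining the partition recovers the Riemann sum for $n^2 \mathcal{E}(H)$. The matching lower bound is constructed by placing near-typical tilings of the appropriate slope in each $B_k$ and joining them; the $1$-Lipschitz admissibility of $H$, together with the monotone surface interpretation of tilings, permits a reconciliation of adjacent $B_k$ via Hall-type marriage or network-flow arguments as in \cite{VPT}.

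For uniqueness of the maximizer, one verifies strict convexity of $\sigma$ on the interior $\mathcal{T}$ by computing its Hessian using $L''(x) = -\cot x$; a direct calculation shows that this Hessian is positive definite throughout $\mathcal{T}$. Combined with convexity and compactness (by Arzel\`a--Ascoli) of $\Adm(\mathfrak{R}; h)$ in the uniform topology of $1$-Lipschitz functions, strict convexity of $\mathcal{E}$ on the sub-level set of admissible $H$ with $\nabla H \in \mathcal{T}$ a.e.\ yields a unique $H^*$ satisfying \eqref{hmaximum}. Concentration then follows abstractly: the set $\{H \in \Adm(\mathfrak{R}; h) : \|H - H^*\|_\infty \geq \varepsilon\}$ is compact in the uniform topology, so $\mathcal{E}(H) \leq \mathcal{E}(H^*) - \delta(\varepsilon)$ on it for some $\delta(\varepsilon) > 0$; combining this with the LDP upper bound yields the probability bound $\exp(-\delta(\varepsilon) n^2/2)$ for large $n$, which is much stronger than what is claimed.

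The principal technical obstacle is the LDP upper bound near the boundary $\partial \mathcal{T}$ of the slope space, where $\sigma$ degenerates and the local partition function estimate must be handled with care; this is overcome in \cite{VPT} by an approximation step that replaces frozen or near-frozen slopes by nearby liquid ones, establishes the asymptotics in the liquid regime, and then passes to the limit using continuity of $\sigma$ up to $\partial \mathcal{T}$.
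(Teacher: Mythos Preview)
The paper does not prove this lemma: it is quoted verbatim from \cite[Theorem 1.1]{VPT}, and the sentence immediately following the statement attributes uniqueness of the maximizer separately to \cite[Proposition 4.5]{MCFARS}. So there is no ``paper's own proof'' to compare against; your sketch is essentially a summary of the Cohn--Kenyon--Propp argument that the citation points to.

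Two corrections to your outline are worth noting. First, the direction is reversed in your uniqueness paragraph: since $\mathcal{E}$ is being \emph{maximized}, what is needed is strict \emph{concavity} of $\sigma$ on $\mathcal{T}$ (equivalently a negative-definite Hessian), not strict convexity. Second, and more substantively, strict concavity on the open triangle $\mathcal{T}$ does not by itself yield a unique maximizer of $\mathcal{E}$ over $\Adm(\mathfrak{R};h)$: the surface tension is affine along the boundary $\partial\mathcal{T}$, so two maximizers could a priori differ on regions where the gradient is frozen. Ruling this out requires an additional argument (roughly, that a maximizer is determined on the frozen region by its values on the liquid region together with the boundary data), and this is exactly why the paper defers uniqueness to the PDE result of De~Silva--Savin rather than claiming it follows from the Hessian computation alone. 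Your LDP sketch and the concentration-via-compactness step are otherwise the standard route.
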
 

	\noindent The fact that there is a unique maximizer described as in \eqref{hmaximum} follows from \cite[Proposition 4.5]{MCFARS}. Under a suitable change of coordinates, this maximizer $H^*$ solves a complex variant of the Burgers equation \cite{LSCE}, which makes it amenable to further analysis; we will discuss this point in more detail in \Cref{Slopeft} below.
	
	For any simply connected open subset $\mathfrak{R} \subset \mathbb{R}^2$ with Lipschitz boundary, and boundary height function $h: \partial \mathfrak{R} \rightarrow \mathbb{R}^2$ admitting an admissible extension to $\mathfrak{R}$, define the \emph{liquid region} $\mathfrak{L} = \mathfrak{L} (\mathfrak{R}; h) \subseteq \mathfrak{R}$ and \emph{arctic boundary} $\mathfrak{A} = \mathfrak{A} (\mathfrak{R}; h) \subseteq \overline{\mathfrak{R}}$ by
	\begin{flalign}
		\label{al} 
		\mathfrak{L} = \big\{ u \in \mathfrak{R}: \big( \partial_x H^* (u), \partial_t H^* (u) \big) \in \mathcal{T} \big\}, \quad \text{and} \quad \mathfrak{A} = \partial \mathfrak{L}, \qquad \text{where $H^*$ is as in \eqref{hmaximum}}.
	\end{flalign}
	
	\noindent By \cite[Proposition 4.1]{MCFARS}, the set $\mathfrak{L}$ is open.
	
 	We will commonly be interested in the case when $\mathfrak{R}$ is a polygonal domain, given as follows. 
 	
 	\begin{definition} 
 		
 	\label{p} 
 	
 	A subset $\mathfrak{P} \subset \mathbb{R}^2$ is called \emph{polygonal} if it is a simply connected polygon, and the boundary edges are in the axes directions of the triangular lattice. We assume the domain\footnote{We assume throughout that all vertices of $n \partial \mathfrak{P}$ are in $\mathbb{Z}^2$.} $\mathsf{P} = \mathsf{P}_n = n \overline{\mathfrak{P}} \cap \mathbb{T}$ is  tileable and thus associated with a (unique, up to global shift) boundary height function $\mathsf{h} = \mathsf{h}_n$. By translating $\mathfrak{P}$ if necessary, we will assume that $\overline{\mathfrak{P}} \subset \mathbb{R} \times \mathbb{R}_{\geq 0}$ and that $(0, 0) \in \partial \mathfrak{P}$. Then by shifting $\mathsf{h}$ if necessary, we will further suppose that $\mathsf{h} (0, 0) = 0$. Under this notation, we set $h: \partial \mathfrak{P} \rightarrow \mathbb{R}$ by $h (u) = n^{-1} \mathsf{h} (nu)$ for each $u \in \partial \mathfrak{P}$. Moreover, we abbreviate $\Adm (\mathfrak{P}) = \Adm (\mathfrak{P}; h)$, $\mathfrak{L} (\mathfrak{P}) = \mathfrak{L} (\mathfrak{P}; h)$, and $\mathfrak{A} (\mathfrak{P}) = \mathfrak{A} (\mathfrak{P}; h)$; they do not depend on the above choice of global shift fixing $h$. We further define the maximizer $H^* \in \Adm (\mathfrak{R}; h)$ as in \eqref{hmaximum}.
 	
 	\end{definition} 
 
 	We will make use of the following results from \cite{LSCE,DMCS} on the behavior of the limit shape $H^*$ and arctic boundary $\mathfrak{A}$ when $\mathfrak{R}$ is polygonal. The first statement in the below lemma is given by \cite[Theorem 1.9]{DMCS} and the second by \cite[Theorem 1.2, Theorem 1.10]{DMCS} (see also \cite[Theorem 2, Proposition 5]{LSCE}).
 	
 	\begin{lem}[{\cite{LSCE,DMCS}}]
	
	\label{pla}
	
	Adopt the notation of \Cref{p}, and assume that the domain $\mathfrak{R} = \mathfrak{P}$ is polygonal with at least $6$ sides. Then  the following two statements hold.
	
	\begin{enumerate}
		
		\item On $\mathfrak{P} \setminus \mathfrak{L} (\mathfrak{P})$, $\nabla H^* (x, t)$ is piecewise constant, taking values in $\big\{ (0, 0), (1, 0), (1, -1) \big\}$. 
		\item The arctic boundary $\mathfrak{A} (\mathfrak{P})$ is an algebraic curve, and its singularities are all either ordinary cusps or tacnodes.
	\end{enumerate}
 
	\end{lem}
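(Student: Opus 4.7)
The plan is to recognize that this lemma is quoted verbatim from \cite{LSCE,DMCS}, so the ``proof'' in the present paper should just be a citation. Still, the statement admits a reasonably clean conceptual strategy, which I will now sketch in the spirit of those references. The backbone is the Kenyon--Okounkov complex Burgers description of the maximizer $H^*$: introduce the complex slope $f : \mathfrak{L}(\mathfrak{P}) \to \mathbb{H}^+$ determined by $\nabla H^* = \bigl( \tfrac{1}{\pi} \arg f, -\tfrac{1}{\pi} \arg (1 - f) \bigr)$ (up to sign conventions). The Euler--Lagrange equation for $\mathcal{E}$, derived by differentiating $\sigma$ from \eqref{sigmal}, reduces to the complex Burgers equation $f_t + (1 - f) f_x = 0$ on $\mathfrak{L}(\mathfrak{P})$.

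For statement (1), I would argue that on any connected component $\mathfrak{F}$ of $\mathfrak{P} \setminus \mathfrak{L}(\mathfrak{P})$, admissibility forces $\nabla H^*(u) \in \partial \overline{\mathcal{T}}$ for a.e.\ $u \in \mathfrak{F}$. The three vertices of $\overline{\mathcal{T}}$ are precisely the slopes $(0,0)$, $(1,0)$, $(1,-1)$, which correspond to $\sigma = 0$, while the three edges of $\overline{\mathcal{T}}$ correspond to regions where one of $L(\pi s), L(-\pi t), L(\pi(1 - s + t))$ vanishes identically. Along the edges, however, a small variation of $H$ inside $\mathfrak{F}$ would strictly decrease $\mathcal{E}$ unless the slope is pushed to a vertex; this rigidity, combined with the fact that $H^*$ is $1$-Lipschitz, forces $\nabla H^*$ to be constant equal to one of $(0,0),(1,0),(1,-1)$ on each component of $\mathfrak{F}$. (This is essentially \cite[Proposition 4.1]{MCFARS} specialized to the polygonal case.)

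For statement (2), the polygonal boundary data $h$ is piecewise linear with each slope in $\{(0,0),(1,0),(1,-1)\}$, so $\partial \mathfrak{P}$ is naturally partitioned into frozen arcs. I would solve the complex Burgers equation by characteristics starting from $\partial \mathfrak{P}$: since $(1-f)$ is transported along characteristics, the map $u \mapsto f(u)$ is globally described by an algebraic relation whose coefficients are determined by the finitely many sides of $\mathfrak{P}$ and the frozen slopes along them. The arctic boundary $\mathfrak{A}(\mathfrak{P})$ is then cut out by the locus where $f$ becomes real (i.e.\ $\nabla H^*$ hits $\partial \overline{\mathcal{T}}$), which is an algebraic curve; this is the content of \cite[Theorem 2, Proposition 5]{LSCE} and \cite[Theorem 1.2]{DMCS}.

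The main obstacle, and the hardest part of \cite{DMCS}, is classifying the singularities in (2). Real algebraic curves of high degree can a priori exhibit many singularity types, so one must exploit the specific origin of $\mathfrak{A}(\mathfrak{P})$ as the real locus of an algebraic mapping arising from the Burgers equation. The strategy is to linearize $f$ near a singular point and observe that, at a generic boundary point, characteristics collide transversally (producing an ordinary cusp), while at special ``double'' points two frozen components of the boundary touch (producing a tacnode). Any other singularity would require a non-generic coincidence of characteristics forbidden by the polygonal structure. I would expect to borrow the full classification from \cite[Theorem 1.10]{DMCS} rather than reprove it; a self-contained argument would require carrying out Duse--Metcalfe's normal-form analysis of the Burgers map near its critical points.
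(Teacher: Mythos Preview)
Your proposal is correct in its main thrust: the paper does not prove this lemma at all but simply cites it, with the sentence preceding the lemma pinpointing \cite[Theorem 1.9]{DMCS} for statement (1) and \cite[Theorem 1.2, Theorem 1.10]{DMCS} (see also \cite[Theorem 2, Proposition 5]{LSCE}) for statement (2). Your conceptual sketch of the underlying arguments is reasonable and broadly faithful to those references, though note two small mismatches: the paper attributes (1) to \cite[Theorem 1.9]{DMCS} rather than \cite[Proposition 4.1]{MCFARS} (the latter is used elsewhere for openness of $\mathfrak{L}$), and your complex slope convention $f \in \mathbb{H}^+$ with Burgers equation $f_t + (1-f)f_x = 0$ differs from the paper's $f \in \mathbb{H}^-$ with $\partial_t f + \partial_x f \cdot \tfrac{f}{f+1} = 0$ as in \eqref{ftx}.
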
 

	The following is an integrality result for the limiting height function $H^*$ outside of the associated liquid region. We provide its proof in \Cref{HeightInteger} below. 
	
	\begin{prop}
		
		\label{p:frozenr}
		
		Adopt the notation of \Cref{p}.
		
		\begin{enumerate}
			\item Fix $(x,t)\in\mathfrak P\setminus \mathfrak L$, such that $\nabla H^* (x, t) = (s, r)$ exists and $\nabla H^*$ is continuous at $(x, t)$. If $(s, r) \in \big\{ (0, 0), (1, 0), (1, -1) \big\}$, then $n(H^*(x,t)-sx-rt)\in \bZ$.
			\item For any point $v\in(\mathfrak P\setminus \overline{\mathfrak L})\cap (n^{-1} \cdot \bZ)^2$, we have $n \cdot H^*(v)\in \bZ$.
		\end{enumerate}
	\end{prop}

	It will also be useful to further set notation for the local parabolic shape of $\mathfrak{A} (\mathfrak{P})$ around any nonsingular point $(x_0, y_0) \in \mathfrak{A}$.
	
	\begin{definition}
		
		\label{sr} 
		
		Fix a nonsingular point $(x_0, y_0) \in \mathfrak{A} = \mathfrak{A} (\mathfrak{P})$; assume it is not a \emph{tangency location} of $\mathfrak{A}$, which is a point on $\mathfrak{A}$ whose tangent line to $\mathfrak{A}$ has slope in $\{ 0, 1, \infty \}$. Define the \emph{curvature parameters} $(\mathfrak{l}, \mathfrak{r}) = \big( \mathfrak{l} (x_0, y_0; \mathfrak{A}), \mathfrak{r} (x_0, y_0; \mathfrak{A}) \big) \in \mathbb{R}^2$ associated with $(x_0, y_0)$ so that 
		\begin{flalign}
			\label{xyql}
			x - x_0 = \mathfrak{l} (y - y_0) + \mathfrak{q} (y - y_0)^2 + \mathcal{O} \big( (y - y_0^3) \big),
		\end{flalign} 
	
		\noindent for all $(x, y) \in \mathfrak{A}$ in a sufficiently small neighborhood of $(x_0, y_0)$. Since $(x_0, y_0)$ is nonsingular and is not a tangency location, the parameters $(\mathfrak{l}, \mathfrak{q})$ exist, with $\mathfrak{l} \notin \{ 0, 1,\infty \}$ and $\mathfrak{q} \notin \{ 0, \infty \}$.

\end{definition}
	
	\subsection{Edge Statistics Results}
	
	\label{Ensemble} 
	
	In order to state our results, we first require some notation on edge statistics. 
	
	\begin{definition} 
		
		\label{kernellimit} 
		
		For any $s, t, x, y \in \mathbb{R}$, the \emph{extended Airy kernel} is given by
	\begin{flalign*}
		\mathcal{K} (s, x; t, y) = \displaystyle\int_0^{\infty} e^{u (t - s)} \Ai (x + u) \Ai (y + u) \mathrm{d}u, \qquad & \text{if $s \geq t$}; \\
		\mathcal{K} (s, x; t, y) = - \displaystyle\int_{-\infty}^0 e^{u (t - s)} \Ai (x + u) \Ai (y + u) \mathrm{d}u, \qquad & \text{if $s < t$},
	\end{flalign*}

	\noindent where we recall that the Airy function $\Ai: \mathbb{R} \rightarrow \mathbb{R}$ is given by
	\begin{flalign*}
		\Ai (x) = \displaystyle\frac{1}{\pi} \displaystyle\int_{-\infty}^{\infty} \cos \Big( \displaystyle\frac{z^3}{3} + xz \Big) \mathrm{d}z.
	\end{flalign*}
	
	\end{definition} 

	From this, we define the Airy line ensemble, which will be limits for our edge statistics. 

	\begin{definition}
		
		\label{ensemblewalks}
		
		The \emph{Airy line ensemble} $\mathcal{A} = (\mathcal{A}_1, \mathcal{A}_2, \ldots )$ is an infinite collection of continuous curves $\mathcal{A}_i: \mathbb{R} \rightarrow \mathbb{R}$, ordered so that $\mathcal{A}_1 (t) > \mathcal{A}_2 (t) > \cdots$ for each $t \in \mathbb{R}$, such that 
		\begin{flalign}
			\label{probabilityaxjtj}
		\mathbb{P} \Bigg( \bigcap_{j = 1}^m \{ (x_j, t_j) \in \mathcal{A} \} \Bigg) = \det \big[ \mathcal{K} (t_i, x_i; t_j, x_j) \big]_{1 \leq i, j \leq m} \displaystyle\prod_{j = 1}^m dx_j,
		\end{flalign}
	
		\noindent for any $(x_1, t_1), (x_2, t_2), \ldots , (x_m, t_m) \in \mathbb{R}^2$. Here, we have written $(x, t) \in \mathcal{A}$ if there exists some integer $k \geq 1$ such that $\mathcal{A}_k (t) = x$. The existence of such an ensemble was shown as \cite[Theorem 3.1]{PLE} (and the uniqueness follows from the explicit form \eqref{probabilityaxjtj} of its multi-point distributions).\footnote{Its top curve $\mathcal{A}_1$ is the Airy$_2$ process.} We abbreviate $\mathcal{R} = \big(\mathcal{A}_1 (t) - t^2, \mathcal{A}_2 (t) - t^2, \ldots \big)$, which may be viewed as a function $\mathcal{R}: \mathbb{Z}_{> 0} \times \mathbb{R} \rightarrow \mathbb{R}$ by setting $\mathcal{R} (i, t) = \mathcal{R}_i (t) = \mathcal{A}_i (t) - t^2$. 
		
	\end{definition}

	We next impose the following assumption of a polygonal subset $\mathfrak{P} \subset \mathbb{R}^2$, which excludes certain conditions on its arctic boundary.
	
	\begin{assumption} 
		
		\label{pa} 
		
		Under the notation of \Cref{p}, assume the following four properties hold. 
		\begin{enumerate} 
			\item The arctic boundary $\mathfrak{A} = \mathfrak{A} (\mathfrak{P})$ has no tacnode singularities. 
			\item No cusp singularity of $\mathfrak{A}$ is also a tangency location of $\mathfrak{A}$. 
			\item  There exists an axis $\ell$ of $\mathbb{T}$ such that any line connecting two distinct cusp singularities of $\mathfrak{A}$ is not parallel to $\ell$. 
			\item  Any intersection point between $\mathfrak{A}$ and $\partial \mathfrak{P}$ must be a tangency location of $\mathfrak{A}$. Moreover, $\nabla H^*(x,t)$ is continuous at any point on $\mathfrak{A}$ that is not a tangency location.
		\end{enumerate}
	
	\end{assumption} 
	
	We refer to \Cref{curve} above for depictions of the four forbidden scenarios. 
	
	\begin{rem}
	
	\label{ptypical}

		It seems likely to us that the constraints listed in \Cref{pa} hold for a generic polygonal domain with a fixed number of sides, since each constraint should impose an algebraic relation between the side lengths of $\mathfrak{P}$. However, we will not pursue a rigorous proof of this here. We refer to \Cref{2polygon} for the arctic boundaries on a generic octagon and 12-gon (obtained by analytically solving for, and then plotting, the algebraic curves of the appropriate degrees tangent to all sides of these polygons); it is quickly seen that these arctic boundaries satisfy our assumption.

	\end{rem}

	\begin{figure}
	\begin{center}
	 \includegraphics[scale=0.7]{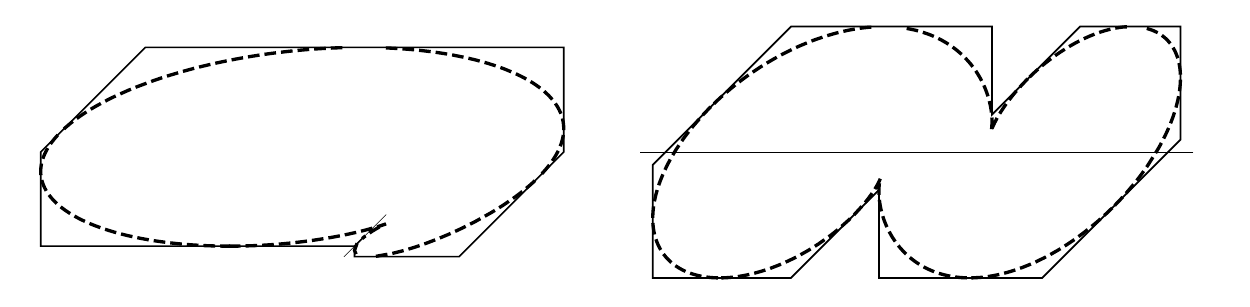}
	 \caption{Shown to the left is the arctic boundary of an octagon, and shown to the right is the arctic boundary of a $12$-gon. Both examples satisfy the constraints listed in \Cref{pa}.}
	 \label{2polygon}
	 \end{center}
	 \end{figure}

Now we can state the following theorem on the convergence to the Airy line ensemble for edge statistics of uniformly random tilings on polygonal domains satisfying \Cref{pa}. In what follows, we recall the non-intersecting Bernoulli walk ensemble associated with any tiling of a domain from \Cref{WalkModel} and the curvature parameters from \Cref{sr} (observe that the quantity $K$ defined in the below theorem is an integer, by \Cref{p:frozenr}).

	\begin{thm} 
		
		\label{walkspconverge} 
		
		Adopt the notation of \Cref{p} and the constraint from \Cref{pa}. Fix some point $(x_0, t_0) \in \mathfrak{A} (\mathfrak{P})$ that is not a tangency or cusp location of $\mathfrak{A} (\mathfrak{P})$, and assume that $\nabla H^* (x_0 + \varepsilon, t_0) = (0, 0)$ for sufficiently small $\varepsilon$. Denote the curvature parameters associated with $(x_0, t_0)$ by $(\mathfrak{l}, \mathfrak{q})$, and set
		\begin{flalign}
			\label{sr1} 
			\mathfrak{s} = \bigg| \displaystyle\frac{\mathfrak{l}^{2/3} (1 - \mathfrak{l})^{2/3}}{4^{1/3} \mathfrak{q}^{1/3}} \bigg|; \qquad \mathfrak{r} = \bigg| \displaystyle\frac{\mathfrak{l}^{1/3} (1 - \mathfrak{l})^{1/3}}{2^{1/3} \mathfrak{q}^{2/3}} \bigg|.
		\end{flalign} 
	
		\noindent Let $\mathscr{M}$ denote a uniformly random tiling of $\mathsf{P}$, which is associated with a (random) family $\big( \mathsf{x}_j (t) \big)$ of non-intersecting Bernoulli walks. Denote $K = n H^* (x_0, t_0)$, and define the family of functions $\mathcal{X}_n = (\mathsf{X}_1, \mathsf{X}_2, \ldots )$ by, for each $i \geq 0$, setting
		\begin{flalign} 
			\label{xit1}
			\mathsf{X}_{i+1 } (t) = \mathfrak{s}^{-1} n^{-1/3} \Big( \mathsf{x}_{K - i} (t_0 n + \mathfrak{r} n^{2/3} t) - n x_0 - \mathfrak{l} n^{2/3} t \Big).
		\end{flalign}
	
		\noindent Then $\mathcal{X}_n$ converges to $\mathcal{R}$, uniformly on compact subsets of $\mathbb{Z}_{> 0} \times \mathbb{R}$, as $n$ tends to $\infty$.
		
	\end{thm}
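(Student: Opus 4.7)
The plan is to carry out the two-step comparison scheme already outlined in the introduction. The first step is to invoke the near-optimal concentration bound (the rigidity estimate \Cref{mh} proved later in the paper), which states that, with probability tending to one, the height function $\mathsf{H}_n$ of a uniform tiling of $\mathsf{P}$ satisfies $\max_{\mathsf{v} \in \mathsf{P}} \bigl| \mathsf{H}_n(\mathsf{v}) - n H^*(n^{-1}\mathsf{v}) \bigr| \leq n^{\delta}$ for any fixed $\delta>0$. Equivalently, each Bernoulli walk in the ensemble $\mathsf{X}$ sits within $n^\delta$ of its deterministic limit trajectory, in particular near $\mathfrak{A}(\mathfrak{P})$.

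Granted this bound, I would select a hexagonal polygon $\mathfrak{P}'$ whose limit-shape arctic ellipse $\mathfrak{A}(\mathfrak{P}')$ passes through $(x_0,t_0)$ and whose Taylor expansion at that point matches \eqref{xyql}, so that the curvature parameters $(\mathfrak{l}',\mathfrak{q}')$ coincide with $(\mathfrak{l},\mathfrak{q})$. Such a $\mathfrak{P}'$ exists because a hexagon has five shape-and-position parameters, while matching a point on the arctic ellipse together with its tangent slope $\mathfrak{l}$ and second-order coefficient $\mathfrak{q}$ imposes four conditions. Since the scaling constants in \eqref{sr1} depend only on $(\mathfrak{l},\mathfrak{q})$, the rescaling \eqref{xit1} is identical for the $\mathsf{P}'$-model. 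For the hexagon, it is already established in \cite{DP,ARS,UEFDIPS}, via exact determinantal kernel asymptotics, that the analogous rescaled Bernoulli walk ensemble converges uniformly on compacts to $\mathcal{R}$. Thus it suffices to compare the two models.

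For the comparison I would introduce a mesoscopic window $\mathsf{W} \subset \mathbb{T}$ centered at $(n x_0, n t_0)$, with dimensions of order $n^{2/3+\kappa}$ for some small $\kappa > 0$, chosen to comfortably contain the compact set of $(i,t)$ on which convergence in \Cref{walkspconverge} is sought. Applying \Cref{mh} to both $\mathsf{P}$ and $\mathsf{P}'$, after a rigid shift the boundary height functions of the two tilings restricted to $\partial\mathsf{W}$ differ pointwise by at most $n^\delta$. Using the standard monotone coupling property of uniform dimer height functions (tilings with pointwise-ordered boundary data admit a pointwise-ordered coupling, via the lattice structure of height functions), I would sandwich both conditional laws on $\mathsf{W}$ between uniform tilings having the same two deterministic boundary profiles, which themselves differ by $\mathcal{O}(n^\delta)$. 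Inside this sandwich, the two height functions couple so as to agree throughout $\mathsf{W}$ up to $\mathcal{O}(n^\delta)$. Choosing $\delta \ll \kappa \ll 1/3$ makes this error $\mathrm{o}(n^{1/3})$, which vanishes under the rescaling \eqref{xit1}. Transferring the Airy convergence from $\mathsf{P}'$ to $\mathsf{P}$ through this coupling then yields $\mathcal{X}_n \to \mathcal{R}$ uniformly on compacts.

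The dominant obstacle is the concentration bound \Cref{mh} itself, to which the bulk of the paper (alternating dynamics, tilted profiles, censoring inequality) is devoted. A more minor subtlety in the comparison step is to ensure the sandwich argument delivers the joint convergence of finitely many levels of the walk ensemble -- not merely of the topmost walk -- to $\mathcal{R}$, which requires applying the monotone coupling simultaneously across the top $O(1)$ walks below the arctic boundary and using the rigidity of each one to control the coupling error uniformly. Additionally, some care is required when $(x_0, t_0)$ lies close to the boundary of $\mathfrak{P}$: \Cref{pa}(4) ensures such a point is not on $\partial\mathfrak{P}$ unless it is a tangency location, but one still needs to verify that $\mathsf{W}$ can be taken fully inside $\mathsf{P}$ and $\mathsf{P}'$ for all large $n$, which follows from the nonsingularity and non-tangency hypotheses in the statement.
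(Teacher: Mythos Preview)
Your outline captures the right architecture --- invoke the concentration estimate \Cref{mh}, match a hexagon at $(x_0,t_0)$ to second order, and transfer the known Airy convergence via monotone coupling on a mesoscopic window --- and this is indeed the scheme the paper follows. But your sandwich step has a genuine gap.

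You assert that an $\mathcal{O}(n^\delta)$ height-function discrepancy on $\partial\mathsf{W}$ propagates to an $\mathcal{O}(n^\delta)$ discrepancy inside, and that this vanishes under \eqref{xit1}. The first part is true, but it is the wrong currency. What is being rescaled in \eqref{xit1} is the \emph{position} $\mathsf{x}_{K-i}(t)$ of the $(i{+}1)$-st walk from the edge, not the height function. A height-function error of $C$ says only that $\mathsf{x}_{K-i}^{\mathsf{P}}$ lies between $\mathsf{x}_{K-i-C}^{\mathsf{P}'}$ and $\mathsf{x}_{K-i+C}^{\mathsf{P}'}$. Near the arctic boundary the walk density $\partial_x H^*$ vanishes like a square root, so by \Cref{gammaikx0t0} the top $C$ walks occupy a spatial interval of width $\asymp C^{2/3} n^{1/3}$. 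With $C=n^\delta$ this is $n^{1/3+2\delta/3}$, which after dividing by $n^{1/3}$ diverges like $n^{2\delta/3}$. Thus your sandwich does not pin the rescaled edge walks at all. (There is also a secondary inaccuracy: the two limit shapes agree only to second order in $y-t_0$, so on $\partial\mathsf{W}$ they already differ by $\mathcal{O}(n^{3\kappa})$, not $\mathcal{O}(n^\delta)$; but this is minor next to the main issue.)

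The paper's remedy, carried out in \Cref{xiwalks1}, is to compare not with one hexagon but with \emph{two}, having quadratic curvature parameters $\mathfrak{q}(1\pm n^{-20\delta})$. At the time endpoints $\pm\mathsf{T}=\pm n^{2/3+20\delta}$ of the window, this perturbation shifts the limiting parabola by $n^{-20\delta}\cdot\mathfrak{q}\,n^{-1}\mathsf{T}^2\asymp n^{1/3+20\delta}$, which dominates the $n^{1/3+10\delta}$ position uncertainty of the boundary walks supplied by \Cref{xjpath}. This produces the pointwise ordering $\mathsf{d}''\le\mathsf{d}\le\mathsf{d}'$, $\mathsf{e}''\le\mathsf{e}\le\mathsf{e}'$, $\mathsf{f}''\le\mathsf{f}\le\mathsf{f}'$, and then \Cref{comparewalks} yields a \emph{same-index} sandwich $\mathsf{x}_j''\le\mathsf{x}_j\le\mathsf{x}_j'$ directly on the walks. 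Since the curvature perturbation is $o(1)$, both hexagon ensembles rescale to the same Airy limit, and the sandwich closes. The trick --- creating room for the ordering by tilting the curvature rather than by shifting the height --- is exactly what your single-hexagon plan is missing.
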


	Furthermore, observe that \Cref{walkspconverge} stipulates $\nabla H^* (x_0 + \varepsilon, t_0) = (0, 0)$ for small $\varepsilon$. Since for a polygonal domain $\mathfrak{P} \subset \mathbb{R}^2$ we have $\nabla H^* (x, y) \in \big\{ (0, 0), (1, 0), (1, -1) \big\}$ for almost any $(x, y) \notin \mathfrak{L} (\mathfrak{P})$ (by the first statement of \Cref{pla}), there are six possibilities for the behavior of $\nabla H^*$ around any $(x_0, t_0) \in \mathfrak{A} (\mathfrak{P})$. Specifically, we either have $\nabla H^* (x_0 + \varepsilon, t_0) \in \big\{ (0, 0), (1, 0), (1, -1) \big\}$ or $\nabla H^* (x_0 - \varepsilon, t_0) \in \big\{ (0, 0), (1, 0), (1, -1) \big\}$, with the former if $(x_0, t_0)$ is on a ``right part'' of the arctic boundary and the latter if it is on a ``left part.'' By rotating or reflecting the tileable domain $\mathsf{P}$ if necessary, establishing convergence for the edge statistics in any one of these six situations also shows it for the remaining cases, and so for brevity we only stated \Cref{walkspconverge} when $\nabla H^* (x_0, t_0 + \varepsilon) = (0, 0)$.

	\section{Convergence of Edge Statistics}
	
	\label{CompareProbability}
	
	In this section we establish \Cref{walkspconverge}, assuming the concentration estimate \Cref{mh} below. We begin in \Cref{Slopeft} by recalling complex analytic properties of tiling limit shapes in relation to the complex Burgers equation; in \Cref{EstimateWalks} we discuss classical locations of these limit shapes. Next, in \Cref{HeightP} we state a concentration bound for the tiling height function of polygonal domains satisfying \Cref{pa}, which we use in \Cref{CompareX} to compare the edge statistics on such polygons to those on hexagonal domains. We then establish \Cref{walkspconverge} in \Cref{ProofPaths}.

	\subsection{Complex Slopes and Complex Burgers Equation} 
	
	\label{Slopeft}
	
	In this section we recall from \cite{LSCE,DMCS} various complex analytic aspects of the tiling limit shapes discussed in \Cref{HeightLimit}; they will be briefly used in the proof of \Cref{lqq} below, and then more extensively in our discussion of tilted height profiles later. In what follows, we fix a simply connected open subset $\mathfrak{R} \subset \mathbb{R}^2$ and a boundary height function $h: \partial \mathfrak{R} \rightarrow \mathbb{R}$. We recall the maximizer $H^* \in \Adm (\mathfrak{R}; h)$ of $\mathcal{E}$ defined in \eqref{hmaximum}, as well as the liquid region $\mathfrak{L} = \mathfrak{L} (\mathfrak{R}; h)$ and arctic boundary $\mathfrak{A} = \mathfrak{A} (\mathfrak{R}; h)$ defined in \eqref{al}. 
	
	Then define the \emph{complex slope} $f: \mathfrak{L} \rightarrow \mathbb{H}^-$ by, for any $u \in \mathfrak{L}$, setting $f(u) \in \mathbb{H}^-$ to be the unique complex number satisfying 
	\begin{flalign}
		\label{fh}
		\arg^* f(u) = - \pi \partial_x H^* (u); \qquad \arg^* \big( f(u) + 1 \big) = \pi \partial_y H^* (u),
	\end{flalign}
	
	\noindent where for any $z \in \overline{\mathbb{H}^-}$ we have set $\arg^* z = \theta \in [-\pi, 0]$ to be the unique number in $[-\pi, 0]$ sastifying $e^{-\mathrm{i} \theta} z \in \mathbb{R}$; see \Cref{slope1} for a depiction, where there we interpret $1 - \partial_x H^* (u)$ and $-\partial_y H^* (u)$ as the approximate proportions of tiles of types $1$ and $2$ around $nu \in \mathsf{R}_n$, respectively (which follows from the definition of the height function from \Cref{FunctionWalks}).

	\begin{figure}
		\begin{center}
			\includegraphics[scale=0.3,trim={0cm 5cm 0 7cm},clip]{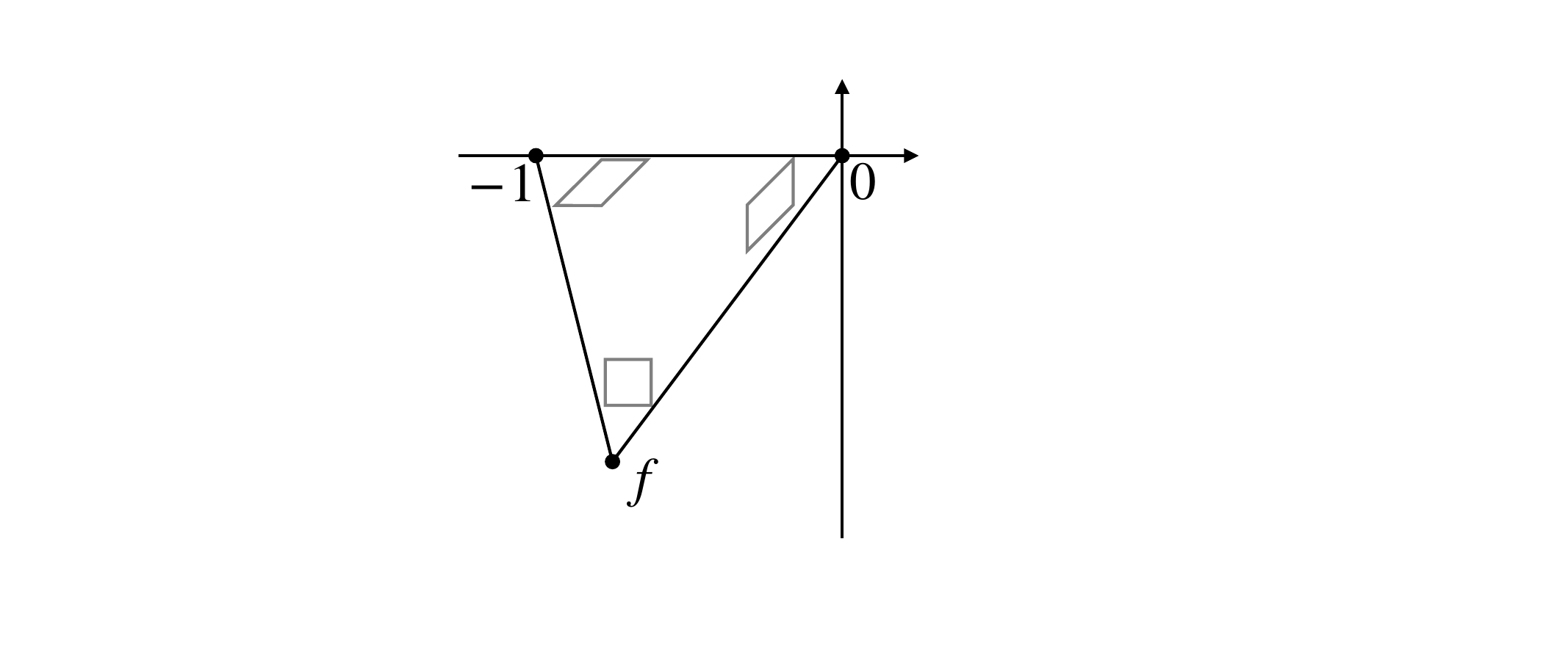}
			\caption{Shown above the complex slope $f = f (u)$.}
			\label{slope1}
		\end{center}
	\end{figure}
	
	The following result from \cite{LSCE} indicates that $f$ satisfies the complex Burgers equation.
	\begin{prop}[{\cite[Theorem 1]{LSCE}}]
		
		\label{fequation}
		
		For any $(x, t) \in \mathfrak{L}$, let $f_t (x) = f (x, t)$ we have
		\begin{flalign}
			\label{ftx}
			\partial_t f_t (x) + \partial_x f_t (x) \displaystyle\frac{f_t (x)}{f_t (x) + 1} = 0.
		\end{flalign} 
		
	\end{prop}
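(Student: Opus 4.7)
The plan is to derive the complex Burgers equation \eqref{ftx} as a consequence of the Euler--Lagrange equation for the variational principle \eqref{hmaximum}. On the liquid region $\mathfrak{L}$ the gradient $\nabla H^*$ lies in the open simplex $\mathcal{T}$, on which $\sigma$ is smooth and strictly concave, so standard elliptic regularity for this nondegenerate variational problem ensures that $H^*$ is smooth on $\mathfrak{L}$. I may therefore differentiate classically and work with the Euler--Lagrange equation
\begin{equation*}
\partial_x \bigl[ \sigma_s (\nabla H^*) \bigr] + \partial_t \bigl[ \sigma_\tau (\nabla H^*) \bigr] = 0 \qquad \text{on } \mathfrak{L},
\end{equation*}
where $\sigma_s$ and $\sigma_\tau$ denote the partial derivatives of $\sigma(s, \tau)$ with respect to its first and second arguments. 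Using the Lobachevsky identity $L'(x) = -\log | 2 \sin x |$, both $\sigma_s (\nabla H^*)$ and $\sigma_\tau (\nabla H^*)$ expand as differences of $\log | 2 \sin (\pi \cdot) |$ terms evaluated at the three lozenge proportions $\bigl( \partial_x H^*, -\partial_t H^*, 1 - \partial_x H^* + \partial_t H^* \bigr)$.

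The key step is to translate this real PDE into a complex identity in $f$. The definition \eqref{fh} realizes the three lozenge proportions as angles naturally associated to $f$ in $\mathbb{C}$: the triangle with vertices $0$, $-1$, and $f$ has side lengths $1$, $|f|$, and $|f+1|$, and its interior angles can be written in closed form in terms of $(\partial_x H^*, \partial_t H^*)$. Applying the law of sines to this triangle, together with sine-addition identities to pass between its interior angles and the three lozenge proportions, expresses $\sigma_s (\nabla H^*)$ and $\sigma_\tau (\nabla H^*)$ in terms of $\Re \log f$ and $\Re \log (f + 1)$, where $\log$ is taken with the branch compatible with the convention $\arg^* \in [-\pi, 0]$ on $\mathbb{H}^-$. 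Under this branch, \eqref{fh} additionally gives $\Im \log f = -\pi \, \partial_x H^*$ and $\Im \log (f + 1) = \pi \, \partial_t H^*$.

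Through this dictionary, the Euler--Lagrange equation becomes precisely the statement that the real part of $\partial_t \log f + \partial_x \log(f + 1)$ vanishes on $\mathfrak{L}$, while the commutativity of mixed partials $\partial_t \partial_x H^* = \partial_x \partial_t H^*$ (automatic since $H^* \in C^2(\mathfrak{L})$) becomes the vanishing of its imaginary part. Assembling the two real identities yields
\begin{equation*}
\partial_t \log f + \partial_x \log (f + 1) = 0 \qquad \text{on } \mathfrak{L},
\end{equation*}
and multiplying through by $f$, using $\partial_t \log f = \partial_t f / f$ and $\partial_x \log (f + 1) = \partial_x f / (f + 1)$, then recovers \eqref{ftx}.

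The main step requiring care is the geometric/trigonometric bookkeeping in the passage from $\sigma_s, \sigma_\tau$ to $\Re \log f, \Re \log (f + 1)$: the natural triangle attached to $f$ has interior angles that are not literally the three lozenge proportions, so the law of sines must be combined with sine-addition identities, and signs must be tracked carefully since $f$ and $f + 1$ lie in $\mathbb{H}^-$ rather than $\mathbb{H}^+$. Once this dictionary is established, however, the derivation of \eqref{ftx} is an essentially algebraic combination of the Euler--Lagrange equation with the compatibility of mixed partial derivatives.
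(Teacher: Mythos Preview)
The paper does not prove \Cref{fequation}; it imports the statement directly from \cite{LSCE}. Your outline is precisely the argument of that reference: write the Euler--Lagrange equation for the surface-tension functional, translate $\sigma_s$ and $\sigma_\tau$ into $\log|f|$ and $\log|f+1|$ via the law of sines on the triangle with vertices $0,-1,f$, and combine with equality of mixed partials of $H^*$ to assemble the complex identity $\partial_t\log f + \partial_x\log(f+1)=0$. So your approach is the correct and standard one, and there is no independent proof in the paper to compare it against.

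One genuine caution that goes beyond the bookkeeping you already flag: with the paper's definitions taken literally, the interior angles of the triangle $0,-1,f$ come out to $\pi(1-\partial_x H^*)$, $-\pi\partial_y H^*$, $\pi(\partial_x H^*+\partial_y H^*)$, whereas the arguments of $L$ in the surface tension \eqref{sigmal} are $\pi\partial_x H^*$, $-\pi\partial_y H^*$, $\pi(1-\partial_x H^*+\partial_y H^*)$. These do not match, and computing directly from \eqref{sigmal} one finds $\sigma_s=-\log|f+1|$ and $\sigma_\tau=\log|f|$, so the Euler--Lagrange equation reads $\partial_t\log|f|=\partial_x\log|f+1|$; together with mixed partials this gives $\partial_t\log f=\overline{\partial_x\log(f+1)}$ rather than $-\partial_x\log(f+1)$. (Compare also the vertices of $\overline{\mathcal{T}}$ in \eqref{t} with the frozen slopes $(0,0),(1,0),(1,-1)$ listed in \Cref{pla}.) This looks like a convention inconsistency in the paper rather than a flaw in your strategy, but you should fix a single consistent set of conventions---e.g.\ those of \cite{LSCE} directly---before carrying out the calculation, since otherwise the two real identities do not assemble into the complex one you claim.

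It is worth noting that the paper does contain an independent derivation of the Burgers equation, in \Cref{fderivativeq}: implicit differentiation of the algebraic relation $\mathcal{Q}_0(\mathcal{F}_t(x))=x(\mathcal{F}_t(x)+1)-t\mathcal{F}_t(x)$ yields \eqref{ftxderivativeq} and hence \eqref{xfequation} in two lines. That route bypasses the variational principle entirely but relies on the algebraic structure from \Cref{pa1}, so it is specific to the polygonal setting; your variational argument applies in full generality.
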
 
	
	\begin{rem} 
		
		As explained in \cite[Section 3.2.2]{DMCS}, the composition $\widetilde{f} = M \circ f$ of $f_t (z)$ with a certain M\"{o}bius transformation $M$ solves the Beltrami equation $\partial_{\overline{z}} \widetilde{f} = \widetilde{f} \cdot \partial_z \widetilde{f}$. Thus, after a change of variables, any solution of the complex Burgers equation also solves the Beltrami equation.
		
	\end{rem}

	The following result from \cite{LSCE,DMCS} describes properties of the complex slope $f_t (x)$ when $\mathfrak{R}$ is polygonal. 
	
	\begin{prop}[{\cite{LSCE,DMCS}}]
		
		\label{pa1}
		
		Adopt the notation of \Cref{p}, and assume that the domain $\mathfrak{R} = \mathfrak{P}$ polygonal with at least $6$ sides. Then the following three statements hold.
		
		\begin{enumerate}
			
			\item The complex slope $f_t (x)$ extends continuously to the arctic boundary $\mathfrak{A} (\mathfrak{P})$. 
			
			\item Fix $(x_0, t_0) \in \overline{\mathfrak{L}}$. There exists a neighborhood $\mathfrak{U} \subset \mathbb{C}^2$ of $(x_0, t_0)$ and a real analytic function $Q_0 : \mathfrak{U} \rightarrow \mathbb{C}$ such that, for any $(x, t) \in \mathfrak{U} \cap \overline{\mathfrak{L}}$, we have
			\begin{flalign}
				\label{q0f} 
				Q_0 \big( f_t (x) \big) = x \big( f_t (x) + 1 \big) - t f_t (x).
			\end{flalign}
			 There exists a nonzero rational function $Q : \mathbb{C}^2 \rightarrow \mathbb{C}^2$ such that, for any $(x,t)\in \overline{\mathfrak{L}}$, we have   
	 	\begin{flalign}\label{e:qfh}
	 		Q \bigg( f_t (x), x - \displaystyle\frac{t f_t (x)}{f_t (x) + 1} \bigg) = 0.
	 	\end{flalign}

			\item For any $(x, t) \in \mathfrak{U} \cap \overline{\mathfrak{L}}$, $f_t (x)$ is a double root of \eqref{q0f} if and only if $(x, t) \in \partial \mathfrak{L}$.
			
		\end{enumerate}
		
	\end{prop}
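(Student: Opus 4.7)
The plan is to apply the method of characteristics to the complex Burgers equation \eqref{ftx} from \Cref{fequation}. Along any characteristic curve $(x(s), s)$ satisfying $\dot x = f/(f+1)$, the complex slope $f$ is constant, so the quantity $x(f+1) - sf$ is conserved. For a polygonal domain, the limit shape is effectively controlled by algebraic boundary data: by \Cref{pla}, the gradient $\nabla H^*$ takes only the values $(0,0), (1,0), (1,-1)$ on each frozen component, which via \eqref{fh} corresponds to $f$ being constant and real on the closure of each frozen region, taking one of three degenerate values in $\{0, -1, \infty\}$. Consequently, across any arc of $\mathfrak{A}$ adjacent to an edge of $\partial \mathfrak{P}$, characteristics enter $\mathfrak{L}$ carrying the common real frozen slope of that region.

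For part (2), I would trace the characteristic through $(x,t) \in \mathfrak{L}$ backward until it exits $\mathfrak{L}$ across $\mathfrak{A}$; this expresses the conserved quantity $x(f_t(x)+1) - tf_t(x)$ as a rational function $Q_0$ of $f_t(x)$, yielding \eqref{q0f} in a neighborhood of $(x_0,t_0)$. Patching the contributions from all edges and cusp singularities then produces an irreducible algebraic curve in the variables $(w, x - tw/(w+1))$, giving the global polynomial relation \eqref{e:qfh}. This identification of the limit shape of a polygonal tiling with an algebraic curve is the substance of the Kenyon--Okounkov theorem \cite[Theorem 1]{LSCE}, with the polygonal refinement developed in \cite[Theorems 1.2 and 1.10]{DMCS}.

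Part (1) then follows because $f_t(x)$ is a branch of the algebraic function implicitly defined by \eqref{e:qfh}, and such branches extend continuously across their ramification locus onto the real axis. For part (3), the algebraic curve $Q = 0$ has real coefficients and is thus invariant under complex conjugation. On $\mathfrak{L}$, $f_t(x)$ defines a smooth $\mathbb{H}^-$-valued branch, so its complex conjugate provides a second branch in $\mathbb{H}^+$; on $\mathfrak{A}$ these two branches must collide on the real axis, which is exactly the condition that $f_t(x)$ is a double root of \eqref{q0f} viewed as an equation in $w$ with $(x,t)$ fixed. Conversely, at any simple real root the implicit function theorem would extend the root smoothly along the real line, preventing $(x,t)$ from lying on the boundary of the liquid region. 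I expect the main obstacle to be controlling the global patching across distinct edges and the singularities of $\mathfrak{A}$; here one relies on the normal form analysis near cusps and tacnodes developed in \cite{LSCE,DMCS}.
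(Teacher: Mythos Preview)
Your sketch is broadly aligned with how the paper handles this proposition: the paper does not give a self-contained proof but records in \Cref{pa2} exactly which cited statements supply each part (part~(1) is \cite[Theorem~1.10]{DMCS}; the local $Q_0$ is \cite[Theorem~10.5]{RT}/\cite[Corollary~2]{LSCE}/\cite[Theorem~5.2]{DMCS}; the global $Q$ is \cite[Proposition~A.2(3)]{H1}; and part~(3) follows from $f_t(x)\in\mathbb{R}\Leftrightarrow (x,t)\in\mathfrak{A}$ together with the real-analyticity of $Q_0$). Your argument for part~(3) via conjugate-root collision is precisely the mechanism the paper invokes, and your reasoning for part~(1) via algebraic-branch continuation is the standard one.

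There is one concrete slip. You write that on the frozen regions $f$ ``tak[es] one of three degenerate values in $\{0,-1,\infty\}$.'' This is not what \eqref{fh} says: the frozen slopes $(0,0),(1,0),(1,-1)$ correspond respectively to $f\in(0,\infty)$, $f\in(-1,0)$, and $f\in(-\infty,-1)$, i.e.\ $f$ is merely \emph{real} on $\mathfrak{A}$, not pinned to $\{0,-1,\infty\}$. The values $0,-1,\infty$ are attained only at tangency locations (cf.\ \Cref{sr}). This matters for your part~(2) outline: tracing a characteristic back to $\mathfrak{A}$ expresses $x(f+1)-tf$ as a function of the \emph{real} boundary value of $f$, but you then need an independent argument that this function is rational/algebraic. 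That step is not a patching of finitely many constant values---it is the substance of the Kenyon--Okounkov theorem and is exactly where the paper defers to \cite{LSCE,DMCS,RT}. Your ``main obstacle'' paragraph correctly identifies this as the nontrivial part, but the preceding heuristic slightly obscures why it is nontrivial.
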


	\begin{rem} 
		
		\label{pa2}
		
		The first statement of \Cref{pa1} is \cite[Theorem 1.10]{DMCS}. The local existence of $Q_0$ in \eqref{q0f} in the second is \cite[Theorem 10.5]{RT} (see also \cite[Corollary 2]{LSCE} or \cite[Theorem 5.2]{DMCS}), and the global existence of $Q$ in \eqref{e:qfh} is a quick consequence of the first part and \eqref{q0f}; see \cite[Proposition A.2(3)]{H1}.
		The third statement follows from the facts that $(x, t) \in \mathfrak{A} (\mathfrak{P})$ if and only if $f_t (x) \in \mathbb{R}$, by \eqref{fh}, and that any root of $Q_0$ is real if and only if it is a double root, as $Q_0$ is real anaytic (see also the discussion at the end of \cite[Section 1.6]{LSCE}).
		
	\end{rem}

	\subsection{Classical Locations}	

	\label{EstimateWalks}
	
	In the remainder of this section, we adopt the notation of \Cref{walkspconverge}. To establish \Cref{walkspconverge}, we will use a concentration estimate for the Bernoulli walk locations $\mathsf{x}_i$ associated with the uniformly random tiling $\mathscr{M}$ of $\mathsf{P}$. To state this result, we require some additional notation that will be in use throughout the remainder of this paper. 
	
	\begin{definition} 
		
	\label{gammait}
	
	For any integer $i \in \mathbb{Z}$ and real number $t \geq 0$, define the \emph{classical location} $\gamma_i (t)$ to be the (deterministic) real number 
	\begin{align}
	\gamma_i(t):=\inf \big\{x\in \mathbb{R}: n H^* ( x, t ) = i \big\},
	\end{align}
	
	\noindent if it exists (whenever this quantity is used, we will always implicitly assume that the parameters $(i, t)$ are such that it exists).
 
	\end{definition} 

	We will use an estimate for the classical locations $\gamma_i (t)$ around the arctic boundary. 
	
	\begin{lem}
		
		\label{gammaikx0t0}
		
		Adopt the notation of \Cref{walkspconverge}. For any integer $j \geq 0$ and $t \in \mathbb{R}_{\geq 0}$, we have 
		\begin{flalign*}
			\gamma_{K - j} (t) = x_0 + \mathfrak{l} (t - t_0) + \mathfrak{q} (t - t_0)^2 - \mathfrak{s}^{3 / 2} \bigg( \displaystyle\frac{3 \pi j}{2 n} \bigg)^{2/3} + \mathcal{O} \big( j n^{-1} + |t - t_0|^3 \big),
		\end{flalign*}
		
		\noindent where the implicit constant in the error is uniform if $(x_0, t_0)$ is bounded away from a singularity or tangency location of $\mathfrak{A} (\mathfrak{P})$.
	\end{lem}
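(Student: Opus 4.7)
The plan is to derive the expansion via a direct local analysis of the limit height function $H^*$ around $(x_0, t_0)$ using the complex slope formalism of \Cref{Slopeft}. First I would set up two preliminary reductions. By the first part of \Cref{pa1}, $f_t(x)$ extends continuously to $\mathfrak{A}(\mathfrak{P})$, where it takes real values, and combining this with \eqref{fh} and the hypothesis $\nabla H^*(x_0 + \varepsilon, t_0) = (0, 0)$ pins down the boundary value $f_0 := \lim_{(x,t) \to (x_0, t_0)} f_t(x) = \mathfrak{l}/(1-\mathfrak{l})$, obtained by matching the tangent slope of $\mathfrak{A}$ at $(x_0, t_0)$ with the characteristic direction $f_0/(f_0 + 1)$ of the Burgers equation \eqref{ftx}. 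Moreover, continuity of $H^*$ combined with $H^*$ being constant on the frozen $(0,0)$-plateau adjacent to $\mathfrak{A}$ gives $H^*(x_A(t), t) = H^*(x_0, t_0)$ for $t$ near $t_0$, where $x_A(t)$ is the arctic curve; equivalently $n H^*(x_A(t), t) = K$, so the defining equation $n H^*(\gamma_{K-j}(t), t) = K - j$ reduces to $H^*(\gamma_{K-j}(t), t) - H^*(x_A(t), t) = -j/n$.

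The heart of the argument is a Puiseux expansion of $f$ near the double-root point $f_0$. By the third part of \Cref{pa1}, $f_0$ is a double root of $Q_0(f) - (x_0 - t_0) f - x_0 = 0$. Substituting $f = f_0 + g$, $\eta = x - x_0$, $\tau = t - t_0$ into \eqref{q0f} yields a local quadratic
\begin{equation*}
\tfrac{1}{2} Q_0''(f_0)\, g^2 = \eta (f_0 + 1) - \tau f_0 + (\eta - \tau) g + O\bigl(g^3 + \eta^2 + \tau^2\bigr).
\end{equation*}
Matching the vanishing of its discriminant with the parabolic expansion $x_A(t) - x_0 = \mathfrak{l}(t - t_0) + \mathfrak{q}(t - t_0)^2 + O((t - t_0)^3)$ from \Cref{sr} determines $Q_0''(f_0)$ explicitly in terms of $(\mathfrak{l}, \mathfrak{q})$. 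On the liquid side the discriminant becomes negative, and the quadratic formula produces $\operatorname{Im} f_t(x) = -\alpha(t) \sqrt{x_A(t) - x} + O(x_A(t) - x)$, with $\alpha(t_0) \ne 0$ explicitly computable from $Q_0''(f_0)$ and $f_0$. Integrating $\partial_x H^* = -\arg^*(f)/\pi$ (from \eqref{fh}) along $\{t\} \times [\gamma_{K-j}(t), x_A(t)]$ yields the key expansion
\begin{equation*}
H^*(x, t) - H^*(x_A(t), t) = - c(t) \bigl(x_A(t) - x\bigr)^{3/2} + O\bigl((x_A(t) - x)^{5/2}\bigr),
\end{equation*}
where $c(t_0)$ is explicitly computed from $\alpha(t_0)$ and, by \eqref{sr1}, coincides with the constant required to produce the leading coefficient in the lemma.

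To conclude, I would invert the $3/2$-power law. Setting $H^*(\gamma_{K-j}(t), t) - H^*(x_A(t), t) = -j/n$ gives $x_A(t) - \gamma_{K-j}(t) = (j / (n c(t_0)))^{2/3} (1 + O((j/n)^{2/3} + |t - t_0|))$. Substituting the parabolic expansion of $x_A(t)$ and applying Young's inequality $(j/n)^{2/3} |t - t_0| \leq \tfrac{2}{3} (j/n) + \tfrac{1}{3} |t - t_0|^3$ collapses all error contributions to $O(j n^{-1} + |t - t_0|^3)$. Uniformity when $(x_0, t_0)$ is bounded away from singularities and tangency locations follows from the continuous (in fact real-analytic) dependence of the local polynomial $Q_0$ and its derivatives on the base point, via the second part of \Cref{pa1}.

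The main obstacle is the algebraic bookkeeping that converts the identity $Q_0''(f_0) \leftrightarrow \mathfrak{q}$ through the imaginary part of $f$, through $\arg^*(f)$ and $\partial_x H^*$, and finally through the integration, while matching the exact prefactor $\mathfrak{s}^{3/2} (3\pi/(2n))^{2/3}$ appearing in the lemma. This is where the precise form of the definition \eqref{sr1} of $\mathfrak{s}$ in terms of $(\mathfrak{l}, \mathfrak{q})$ is essential; the computation itself is elementary but must be carried out carefully to obtain the correct power of $\mathfrak{s}$.
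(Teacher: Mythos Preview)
Your proposal is correct and follows essentially the same approach as the paper: both arguments use the local equation \eqref{q0f}, the double-root condition at $(x_0,t_0)$, a Taylor (Puiseux) expansion to extract the square-root behavior of $\Imaginary f_t(x)$, integrate $\partial_x H^* = -\pi^{-1}\arg^* f$ to obtain the $3/2$-power law, and invert. The only differences are organizational: the paper separates the identity $\mathfrak{l} = f_0/(f_0+1)$, $\mathfrak{q} = -\tfrac{1}{2}(f_0+1)^{-3} Q_0''(f_0)^{-1}$ into a standalone \Cref{lqq} (proved by the same discriminant-matching you describe), and first treats the slice $t = t_0$ before extending to nearby $t$ via smoothness of $f_t$ along $\mathfrak{A}$, whereas you carry the $\tau = t - t_0$ variable through the Puiseux expansion from the outset and absorb the cross term $(j/n)^{2/3}|t-t_0|$ by Young's inequality.
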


	To establish \Cref{gammaikx0t0}, we require the following lemma expressing the curvature parameters $(\mathfrak{l}, \mathfrak{r})$ in terms of the analytic function $Q_0$ from \Cref{pa1} (associated with some point $(x_0, t_0) \in \overline{\mathfrak{L}}$). Its proof, which essentially follows from a Taylor expansion, is given in \Cref{Equation} below.
	
	\begin{lem}
		
		\label{lqq}
		
		Adopting the notation of \Cref{walkspconverge} and abbreviating $(x, t) = (x_0, t_0)$, we have  
		\begin{flalign*}
			\mathfrak{l} = \displaystyle\frac{f_t (x)}{f_t (x) + 1}; \qquad \mathfrak{q} = - \frac{1}{2} \big( f_t (x) + 1 \big)^{-3} Q_0'' \big( f_t (x) \big)^{-1}.
		\end{flalign*}
	\end{lem}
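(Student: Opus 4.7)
The plan is to extract both $\mathfrak{l}$ and $\mathfrak{q}$ from an implicit parametrization of the arctic boundary by the (real) complex slope $f$, using the double-root characterization in the third part of \Cref{pa1}. Write $f_0 = f_{t_0}(x_0) \in \mathbb{R}$ and let $Q_0$ be the real analytic function associated with $(x_0,t_0)$ as in the second part of \Cref{pa1}. By the third part of \Cref{pa1}, a point $(x,t)$ near $(x_0,t_0)$ lies on $\mathfrak{A}$ exactly when $f = f_t(x)$ is a double root of the polynomial (in $w$) $P_{x,t}(w) = Q_0(w) - x(w+1) + tw$. Equivalently, on $\mathfrak{A}$ one has the two equations
\begin{align*}
Q_0(f) &= x(f+1) - tf, \\
Q_0'(f) &= x - t.
\end{align*}

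Solving these simultaneously for $(x,t)$ gives an explicit local parametrization of $\mathfrak{A}$ by the real variable $f$:
\begin{align*}
x(f) = Q_0(f) - f\,Q_0'(f), \qquad t(f) = Q_0(f) - (f+1)\,Q_0'(f).
\end{align*}
Differentiating once yields $x'(f) = -f\,Q_0''(f)$ and $t'(f) = -(f+1)\,Q_0''(f)$. Since $(x_0,t_0)$ is neither a singular point nor a tangency location of $\mathfrak{A}$, the value $Q_0''(f_0)$ is nonzero (otherwise $f_0$ would be a triple root, forcing a cusp, and similarly $f_0 \notin \{0,-1\}$ because otherwise the tangent slope $\mathfrak{l}$ would be $0$ or $\infty$). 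Thus $\mathfrak{l} = \tfrac{dx}{dt}\big|_{f_0} = x'(f_0)/t'(f_0) = f_0/(f_0+1)$, which is the first claim.

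For $\mathfrak{q}$ I would differentiate once more. From $\tfrac{dx}{dt} = f/(f+1)$, viewed as a function of $t$ via the inverse of $t(f)$, one obtains
\begin{align*}
\frac{d^2 x}{dt^2} = \frac{d}{dt}\!\left(\frac{f}{f+1}\right) = \frac{1}{(f+1)^2}\cdot\frac{df}{dt} = \frac{1}{(f+1)^2}\cdot\frac{1}{t'(f)} = -\frac{1}{(f+1)^3 Q_0''(f)}.
\end{align*}
Comparing with the Taylor expansion in \eqref{xyql} gives $\mathfrak{q} = \tfrac12 \tfrac{d^2x}{dt^2}\big|_{t_0} = -\tfrac12 (f_0+1)^{-3} Q_0''(f_0)^{-1}$, which is the second claim.

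The only substantive content is the double-root identity from \Cref{pa1}; everything else is an implicit-function computation. There is no real obstacle, but the step that deserves care is verifying the nonvanishing of $Q_0''(f_0)$ and of $f_0+1$ (hence of the denominators), which is exactly what is guaranteed by assuming $(x_0,t_0)$ is a nonsingular, non-tangency point of $\mathfrak{A}(\mathfrak{P})$.
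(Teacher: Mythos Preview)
Your proof is correct and follows essentially the same approach as the paper's: both use the double-root characterization from \Cref{pa1} to obtain the pair of equations $Q_0(f)=x(f+1)-tf$ and $Q_0'(f)=x-t$ on $\mathfrak{A}$, and then extract $\mathfrak{l}$ and $\mathfrak{q}$ by computing $dx/dt$ and $d^2x/dt^2$ along the curve. Your explicit parametrization $x(f)=Q_0(f)-fQ_0'(f)$, $t(f)=Q_0(f)-(f+1)Q_0'(f)$ and subsequent chain-rule differentiation is somewhat more streamlined than the paper's route, which instead Taylor-expands the same two equations at a nearby point $(\widetilde{x},\widetilde{t})\in\mathfrak{A}$ to second order in $\widetilde{t}-t$ and matches coefficients with \eqref{xyql}; the underlying content is identical.
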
 
 
	Now we can establish \Cref{gammaikx0t0}.

	\begin{proof}[Proof of \Cref{gammaikx0t0}]
		
		We may assume throughout that $|t - t_0|$ and $jn^{-1}$ are sufficiently small, for otherwise $|t - t_0|^3 + jn^{-1}$ is of order $1$ (and thus of $\diam (\mathfrak{P}) \geq \gamma_{K - j} (t)$). Then, observe that $\big( \gamma_K (s), s \big) \in \mathfrak{A} (\mathfrak{P})$ for each $s$ in a neighborhood of $t_0$. Indeed, since $\nabla H^* (w, s) = (0, 0)$ for all $(w, s) \in \mathfrak{P} \setminus \mathfrak{L} (\mathfrak{P})$ sufficiently close to $(x_0, t_0)$, we have $n H^* (w, s) = n H (x_0, t_0) = K$ for each $(w, s) \in \mathfrak{A} (\mathfrak{P})$ in a neighborhood of $(x_0, t_0)$, implying $\big( \gamma_K (s), s \big) = (w, s) \in \mathfrak{A} (\mathfrak{P})$.

		Throughout this proof, set $\widetilde{x} = \gamma_{K - j} (t)$. We first consider the case $t = t_0$. Fix some $x \in [\widetilde{x}, x_0]$, and abbreviate $f_0 = f_{t_0} (x_0)$ and $f = f_{t_0} (x)$. We will approximately express $f$ in terms of $f_0$, and then we will use this with \eqref{fh} to compare the classical locations $\gamma_{K - j} (t_0)$ and $\gamma_K (t_0) = x_0$. To that end, the second part of \Cref{pa1} implies
		\begin{flalign*}
		Q_0 (f_0) = x_0 (f_0 + 1) - t_0 f_0; \qquad Q_0 (f) = x (f + 1) - t_0 f.
		\end{flalign*}

	\noindent Subtracting these and applying a Taylor expansion yields
	\begin{flalign*}
		(f - f_0) Q_0' (f_0) + \displaystyle\frac{(f - f_0)^2}{2} Q_0'' (f_0) + \mathcal{O} \big( |f - f_0|^3 \big) & = Q_0 (f) - Q_0 (f_0) \\
		& = (f + 1) (x - x_0) + (x_0 - t_0) (f - f_0),
	\end{flalign*} 
	
	\noindent where the error depends on the first three derivatives of $Q_0$ at $f$, which is uniformly bounded if $(x_0, t_0)$ is bounded away from a singularity or tangency location of $\mathfrak{A} (\mathfrak{P})$. Since the third part of \Cref{pa1} gives $Q_0' (f_0) = x_0 - t_0$, we find that 
	\begin{flalign*}
		(f - f_0)^2 = \displaystyle\frac{2 (f_0 + 1)}{Q_0'' (f_0)} (x - x_0) + \mathcal{O} \big( |f - f_0|^3  \big).
	\end{flalign*}

	\noindent In particular, $|f_0 - f| = \mathcal{O} \big( |x - x_0|^{1/2} \big)$ and, more specifically,
	\begin{flalign}
		\label{ff0} 
		f - f_0 = \bigg( \displaystyle\frac{2 (f_0 + 1)}{Q_0'' (f_0)} \bigg)^{1/2} (x - x_0)^{1/2} + \mathcal{O} \big( |x - x_0|^{3 / 2} \big).
	\end{flalign} 

	\noindent Since $\gamma_{K - j} (t_0) = \widetilde{x} \leq x \leq x_0 \leq \gamma_K (t_0)$, we have $(x - x_0)^{1/2} \in \mathrm{i} \mathbb{R}$. Since moreover $f_0 \in \mathbb{R}$, which implies $Q_0'' (f_0) \in \mathbb{R}$, we deduce
	\begin{flalign*}
		\arg^* f & = f_0^{-1} \Imaginary (f - f_0) + \mathcal{O} \big( |f - f_0|^3 + |x - x_0|^{3/2} \big) \\
		& = \bigg( \displaystyle\frac{2 |f_0 + 1|}{f_0^2 \big| Q_0'' (f_0) \big|} \bigg)^{1/2} (x_0 - x)^{1/2} + \mathcal{O} \big( |x_0 - x|^{3 / 2} \big).
	\end{flalign*} 
	
	\noindent In particular, since $n H^* (x_0, t_0) = K$ and $n H^* (\widetilde{x}, t_0) = K - j$, this implies by \eqref{fh} that
	\begin{flalign*}
		\displaystyle\frac{j}{n} = H^* (x_0, t_0) - H^* (\widetilde{x}, t_0) & = \displaystyle\int_{\widetilde{x}}^{x_0} \partial_x H^* (w, t_0) \mathrm{d}w \\
		& = \displaystyle\frac{1}{\pi} \displaystyle\int_{\widetilde{x}}^{x_0} \arg^* f_{t_0} (w) \mathrm{d} w \\
		& = \bigg( \displaystyle\frac{8 |f_0 + 1|}{9 \pi^2 f_0^2 \big| Q_0'' (f_0) \big|} \bigg)^{1 / 2} (x_0 - \widetilde{x})^{3/2} + \mathcal{O} \big( |x_0 - \widetilde{x}|^{5 / 2} \big).
	\end{flalign*}
	
	\noindent Hence,
	\begin{flalign}
		\label{gammakt0}
				\gamma_K (t_0) - \gamma_{K - j} (t_0) = x_0 - \widetilde{x} = \bigg( \displaystyle\frac{f_0^2 \big| Q_0'' (f_0) \big|}{2 |f_0 + 1|} \bigg)^{1/3} \bigg( \displaystyle\frac{3 \pi j}{2 n} \bigg)^{2/3} + \mathcal{O} (j n^{-1}),
	\end{flalign}

	\noindent which by \Cref{lqq} and the definition of $\mathfrak{s}$ from \eqref{sr1} implies the lemma when $t = t_0$.
	
	If $t \ne t_0$, then set $\widehat{x}_0 = \gamma_K (t) \in \mathfrak{A} (\mathfrak{P})$ and $\widehat{f}_0 = f_t (\widehat{x}_0)$. Then, the same reasoning as used to deduce \eqref{gammakt0} implies
	\begin{flalign*}
		\gamma_K (t) - \gamma_{K - j} (t) & = \Bigg( \displaystyle\frac{\widehat{f}_0^2 \big| Q_0'' (\widehat{f}_0)\big|}{2 |\widehat{f}_0 + 1|} \Bigg)^{1/3} \bigg( \displaystyle\frac{3 \pi j}{2 n} \bigg)^{2/3} + \mathcal{O} (jn^{-1}) \\
		& = \bigg( \displaystyle\frac{f_0^2 \big| Q_0'' (f_0) \big|}{2 |f_0 + 1|} \bigg)^{1/3} \bigg( \displaystyle\frac{3 \pi j}{2 n} \bigg)^{2/3} + \mathcal{O} \big( |t - t_0| j^{2/3} n^{-2/3} + jn^{-1} \big),
	\end{flalign*}

	\noindent where in the last equality we used the fact that $f_t$ is uniformly smooth in $t$ around $t_0$ along $\mathfrak{A} (\mathfrak{P})$. By \eqref{xyql}, it follows that
	\begin{flalign*}
		\gamma_{K - j} (t) & = \gamma_K (t) - \bigg( \displaystyle\frac{f_0^2 \big| Q_0'' (f_0) \big|}{2 |f_0 + 1|} \bigg)^{1/3} \bigg( \displaystyle\frac{3 \pi j}{2 n} \bigg)^{2/3} + \mathcal{O} \big( |t - t_0| j^{2/3} n^{-2/3} + jn^{-1} \big) \\
		& = \gamma_K (t_0) + \mathfrak{l} (t - t_0) + \mathfrak{q} (t - t_0)^2 - \bigg( \displaystyle\frac{f_0^2 \big| Q_0'' (f_0) \big|}{2 |f_0 + 1|} \bigg)^{1/3} \bigg( \displaystyle\frac{3 \pi j}{2 n} \bigg)^{2/3} + \mathcal{O} \big( jn^{-1} + |t - t_0|^3 \big),
	\end{flalign*}
	
	\noindent which implies the lemma due to \Cref{lqq} and \eqref{sr1} again. 
	\end{proof}

	\subsection{Concentration Estimate for the Height Function}
	
	\label{HeightP}
		
	In this section we state a concentration estimate for the height function of a random tiling of $\mathsf{P}$. We begin with the following definition for events that hold with very high probability.
	
	\begin{definition} 
		
		\label{e}
		
		We say that an event $\mathscr{E}_n$ \emph{occurs with overwhelming probability} if the following holds. For any real number $D > 1$, there exists a constant $C > 1$ (dependent on $D$ and also possibly on other implicit parameters, but not $n$, involved in the definition of $\mathscr{E}_n$) such that $\mathbb{P} (\mathscr{E}_n) \geq 1 - n^{-D}$ for any integer $n > C$.
		
	\end{definition} 

	Recalling the notation of \Cref{walkspconverge}, and letting $\mathsf{H}$ denote the height function associated with the random tiling $\mathsf{M}$ of $\mathsf{P}$, our concentration estimate will state that the following two points with overwhelming probability. First, $\mathsf{H}$ is within $n^{\delta}$ of the deterministic function $n H^*$ everywhere on $\mathfrak{P}$. Second, $\mathsf{H}$ is frozen (deterministic) at a ``sufficiently far mesoscopic distance'' from the liquid region $\mathfrak{L} (\mathfrak{P})$. To make the latter point precise, we require the following definition. 
	
	\begin{definition}
		
		\label{llarge}
		
		Adopt the notation of \Cref{walkspconverge}, and abbreviate $\mathfrak{L} = \mathfrak{L} (\mathfrak{P})$ and $\mathfrak{A} = \mathfrak{A} (\mathfrak{P})$. Then, define the augmented liquid region 
		\begin{flalign*}
			\mathfrak{L}_+ (\mathfrak{P}) = \mathfrak{L}_+^{\delta} (\mathfrak{P}) = \mathfrak{L} \cup \bigcup_{u \in \mathfrak{A}} \mathfrak{B} (u; n^{\delta - 2/3}).
		\end{flalign*}

	\end{definition}

	 Under this notation, the following theorem then provides a concentration bound for the height function associated with $\mathscr{M}$; it will be established in \Cref{TilingEstimate} below.
	
	\begin{thm}
		
		\label{mh}
		
		Adopt the notation of \Cref{walkspconverge}, and let $\mathsf{H}: \mathsf{P} \rightarrow \mathbb{Z}$ denote the height function associated with $\mathscr{M}$. For any real number $\delta > 0$, the following two statements hold with overwhelming probability. 
		
		\begin{enumerate}
			\item We have $\big| \mathsf{H} (nu) - n H^* (u) \big| < n^{\delta}$ for any $u \in \overline{\mathfrak{P}}$.
			\item For any $u \in \overline{\mathfrak{P}} \setminus \mathfrak{L}_+^{\delta} (\mathfrak{P})$, we have $\mathsf{H} (nu) = n H^* (u)$.
		\end{enumerate}
	\end{thm}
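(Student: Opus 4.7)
The plan is to deduce Theorem \ref{mh} by running a Markov chain, which I will call the alternating dynamics, whose unique invariant distribution is the uniform measure on tilings of $\mathsf{P}$, and to show that the two claimed concentration properties are preserved throughout its trajectory. Since the singular behaviors of $\mathfrak{A}(\mathfrak{P})$ excluded by Assumption \ref{pa} (in particular the absence of two cusps aligned along an axis of $\mathbb{T}$, and the tangency condition at $\mathfrak{A}\cap\partial\mathfrak{P}$) exactly rule out the configurations in Figure \ref{curve}, I would first decompose $\mathsf{P}$ into a bounded number of overlapping subregions $\mathsf{P}^{(1)},\ldots,\mathsf{P}^{(k)}$ so that for each $\mathsf{P}^{(i)}$, no matter which boundary data compatible with the limit shape is imposed, the resulting arctic boundary has at most one cusp (as in the right panel of Figure \ref{walkscusp}). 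The alternating dynamics then cyclically picks an index $i$, freezes the tiling outside $\mathsf{P}^{(i)}$, and uniformly resamples it inside; known Glauber mixing bounds from \cite{ADCC} combined with the censoring inequality of \cite{EUM} produce a polynomial mixing time $T=\mathcal{O}(n^{C})$ for this chain.

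I would initialize the chain from a deterministic configuration whose height function is within $n^{\delta/2}$ of $nH^\ast$. The inductive step relies on the preliminary concentration result Theorem \ref{estimategamma} of \cite{H1}: conditional on boundary data within $n^\delta$ of the limit shape on $\partial\mathsf{P}^{(i)}$, after one resampling step the restricted height function lies within $n^\delta$ of its own (conditional) limit shape with overwhelming probability. The naive difficulty is that the conditional limit shape inside $\mathsf{P}^{(i)}$ depends on the current boundary data, so the $n^\delta$ fluctuation of the boundary data may drive the conditional limit shape away from $H^\ast|_{\mathsf{P}^{(i)}}$; iterating this for $T$ steps could conceivably amplify the error to order $T\cdot n^\delta\gg n^\delta$.

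To prevent this amplification I would introduce deterministic upper and lower \emph{tilted profiles} $\overline{H},\underline{H}:\overline{\mathfrak{P}}\to\mathbb{R}$, each admissible, each within $n^{\delta-o(1)}$ of $H^\ast$, and with the rigid property that inside any subregion $\mathsf{P}^{(i)}$ they remain super- and subsolutions for the limit shape associated with any boundary data sandwiched between them. Construction of these barriers is the main obstacle: I would build them by perturbing the complex slope $f$ from Proposition \ref{fequation} in a way that respects the algebraic structure of $Q_0$ from Proposition \ref{pa1}, and check admissibility using the double-root characterization of $\mathfrak{A}$ in part (3) of that proposition together with the curvature expansion from Lemma \ref{gammaikx0t0}. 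Given the tilted profiles, a monotone coupling for the single-subregion resampling combined with Theorem \ref{estimategamma} shows inductively that at every step of the dynamics the random height function remains sandwiched between $n\underline{H}$ and $n\overline{H}$ with overwhelming probability; a union bound over the $T$ steps preserves this through mixing. Near the arctic boundary the tilted profiles force $\mathsf{H}(nu)=nH^\ast(u)$ once $\mathrm{dist}(u,\mathfrak{A})\gtrsim n^{\delta-2/3}$, which gives the second claim of the theorem, while everywhere else the sandwich yields the first claim.
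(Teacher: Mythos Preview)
Your proposal is correct and follows essentially the same strategy as the paper: decompose $\mathfrak{P}$ into overlapping trapezoidal subregions each carrying at most one cusp, run alternating (block) dynamics whose mixing time is controlled via the Glauber bound of \cite{ADCC} and the censoring inequality of \cite{EUM}, initialize at $\lfloor nH^\ast\rfloor$, and prevent error accumulation by sandwiching the height function between tilted profiles built by perturbing solutions of the complex Burgers equation. The paper's implementation differs only in bookkeeping: rather than a single pair $\overline{H},\underline{H}$, it introduces a quantitative notion of $(\xi;\mu)$- and $\zeta$-tiltedness (Definition~\ref{lambdamud}) tracked at finitely many horizontal levels $\mathfrak{t}_0<\cdots<\mathfrak{t}_m$, and shows via Propositions~\ref{hhestimate1}--\ref{hhestimate2} that after resampling a subregion the tiltedness at any interior level is bounded by a strict convex combination of the boundary tiltedness values, which is what makes the induction close without drift.
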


	Together with \Cref{gammaikx0t0}, \Cref{mh} implies the following corollary that estimates trajectories for the random Bernoulli walks associated with $\mathscr{M}$ (recall the Bernoulli walk locations associated with the uniformly random tiling $\mathscr{M}$ of $\mathsf{P}$ from \Cref{walkspconverge} and \Cref{WalkModel}) near the arctic boundary.

	\begin{cor}
		
		\label{xjpath} 
		
		Adopt the notation of \Cref{walkspconverge}, and fix a real number $\delta \in \big( 0, 1/100 \big)$. For any integers $j \in [1, 2n^{10 \delta}]$ and $s \in [-n^{2/3 + 20 \delta}, n^{2/3 + 20 \delta}]$, we have with overwhelming probability that
		\begin{flalign*} 
			\Bigg| \mathsf{x}_{K - j + 1} (s + t_0 n) - \bigg( x_0 n + \mathfrak{l} s + \mathfrak{q} n^{-1} s^2 - \mathfrak{s}^{3 / 2} \Big( \displaystyle\frac{3 \pi j}{2} \Big)^{2/3} n^{1/3} \bigg) \Bigg| \leq j^{-1/3} n^{1/3 + \delta}.
		\end{flalign*}
	\end{cor}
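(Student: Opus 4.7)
The strategy is to translate the height-function concentration of \Cref{mh} into concentration of walk positions via the defining relation that $\mathsf{x}_i(t) + 1$ is the leftmost lattice point at which $\mathsf{H}(\cdot, t)$ attains the value $i$, and then combine this with the explicit near-edge expansion of the classical locations given by \Cref{gammaikx0t0}.

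Fix an auxiliary exponent $\delta' = c\delta$ for a sufficiently small constant $c > 0$, and apply \Cref{mh} with parameter $\delta'$. On the resulting event of overwhelming probability, one has $|\mathsf{H}(nu) - nH^*(u)| \leq n^{\delta'}$ uniformly in $u \in \overline{\mathfrak{P}}$, as well as the frozen identity $\mathsf{H}(nu) = nH^*(u)$ for $u \notin \mathfrak{L}_+^{\delta'}(\mathfrak{P})$. Since both $x \mapsto \mathsf{H}(x, t)$ and $x \mapsto nH^*(x/n, t/n)$ are non-decreasing, this concentration together with the defining relation for $\mathsf{x}_i$ yields the sandwich
\begin{flalign*}
n\gamma_{K - j + 1 - \lceil n^{\delta'}\rceil}\bigl(t_0 + s/n\bigr) - 1 \;\leq\; \mathsf{x}_{K - j + 1}(s + t_0 n) \;\leq\; n\gamma_{K - j + 1 + \lceil n^{\delta'}\rceil}\bigl(t_0 + s/n\bigr) - 1,
\end{flalign*}
valid whenever the two bracketing indices lie in the admissible range for $\gamma$ (in particular, are at most $K$).

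In the main regime $j \geq 2 n^{\delta'}$, both brackets are covered by \Cref{gammaikx0t0}. Substituting $t - t_0 = s/n$ and Taylor-expanding
\begin{flalign*}
(j \pm n^{\delta'})^{2/3} - j^{2/3} \;=\; \pm \tfrac{2}{3}\, j^{-1/3} n^{\delta'} + O\bigl(j^{-4/3} n^{2\delta'}\bigr),
\end{flalign*}
one finds that each bracket differs from the target $x_0 n + \mathfrak{l} s + \mathfrak{q} n^{-1} s^2 - \mathfrak{s}^{3/2}(3\pi j/2)^{2/3} n^{1/3}$ by at most $O(j^{-1/3} n^{1/3 + \delta'})$, plus a residual $O(j + |s|^3 n^{-2})$ inherited from the error in \Cref{gammaikx0t0}. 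Since $j \leq 2 n^{10\delta}$ and $|s| \leq n^{2/3 + 20\delta}$, choosing $c$ small enough allows all of these contributions to be absorbed into $j^{-1/3} n^{1/3 + \delta}$.

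The main obstacle is the ``near-edge'' regime $j < 2 n^{\delta'}$, where the upper bracketing index exceeds $K$ and \Cref{gammaikx0t0} no longer applies. Here I would appeal to the freezing part of \Cref{mh}: the walk $\mathsf{x}_{K - j + 1}(s + t_0 n)$ is forced to lie within the horizontal slice of $\mathfrak{L}_+^{\delta'}(\mathfrak{P})$ at time $t_0 n + s$, a set which, by a local analysis of $\mathfrak{A}(\mathfrak{P})$ at the nonsingular edge point $(x_0, t_0)$ together with the parabolic expansion \eqref{xyql}, has width $O(n^{1/3 + \delta'})$ in lattice scale. The target location lies in the same slice. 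For $j \leq 2 n^{\delta'}$ one has $j^{-1/3} n^{1/3 + \delta} \geq n^{1/3 + \delta - \delta'/3}$, which dominates $n^{1/3 + \delta'}$ once $c$ is small, completing the argument.
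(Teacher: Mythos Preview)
Your overall approach matches the paper's: translate the height-function concentration into a sandwich of $\mathsf{x}_{K-j+1}$ between shifted classical locations, then expand those via \Cref{gammaikx0t0}. The paper writes its sandwich \eqref{xgamma1} with upper bound $\min\{\gamma_{K-j+n^{\delta}+1}(t),\, \gamma_K(t)+n^{\delta/2-2/3}\}$, the second entry coming directly from the frozen-region part of \Cref{mh}; this handles all $j$ at once without a case split, but the content is the same as your two-regime argument.

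Your near-edge argument, however, contains a gap. The horizontal slice of $\mathfrak{L}_+^{\delta'}(\mathfrak{P})$ at a fixed time has \emph{macroscopic} width, since by definition $\mathfrak{L}_+^{\delta'}$ contains all of $\mathfrak{L}$; the fact that both the walk and the target lie in this slice therefore does not bound their separation. What the freezing part of \Cref{mh} actually delivers is only a one-sided bound: taking $x'=\gamma_K(t)+n^{\delta'-2/3}$ one has $(x',t)\notin\mathfrak{L}_+^{\delta'}$ and $\partial_x H^*=0$ there, hence $\mathsf{H}(nx',nt)=K$, which forces $\mathsf{x}_{K-j+1}\le \mathsf{x}_K\le nx'$. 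For the matching lower bound you must still use the sandwich, whose lower bracket $\gamma_{K-j+1-\lceil n^{\delta'}\rceil}$ is a valid classical location for every $j\ge 1$ and sits within $O(n^{1/3+2\delta'/3})$ of the target when $j<2n^{\delta'}$. Combining these two one-sided estimates (not a width argument) is what closes the near-edge case. A separate minor point: the residual $O(j+|s|^3n^{-2})$ from \Cref{gammaikx0t0} is independent of $\delta'$, so shrinking $c$ does nothing for it; absorbing it into $j^{-1/3}n^{1/3+\delta}$ requires the stated constraint on $\delta$ directly.
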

	
	\begin{proof}
		
		We first show that \Cref{mh} implies, for any $t$ in a sufficiently small (independent of $n$) neighborhood of $t_0$, that with overwhelming probability we have
		\begin{flalign}
			\label{xgamma1}
			 \gamma_{K - j - n^{\delta} + 1} (t) - n^{-1} \leq n^{-1} \mathsf{x}_{K - j + 1} (tn) \leq \min \big\{ \gamma_{K - j + n^{\delta} + 1} (t), \gamma_K (t) + n^{\delta / 2 - 2/3} \},
		\end{flalign}	
	
		\noindent where we recall the classical locations $\gamma_i(t)$ from \Cref{gammait} (and we assume that $tn \in \mathbb{Z}$ for notational convenience). Let us only show the second bound in \eqref{xgamma1}, for the proof of the first is entirely analogous. Then, from the bijection between tilings and non-intersecting Bernoulli walk ensembles described in \Cref{WalkModel}, we have $n^{-1} (\mathsf{x}_{K - j + 1} (tn)+1) \leq x$ if and only if $\mathsf{H} (xn, tn) \geq K - j + 1$.
		
		So, setting $\gamma = \gamma_{K - j + n^{\delta} + 1} (t)$, the first part of \Cref{mh} implies with overwhelming probability that $\mathsf{H} (\gamma n, tn) \geq n H^* (\gamma n, tn) - n^{\delta} = K - j + 1$. Hence, $\mathsf{x}_{K - j + 1} (tn) \leq \gamma_{K - j + n^{\delta} + 1} (t)$ holds with overwhelming probability. Moreover, denoting $x' = \gamma_K (t) + N^{\delta  / 2 - 2/3}$, we have by the second part of \Cref{mh} that $\mathsf{H} (x' n, tn) = n H^* (x', t) = n H^* \big( \gamma_K (t), t \big) = K$ with overwhelming probability, where in the second equality we used the fact that $\nabla H^* (x, t) = (0, 0)$ for $(x, t)$ in a neighborhood of $(x_0, t_0)$ to the right of $\mathfrak{A}$. Hence, $n^{-1} \mathsf{x}_{K - j + 1} (t) \leq n^{-1} \mathsf{x}_K (t) \leq x' = \gamma_K (t) + N^{\delta / 2 - 2/3}$ with overwhelming probability. This confirms \eqref{xgamma1}. 
		
		Now, \eqref{xgamma1} and \Cref{gammaikx0t0} together imply that
		\begin{flalign}
			\label{y1} 
			\begin{aligned} 
				n^{-1} \mathsf{x}_{K - j + 1} (tn) & =  x_0 + \mathfrak{l} (t - t_0) + \mathfrak{q} (t - t_0)^2 - \mathfrak{s}^{3 / 2} \Big( \displaystyle\frac{3 \pi j}{2n} \Big)^{2/3} \\
				& \qquad + \mathcal{O} \big( j n^{-1} + |t - t_0|^3 + j^{-1/3} n^{\delta - 2/3}  \big),
			\end{aligned} 
		\end{flalign}
		
		\noindent holds for each $j \in \mathbb{Z}$ and $t \in \mathbb{R}$, with overwhelming probability. Here, we have also used the fact that \Cref{gammaikx0t0} implies the classical locations $\gamma_j (t)$ (from \Cref{gammait}) with respect to $\mathfrak{P}$ satisfy $\gamma_j (t) - \gamma_{j - n^{\delta}} (t) = \mathcal{O} (j^{-1/3} n^{- 2/3})$. Since $\delta \in \big( 0, 1/100 \big)$ and $j \in [1, 2 n^{10 \delta}]$, we have $jn^{-1} + |t - t_0|^3 + j^{-1/3} n^{\delta - 2/3} \leq 3 j^{-1/3} n^{\delta - 2/3}$ for $j \in [1, 2n^{10 \delta}]$ and $|t - t_0| \leq n^{20 \delta - 2/3}$. Setting $s = (t - t_0) n$ in \eqref{y1} then yields the corollary.	
	\end{proof}

	\subsection{Comparison to Hexagonal Edge Statistics} 
	
	\label{CompareX}
	
	We will prove \Cref{walkspconverge} through a local comparison of a random tiling of $\mathsf{P}$ with one of a suitably chosen hexagonal domain, whose universality of edge statistics has been proven in \cite{ARS,UEFDIPS,DP,UCLE}. In this section we set notation and state known properties for such hexagonal domains.
	
	\begin{definition}
		
		\label{xdomain}
		
		For any real numbers $a, b, c > 0$, let $\mathfrak{E}_{a, b, c} \subset \mathbb{R}^2$ denote the \emph{$a \times b \times c$ hexagon}, that is, the polygon with vertices $\big\{ (0, 0), (a, 0), (a + c, c), (a + c, b + c), (c, b + c), (0, b) \big\}$. By \cite[Theorem 1.1]{TSP}, its liquid region $\mathfrak{L}_{a, b, c} = \mathfrak{L} (\mathfrak{E}_{a, b, c})$ is bounded by the ellipse inscribed in $\mathfrak{E}_{a, b, c}$.

	\end{definition} 
	
	We refer to the middle of \Cref{tilinghexagon} for a depiction when $(a, b, c) = (5, 4, 3)$. The following result from \cite{ARS,UEFDIPS,DP,UCLE} is the case of \Cref{walkspconverge} when $\mathsf{P}$ is a hexagon.

	\begin{prop}[{\cite{ARS,UEFDIPS,DP,UCLE}}]
		
		\label{walksdomain1} 
		
		Let $a = a_n$, $b = b_n$, and $c = c_n$ be real numbers bounded away from $0$ and $\infty$, and set $(\mathsf{a}, \mathsf{b}, \mathsf{c}) = (\mathsf{a}_n, \mathsf{b}_n, \mathsf{c}_n) = (na, nb, nc)$; assume that $\mathsf{a}, \mathsf{b}, \mathsf{c} \in \mathbb{Z}$. Then \Cref{walkspconverge} holds with the $\mathfrak{P}$ there equal to the $a \times b \times c$ hexagon (and $\mathsf{P}$ equal to the $\mathsf{a} \times \mathsf{b} \times \mathsf{c}$ hexagon).
		
	\end{prop}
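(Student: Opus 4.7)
The plan is to extract Proposition \ref{walksdomain1} from the cited works by reducing to a specific edge point of the inscribed ellipse of a hexagon and matching parameters. First I would invoke the fact that for the $\mathsf{a}\times \mathsf{b}\times \mathsf{c}$ hexagon, the random tiling is a determinantal point process with an explicit double-contour integral correlation kernel (this is classical, going back to work of Johansson). By the first statement of \Cref{pla} applied to the hexagon, the liquid region $\mathfrak{L}_{a,b,c}$ is bounded by the inscribed ellipse (this being the content of \Cref{xdomain}); in particular the arctic curve has no cusps or tacnodes, so the configurations of \Cref{curve} never occur, and the hypothesis \Cref{pa} is automatic. Thus the only content of \Cref{walkspconverge} in this setting is the convergence of the rescaled non-intersecting Bernoulli walks near any nonsingular, non-tangency point $(x_0,t_0)$ of the ellipse.

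Next I would match the parameters. Using \Cref{lqq} (or a direct computation from the explicit equation of the inscribed ellipse) one obtains formulas for $\mathfrak{l}$ and $\mathfrak{q}$ at $(x_0,t_0)$, and hence for $\mathfrak{s}$ and $\mathfrak{r}$ from \eqref{sr1}, in terms of $(a,b,c)$ and $(x_0,t_0)$. One then checks that, up to this reparametrization, the scaling in \eqref{xit1} coincides with the canonical Airy scaling used in the cited works. For the top Bernoulli walk $\mathsf{x}_K$, convergence of $\mathsf{X}_1$ to the Airy$_2$ process $\mathcal{R}_1$ follows from the steepest-descent analysis of the hexagon kernel carried out in \cite{DP}, applied to a generic boundary point of the ellipse. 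For the multi-line extension, i.e.\ joint convergence of $(\mathsf{X}_1,\mathsf{X}_2,\ldots)$ to $\mathcal{R}$ in finite-dimensional distribution, I would appeal to \cite{ARS,UEFDIPS}, which establish exactly such an Airy line ensemble limit for tilings of trapezoids (a strictly larger class than hexagons); the hexagon is recovered by specializing the trapezoidal cut parameters.

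Finally, to upgrade finite-dimensional convergence to uniform convergence on compact subsets of $\mathbb{Z}_{>0}\times\mathbb{R}$ (as claimed in \Cref{walkspconverge}), I would combine the pointwise convergence above with the tightness/regularity results of \cite{UCLE} for the Airy-type line ensemble arising in these determinantal models. Concretely, \cite{UCLE} provides H\"older continuity estimates for $\mathsf{X}_i$ on the $n^{2/3}$ scale that are uniform in $n$, which together with the Arzel\`a--Ascoli theorem promote pointwise convergence of the correlation functions to the desired uniform-on-compacts convergence of paths. The main obstacle I anticipate is purely notational: ensuring that the $\mathfrak{s},\mathfrak{r}$ defined via \eqref{sr1} and \Cref{lqq} agree with the normalizations used in \cite{DP,ARS,UEFDIPS,UCLE}. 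There is no new analytic work to do here, since each of these steps is contained in a cited reference; the content of the proposition is the assembly of these results with the parameter identification given by \Cref{lqq} and \Cref{gammaikx0t0}.
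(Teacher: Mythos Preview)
Your proposal is correct and follows essentially the same assembly as the paper's own argument (given in \Cref{kernelwalks}): kernel convergence from \cite{ARS,UEFDIPS}, edge tightness from \cite{DP}, and the upgrade to uniform-on-compacts via \cite{UCLE}. The only refinement the paper makes explicit is the precise logical role of \cite{DP}: kernel convergence alone does not yield the finite-dimensional convergence \eqref{xr} because the relevant gap probabilities are \emph{unbounded} sums in the kernel, so one needs one-point tightness of the top curve (supplied by \cite[Theorem 3.14]{DP}) to truncate; and \cite[Theorem 4.1]{UCLE} is invoked as a black box (finite-dimensional convergence $\Rightarrow$ uniform convergence for Gibbsian ensembles) rather than via explicit H\"older estimates plus Arzel\`a--Ascoli, though of course that is the mechanism underlying it.
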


	\begin{rem}
		
		\label{kernelwalks}
		
		Since \Cref{walksdomain1} does not appear to have been stated exactly in above form in the literature, let us briefly outline how it follows from known results. First, \cite[Theorem 4.1]{UCLE} (see also the proof of \cite[Theorem 1.5]{UCLE}) indicates that, to show uniform convergence of the normalized discrete non-intersecting Bernoulli walks $\mathsf{X}_i (t)$ from \eqref{xit1} to the shifted Airy line ensemble $\mathcal{R}$ from \Cref{ensemblewalks}, it suffices to establish convergence in the sense of distributions, that is, 
		\begin{flalign}
			\label{xr} 
			\displaystyle\lim_{n \rightarrow \infty} \mathbb{P} \Bigg(\bigcap_{i = 1}^m \big\{ \mathsf{X}_{j_i} (t_i) \leq z_i \big\} \Bigg) = \mathbb{P} \Bigg( \bigcap_{i = 1}^m \big\{ \mathcal{R}_{j_i} (t_i) \leq z_i \big\} \Bigg),
		\end{flalign}
		
		\noindent for any $j_1, j_2, \ldots , j_m \in \mathbb{Z}_{\geq 1}$ and $t_1, t_2, \ldots , t_m, z_1, z_2, \ldots , z_m \in \mathbb{R}$. Next, \cite[Theorem 8.1]{ARS} and \cite[Theorem 1.12]{UEFDIPS} show that the non-intersecting Bernoulli walk ensemble $\big( \mathsf{x}_j (t) \big)$ is a determinantal point process, whose correlation kernel under the scaling \eqref{xit1} converges to the extended Airy kernel from \Cref{kernellimit}. Since probabilities as in the left side of \eqref{xr} are expressible in terms of unbounded sums involving this correlation kernel (see, for example, \cite[Equation (3.9)]{ACP}), to conclude the distributional convergence \eqref{xr} from the kernel limit, it suffices to show one-point tightness of the extremal Bernoulli walk $\mathsf{X}_1$ (in order to effectively cut off\footnote{One could alternatively prove sufficient decay of the kernel, as in \cite[Lemma 3.1(b)]{ACP}.} the sum mentioned above). This tightness is provided by \cite[Theorem 3.14]{DP}, which in fact shows that the one-point law of $\mathsf{X}_1$ converges to the Tracy--Widom GUE distribution.
		
	\end{rem} 

	To proceed, we require some additional notation on non-intersecting Bernoulli walk ensembles. Let $\mathsf{X} = (\mathsf{x}_l, \mathsf{x}_{l + 1}, \ldots , \mathsf{x}_m)$ denote a family of non-intersecting Bernoulli walks, each with time span $[s, t]$, so that $\mathsf{x}_j = \big( \mathsf{x}_j (s), \mathsf{x}_j (s + 1), \ldots , \mathsf{x}_j (t) \big)$ for each $j \in [l, m]$. Given functions $\mathsf{f}, \mathsf{g}: [s, t] \rightarrow \mathbb{R}$, we say that $\mathsf{X}$ has $(\mathsf{f}; \mathsf{g})$ as a \emph{boundary condition} if $\mathsf{f} (r) \leq \mathsf{x}_j (r) \leq \mathsf{g} (r)$ for each $r \in [s, t]$. We refer to $\mathsf{f}$ and $\mathsf{g}$ as a \emph{left boundary} and \emph{right boundary} for $\mathsf{X}$, respectively, and allow $\mathsf{f}$ and $\mathsf{g}$ to be $-\infty$ or $\infty$. We further say that $\mathsf{X}$ has \emph{entrance data} $\mathsf{d} = (\mathsf{d}_l, \mathsf{d}_{l + 1}, \ldots , \mathsf{d}_m)$ and \emph{exit data} $\mathsf{e} = (\mathsf{e}_l, \mathsf{e}_{l + 1}, \ldots , \mathsf{e}_m)$ if $\mathsf{x}_j (s) = \mathsf{d}_j$ and $\mathsf{x}_j (t) = \mathsf{e}_j$, for each $j \in [l, m]$; see \Cref{pathsfigure} for a depiction. Then, there is a finite number of non-intersecting Bernoulli walk ensembles with any given entrance and exit data $(\mathsf{d}; \mathsf{e})$ and (possibly infinite) boundary conditions $(\mathsf{f}; \mathsf{g})$.

	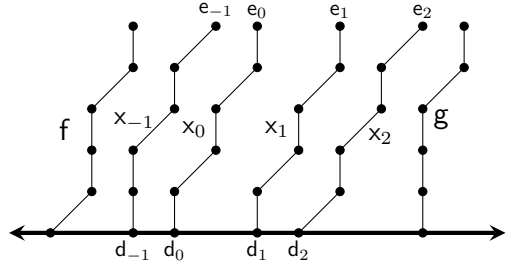
\begin{figure}
		
		\begin{center}		
			
			\begin{tikzpicture}[
				>=stealth,
				auto,
				style={
					scale = .55
				}
				]
				
				\draw[<->, black, ultra thick] (-.5, 0) -- (11.5, 0);	
				
				\draw[-, black] (.5, 0) -- (1.5, 1) -- (1.5, 2) -- (1.5, 3) -- (2.5, 4) -- (2.5, 5); 
				\draw[-, black] (2.5, 0) -- (2.5, 1) -- (2.5, 2) -- (3.5, 3) -- (3.5, 4) -- (4.5, 5); 
				\draw[-, black] (3.5, 0) -- (3.5, 1) -- (4.5, 2) -- (4.5, 3) -- (5.5, 4) -- (5.5, 5);
				\draw[-, black] (5.5, 0) -- (5.5, 1) -- (6.5, 2) -- (6.5, 3) -- (7.5, 4) -- (7.5, 5); 
				\draw[-, black] (6.5, 0) -- (7.5, 1) -- (7.5, 2) -- (8.5, 3) -- (8.5, 4) -- (9.5, 5);
				\draw[-, black] (9.5, 0) -- (9.5, 1) -- (9.5, 2) -- (9.5, 3) -- (10.5, 4) -- (10.5, 5);
				
				\filldraw[fill=black] (1.25, 2.5) circle [radius = .0] node[left, scale = 1.15]{$\mathsf{f}$};	
				\filldraw[fill=black] (3.25, 2.7) circle [radius = .0] node[left, scale = 1]{$\mathsf{x}_{-1}$};	
				\filldraw[fill=black] (4.5, 2.5) circle [radius = .0] node[left, scale = 1]{$\mathsf{x}_0$};	
				\filldraw[fill=black] (6.5, 2.5) circle [radius = .0] node[left, scale = 1]{$\mathsf{x}_1$};
				\filldraw[fill=black] (9, 2.35) circle [radius = .0] node[left, scale = 1]{$\mathsf{x}_2$};		
				\filldraw[fill=black] (.5, 0) circle [radius = .1];
				\filldraw[fill=black] (2.5, 0) circle [radius = .1] node[below, scale = .85]{$\mathsf{d}_{-1}$};	
				\filldraw[fill=black] (3.5, 0) circle [radius = .1] node[below, scale = .85]{$\mathsf{d}_0$};	
				\filldraw[fill=black] (5.5, 0) circle [radius = .1] node[below, scale = .85]{$\mathsf{d}_1$};	
				\filldraw[fill=black] (6.5, 0) circle [radius = .1] node[below, scale = .85]{$\mathsf{d}_2$};	
				\filldraw[fill=black] (9.5, 0) circle[radius = .1];
				\filldraw[fill=black] (9.5, 2.75) circle [radius = .0] node[right, scale = 1.15]{$\mathsf{g}$};
				
				\filldraw[fill=black] (1.5, 1) circle [radius = .1];
				\filldraw[fill=black] (2.5, 1) circle [radius = .1];
				\filldraw[fill=black] (3.5, 1) circle [radius = .1];
				\filldraw[fill=black] (5.5, 1) circle [radius = .1];
				\filldraw[fill=black] (7.5, 1) circle [radius = .1];
				\filldraw[fill=black] (9.5, 1) circle [radius = .1];	
				
				\filldraw[fill=black] (1.5, 2) circle [radius = .1];
				\filldraw[fill=black] (2.5, 2) circle [radius = .1];
				\filldraw[fill=black] (4.5, 2) circle [radius = .1];
				\filldraw[fill=black] (6.5, 2) circle [radius = .1];
				\filldraw[fill=black] (7.5, 2) circle [radius = .1];
				\filldraw[fill=black] (9.5, 2) circle [radius = .1];
				
				\filldraw[fill=black] (1.5, 3) circle [radius = .1];
				\filldraw[fill=black] (3.5, 3) circle [radius = .1];
				\filldraw[fill=black] (4.5, 3) circle [radius = .1];
				\filldraw[fill=black] (6.5, 3) circle [radius = .1];
				\filldraw[fill=black] (8.5, 3) circle [radius = .1];
				\filldraw[fill=black] (9.5, 3) circle [radius = .1];
				
				\filldraw[fill=black] (2.5, 4) circle [radius = .1];
				\filldraw[fill=black] (3.5, 4) circle [radius = .1];
				\filldraw[fill=black] (5.5, 4) circle [radius = .1];
				\filldraw[fill=black] (7.5, 4) circle [radius = .1];
				\filldraw[fill=black] (8.5, 4) circle [radius = .1];
				\filldraw[fill=black] (10.5, 4) circle [radius = .1];
				
				\filldraw[fill=black] (2.5, 5) circle [radius = .1];
				\filldraw[fill=black] (4.5, 5) circle [radius = .1] node[above, scale = .85]{$\mathsf{e}_{-1}$};
				\filldraw[fill=black] (5.5, 5) circle [radius = .1] node[above, scale = .85]{$\mathsf{e}_0$};
				\filldraw[fill=black] (7.5, 5) circle [radius = .1] node[above, scale = .85]{$\mathsf{e}_1$};
				\filldraw[fill=black] (9.5, 5) circle [radius = .1] node[above, scale = .85]{$\mathsf{e}_2$};
				\filldraw[fill=black] (10.5, 5) circle [radius = .1];	
				
			\end{tikzpicture}
			
		\end{center}
		
		\caption{\label{pathsfigure} Shown above is an ensemble of non-intersecting Bernoulli walks $\mathsf{X} = ( \mathsf{x}_{-1}, \mathsf{x}_0, \mathsf{x}_1, \mathsf{x}_2)$ with initial data $\mathsf{d} = (\mathsf{d}_{-1}, \mathsf{d}_0, \mathsf{d}_1, \mathsf{d}_2)$; ending data $\mathsf{e} = (\mathsf{e}_{-1}, \mathsf{e}_0, \mathsf{e}_1, \mathsf{e}_2)$; left boundary $\mathsf{f}$; and right boundary $\mathsf{g}$. }
		
	\end{figure}

	The below lemma from \cite{LSRT} provides a monotonicity property for non-intersecting Bernoulli walk ensembles randomly sampled under the uniform measure on the set of such families with prescribed entrance, exit, and boundary conditions. In what follows, for any functions $\mathsf{f}, \mathsf{f}': [s, t] \rightarrow \mathbb{R}$ we write $\mathsf{f} \leq \mathsf{f}'$ if $\mathsf{f} (r) \leq \mathsf{f}' (r)$ for each $r \in [s, t]$. Similarly, for any sequences $\mathsf{d} = (\mathsf{d}_l, \mathsf{d}_{l + 1}, \ldots , \mathsf{d}_m) \subset \mathbb{R}$ and $\mathsf{d}' = (\mathsf{d}_l', \mathsf{d}_{l + 1}', \ldots , \mathsf{d}_m') \subset \mathbb{R}$, we write $\mathsf{d} \leq \mathsf{d}'$ if $\mathsf{d}_j \leq \mathsf{d}_j'$ for each $j \in [l, m]$.

	\begin{lem}[{\cite[Lemma 18]{LSRT}}]
		
		\label{comparewalks}
		
		Fix integers $s \leq t$ and $l \leq m$; functions $\mathsf{f}, \mathsf{f}, \mathsf{g}, \mathsf{g}' : [s, t] \rightarrow \mathbb{R}$; and $(m - l + 1)$-tuples $\mathsf{d}, \mathsf{d}', \mathsf{e}, \mathsf{e}'$ with coordinates indexed by $[l, m]$. Let $\mathsf{X} = (\mathsf{x}_l, \mathsf{x}_{l + 1}, \ldots , \mathsf{x}_m)$ denote a uniformly random non-intersecting Bernoulli walk ensemble with boundary data $(\mathsf{f}; \mathsf{g})$; entrance data $\mathsf{d}$; and exit data $\mathsf{e}$. Define $\mathsf{X}' = (\mathsf{x}_l', \mathsf{x}_{l + 1}', \ldots , \mathsf{x}_m')$ similarly, but with respect to $(\mathsf{f}'; \mathsf{g}')$ and $ (\mathsf{d}'; \mathsf{e}')$. If $\mathsf{f} \leq \mathsf{f}'$, $\mathsf{g} \leq \mathsf{g}'$, $\mathsf{d} \leq \mathsf{d}'$, and $\mathsf{e} \leq \mathsf{e}'$, then there exists coupling between $\mathsf{X}$ and $\mathsf{X}'$ such that $\mathsf{x}_j \leq \mathsf{x}_j'$ almost surely, for each $j \in [l, m]$.
		
	\end{lem}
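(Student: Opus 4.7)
The plan is to construct the coupling via a single-vertex heat-bath Markov chain on the (finite) set of non-intersecting Bernoulli walk ensembles with prescribed boundary data, and to couple two copies of this chain in a way that preserves pointwise comparison. As a first step, I would translate the statement into height function language using the bijection from \Cref{WalkModel}: each ensemble $\mathsf{X}$ (resp.\ $\mathsf{X}'$) corresponds bijectively to a lozenge tiling of a common strip domain $\mathsf{R} \subset \mathbb{T}$ determined by $(s,t,l,m)$, with associated height function $\mathsf{H}$ (resp.\ $\mathsf{H}'$). A direct check shows that the componentwise inequalities $\mathsf{f} \leq \mathsf{f}'$, $\mathsf{g} \leq \mathsf{g}'$, $\mathsf{d} \leq \mathsf{d}'$, and $\mathsf{e} \leq \mathsf{e}'$ are equivalent to a pointwise comparison of the boundary height functions on $\partial \mathsf{R}$, and the conclusion $\mathsf{x}_j \leq \mathsf{x}_j'$ for every $j \in [l,m]$ is equivalent to a pointwise comparison of $\mathsf{H}$ and $\mathsf{H}'$ throughout $\mathsf{R}$.

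Next, I would introduce the standard heat-bath dynamics on height functions with the given boundary data: at each step, pick an interior vertex $\mathsf{v} \in \mathsf{R} \setminus \partial \mathsf{R}$ uniformly and replace the value $\mathsf{H}(\mathsf{v})$ by a uniformly random integer $k$ such that assigning $\mathsf{H}(\mathsf{v}) := k$ still yields a valid height function. Checking the gradient constraint from \Cref{FunctionWalks} shows that the admissible set at $\mathsf{v}$ is always either a single integer or a pair of consecutive integers $\{k, k+1\}$; in the latter case, the update corresponds to the elementary ``hexagon-flip'' that rotates the three lozenges meeting at $\mathsf{v}$. This chain is reversible with respect to the uniform measure on its (finite) configuration space, and irreducibility is the classical fact that any two height functions with the same boundary are connected by a sequence of hexagon flips.

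I would then run two instances of the chain — one for $\mathsf{X}$ and one for $\mathsf{X}'$ — on the same sequence of randomly selected update vertices, coupling the single-site heat-bath draws via the monotone coupling of uniform distributions on intervals (draw one uniform $U \in [0,1]$ per step and let each chain take the $U$-quantile of its admissible set). The key point to verify is that, if $\mathsf{H} \leq \mathsf{H}'$ pointwise just before the update at $\mathsf{v}$, then the admissible sets $[a,b]$ and $[a',b']$ for the two chains satisfy $a \leq a'$ and $b \leq b'$; combined with the quantile coupling, this propagates the comparison $\mathsf{H} \leq \mathsf{H}'$ through the update, while at every other vertex the two height functions are unchanged. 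Starting both chains from the extremal (minimal) height functions on their respective configuration spaces, which trivially satisfy the required comparison, and passing to the large-time limit (possible since both chains are irreducible on finite state spaces and converge to the uniform stationary distribution), yields the desired monotone coupling between the two uniform measures.

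The main obstacle is the monotonicity of the admissible update intervals at a single site, namely the inequalities $a \leq a'$ and $b \leq b'$. This is a local computation: the endpoints $a$ and $b$ are determined as the maximum and minimum of a bounded number of explicit expressions in the values of $\mathsf{H}$ at the neighbors of $\mathsf{v}$ in $\mathbb{T}$ (coming from the constraints $\mathsf{H}(\mathsf{v}) - \mathsf{H}(\mathsf{u}) \in \{0,1\}$ for each of the three edge-directions), and each such expression is monotone increasing in the relevant neighbor value. Enumerating these cases confirms the required monotonicity, after which the argument is the standard Glauber-dynamics monotone-coupling framework as in \cite{LSRT}.
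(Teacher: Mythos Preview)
Your argument is correct and is precisely the standard monotone Glauber-dynamics coupling from \cite[Lemma 18]{LSRT}, which is exactly what the paper invokes: the paper does not supply its own proof of this lemma but simply cites that reference (and records the height-function reformulation in \Cref{heightcompare}). One minor remark: in the translation step, be careful that the inequality reverses direction when passing from walk positions to height functions (as noted in \Cref{heightcompare}, $\mathsf{x}_j \leq \mathsf{x}_j'$ corresponds to $\mathsf{H} \geq \mathsf{H}'$), but this is purely a sign convention and does not affect your argument.
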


	\begin{rem}
		
		\label{heightcompare}
		
		An equivalent way of stating \Cref{comparewalks} (as was done in \cite{LSRT}) is through the height functions associated with the Bernoulli walk ensembles $\mathsf{X}$ and $\mathsf{X}'$. Specifically, let $\mathsf{D} \subset \mathbb{T}$ be a finite domain, and let $\mathsf{h}, \mathsf{h}' : \partial \mathsf{H} \rightarrow \mathbb{Z}$ denote two boundary height functions such that $\mathsf{h} (v) \geq \mathsf{h}' (v)$, for each $v \in \partial \mathsf{D}$. Let $\mathsf{H}, \mathsf{H}' : \mathsf{D} \rightarrow \mathbb{Z}$ denote two uniformly random height functions on $\mathsf{D}$ with boundary data $\mathsf{H} |_{\partial \mathsf{D}} = \mathsf{h}$ and $\mathsf{H}' |_{\partial \mathsf{D}} = \mathsf{h}'$. Then, \Cref{comparewalks} implies (and is equivalent to) the existence of a coupling between $\mathsf{H}$ and $\mathsf{H}'$ such that $\mathsf{H} (\mathsf{u}) \geq \mathsf{H}' (\mathsf{u})$ almost surely, for each $\mathsf{u} \in \mathsf{D}$. 
		
	\end{rem}
	
	\begin{rem}
		
		\label{measurepaths} 
		
		Due to the correspondence from \Cref{WalkModel} between tilings and non-intersecting Bernoulli walk ensembles, the uniform measure on the set of (free) tilings of a strip domain of the form $\mathbb{Z} \times [s, t] \subset \mathbb{T}$ is equivalent to that on the set of non-intersecting Bernoulli walk ensembles with time spans $[s, t]$ under specified entrance, exit, and boundary conditions. Moreover, if $\mathsf{X} = (\mathsf{x}_l, \mathsf{x}_{l + 1}, \ldots , \mathsf{x}_m)$ is sampled under the uniform measure then it satisfies the following \emph{Gibbs property}. For any $s \leq s' \leq t' \leq t$ and $l \leq l' \leq m' \leq m$, the law of $(\mathsf{x}_{l'}, \mathsf{x}_{l' + 1}, \ldots , \mathsf{x}_{m'})$ restricted to $\mathbb{Z} \times [s', t']$ is uniform measure on those non-intersecting Bernoulli walk ensembles with entrance data $\big( \mathsf{x}_{l'} (s'), \mathsf{x}_{l' + 1} (s'), \ldots , \mathsf{x}_{m'} (s') \big)$; exit data $\big( \mathsf{x}_{l'} (t'), \mathsf{x}_{l' + 1} (t'), \ldots , \mathsf{x}_{m'} (t') \big)$; and boundary conditions $(\mathsf{x}_{l' - 1}; \mathsf{x}_{m' + 1})$.
		
	\end{rem}

	\subsection{Proof of \Cref{walkspconverge}}
	
	\label{ProofPaths}

	In this section we establish \Cref{walkspconverge}. We begin with the following proposition that provides edge statistics for non-intersecting random Bernoulli walks with an approximately quadratic boundary condition.
	
	\begin{prop} 
		
		\label{xiwalks1} 
		
		Fix $\delta \in \big( 0, 1/100 \big)$ and real numbers $\mathfrak{q} = \mathfrak{q}_n$ and $\mathfrak{l} = \mathfrak{l}_n$ bounded away from $0$ and $\infty$. Define $\mathfrak{s}, \mathfrak{r} \in \mathbb{R}$ from $(\mathfrak{l}, \mathfrak{q})$ through \eqref{sr1}; set $m = \lfloor n^{10 \delta} \rfloor$ and $\mathsf{T} = \lfloor n^{2 / 3 + 20 \delta} \rfloor$; and let $\mathsf{f}: [-\mathsf{T}, \mathsf{T}] \rightarrow \mathbb{R}$ be a function satisfying
		\begin{flalign}
			\label{klqf}
			\displaystyle\sup_{s \in [-\mathsf{T}, \mathsf{T}]} \big| \mathsf{f} (s) + K_0 - \mathfrak{l} s- \mathfrak{q} s^2 n^{-1} \big| < n^{1/3 - \delta}, \qquad \text{where} \qquad K_0= \mathfrak{s}^{3/2} n^{1/3} \Big( \displaystyle\frac{3 \pi m}{2} \Big)^{2 / 3}.
		\end{flalign} 
		
		\noindent Further let $\mathsf{d} = (\mathsf{d}_1, \mathsf{d}_2, \ldots, \mathsf{d}_m)$ and $\mathsf{e} = (\mathsf{e}_1, \mathsf{e}_2, \ldots , \mathsf{e}_m)$ be integer sequences satisfying
		\begin{flalign}
			\label{de} 
			\big| \mathsf{d}_j - \mathsf{f} (- \mathsf{T}) \big| < n^{1/3 + 10 \delta}; \qquad \big| \mathsf{e}_j - \mathsf{f} (\mathsf{T}) \big| & < n^{1/3 + 10 \delta},
		\end{flalign}
	
		\noindent for each $j \in [1, m]$. Let $\mathsf{X} = (\mathsf{x}_1, \mathsf{x}_2, \ldots , \mathsf{x}_m)$ denote a uniformly random ensemble of non-intersecting Bernoulli walks with time span $[-\mathsf{T}, \mathsf{T}]$; boundary data $(\mathsf{f}; \infty)$; entrance data $\mathsf{d}$; and exit data $\mathsf{e}$. Define the family of functions $\mathcal{X}_n = (\mathsf{X}_1, \mathsf{X}_2, \ldots , \mathsf{X}_n)$ by
		\begin{flalign}
			\label{xiconstant}
			\mathsf{X}_i (t) = \mathfrak{s}^{-1} n^{-1/3} \Big( \mathsf{x}_{m - i + 1} (\mathfrak{r} n^{2/3} t) - \mathfrak{l} n^{2 / 3} t \Big).
		\end{flalign}
		
		\noindent Then $\mathcal{X}_n$ converges to $\mathcal{R}$, uniformly on compact subsets of $\mathbb{Z}_{> 0} \times \mathbb{R}$, as $n$ tends to $\infty$. 
		
	\end{prop}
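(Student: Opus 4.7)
The strategy is to compare $\mathsf{X}$ locally with the non-intersecting Bernoulli walk ensemble arising from a uniformly random tiling of a suitably chosen hexagon, whose edge statistics converge to $\mathcal{R}$ by \Cref{walksdomain1}. First, choose positive reals $a, b, c$ and a nonsingular, non-tangency point $(x_0, t_0) \in \mathfrak{A}(\mathfrak{E}_{a,b,c})$ whose curvature parameters equal $(\mathfrak{l}, \mathfrak{q})$; this is possible, since a hexagon trivially satisfies \Cref{pa}, and the three-parameter family $(a,b,c)$ together with the choice of $(x_0, t_0)$ on the arctic ellipse offers more than enough freedom to prescribe the two numbers $(\mathfrak{l}, \mathfrak{q})$. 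Let $\mathsf{Y} = (\mathsf{y}_1, \ldots, \mathsf{y}_M)$ denote the non-intersecting Bernoulli walks for a uniformly random tiling of $n \mathfrak{E}_{a,b,c}$, set $K_H = n H^*(x_0, t_0)$, and introduce the recentered walks $\tilde{\mathsf{y}}_j(s) = \mathsf{y}_{K_H - j + 1}(s + t_0 n) - n x_0$. By \Cref{xjpath} applied to the hexagon, with overwhelming probability $\tilde{\mathsf{y}}_j$ concentrates around the classical location $\mathfrak{l} s + \mathfrak{q} s^2 / n - \mathfrak{s}^{3/2}(3\pi j/2)^{2/3} n^{1/3}$ with error $O(j^{-1/3} n^{1/3 + \delta})$, uniformly for $s \in [-\mathsf{T}, \mathsf{T}]$ and $j \in [1, 2m]$. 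In particular, $\tilde{\mathsf{y}}_{m+1}$ lies within $O(n^{1/3 - 7\delta/3}) = o(n^{1/3})$ of $\mathsf{f}$ on $[-\mathsf{T}, \mathsf{T}]$.

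By the Gibbs property (\Cref{measurepaths}), conditional on $\tilde{\mathsf{y}}_{m+1}$ over $[-\mathsf{T}, \mathsf{T}]$ together with the endpoints $\tilde{\mathsf{y}}_j(\pm \mathsf{T})$ for $j \in [1, m]$, the top $m$ hexagonal walks $(\tilde{\mathsf{y}}_1, \ldots, \tilde{\mathsf{y}}_m)$ are uniformly distributed on non-intersecting Bernoulli walk ensembles with these boundary, entrance, and exit data. On the high-probability event where the above concentration holds, \Cref{comparewalks} then allows one to couple $\mathsf{X}$ with two shifted versions of this conditioned hexagonal ensemble, obtained by translating the hexagonal data by $\pm \Delta$, so that one shifted ensemble stochastically dominates $\mathsf{X}$ and the other is dominated by $\mathsf{X}$. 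Since translation invariance of the uniform measure turns each shifted ensemble into the conditioned top $m$ hexagonal walks translated by $\pm \Delta$, \Cref{walksdomain1} implies that their rescaled versions converge to translates of $\mathcal{R}$ by $\pm \Delta / (\mathfrak{s} n^{1/3})$. Provided $\Delta = o(n^{1/3})$, these translates vanish in the limit and a squeeze yields $\mathcal{X}_n \to \mathcal{R}$ on compact subsets of $\mathbb{Z}_{>0} \times \mathbb{R}$.

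The main obstacle is ensuring that $\Delta$ can be taken to be $o(n^{1/3})$. For the lower boundary $\mathsf{f}$ this is immediate from the $O(n^{1/3 - 7\delta/3})$ concentration above. The subtlety lies with the entrance and exit data $\mathsf{d}, \mathsf{e}$, whose spread around $\mathsf{f}(\pm \mathsf{T})$ may be as large as $n^{1/3 + 10\delta}$ — a priori far too large to absorb into a vanishing $\Delta$. This is resolved by exploiting the fact that the time span $\mathsf{T} = n^{2/3 + 20\delta}$ is much larger than the Airy time scale $n^{2/3}$: after iteratively applying the Gibbs property on a nested sequence of shrinking time sub-windows, interleaved at each scale with further applications of \Cref{xjpath} and monotone couplings via \Cref{comparewalks}, the influence of the entrance/exit data on the edge statistics at the central times $s = O(n^{2/3})$ decays to $o(n^{1/3})$. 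Carrying out this multiscale decorrelation is the principal technical step of the proof.
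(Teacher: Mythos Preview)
Your overall strategy --- compare $\mathsf{X}$ with the top $m$ walks of a suitably chosen hexagon via the Gibbs property and the monotone coupling of \Cref{comparewalks} --- is exactly the paper's approach, and you correctly identify the real obstacle: the entrance and exit data may be spread by as much as $n^{1/3+10\delta}$, which cannot be absorbed by a vanishing spatial shift $\Delta$.

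However, your proposed resolution (a multiscale decorrelation over nested time windows) is not what the paper does, and you leave it as an unexecuted ``principal technical step.'' The paper instead uses a single, direct trick that avoids any iteration: rather than translating one hexagon, it compares $\mathsf{X}$ with \emph{two} rescaled hexagons whose curvature parameters are $(\mathfrak{l},\kappa\mathfrak{q})$ and $(\mathfrak{l},\nu\mathfrak{q})$ with $\kappa=1+n^{-20\delta}$ and $\nu=1-n^{-20\delta}$. The point is that perturbing the quadratic coefficient by $n^{-20\delta}$ changes the parabolic trajectory at the endpoints $s=\pm\mathsf{T}$ by order $(\kappa-1)\mathfrak{q}\,\mathsf{T}^2/n \asymp n^{1/3+20\delta}$, which dominates the $n^{1/3+10\delta}$ spread of $\mathsf{d},\mathsf{e}$. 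Thus the hexagonal entrance/exit data and lower boundaries sandwich those of $\mathsf{X}$ with overwhelming probability, \Cref{comparewalks} applies directly, and since $\kappa,\nu\to 1$ the rescaled sandwiching ensembles both converge to $\mathcal{R}$ by \Cref{walksdomain1}. This curvature-perturbation idea is the missing ingredient in your argument; with it, no multiscale machinery is needed.
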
 
	
	\begin{proof} 
		
		 Throughout, we assume $\mathfrak{q} > 0$, as the proof when $\mathfrak{q} < 0$ is entirely analogous. This proposition will follow from a comparison between the random Bernoulli walk ensemble $\mathsf{X}$ and the one associated with a random tiling of a suitably chosen hexagon. So, we begin by identifying real numbers $a, b, c > 0$ and a point $(x_0, t_0) \in \mathfrak{A} (a, b, c) = \partial \mathfrak{L}_{a, b, c}$ on the ellipse inscribed in the hexagon $\mathfrak{E}_{a, b, c}$ (recall \Cref{xdomain}) whose curvature parameters are given by $(\mathfrak{l}, \mathfrak{q})$. To that end, first observe that there are two points on $\mathfrak{A} (1, 1, 1)$ at which a line with inverse slope $\mathfrak{l}$ is tangent. Let $(\widetilde{x}, \widetilde{t}) \in \mathfrak{A} (1, 1, 1)$ be the one whose curvature parameters $(\widetilde{\mathfrak{l}}, \widetilde{\mathfrak{q}}) = (\mathfrak{l}, \widetilde{\mathfrak{q}})$ are such that $\sgn \widetilde{\mathfrak{q}} = \sgn \mathfrak{q} = 1$. Since $\widetilde{\mathfrak{l}} = \mathfrak{l}$ is bounded away from $0$ and $\infty$, as is $\widetilde{\mathfrak{q}}$. Then setting $r = \widetilde{\mathfrak{q}} \mathfrak{q}^{-1}$, $(a, b, c) = (r, r, r)$, and $(x_0, t_0) = (r \widetilde{x}, r \widetilde{t}) \in \mathfrak{A} (a, b, c)$, the curvature parameters at $(x_0, t_0)$ are $(\mathfrak{l}, \mathfrak{q})$.
		
		We will now perturb the quadratic curvature parameter of $(x_0, t_0)$ with respect to $\mathfrak{A} (\mathfrak{E}_{a, b, c})$ through scaling by $\kappa$ and $\nu$, where  
		\begin{flalign*}
			\kappa = 1 + n^{-20 \delta}; \qquad \nu = 1 - n^{-20 \delta}.
		\end{flalign*}
	
		\noindent Set $(a', b', c') = (\kappa^{-1} a, \kappa^{-1} b, \kappa^{-1} c)$ and $(a'', b'', c'') = (\nu^{-1} a, \nu^{-1} b, \nu^{-1} c)$; further let $(x_0', t_0') = (\kappa^{-1} x_0, \kappa^{-1} t_0)$ and $(x_0'', t_0'') = (\nu^{-1} x_0, \nu^{-1} t_0)$. Observe that the curvature parameters at $(x_0', t_0')$ with respect to $\mathfrak{A} (a', b', c')$ and at $(x_0'', t_0'')$ with respect to $\mathfrak{A} (a'', b'', c'')$ are given by $(\mathfrak{l}', \mathfrak{q}') = (\mathfrak{l}, \kappa \mathfrak{q})$ and $(\mathfrak{l}'', \mathfrak{q}'') = (\mathfrak{l}, \nu \mathfrak{q})$, respectively. 
		
	Next, define $(\mathsf{a}', \mathsf{b}', \mathsf{c}') = (na', nb', nc')$; and $(\mathsf{a}'', \mathsf{b}'', \mathsf{c}'') = (na'', nb'', nc'')$, which we  assume for notational simplicity are integers. Denote the hexagons $\mathsf{P}' = \mathfrak{E}_{\mathsf{a}', \mathsf{b}', \mathsf{c}'}$ and $\mathsf{P}'' = \mathfrak{E}_{\mathsf{a}'', \mathsf{b}'', \mathsf{c}''}$; let $\mathscr{M}'$ and $\mathscr{M}''$ denote uniformly random tilings of $\mathsf{P}'$ and $\mathsf{P}''$, which are associated with non-intersecting Bernoulli walk ensembles $\mathsf{Y}' = (\mathsf{y}_1', \mathsf{y}_2', \ldots , \mathsf{y}_{\mathsf{a}'}')$ and $\mathsf{Y}'' = (\mathsf{y}_1'', \mathsf{y}_1'', \ldots , \mathsf{y}_{\mathsf{a}''}'')$, respectively. Define the Bernoulli walk ensembles $\mathsf{X}' = (\mathsf{x}_1', \mathsf{x}_2', \ldots , \mathsf{x}_m')$ and $\mathsf{X}'' = (\mathsf{x}_1'', \mathsf{x}_2'', \ldots , \mathsf{x}_m'')$ through a spacial and index shift of $\mathsf{Y}'$ and $\mathsf{Y}''$ respectively; specifically, for each $j$ and $t$, set
	\begin{flalign*}
		\mathsf{x}_j' (t) = \mathsf{y}_{\mathsf{a}' + j - m }' (t + t_0' n) - x_0' n + 3 n^{1/3 - \delta}; \qquad \mathsf{x}_j'' (t) = \mathsf{y}_{\mathsf{a}'' + j - m}'' (t + t_0'' n) - x_0'' n - 3 n^{1/3 - \delta}.
	\end{flalign*}

	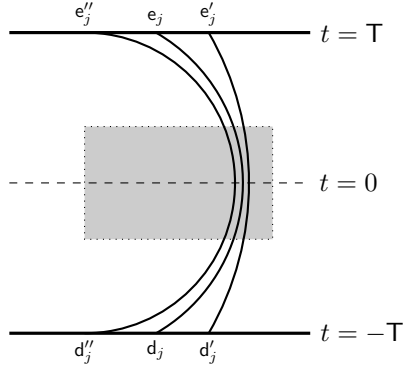
\begin{figure}
		
		\begin{center}		
			
			\begin{tikzpicture}[
				>=stealth,
				auto,
				style={
					scale = 1
				}
				]
				
				\filldraw[fill = gray!40!white, dotted] (1, 2.75) -- (3.5, 2.75) -- (3.5, 1.25) -- (1, 1.25) -- (1, 2.75);
				
				\draw[black, very thick] (0, 0) -- (4, 0) node[right]{$t = -\mathsf{T}$};	
				\draw[black, dashed] (0, 2) -- (4, 2) node[right]{$t = 0$};
				\draw[black, very thick] (0, 4) -- (4, 4) node[right]{$t = \mathsf{T}$};
		
				\draw[black, thick] (1, 0) node[below, scale = .75]{$\mathsf{d}_j''$} arc (-90:90:2) node[above, scale = .75]{$\mathsf{e}_j''$};
				\draw[black, thick] (1.95, 0) node[below, scale = .75]{$\mathsf{d}_j$} arc (-60:60:2.309) node[above, scale = .75]{$\mathsf{e}_j$};
				\draw[black, thick] (2.65, 0) node[below, scale = .75]{$\mathsf{d}_j'$} arc (-30:30:4) node[above, scale = .75]{$\mathsf{e}_j'$};

			\end{tikzpicture}
			
		\end{center}

		\caption{\label{xpathscompare} Shown above trajectories for the paths $\mathsf{x}_j' \leq \mathsf{x}_j \leq \mathsf{x}_j''$ in the proof of \Cref{xiwalks1}; they approximately coincide in the shaded region. }
		
	\end{figure} 

	Given this notation, we will first use \Cref{comparewalks} to bound the ensemble $\mathsf{X}$ between $\mathsf{X}'$ and $\mathsf{X}''$; see \Cref{xpathscompare}. Then, we will apply \Cref{walksdomain1} to show that $(\mathsf{X}', \mathsf{X}'')$ converges to the same Airy line ensemble under the normalization \eqref{xiconstant}. To implement the former, define the sequences $\mathsf{d}', \mathsf{e}', \mathsf{d}'', \mathsf{e}'' \in \mathbb{Z}^m$ by
	\begin{flalign*}
		& \mathsf{d}' =  \big( \mathsf{x}_1' (-\mathsf{T}), \mathsf{x}_2' (-\mathsf{T}), \ldots , \mathsf{x}_m' (-\mathsf{T}) \big); \quad  \mathsf{e}' = \big( \mathsf{x}_1' (\mathsf{T}), \mathsf{x}_2' (\mathsf{T}), \ldots , \mathsf{x}_m' (\mathsf{T}) \big); \\
		& \mathsf{d}'' = \big( \mathsf{x}_1'' (-\mathsf{T}), \mathsf{x}_2'' (-\mathsf{T}), \ldots , \mathsf{x}_m'' (-\mathsf{T}) \big); \quad \mathsf{e}'' = \big( \mathsf{x}_1'' (\mathsf{T}), \mathsf{x}_2'' (\mathsf{T}), \ldots , \mathsf{x}_m'' (\mathsf{T}) \big),
	\end{flalign*}
	
	\noindent and the functions $\mathsf{f}', \mathsf{f}'' : [-\mathsf{T}, \mathsf{T}] \rightarrow \mathbb{Z}$ by
	\begin{flalign*}
		\mathsf{f}' (t) = \mathsf{x}_0' (t); \qquad \mathsf{f}'' (t) = \mathsf{x}_0'' (t).
	\end{flalign*}

	\noindent Then, by the Gibbs property described in \Cref{measurepaths}, $\mathsf{X}'$ is a uniformly random non-intersecting Bernoulli walk ensemble with entrance data $\mathsf{d}'$, exit data $\mathsf{e}'$, and boundary conditions $(\mathsf{x}_0'; \infty)$; a similar statement holds for $\mathsf{X}''$. Let us show with overwhelming probability that 
	\begin{flalign}
		\label{def}
		\mathsf{d}'' \leq \mathsf{d} \leq \mathsf{d}'; \qquad \mathsf{e}'' \leq \mathsf{e} \leq \mathsf{e}'; \qquad \mathsf{f}'' \leq \mathsf{f} \leq \mathsf{f}'.
	\end{flalign}

	To verify the first statement of \eqref{def}, observe for any $j \in [1, m]$ that \eqref{klqf} and \eqref{de} imply
	\begin{flalign}
		\label{d1} 
		\mathsf{d}_j \leq \mathfrak{q} n^{-1} \mathsf{T}^2 - \mathfrak{l} \mathsf{T} + 5 (\mathfrak{s}^2 + 1) n^{1/3 + 10 \delta} \leq \mathfrak{q}' n^{-1} \mathsf{T}^2 - \mathfrak{l} \mathsf{T} - 5 (\mathfrak{s}^2 + 1) n^{1/3 + 10 \delta}.
	\end{flalign}

	\noindent To deduce the first bound, we used the facts that $n^{1/3 - \delta} \leq \mathfrak{s}^{-3/2} K_0 \leq 3 m^{2/3} n^{1/3} \leq n^{1/3 + 10 \delta}$, since $m = \lfloor n^{10 \delta} \rfloor$; to deduce the second, we used the facts that $\mathfrak{q}' = \kappa \mathfrak{q} = (1 + n^{-20 \delta}) \mathfrak{q}$ and $\mathsf{T} = \lfloor n^{2/3 + 20 \delta} \rfloor$. 
	
	Moreover, \Cref{xjpath} (applied with the $\mathsf{P}$ there equal to $\mathsf{P}'$ here) implies with overwhelming probability that
	\begin{flalign}
		\label{d2}
		\mathsf{x}_{m - j}' (-\mathsf{T}) = \mathsf{y}_{\mathsf{a}' - j}' (t_0 n - \mathsf{T}) - x_0' n + 3 n^{1/3 - \delta} \geq \mathfrak{q}' n^{-1} \mathsf{T}^2 - \mathfrak{l} \mathsf{T} - 5 (\mathfrak{s}^2 + 1) n^{1/3 + 10 \delta},
	\end{flalign} 

	\noindent where we have again used the facts that $n^{1/3 - \delta} \leq \mathfrak{s}^{-3/2} \leq K_0 \leq 3 m^{2/3} n^{1/3} \leq n^{1/3 + 10 \delta}$. Combining \eqref{d1} and \eqref{d2}, it follows that $\mathsf{d} \leq \mathsf{d}'$. The proof that $\mathsf{d}'' \leq \mathsf{d}$ is entirely analogous, thereby establishing the first statement of \eqref{def}; the second is shown similarly.
	
	The third statement of \eqref{def} follows from the fact that for any $t \in [-\mathsf{T}, \mathsf{T}]$ we have
	\begin{flalign}
		\label{ftft2} 
		\begin{aligned}
		\mathsf{f}' (t) = \mathsf{x}_0' (t) & = \mathsf{y}_{\mathsf{a}'- m}' (t + t_0' n) - x_0' n + 3 n^{1/3 - \delta} \\
		& \geq \mathfrak{l} t + \mathfrak{q}' n^{-1} t^2 - K_0' - m^{-1/3} n^{1/3 + \delta} + 3 n^{1/3 - \delta} \\
		& \geq \mathsf{f} (t) + K_0 - K_0' + (\mathfrak{q}' - \mathfrak{q}) n^{-1} t^2 + n^{1/3 - \delta} \geq \mathsf{f} (t),
		\end{aligned}
	\end{flalign}

	\noindent where we have set
	\begin{flalign*}
		K_0' = \mathfrak{s}'^{3/2} n^{1/3} \Big( \displaystyle\frac{3 \pi m}{2} \Big)^{2/3} = \kappa^{-1/2} K_0, 
	\end{flalign*}
	
	\noindent and we have denoted
		\begin{flalign}
			\label{srsr}
		\mathfrak{s}' = \Big| \displaystyle\frac{\mathfrak{l}^{2/3} (1 - \mathfrak{l})^{2/3}}{4^{1/3} \mathfrak{q}'^{1/3}} \Big|; \quad \mathfrak{r}' = \Big| \displaystyle\frac{\mathfrak{l}^{1/3} (1 - \mathfrak{l})^{1/3}}{2^{1/3} \mathfrak{q}'^{1/3}} \Big|. 
	\end{flalign}
	
	\noindent The first and second statements of \eqref{ftft2} follow from the definitions of $\mathsf{f}'$ and $\mathsf{x}'$; the third from \Cref{xjpath} (applied with $j$ there equal to $m$ here); the fourth from \eqref{klqf} and the definition $m = \lfloor n^{10 \delta} \rfloor$; and the fifth from the facts that $\mathfrak{q}' = \kappa \mathfrak{q} \geq \mathfrak{q}$ and that 
	\begin{flalign*} 
		|K_0 - K_0'| \leq |1 - \kappa^{-1/2}| K_0 \leq n^{-10 \delta} K_0 \leq 5 \mathfrak{s}^{3/2} n^{1/3 -10 \delta} m^{2/3} = \mathrm{o}(n^{1/3 - \delta}),
	\end{flalign*} 

	\noindent as $m \leq n^{10 \delta}$. Hence, $\mathsf{f} \leq \mathsf{f}'$; similarly, $\mathsf{f}'' \leq \mathsf{f}$. This verifies that \eqref{def} holds with overwhelming probability. So, it follows from \Cref{comparewalks} that there exists a coupling between $(\mathsf{X}, \mathsf{X}', \mathsf{X}'')$ such that $\mathsf{x}_j'' \leq \mathsf{x}_j \leq \mathsf{x}_j'$ holds for each $j \in [1, m]$, with overwhelming probability.
	
	Define normalizations of these Bernoulli walk ensembles, denoted $\mathcal{X}_n' = (\mathsf{X}_1', \mathsf{X}_2', \ldots , \mathsf{X}_m')$, $\mathcal{X}_n'' = (\mathsf{X}_1'', \mathsf{X}_2'', \ldots , \mathsf{X}_n'')$, $\mathcal{W}_n' = (\mathsf{W}_1', \mathsf{W}_2', \ldots , \mathsf{W}_m')$, and $\mathcal{W}_n'' = (\mathsf{W}_1'', \mathsf{W}_2'', \ldots , \mathsf{W}_m'')$, by (recall the notation from \eqref{srsr}) setting
	\begin{flalign*}
		& \mathsf{X}_i' (t) = \mathfrak{s}^{-1} n^{-1/3} \Big( \mathsf{x}_{m - i + 1}' (\mathfrak{r} n^{2/3} t) - \mathfrak{l} n^{2/3} t\Big); \qquad \mathsf{X}_i'' (t) = \mathfrak{s}^{-1} n^{-1/3} \Big( \mathsf{x}_{m -i + 1} (\mathfrak{r} n^{2/3} t) - \mathfrak{l} n^{2/3} t \Big); \\
		& \mathsf{W}_i' (t) = \mathfrak{s}'^{-1} n^{-1/3} \Big( \mathsf{x}_{m - i + 1}' (\mathfrak{r}' n^{2/3} t) - \mathfrak{l} n^{2/3} t\Big); \qquad \mathsf{W}_i'' (t) = \mathfrak{s}''^{-1} n^{-1/3} \Big( \mathsf{x}_{m - i + 1} (\mathfrak{r}'' n^{2/3} t) - \mathfrak{l} n^{2/3} t \Big).
	\end{flalign*}

	\noindent Then, \Cref{walksdomain1} implies that $\mathcal{W}_n'$ and $\mathcal{W}_n''$ converge to $\mathcal{R}$, uniformly on compact subsets of $\mathbb{Z}_{> 0} \times \mathbb{R}$, as $n$ tends to $\infty$.  Since \eqref{srsr} and the facts that $(\mathfrak{q}, \mathfrak{q}') = (\kappa \mathfrak{q}, \nu \mathfrak{q})$ and $\kappa - 1 = 1 - \nu = n^{-20 \delta} = \mathrm{o}(1)$ imply $\mathfrak{s}' = \mathfrak{s} \big( 1 + \mathrm{o}(1) \big)$; $\mathfrak{r}' = \mathfrak{r} \big( 1 + \mathrm{o}(1) \big)$; $\mathfrak{s}'' = \mathfrak{s} \big( 1 + \mathrm{o}(1) \big)$; and $\mathfrak{r}'' = \mathfrak{r} \big( 1 + \mathrm{o}(1) \big)$, we deduce that $\big| \mathsf{W}_j' (t) - \mathsf{X}_j' (t) \big|$ and $\big| \mathsf{W}_j'' (t) - \mathsf{X}_j'' (t) \big| = \mathrm{o}(1)$ for each $j \in [1, m]$, uniformly on compact subsets of $\mathbb{Z}_{> 0} \times \mathbb{R}$. Hence, $\mathcal{X}_n'$ and $\mathcal{X}_n''$ both converge to $\mathcal{R}$. Since $\mathsf{x}_j'' \leq \mathsf{x}_j \leq \mathsf{x}_j''$, it follows that $\mathsf{X}_j'' \leq \mathsf{X}_j \leq \mathsf{X}_j'$, and thus the same convergence holds for $\mathcal{X}_n$.
	\end{proof}

	We can now establish \Cref{walkspconverge}.

	\begin{proof}[Proof of \Cref{walkspconverge}]
		
		This will follow from \Cref{xiwalks1}. Fix a real number $\delta \in \big( 0, 1/100 \big)$, and define $m = n^{10 \delta}$ and $\mathsf{T} = n^{2/3 + 20 \delta}$ (as in \Cref{xiwalks1}), which we assume for notational convenience are integers. Define the non-intersecting Bernoulli walk ensemble $\mathsf{Y} = (\mathsf{y}_1, \mathsf{y}_2, \ldots , \mathsf{y}_m)$ by
		\begin{flalign}
			\label{yti}
			\mathsf{y}_i (t) = \mathsf{x}_{K + i - m} (t_0 n + t) - x_0 n.
		\end{flalign} 
	
		\noindent Denoting the sequences $\mathsf{d}, \mathsf{e} \in \mathbb{Z}^m$ and function $\mathsf{f} : [-\mathsf{T}, \mathsf{T}] \rightarrow \mathbb{Z}$ by 
		\begin{flalign*}
			\mathsf{d} = \big( \mathsf{y}_1 (- \mathsf{T}), \mathsf{y}_2 (-\mathsf{T}), & \ldots , \mathsf{y}_m (-\mathsf{T}) \big); \qquad 
			\mathsf{e} = \big( \mathsf{y}_1 (\mathsf{T}), \mathsf{y}_2 (\mathsf{T}), \ldots , \mathsf{y}_m (\mathsf{T}) \big); \\
			& \mathsf{f} (t) = \mathsf{x}_{K - m} (t_0 n + t) - x_0 n,
		\end{flalign*}
		
		\noindent it follows from the Gibbs property described in \Cref{measurepaths} that $\mathsf{Y}$ is a uniformly random non-intersecting Bernoulli walk ensemble with entrance and exit data $(\mathsf{d}; \mathsf{e})$ and boundary conditions $(\mathsf{f}; \infty)$. Let us verify that $(\mathsf{d}, \mathsf{e}, \mathsf{f})$ satisfy \eqref{de} and \eqref{klqf}. 
		
		To that end, since $m = n^{10 \delta}$, the $j = m$ case of \Cref{xjpath} implies with overwhelming probability that
		\begin{flalign}
			\label{fls} 
			\displaystyle\max_{s \in [-\mathsf{T}, \mathsf{T}]}	\bigg| \mathsf{f} (s) - \mathfrak{l} s - \mathfrak{q} n^{-1} s^2 + \mathfrak{s}^{3/2} \Big( \displaystyle\frac{3 \pi m}{2} \Big)^{3/2} n^{1/3} \bigg| < m^{-1/3} n^{1/3 + \delta} < n^{1/3 - \delta},
		\end{flalign}
		
		\noindent and so $\mathsf{f}$ satisfies \eqref{klqf}. Applying \Cref{xjpath} with $j \in [0, m - 1]$ and also using \eqref{fls} yields  
		\begin{flalign*}
			\big| \mathsf{y}_1 (-\mathsf{T}) - f(-\mathsf{T}) \big| \leq 2 \mathfrak{s}^{3/2} \Big( \displaystyle\frac{3 \pi m}{2} \Big)^{2/3} n^{1/3} + 2n^{1/3} \leq n^{1/3 + 10 \delta},
		\end{flalign*}
	
		\noindent where to deduce the last inequality we used the fact that $m = n^{10 \delta}$. This verifies that $\mathsf{d}$ satisfies \eqref{de} with overwhelming probability; the proof that $\mathsf{e}$ does as well is very similar and thus omitted. 
		
		Hence, \Cref{xiwalks1} applies and gives that $\mathcal{Y}_n = (\mathsf{Y}_1, \mathsf{Y}_2, \ldots , \mathsf{Y}_m)$ converges to $\mathcal{R}$, uniformly on compact subsets of $\mathbb{Z}_{> 0} \times \mathbb{R}$, as $n$ tends to $\infty$, where
		\begin{flalign*}
			\mathsf{Y}_i (t) = \mathfrak{s}^{-1} n^{-1/3} \Big( \mathsf{y}_{m - i + 1} (\mathfrak{r} n^{2/3} t) - \mathfrak{l} n^{2/3} t \Big).
		\end{flalign*}
		
		\noindent By \eqref{yti}, $\mathsf{Y}_i (t) = \mathsf{X}_i (t)$, meaning that $\mathcal{Y}_n = \mathcal{X}_n$; so, the same convergence holds for $\mathcal{X}_n$.
	\end{proof}

	\section{Mixing and Concentration Bounds}
	
	\label{TimeDynamics}
	
	By the content in \Cref{ProofPaths}, it remains to establish \Cref{mh}. In this section we collect several miscellaneous results that will be used in its proof, to appear in \Cref{ProofH} below. More specifically, in \Cref{EstimateDomain} we state a preliminary concentration estimate for a class of tilings whose arctic boundaries are constrained to only have one cusp (in addition to other, less essential conditions); in \Cref{Dynamics} we state a mixing time bound for certain dynamics on the set of tilings, which we prove in \Cref{Prooft}.

	\subsection{Preliminary Concentration Estimate}
	
	\label{EstimateDomain} 
	
	In this section we state a concentration estimate for tiling height functions on ``double-sided trapezoid domains'' (one may also view these as tilings of a strip). These domains are different from the ones considered in earlier works, such as \cite{ARS,AURTPF,UEFDIPS}, since they will accomodate non-frozen boundary conditions along both their north and south edges (instead of only their south ones).
	
	Throughout this section, we fix real numbers $\mathfrak{t}_1 < \mathfrak{t}_2$ and denote $\mathfrak{t} = \mathfrak{t}_2 - \mathfrak{t}_1$. We fix linear functions $\mathfrak{a}, \mathfrak{b} : [\mathfrak{t}_1, \mathfrak{t}_2]$ with $\mathfrak{a}' (s), \mathfrak{b}' (s) \in \{ 0, 1 \}$, such that $\mathfrak{a} (s) \leq \mathfrak{b} (s)$ for each $s \in [\mathfrak{t}_1, \mathfrak{t}_2]$. Define the trapezoid domain
	\begin{flalign}
		\label{d}
		\mathfrak{D} = \mathfrak{D} (\mathfrak{a}, \mathfrak{b}; \mathfrak{t}_1, \mathfrak{t}_2) = \big\{ (x, y) \in \mathbb{R} \times (\mathfrak{t}_1, \mathfrak{t}_2) : \mathfrak{a} (t) < x < \mathfrak{b} (t) \big\},
	\end{flalign}

	\noindent and denote its four boundaries by 
	\begin{flalign}
		\label{dboundary}
		\begin{aligned} 
		&\partial_{\so} (\mathfrak{D}) = \big\{ (x, y) \in \overline{\mathfrak{D}}: y = \mathfrak{t}_1 \big\}; \qquad \quad
		\partial_{\north} (\mathfrak{D}) = \big\{ (x, y) \in \overline{\mathfrak{D}}: y = \mathfrak{t}_2 \big\}; \\ 
		& \partial_{\we} (\mathfrak{D}) = \big\{ (x, y) \in \overline{\mathfrak{D}}: x = \mathfrak{a} (y) \big\}; \qquad 
		\partial_{\ea} (\mathfrak{D}) = \big\{ (x, y) \in \overline{\mathfrak{D}}: x = \mathfrak{b} (y) \big\}.
		\end{aligned} 
	\end{flalign}

	\noindent We refer to \Cref{ddomain} for a depiction.

	\begin{figure}
		
		\begin{center}		
			
			\begin{tikzpicture}[
				>=stealth,
				auto,
				style={
					scale = .52
				}
				]
				
				\draw[black, very thick] (0, 0) node[left, scale = .7]{$y = \mathfrak{t}_1$} -- (4, 0) -- (7, 3) -- (0, 3) node[left, scale = .7]{$y = \mathfrak{t}_2$} -- (0, 0);
				\draw[black, very thick] (6.5, 0) -- (10.5, 0) -- (13.5, 3) -- (9.5, 3) -- (6.5, 0);
				\draw[black, very thick] (15, 0) -- (19, 0) -- (19, 3) -- (15, 3) -- (15, 0);
				\draw[black, very thick] (20.5, 0) -- (25.5, 0) -- (25.5, 3) -- (23.5, 3) -- (20.5, 0);
				
				\draw[] (3.5, 3) node[above, scale = .7]{$\partial_{\north} (\mathfrak{D})$};
				\draw[] (5.5, 1.5) node[below = 2, right, scale = .7]{$\partial_{\ea} (\mathfrak{D})$};
				\draw[] (0, 1.5) node[left, scale = .7]{$\partial_{\we} (\mathfrak{D})$};
				\draw[] (2, 0) node[below, scale = .7]{$\partial_{\so} (\mathfrak{D})$};	
				
			\end{tikzpicture}
			
		\end{center}
		
		\caption{\label{ddomain} Shown above are the four possibilities for $\mathfrak{D}$.}
		
	\end{figure}
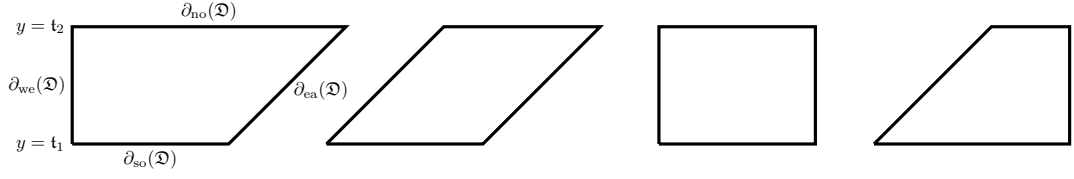 

	Let $h: \partial \mathfrak{D} \rightarrow \mathbb{R}$ denote a function admitting an admissible extension to $\mathfrak{D}$. We assume throughout that $h$ is constant along both $\partial_{\we} (\mathfrak{D})$ and $\partial_{\ea} (\mathfrak{D})$. Let $H^* \in \Adm (\mathfrak{D}; h)$ denote the maximizer of $\mathcal{E}$ from \eqref{efunctionh}, as in \eqref{hmaximum}, and let the liquid region $\mathfrak{L} = \mathfrak{L} (\mathfrak{D}; h)$ and arctic boundary $\mathfrak{A} = \mathfrak{A} (\mathfrak{D}; h)$ be as in \eqref{al}. Recall that a point on $\mathfrak{A}$ is a tangency location if the tangent line to $\mathfrak{A}$ through it has slope either $\{ 0, 1, \infty \}$. 
	
	We may then define the complex slope $f: \mathfrak{L} \rightarrow \mathbb{H}^-$ as in \eqref{fh}, which upon denoting $f_t (x) = f (x, t)$ satisfies the complex Burgers equation \eqref{ftx}. Further let $\mathfrak{L}_{\north} = \mathfrak{L}_{\north} (\mathfrak{D}; h)$ denote the interior of $\overline{\mathfrak{L}} \cap \partial_{\north} (\mathfrak{D})$, and let $\mathfrak{L}_{\so} = \mathfrak{L}_{\so} (\mathfrak{D}; h)$ denote the interior of $\overline{\mathfrak{L}} \cap \partial_{\so} (\mathfrak{D})$; these are the extensions of the liquid region $\mathfrak{R}$ to the north and south boundaries of $\mathfrak{L}$. For all $t \in (\mathfrak{t}_1, \mathfrak{t}_2)$, we define slices of the liquid region (along the horizontal line $y = t$) by
	\begin{flalign*}
		I_t = \big\{ x : (x, t) \in \overline{\mathfrak{L}} \big\}; \qquad I_{\mathfrak{t}_1} = \overline{\mathfrak{L}}_{\so}; \qquad I_{\mathfrak{t}_2} = \overline{\mathfrak{L}_{\north}}.
	\end{flalign*}
	
	For any real number $\delta > 0$, we define the augmented variant of $\mathfrak{L}$ (as in \Cref{llarge}) by
	\begin{flalign*}
		\mathfrak{L}_+ (\mathfrak{D}) = \mathfrak{L}_+^{\delta} (\mathfrak{D}) = \mathfrak{L}_+^{\delta} (\mathfrak{D}; h) = \mathfrak{L} \cup \bigcup_{u \in \mathfrak{A}} \mathfrak{B} (u; n^{\delta - 2/3}).
	\end{flalign*}
	
	Next, let us formulate certain conditions on the limit shape $H^*$. For any $\mathfrak{t}' \geq \mathfrak{t}_2$, we say that $H^*$ can be \emph{extended to time $\mathfrak{t}'$} if there exists a simply connected, open subset $\widetilde{\mathfrak{L}} \subset \mathbb{R}^2$ containing $\mathfrak{L}$, such that the set $\big\{ x: (x, t) \in \widetilde{\mathfrak{L}} \big\}$ is non-empty and connected for each $t \in [\mathfrak{t}_2, \mathfrak{t}']$, and there exists an extension $\widetilde{f}: \widetilde{\mathfrak{L}} \rightarrow \mathbb{H}^-$ of $f_t (x)$ satisfying the complex Burgers equation \eqref{ftx}. In this case, $\mathfrak{L}_{\north} = I_{\mathfrak{t}_2}$ is a single interval. We also call $\partial_{\north} (\mathfrak{D})$ \emph{packed} (with respect to $h$) if $\partial_x h (v) = 1$ for each $v \in \partial_{\north} (\mathfrak{D})$; in this case, $\overline{\mathfrak{L}} \cap \partial_{\north} (\mathfrak{D})$ consistutes at most a single point, and so $\mathfrak{L}_{\north}$ is empty. We refer to \Cref{dldomain} for a depiction. 
	
	Now let $n \geq 1$ be an integer; denote $\mathsf{t}_1 = \mathfrak{t}_1 n$, and $\mathsf{t}_2 = \mathfrak{t}_2 n$. Suppose $\mathsf{D} = n \mathfrak{D} \subset \mathbb{T}^2$, so that $\mathsf{t}_1, \mathsf{t}_2 \in \mathbb{Z}_{> 0}$. Let $\mathsf{h}: \partial \mathsf{D} \rightarrow \mathbb{Z}$ denote a boundary height function. We next stipulate the following assumption on the continuum limit shape $H^*$. Here, we fix a real number $\delta > 0$ and a (large) positive integer $n$. Below, we view the quantities $\mathfrak{t}_1 < \mathfrak{t}_2 < \mathfrak{t}'$; functions $\mathfrak{a}, \mathfrak{b}$; and polygonal domain $\mathfrak{P}$ as independent of $n$. In what follows, a \emph{horizontal tangency location} of $\partial \mathfrak{L}$ is a tangency location on $\mathfrak{A}$ at which the tangent line is horizontal (parallel to the $x$-axis).
	
		\begin{assumption}
		\label{xhh} 
		
		Assume the following constraints hold.
		
		\begin{enumerate} 
			\item The boundary height function $h$ is constant along both $\partial_{\ea} (\mathfrak{D})$ and $\partial_{\we} (\mathfrak{D})$.
			\item Either $\partial_{\north} (\mathfrak{D})$ is packed with respect to $h$ or there exists $\mathfrak{t}' > \mathfrak{t}_2$ such that $H^*$ admits an extension to time $\mathfrak{t}'$.
			\item There exists $\widetilde{\mathfrak{t}} \in [\mathfrak{t}_1, \mathfrak{t}_2]$ such that the following holds. For $t \in [\mathfrak{t}_1, \widetilde{\mathfrak{t}}]$, the set $I_t$ consists of one nonempty interval, and for $t \in (\widetilde{\mathfrak{t}}, \mathfrak{t}_2]$, the set $I_t$ consists of two nonempty disjoint intervals.\footnote{Observe if $\widetilde{\mathfrak{t}} = \mathfrak{t}_2$, then $I_t$ always only consists of one nonempty interval.}
			\item Any tangency location along $\partial \overline{\mathfrak{L}}$ is of the form $\min I_t$ or $\max I_t$, for some $t \in (\mathfrak{t}_1, \mathfrak{t}_2)$. At most one tangency location is of the form $\min I_t$, and at most one is of the form $\max I_t$.
			\item There exists an algebraic curve $Q$ such that, for any $(x, t) \in \overline{\mathfrak{L}}$, we have 
			\begin{flalign*}
				Q \bigg( f_t (x), x - \displaystyle\frac{t f_t (x)}{f_t (x) + 1} \bigg) = 0.
			\end{flalign*}
				
			\noindent Furthermore, the curve $Q$ ``approximately comes from a polygonal domain'' in the following sense. There exists a polygonal domain $\mathfrak{P}$ satisfying \Cref{pa} with liquid region $\mathfrak{L} (\mathfrak{P})$ and a real number $\alpha \in \mathbb{R}$ with $|\alpha - 1| < n^{-\delta}$ such that, if $Q_{\mathfrak{L} (\mathfrak{P})}$ is the algebraic curve associated with $\mathfrak{L} (\mathfrak{P})$ from \Cref{pa1}, then
			\begin{flalign*}
				Q (u, v) = \displaystyle\frac{u + 1}{\alpha^{-1} u + 1} Q_{\mathfrak{L} (\mathfrak{P})} (\alpha^{-1} u, v).
			\end{flalign*}
			
		\end{enumerate}
		
	\end{assumption}

	Let us briefly comment on these constraints. The first guarantees that the associated tiling is one of a trapezoid, as depicted in \Cref{ddomain}. The second and third guarantee that the arctic boundary for the tiling has only one cusp. The fourth implies that are are most two tangency locations along the arctic boundary (and are along the leftmost and rightmost components of the arctic curve, if they exist); the fifth implies that the limit shape for the tiling is part of (an explicit perturbation of) one given by a polygonal domain. The last two conditions could in principle be weakened; we impose them since doing so will substantially simplify notation in the proofs later. 
		
	The next assumption indicates how the tiling boundary data $\mathsf{h}$ approximates $h$ along $\partial \mathfrak{D}$. 
		
	\begin{assumption} 
		\label{xhh2} 
		
		Adopt \Cref{xhh}, and assume the following on how $\mathsf{h}$ converges to $h$.
		
		\begin{enumerate}
			\item For each $v \in \partial \mathfrak{D}$, we have $\big| \mathsf{h} (nv) - n h(v) \big| < n^{\delta / 2}$.
			\item For each $v \in \partial_{\ea} (\mathfrak{D}) \cup \partial_{\we} (\mathfrak{D})$, and each $v \in \partial_{\north} \mathfrak{D}\cup \partial_{\so} \mathfrak{D}$ such that $v \notin \mathfrak{L}_+^{\delta / 2} (\mathfrak{D})$, we have $\mathsf{h} (nv) = n h(v)$. 
		\end{enumerate}
	
	\end{assumption}

	The first assumption states that $\mathsf{h}$ approximates its limit shape, and the second states that it coincides with its limit shape in the frozen region.\footnote{Observe since $\partial_x H \le 1$ that, if $\partial_{\north} (\mathfrak{D})$ is packed, then the second part of \Cref{xhh2} implies that $\mathsf{h} (nv) = n h(v)$ for each $v \in \partial_{\north} (\mathfrak{D})$.} Recalling that $\mathscr{G} (\mathsf{h})$ denotes the set of height functions on $\mathsf{D}$ with boundary data $\mathsf{h}$. We can now state the following concentration estimate for a uniformly element of $\mathscr{G} (\mathsf{h})$ from part I of this series \cite{H1}. In particular, the below result appears as \cite[Theorem 2.5]{H1}, where Assumption 2.3 there is verified by \Cref{xhh} and \cite[Proposition A.4]{H1}.

	\begin{figure}
		\begin{center}			
			\begin{tikzpicture}[
				>=stealth,
				auto,
				style={
					scale = .52
				}
				]
	
				\draw[black, very thick] (-5, 0) node[left, scale = .7]{$y = \mathfrak{t}_1$}-- (5.5, 0) -- (5.5, 3) -- (-2, 3) node[left, scale = .7]{$y = \mathfrak{t}_2$} -- (-5, 0);
				\draw[black, very thick] (9.5, 0) -- (18.5, 0) node[right, scale = .7]{$y = \mathfrak{t}_1$} -- (18.5, 3) node[right, scale = .7]{$y = \mathfrak{t}_2$} -- (12.5, 3) -- (9.5, 0);
				\draw[dotted] (-2, 3.3) -- (5.7, 3.3) node[right, scale = .7]{$y = \mathfrak{t}'$};
				
				\draw[black, thick] (-3, 0) arc (180:120:3.464);
				\draw[black, thick] (3.928, 0) arc (0:60:3.464);
				\draw[black, thick] (11.5, 0) arc (180:0:3);
				
				\draw[black, thick] (-1, 0) arc (-60:0:1.732);
				\draw[black, thick] (.75, 0) arc (240:180:1.732);
				
				\draw[black, dashed] (-1.278, 3) arc (120:60:3.464);
				
				\draw[] (0, 2) circle [radius = 0] node[]{$\mathfrak{L}$};
				\draw[] (14.75, 2) circle [radius = 0] node[]{$\mathfrak{L}$};
				
				\draw[-, dashed, thick] (-4.15, 1) node[left, scale = .7]{$y = \mathfrak{s}$} -- (5.7, 1);
				\draw[-, dashed, thick]  (10.35, 1) -- (18.7, 1) node[right, scale = .7]{$y = \mathfrak{s}$};
			\end{tikzpicture}	
		\end{center}
		\caption{\label{dldomain} Shown to the left is an example of limit shape admitting an extension to time $\mathfrak{t}' > \mathfrak{t}_2$; shown to the right is a liquid region that is packed with respect to $h$.}
	\end{figure}
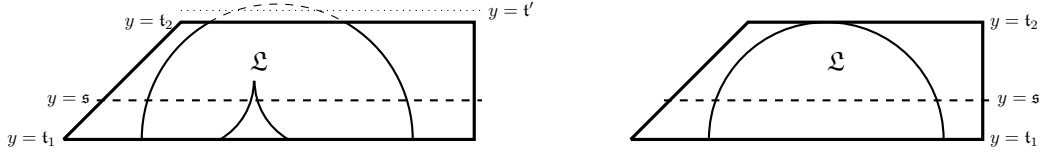

	\begin{thm}[{\cite[Theorem 2.5]{H1}}]
		
		\label{estimategamma}
		
		There exists a constant $\mathfrak{c} = \mathfrak{c} (\mathfrak{P}) > 0$ such that the following holds. Adopt Assumption 4.1 and Assumption 4.2, and further assume that $\mathfrak{t}_2 - \mathfrak{t}_1 < \mathfrak{c}$. Let $\mathsf{H} : \mathsf{D} \rightarrow \mathbb{Z}$ denote a uniformly random element of $\mathscr{G} (\mathsf{h})$. Then, the following two statements hold with overwhelming probability. 
		
		\begin{enumerate} 
			\item We have $\big| \mathsf{H} (nu) - n H^* (u) \big| < n^{\delta}$, for any $u \in \mathfrak{D}$.
			\item For any $u \in \mathfrak{D} \setminus \mathfrak{L}_+^{\delta} (\mathfrak{D})$, we have $\mathsf{H} (nu) =n H^* (u)$.
		\end{enumerate}  
	
	\end{thm}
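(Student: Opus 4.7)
My approach follows the two-step strategy sketched in the introduction: first recast the uniform measure on $\mathscr{G}(\mathsf{h})$ as a law on non-intersecting Bernoulli bridges via \Cref{WalkModel}, and then compare these bridges to a family of non-intersecting Bernoulli walks with position- and time-dependent drifts whose empirical statistics can be tracked through dynamical loop equations. The hypotheses of \Cref{xhh} --- in particular the one-cusp structure and the restriction $\mathfrak{t}_2 - \mathfrak{t}_1 < \mathfrak{c}$ --- are designed precisely so that this drift-walk analysis closes.

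First, I would carry out the bridge-to-drift-walk reduction. Since $\mathsf{h}$ is constant on $\partial_{\we}(\mathfrak{D})\cup\partial_{\ea}(\mathfrak{D})$ by \Cref{xhh}(1), the Bernoulli walks $(\mathsf{x}_j)$ associated to $\mathsf{H}$ have time span $[\mathsf{t}_1,\mathsf{t}_2]$ and prescribed entrance/exit data coming from $\mathsf{h}|_{\partial_{\so}(\mathfrak{D})}$ and $\mathsf{h}|_{\partial_{\north}(\mathfrak{D})}$, both of which $n^{\delta/2}$-approximate their limit shapes by \Cref{xhh2}(1). Using the algorithm of \cite{HFRTNRW}, I would couple $(\mathsf{x}_j)$ to an ensemble $(\widetilde{\mathsf{x}}_j)$ of non-intersecting Bernoulli walks whose jump probability at $(x,t)$ matches the local limiting slope $\partial_x H^*(x/n, t/n)$, so that each $\widetilde{\mathsf{x}}_j(t)$ centers on $n\gamma_j(t/n)$. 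The coupling error is quantified by the discrepancy of entrance/exit data and is thus $\mathcal{O}(n^{\delta/2})$.

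The core of the argument is the loop-equation analysis of the drift walks. Let $G_t(z) = n^{-1}\sum_j (z - \widetilde{\mathsf{x}}_j(t)/n)^{-1}$ denote the Stieltjes transform of the time-$t$ empirical measure. The discrete Schwinger--Dyson identity for the Bernoulli walk measure yields a stochastic evolution equation for $G_t(z)$ whose deterministic part, after a substitution relating $G_t$ to the complex slope $f_t$, reduces to the complex Burgers equation \eqref{ftx}. Applying the method of characteristics to \eqref{ftx} integrates this deterministic law from the boundary data (controlled by \Cref{xhh2}(1)) inward, producing rigidity of $G_t(z)$ on scale $n^{-1+\delta}/\Imaginary z$ for $z$ away from $\mathbb{R}$. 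A Helffer--Sj\"ostrand or contour deformation argument converts this to the pointwise bound $\bigl|\mathsf{H}(nu)-nH^*(u)\bigr| < n^\delta$ inside $\mathfrak{L}$. For $u \notin \mathfrak{L}_+^{\delta}$, the exact equality $\mathsf{H}(nu) = nH^*(u)$ follows by sandwiching $\mathsf{H}$ via monotonicity (\Cref{comparewalks}) between two height functions with frozen boundary profiles that agree with $nh$ outside $\mathfrak{L}_+^{\delta/2}$, an input guaranteed by \Cref{xhh2}(2).

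The principal obstacle is pushing the loop-equation rigidity through the single cusp of $\mathfrak{A}$ at time $\widetilde{\mathfrak{t}}$ and past the at most two tangency locations permitted by \Cref{xhh}(4). Near such points $f_t$ has square-root type singularities, the characteristic curves of \eqref{ftx} pinch toward $\mathbb{R}$, and the naive rigidity scale degrades. The one-cusp condition \Cref{xhh}(3) is essential here: it allows one to orient the Bernoulli walks so that families \emph{merge} rather than separate at the cusp, since separation would require divergent drift and fall outside the reach of the drift-walk coupling. The algebraic structure demanded by \Cref{xhh}(5), together with the explicit comparison to the polygonal datum $Q_{\mathfrak{L}(\mathfrak{P})}$ from \Cref{pa1}, then supplies a quantitative local expansion of $f_t$ near the cusp and tangencies that makes the bootstrap closeable. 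The smallness hypothesis $\mathfrak{t}_2 - \mathfrak{t}_1 < \mathfrak{c}$ ensures that characteristics launched from the boundary data do not develop any further collisions in the bulk during the relevant time window, so that this delicate local analysis near the one cusp is the only obstruction.
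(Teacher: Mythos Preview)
The paper does not prove this theorem. It is imported verbatim from the companion paper \cite{H1} (as the citation in the theorem header and the sentence preceding it make explicit: ``the below result appears as \cite[Theorem 2.5]{H1}, where Assumption 2.3 there is verified by \Cref{xhh} and \cite[Proposition A.4]{H1}''). There is no proof in the present paper to compare your proposal against.

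That said, your sketch does track the one-paragraph summary of the \cite{H1} argument given in the introduction here: couple the bridges to non-intersecting Bernoulli walks with space-time dependent drift via \cite{HFRTNRW}, then control the drift walks through dynamical loop equations and the characteristic method for the complex Burgers equation \eqref{ftx}. Your identification of the role of the one-cusp hypothesis (orienting walks so that they merge rather than separate) and of the smallness condition $\mathfrak{t}_2-\mathfrak{t}_1<\mathfrak{c}$ also matches the discussion in the introduction. Whether the specific technical steps you name (Helffer--Sj\"ostrand, the precise coupling error $\mathcal{O}(n^{\delta/2})$, the monotonicity sandwich for the frozen region) are how \cite{H1} actually executes the argument cannot be assessed from this paper alone; those details live entirely in part I of the series.
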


	\begin{rem}
		
		\label{pathsgamma} 
		
		Recall from \Cref{WalkModel} that the height function $\mathsf{H}$ from \Cref{estimategamma} can equivalently be interpreted as a family of non-intersecting Bernoulli walks on $\mathsf{D}$. The fact that $h$ is constant along both $\partial_{\ea} (\mathfrak{D})$ and $\partial_{\we} (\mathfrak{D})$ implies that $\mathsf{h}$ is constant along the east and west boundaries of $\mathsf{D}$. This is equivalent to not imposing any left or right boundary constraints $(\mathsf{f}; \mathsf{g})$ for these non-intersecting random Bernoulli walks (in the sense described in \Cref{CompareX}).
		
	\end{rem}

	\subsection{Mixing Time Estimates} 
	
	\label{Dynamics} 
	
	Fix a domain $\mathsf{R} \subset \mathbb{T}$ and a boundary height function $\mathsf{h}: \partial \mathsf{R} \rightarrow \mathbb{Z}$. Let us introduce the following Markov dynamics on $\mathscr{G} (\mathsf{h})$ that, given a certain decomposition of $\mathsf{R}$ as a union $\mathsf{R} = \bigcup_{i = 1}^k \mathsf{R}_i$ of domains, ``alternate'' between uniformly resampling $\mathsf{H} \in \mathscr{G} (\mathsf{h})$ on each of the $\mathsf{R}_i$.
	
	\begin{definition}
		
		\label{r1r2r} 
		
		Fix an integer $k \geq 1$, a domain $\mathsf{R} \subset \mathbb{T}$, and a boundary height  function $\mathsf{h}: \partial \mathsf{R} \rightarrow \mathbb{Z}$. Suppose that $\mathsf{R}\setminus \partial \mathsf{R}$ and $\mathscr{G} (\mathsf{h})$ are both nonempty, and that $k \le \diam \mathsf{R}$. Let $\mathsf{R}_1, \mathsf{R}_2, \ldots , \mathsf{R}_k \subseteq \mathsf{R}$ denote domains such that $\mathsf{R} = \bigcup_{i = 1}^k \mathsf{R}_i$, and such that any interior vertex of $\mathsf{R}$ is an interior vertex of some $\mathsf{R}_i$, that is, for each $\mathsf{v} \in \mathsf{R} \setminus \partial \mathsf{R}$, there exists $i \in [1, k]$ for which $\mathsf{v} \in \mathsf{R}_i \setminus \partial \mathsf{R}_i$. The \emph{alternating dynamics} on $\mathsf{R}$ with respect to $(\mathsf{R}_1, \mathsf{R}_2, \ldots , \mathsf{R}_k)$, denoted by $M_{\alt}$, is the discrete-time Markov chain on $\mathscr{G} (\mathsf{h})$, whose state $\mathsf{H}_{t+1} \in \mathscr{G} (\mathsf{h})$ at any time $t+1 \in \mathbb{Z}_{\geq 1}$ is defined from $\mathsf{H}_t$ as follows. 
		
		Let $i \in [1, k]$ denote the integer such that $k$ divides $t -  i + 1$, and let $\mathsf{h}_{t+1} = \mathsf{H}_t |_{\partial \mathsf{R}_i}$. Further let $\mathsf{F}_{t+1} \in \mathscr{G} (\mathsf{h}_{t+1})$ denote a uniformly random height function on $\mathsf{R}_i$. Then, define $\mathsf{H}_{t+1}: \mathsf{R} \rightarrow \mathbb{Z}$ by setting $\mathsf{H}_{t+1} |_{\mathsf{R}_i} = \mathsf{F}_{t+1}$ and $\mathsf{H}_{t+1} |_{\mathsf{R} \setminus \mathsf{R}_i} = \mathsf{H}_t |_{\mathsf{R}\setminus \mathsf{R}_i}$. 
		
	\end{definition} 

	Observe (as is quickly verified by induction on $|\mathsf{R}|$) that the alternating dynamics are irreducible. Thus, they admit a unique stationary measure \cite[Corollary 1.17]{CMT}, which is the uniform one $\mathscr{G} (\mathsf{h})$. 
	
	We will bound the rate of convergence to stationarity for these alternating dynamics, so let us recall some notion on mixing times. Fix a discrete state space $\mathscr{S}$, and let $\mathscr{P} (\mathscr{S})$ denote the set of probability measures on $\mathscr{S}$. The total variation distance between two measures $\nu, \nu' \in \mathscr{P} (\mathscr{S})$ is  
	\begin{flalign*}
		d_{\TV} (\nu, \nu') = \displaystyle\max_{\mathscr{A} \subseteq \mathscr{S}} \big| \nu (\mathscr{A}) - \nu' (\mathscr{A}) \big|.
	\end{flalign*}
	
	\noindent In addition, fix an irreducible Markov chain $M: \mathscr{P} (\mathscr{S}) \rightarrow \mathscr{P} (\mathscr{S})$ on $\mathscr{S}$, whose unique stationary measure is denoted by $\rho$. For any real number $\varepsilon > 0$, the \emph{mixing time with respect to $M$} is given by 
	\begin{flalign}\label{e:mix}
		t_{\mix} (\varepsilon; M) = \min \Big\{ t \in \mathbb{Z}_{\geq 0}: \displaystyle\max_{\nu \in \mathscr{P} (\mathscr{S})}  d_{\TV} (M^t \nu, \rho) \le \varepsilon \Big\},
	\end{flalign}

	\noindent which by \cite[Exercise 4.3]{CMT} (which is a quick consequence of \cite[Proposition 4.7]{CMT}) satisfies 
	\begin{flalign} 
		\label{d02} 
		d_{\TV} (M^t \nu, \rho) \le d_{\TV} (M^{t_{\mix} (\varepsilon; M)} \nu, \rho) \le \varepsilon, \qquad \text{whenever $t \ge t_{\mix} (\varepsilon; M)$}.
	\end{flalign} 
	
	We now state the following (very coarse) estimate on the mixing time for the dynamics $M_{\alt}$. Its proof will appear in \Cref{Prooft} below. 
	
	\begin{prop}
		
		\label{r1r2estimate}
		
		There exists a constant $C > 1$ such that the following holds. Adopt the notation of \Cref{r1r2r}, and set $A = (\diam \mathsf{R})^2$. If $A \geq C$, then $t_{\mix} (e^{-A}; M_{\alt}) \leq A^{11}$. 
	\end{prop}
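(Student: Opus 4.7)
The plan is to derive this mixing-time bound exactly as outlined in the introduction: by combining the polynomial upper bound on the mixing time of single-site Glauber dynamics for lozenge tilings proved in \cite{ADCC} with the censoring inequality of Peres--Winkler \cite{EUM}. Both chains under consideration -- the single-site Glauber chain $M_{\mathrm{Gl}}$ on $\mathscr{G}(\mathsf{h})$ (which at each step picks a uniform interior vertex and performs a hexagon flip with the appropriate heat-bath probability) and the block chain $M_{\alt}$ -- are monotone with respect to the pointwise partial order on height functions; for $M_{\alt}$ this follows immediately from the coupling statement of \Cref{heightcompare} applied to each sub-domain $\mathsf{R}_i$. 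Hence, by a Holley-type monotone coupling, the worst-case $d_{\TV}$ distance to the uniform stationary measure is attained when the chain is started from the maximum element $\mathsf{H}_{\max}$ or the minimum element $\mathsf{H}_{\min}$ of $\mathscr{G}(\mathsf{h})$, and it suffices to bound the coalescence time of two copies of $M_{\alt}$ started from these extremes under a monotone coupling.

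First, I would invoke the polynomial Glauber-mixing bound of \cite{ADCC}: there exist absolute constants $C_0, c_0 > 0$ such that for every tileable $\mathsf{R}' \subset \mathbb{T}$ of diameter at most $D$ and every admissible boundary height function $\mathsf{h}'$, one has $t_{\mix}(\varepsilon; M_{\mathrm{Gl}}) \leq C_0 D^{c_0} \log(D / \varepsilon)$. The precise exponent is unimportant, since the target $A^6$ provides ample slack. Setting $D = \diam \mathsf{R}$ and $\varepsilon$ inverse polynomial in $e^{-A}$ bounds $t_{\mix}(\varepsilon; M_{\mathrm{Gl}})$ by a fixed polynomial in $A$, say $A^{c_0/2 + 2}$.

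Second, I would use the Peres--Winkler inequality \cite{EUM} to relate $M_{\alt}$ to $M_{\mathrm{Gl}}$. The inequality states that for a monotone chain started from an extremal configuration, selectively censoring a subset of the scheduled updates can only increase the TV distance to stationarity; equivalently, adding updates never delays mixing. I would apply this to a schedule in which a long Glauber run is partitioned into consecutive windows, each of length equal to the within-block Glauber mixing time, and within the $j$-th window only single-vertex updates in the currently active block $\mathsf{R}_{i(j)}$ are performed, cycling through $i = 1, \dots, k$. Within each window this block-restricted chain equilibrates to the uniform measure on $\mathscr{G}(\mathsf{h}_{i(j)})$ conditional on the current boundary (up to a TV error that one controls by choosing the window length sufficiently long); this is exactly what one step of $M_{\alt}$ produces. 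Coupling the block-restricted Glauber chain with $M_{\alt}$ window-by-window, and invoking the censoring inequality to transfer mixing bounds between the two monotone chains started from $\mathsf{H}_{\max}$ (and $\mathsf{H}_{\min}$), yields polynomial mixing for $M_{\alt}$ from the extremes.

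Finally, transferring the bound from the extremal initial states to arbitrary starting configurations via the monotone coupling gives $t_{\mix}(e^{-A}; M_{\alt}) \leq A^6$ for all sufficiently large $A$. The main obstacle, as is typical for such coupling-and-censoring arguments, is the bookkeeping: one must ensure that the per-window TV approximation errors between $M_{\alt}$ and the block-restricted Glauber chain do not accumulate beyond the final $e^{-A}$ budget. I would handle this by selecting the within-block target accuracy $\varepsilon$ to be a fixed polynomial factor smaller than $e^{-A}$ divided by the total number of block updates performed (itself polynomial in $A$), and by propagating the monotone coupling between the $\mathsf{H}_{\max}$- and $\mathsf{H}_{\min}$-chains through every round so that the errors remain additive rather than compounding.
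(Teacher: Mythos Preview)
Your plan uses exactly the two ingredients the paper uses---the polynomial Glauber bound of \cite{ADCC} and the Peres--Winkler censoring inequality \cite{EUM}---and your window-by-window coupling of the block-restricted single-site chain with $M_{\alt}$ is precisely the paper's \Cref{rt2}. The gap is in how you orient the censoring inequality.

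You construct the block-restricted chain by starting from full Glauber $M_{\fl}$ and, in window $j$, \emph{deleting} all updates that land outside $\mathsf{R}_{i(j)}$. That makes the block-restricted chain a \emph{censored} version of $M_{\fl}$, and Peres--Winkler then says only that the censored chain is \emph{farther} from stationarity than $M_{\fl}$. This gives $t_{\mix}(\text{block-restricted}) \geq t_{\mix}(M_{\fl})$, which is the wrong direction: it cannot be used to upper-bound $t_{\mix}(M_{\alt})$. Phrasing the inequality as ``adding updates never delays mixing'' does not help either, since neither $M_{\alt}$ nor the block-restricted chain is obtained from Glauber by adding updates.

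The paper resolves this with a randomization that you do not mention. It defines a \emph{censored region-flip} chain $M_{\crf}$: in window $j$ it performs at most one single-site flip in $\mathsf{R}_{i(j)}$, and only when $j$ lies in a random increasing sequence $\mathcal{X}$ whose increments hit residue class $i$ with probability $p_i = |\mathsf{R}_i\setminus\partial\mathsf{R}_i|/|\mathsf{R}\setminus\partial\mathsf{R}|$. Now $M_{\crf}$ is a censoring of the region-flip chain $M_{\rf}$ (which does $A^5$ flips per window), so censoring gives $t_{\mix}(M_{\rf}) \leq t_{\mix}(M_{\crf})$ in the needed direction. The point of the randomization is that the sequence of \emph{non-censored} updates of $M_{\crf}$ is distributionally identical to a run of $M_{\fl}$ (choosing block $i$ with probability $p_i$ and then a uniform vertex in $\mathsf{R}_i$ is the same as a uniform vertex in $\mathsf{R}$), so a direct time-change coupling yields $t_{\mix}(M_{\crf}) \leq kA^5\, t_{\mix}(M_{\fl})$. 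This randomized block selection is the missing idea; without it, censoring alone cannot convert the Glauber bound into an upper bound for $M_{\alt}$.
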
 
	
	\subsection{Proof of \Cref{r1r2estimate}} 
	
	\label{Prooft} 
	
	There are likely many ways of establishing \Cref{r1r2estimate}; the proof below will proceed through a comparison between the alternating dynamics and the Glauber (``flip'') dynamics. To define the latter, given a height function $\mathsf{H} : \mathsf{R} \rightarrow \mathbb{Z}$ and an interior vertex $\mathsf{v} \in \mathsf{R} \setminus \partial \mathsf{R}$, we say $\mathsf{v}$ is \emph{increasable with respect to $\mathsf{H}$} if the function $\mathsf{H}' : \mathsf{R} \rightarrow \mathbb{Z}$ defined by 
	\begin{flalign*}
		\mathsf{H}' (\mathsf{v}) = \mathsf{H} (\mathsf{v}) + 1; \qquad \mathsf{H}' (\mathsf{u}) = \mathsf{H} (\mathsf{u}), \quad \text{for $\mathsf{u} \in \mathsf{R}\setminus \{ v \}$},
	\end{flalign*}
	
	\noindent is a height function on $\mathsf{R}$. In this case, we say that $\mathsf{H}'$ is the \emph{unit increase} of $\mathsf{H}$ at $\mathsf{v}$. We define decreasable vertices and unit decreases of $\mathsf{H}$ analogously. Observe that a vertex of $\mathsf{R}$ cannot simultaneously be increasable and decreasable with respect to a given height function $\mathsf{H}$ on $\mathsf{R}$. 
	
	\begin{definition}
		
		\label{hh} 
		
		Given a height function $\mathsf{H} \in \mathscr{G} (\mathsf{h})$ and an interior vertex $\mathsf{v} \in \mathsf{R} \setminus \partial \mathsf{R}$, the \emph{random flip of $\mathsf{H}$ at $\mathsf{v}$} is the random height function $\mathsf{H}' \in \mathscr{G} (\mathsf{h})$ defined as follows. 
		
		\begin{itemize}
			\item If $\mathsf{v}$ is neither increasable nor decreasable with respect to $\mathsf{H}$, then set $\mathsf{H}' = \mathsf{H}$.
			\item If $\mathsf{v}$ is increasable with respect to $\mathsf{H}$, then with probability $\frac{1}{2}$ set $\mathsf{H}'$ to be the unit increase of $\mathsf{H}$ at $\mathsf{v}$. Otherwise, set $\mathsf{H}' = \mathsf{H}$. 
			\item If $\mathsf{v}$ is decreasable with respect to $\mathsf{H}$, then with probability $\frac{1}{2}$ set $\mathsf{H}'$ to be the unit decrease of $\mathsf{H}$ at $\mathsf{v}$. Otherwise, set $\mathsf{H}' = \mathsf{H}$.
		\end{itemize} 	
	\end{definition} 
	
	Now we define two Markov chains on $\mathscr{G} (\mathsf{h})$, the flip dynamics and the region-flip dynamics. Each update of either chain is obtained by applying a random flip to an interior vertex $\mathsf{v}$ of $\mathsf{R}$. In the flip dynamics, $\mathsf{v} \in \mathsf{R} \setminus \partial \mathsf{R}$ is selected uniformly at random; in the region-flip dynamics $\mathsf{v}$ is selected uniformly at random from $\mathsf{R}_i \setminus \partial \mathsf{R}_i$, where $i$ is determined from the time of the update.

	\begin{definition}
		
		\label{rh} 
		
		The \emph{flip dynamics} on $\mathsf{R}$, denoted by $M_{\fl}$, is the discrete-time Markov chain on $\mathscr{G} (\mathsf{h})$ whose state $\mathsf{H}_{t+1}$ at time $t+1 \geq 1$ is defined from $\mathsf{H}_t$ as follows. Select an interior vertex $\mathsf{v} \in \mathsf{R} \setminus \partial \mathsf{R}$ uniformly at random, and set $\mathsf{H}_{t+1}$ to be the random flip of $\mathsf{H}_t$ at $\mathsf{v}$. 
		
		The \emph{region-flip dynamics} on $\mathsf{R}$ with respect to $(\mathsf{R}_1, \mathsf{R}_2, \ldots , \mathsf{R}_k)$, denoted by $M_{\rf}$, is the discrete-time Markov chain on $\mathscr{G} (\mathsf{h})$ whose state $\mathsf{H}_{t+1}$ at time $t+1 \geq 1$ is defined from $\mathsf{H}_t$ as follows. Let $i \in [1, k]$ denote the integer such that $(k t_0 + i - 1) A^5 < t+1 \leq (k t_0 + i) A^5$, for some $t_0 \in \mathbb{Z}_{\geq 0}$. Select a vertex $\mathsf{v} \in \mathsf{R}_i \setminus \partial \mathsf{R}_i$ uniformly at random, and set $\mathsf{H}_{t+1}$ to be the random flip of $\mathsf{H}_t$ at $\mathsf{v}$. 
		
	\end{definition}
	
	We then have the following result from \cite{ADCC} bounding the mixing time for the flip dynamics.

	\begin{prop}[{\cite[Theorem 5]{ADCC}}]
		
		\label{rt1} 
		
		Adopting the notation of \Cref{r1r2estimate}, we have for any real number $\varepsilon \in (0, 1)$ that 
		\begin{flalign*}
			t_{\mix} (\varepsilon; M_{\fl}) < C A^4 \log A + C A^3 \log A \log \varepsilon^{-1}.
		\end{flalign*}
		
	\end{prop}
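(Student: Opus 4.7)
The plan is to prove this bound via a standard monotone coupling argument combined with Wilson's eigenfunction method. First, observe that $\mathscr{G}(\mathsf{h})$ is a finite distributive lattice under the pointwise partial order $\preceq$ on height functions, with a unique maximum $\mathsf{H}^+$ and minimum $\mathsf{H}^-$. The flip dynamics $M_{\fl}$ admits a grand coupling that preserves $\preceq$: at each step draw a uniform vertex $\mathsf{v} \in \mathsf{R} \setminus \partial \mathsf{R}$ together with a uniform direction, and apply the same random update to every coupled copy. A direct case analysis verifies monotonicity, since at any vertex the set of allowed upward moves only shrinks as $\mathsf{H}(\mathsf{v})$ grows. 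Standard theory (coupling from the past, together with the fact that the two extremal copies sandwich every other copy) then reduces bounding $t_{\mix}(\varepsilon; M_{\fl})$ to bounding the coalescence time $\tau = \inf\{ t : \mathsf{H}^+_t = \mathsf{H}^-_t \}$ of the coupled chains started from $\mathsf{H}^{\pm}$.

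For the coalescence analysis, I would introduce a weighted potential $\Phi_t = \sum_{\mathsf{v}} \psi(\mathsf{v}) \bigl( \mathsf{H}^+_t(\mathsf{v}) - \mathsf{H}^-_t(\mathsf{v}) \bigr)$, where $\psi > 0$ is the principal eigenfunction of the discrete Dirichlet Laplacian on $\mathsf{R} \setminus \partial \mathsf{R}$ with eigenvalue $\lambda_1$; a standard Faber--Krahn-type bound for a domain of diameter $\sqrt{A}$ gives $\lambda_1 \geq c/A$. A drift computation, using the eigen-relation and the monotone coupling, should yield $\mathbb{E}[\Phi_{t+1} \mid \mathcal{F}_t] \leq (1 - c \lambda_1 / N) \Phi_t$ where $N = |\mathsf{R} \setminus \partial \mathsf{R}| \leq C A$, so that $\mathbb{E}[\Phi_t] \leq e^{-ct/A^2} \Phi_0$. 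Since heights differ by at most the diameter $\sqrt{A}$ on $O(A)$ interior vertices, $\Phi_0 = O(A^{3/2})$. Pairing this with a complementary second-moment estimate on $\Phi_t$ (obtainable via an analogous Wilson-type computation involving $(\Delta \psi^2)(\mathsf{v})$) converts the first-moment decay into a tail bound of the form $\mathbb{P}(\Phi_t \geq 1) < \varepsilon$ once $t \gtrsim A^2 (\log A + \log \varepsilon^{-1})$. Since $\Phi_t \geq 1$ whenever the two coupled chains have not coalesced, this yields the stated mixing time bound (in fact with significant slack relative to the crude $A^4 \log A + A^3 \log A \cdot \log \varepsilon^{-1}$).

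The main obstacle is establishing the drift inequality for $\Phi_t$ under the flip dynamics. Unlike the standard heat-bath dynamics for $\pm 1$ spins, the flip dynamics is frequently a no-op (when the selected vertex is neither increasable nor decreasable in one or both chains), and the set of admissible updates at $\mathsf{v}$ depends delicately on the local configuration of the three lozenge types around $\mathsf{v}$. The crux is to show, by case analysis over the possible values of $(\mathsf{H}^+ - \mathsf{H}^-)$ at $\mathsf{v}$ and its six neighbors, that whenever $\mathsf{H}^+(\mathsf{v}) > \mathsf{H}^-(\mathsf{v})$ the coupling makes progress toward coalescence at a rate which, upon averaging over the uniform choice of $\mathsf{v}$, is controlled by the eigen-relation $\sum_{\mathsf{u} \sim \mathsf{v}} \psi(\mathsf{u}) - 6 \psi(\mathsf{v}) = -\lambda_1 \psi(\mathsf{v})$. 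Once this local drift inequality is in place, the remainder of the argument is a routine application of Chebyshev's inequality, a union bound over times, and the standard reduction from expected coalescence to total-variation mixing.
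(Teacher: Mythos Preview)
The paper does not prove this proposition at all: it is quoted verbatim as \cite[Theorem 5]{ADCC} (Randall--Tetali) and used as a black box. So there is no ``paper's own proof'' to compare against beyond the citation.

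Your proposed route is genuinely different from the cited reference. Randall--Tetali obtain the bound by the comparison-of-chains technique: they compare the single-flip Glauber dynamics to the Luby--Randall--Sinclair ``tower move'' chain via canonical paths, incurring polynomial congestion factors which account for the rather loose exponents $A^4$ and $A^3$. What you sketch is instead Wilson's eigenfunction/coupling method, which for lozenge tilings yields the sharper $O(A^2(\log A + \log \varepsilon^{-1}))$ bound you arrive at. Both approaches are valid for this model; yours is tighter and arguably more direct, while the comparison method is more robust to variations in the local moves.

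Your outline is sound, but be aware that the drift inequality you flag as ``the main obstacle'' is the entire content of the argument and is not quite as routine as a six-neighbor Laplacian computation. For lozenge height functions the relevant observation (Wilson 2004) is that under the monotone grand coupling, whenever $\mathsf{H}^+(\mathsf{v}) > \mathsf{H}^-(\mathsf{v})$, the vertex $\mathsf{v}$ is either decreasable in $\mathsf{H}^+$ or increasable in $\mathsf{H}^-$; this is what produces the supermartingale against the Dirichlet eigenfunction. If you intend to actually write this up rather than cite it, that lemma is the step to state and check carefully.
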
 
	
	We now state the following two lemmas, which will be established below. 
	\begin{lem} 
		
		\label{rt3}
		
		Under the notation of \Cref{r1r2estimate}, 
		\begin{flalign*} 
			t_{\mix} (e^{-2A}; M_{\rf}) \leq 144A^{11} \cdot \bigg( t_{\mix} \Big( \frac{e^{-2A}}{32A^4}; M_{\fl} \Big) + 1 \bigg).
		\end{flalign*} 
	\end{lem}

	\begin{lem}
		
		\label{rt2}
		
		Under the notation of \Cref{r1r2estimate}, $A^5 t_{\mix} (e^{-A}; M_{\alt}) \leq t_{\mix} (e^{-2A}; M_{\rf})$. 
	\end{lem}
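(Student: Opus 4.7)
The underlying intuition is that $A^5$ flip steps of $M_{\rf}$ concentrated on a single subregion $\mathsf{R}_i$ should produce a state whose restriction to $\mathsf{R}_i$ is extremely close to the uniform distribution given its boundary values, which is exactly what one step of $M_{\alt}$ does exactly. Quantitatively, fix $i \in [1,k]$ and consider the Markov chain on $\mathscr{G}(\mathsf{h}|_{\partial \mathsf{R}_i})$ obtained by performing random flips at uniformly chosen interior vertices of $\mathsf{R}_i$. This is the flip dynamics on the domain $\mathsf{R}_i$ (with its inherited boundary data held fixed), and since $(\diam \mathsf{R}_i)^2 \leq A$, \Cref{rt1} applies to it. Inverting that bound and solving for the error tolerance at $t = A^5$ steps shows that one such block produces a law within total-variation distance $\varepsilon^*$ of the uniform measure on $\mathscr{G}(\mathsf{h}|_{\partial \mathsf{R}_i})$, with $\varepsilon^* \leq \exp\bigl(-c A^2/\log A\bigr)$ for some constant $c > 0$ depending only on the constant in \Cref{rt1}.

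Next, given any initial distribution $\mu_0$, the plan is to construct a coupling of $(M_{\alt}^s \mu_0)$ and $(M_{\rf}^{s A^5}\mu_0)$ by proceeding round by round: start both chains from the same state, and at round $j$ with $1 \leq j \leq s$, if the chains currently agree, then one step of $M_{\alt}$ on the region $\mathsf{R}_{i_j}$ and $A^5$ consecutive steps of $M_{\rf}$ on the same $\mathsf{R}_{i_j}$ yield output laws within $\varepsilon^*$ in total variation by the previous paragraph, so the optimal coupling lemma produces a joint realization in which they again agree with probability at least $1 - \varepsilon^*$. If they already disagree, couple arbitrarily. A union bound over the $s$ rounds yields
\[
d_{\TV}\bigl(M_{\alt}^s \mu_0,\, M_{\rf}^{s A^5} \mu_0\bigr) \leq s\varepsilon^*.
\]

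To conclude, set $T = t_{\mix}(e^{-2A}; M_{\rf})$ and $s = \lceil T/A^5 \rceil$, so $s A^5 \geq T$ and hence $d_{\TV}(M_{\rf}^{s A^5} \mu_0, \pi) \leq e^{-2A}$, where $\pi$ is the uniform measure on $\mathscr{G}(\mathsf{h})$ (the common stationary distribution of both chains by the Gibbs property). The triangle inequality then gives $d_{\TV}(M_{\alt}^s \mu_0, \pi) \leq e^{-2A} + s\varepsilon^*$. Since \Cref{rt3} together with \Cref{rt1} bounds $T$ polynomially in $A$, while $\varepsilon^*$ is superpolynomially small, the right-hand side lies below $e^{-A}$ for $A$ sufficiently large, yielding $t_{\mix}(e^{-A}; M_{\alt}) \leq s$ and hence the claim (modulo an additive ceiling slack of $A^5$, which is inconsequential at the scale of the final bound in \Cref{r1r2estimate}). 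The main technical obstacle is purely quantitative: the per-block mixing error $\varepsilon^*$ must be small enough to survive accumulation over $s \approx T/A^5 = \mathrm{poly}(A)$ rounds, and it is precisely the logarithmic slack in \Cref{rt1} --- allowing the bound $t_{\mix}(\varepsilon; M_{\fl}) \lesssim A^4 \log A + A^3 \log A \cdot \log \varepsilon^{-1}$ to be inverted to $\varepsilon^* = \exp(-\Omega(A^2/\log A))$ at time $A^5$ --- that makes this accumulation negligible.
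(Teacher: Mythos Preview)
Your proposal is correct and follows essentially the same route as the paper: couple $s$ steps of $M_{\alt}$ with $sA^5$ steps of $M_{\rf}$ block-by-block, using \Cref{rt1} on each $\mathsf{R}_i$ to bound the per-block error, then union bound and apply the triangle inequality once $sA^5$ exceeds $t_{\mix}(e^{-2A};M_{\rf})$. The only cosmetic difference is that the paper, rather than inverting \Cref{rt1} to get your $\varepsilon^* = \exp(-cA^2/\log A)$, simply checks that $t_{\mix}(e^{-2A};M_{\fl}) \le A^5$ and uses $e^{-2A}$ as the per-block error; the ceiling slack you flag is present in the paper's argument as well and is indeed immaterial for \Cref{r1r2estimate}.
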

	
	Given these two results, we can quickly establish \Cref{r1r2estimate}.
	
	\begin{proof}[Proof of \Cref{r1r2estimate}]
		
		This follows from \Cref{rt2}, \Cref{rt3}, and \Cref{rt1} (the last applied with the $\varepsilon$ there equal to $\frac{e^{-2A}}{32A^4}$ here).		
	\end{proof}

	The proofs of \Cref{rt3} and \Cref{rt2} will use ``weighted'' and ``censored'' forms of the flip dynamics from \Cref{rh}. To define these, adopting the notation of \Cref{r1r2r}, for each $i \in [1, k]$ let
	\begin{flalign}
		\label{pi} 
		p_i = |\mathsf{R}_i \setminus \partial \mathsf{R}_i| \cdot \Bigg(\displaystyle\sum_{j=1}^k |\mathsf{R}_j \setminus \partial \mathsf{R}_j| \Bigg)^{-1} \in [0, 1].
	\end{flalign} 

	\noindent For any vertex $v \in \mathsf{R} \setminus \partial \mathsf{R}$, we further let 
	\begin{flalign*} 
		\mathfrak{m}(\mathsf{v}) = \# \big\{ j \in [1, k] : \mathsf{v} \in \mathsf{R}_j \setminus \partial \mathsf{R}_j \big\}; \quad \mathfrak{M} = \displaystyle\sum_{\mathsf{v} \in \mathsf{R} \setminus \partial \mathsf{R}} \mathfrak{m} (\mathsf{v}) = \displaystyle\sum_{j = 1}^k |\mathsf{R}_j \setminus \partial \mathsf{R}_j|; \quad \mathfrak{m}= \displaystyle\sum_{\mathsf{v} \in \mathsf{R} \setminus \partial \mathsf{R}} \mathfrak{m} (\mathsf{v})^{-1}.
	\end{flalign*}
	
	\begin{definition} 
		
		\label{mcf}
		
		The \emph{weighted flip dynamics}, denoted by $M_{\wf}$, is the discrete-time Markov chain on $\mathscr{G} (\mathsf{h})$, whose state $\mathsf{H}_{t+1}$ at time $t+1$ is defined from $\mathsf{H}_t$ as follows. Select an interior vertex $\mathsf{v} \in \mathsf{R} \setminus \partial \mathsf{R}$ with probability $\mathfrak{m} (\mathsf{v}) \cdot \mathfrak{M}^{-1}$, and set $\mathsf{H}_{t+1}$ to be the random flip of $\mathsf{H}_t$ at $\mathsf{v}$.		
		
		The \emph{censored weighted flip dynamics}, denoted by $M_{\cwf}$, is the discrete-time Markov chain on $\mathscr{G} (\mathsf{h})$, whose state $\mathsf{H}_{t+1}$ at time $t+1$ is defined from $\mathsf{H}_t$ as follows. Select an interior vertex $\mathsf{v} \in \mathsf{R} \setminus \partial \mathsf{R}$ with probability $\mathfrak{m} (\mathsf{v}) \cdot \mathfrak{M}^{-1}$. Then set $\mathsf{H}_{t+1}$ to be the random flip of $\mathsf{H}_t$ at $\mathsf{v}$ with probability $\mathfrak{m}(\mathsf{v})^{-1} \cdot \mathfrak{m}^{-1}$, and set $\mathsf{H}_{t+1} = \mathsf{H}_t$ with the complementary probability $1 - \mathfrak{m} (\mathsf{v})^{-1} \cdot \mathfrak{m}^{-1}$. 
		
	\end{definition}

	\begin{rem} 
		
		\label{mcwf2} 
		
		By \Cref{mcf}, we may interpret the censored weighted flip dynamics $M_{\cwf}$ as the following ``lazy'' version of the flip dynamics from \Cref{rh}. With probability $1 - |\mathsf{R} \setminus \partial \mathsf{R}| \cdot (\mathfrak{M} \cdot \mathfrak{m})^{-1}$, we perform a \emph{lazy} step and set $\mathsf{H}_{t+1} = \mathsf{H}_t$. Otherwise we perform a \emph{active} step, by selecting a vertex $\mathsf{v} \in \mathsf{R} \setminus \partial \mathsf{R}$ uniformly at random and setting $\mathsf{H}_{t+1}$ to be the random flip of $\mathsf{H}_t$ at $\mathsf{v}$. 
	
	\end{rem}

	The following lemma compares the mixing time for $M_{\cwf}$ to that of the flip dynamics $M_{\fl}$, using \Cref{mcwf2}. 

	\begin{lem} 
		
		\label{tmcf2} 
		
		For any real number $\varepsilon \in \big( 0, \frac{1}{2} \big]$, we have 
		\begin{flalign*} 
			t_{\mix} (2 \varepsilon; M_{\cwf}) \le 4 A^3 (\log \varepsilon^{-1})^2 \cdot \big( t_{\mix} (\varepsilon; M_{\fl}) + 1 \big).
		\end{flalign*} 
	\end{lem} 

	\begin{proof}
		
		Throughout this proof, we recall the interpretation of $M_{\cwf}$ as a lazy version of $M_{\fl}$ from \Cref{mcwf2}; we also recall the notation of that remark and set $T_0 = 4A^3 (\log \varepsilon^{-1})^2 \cdot \big( t_{\mix} (\varepsilon; M_{\fl}) \big) + 2 \le 4 A^3 (\log \varepsilon^{-1})^2 \cdot \big( t_{\mix} (\varepsilon; M_{\fl}) + 1 \big)$. By Chernoff's inequality, with probability at least $1 - \varepsilon$, the number of active steps in this walk after some time $T \ge T_0 \ge 4A^3 (\log \varepsilon^{-1})^2$ is at least 
		\begin{flalign*}
			\displaystyle\frac{|\mathsf{R} \setminus \partial \mathsf{R}|}{\mathfrak{m} \cdot \mathfrak{M}} \cdot T - T^{1/2} \log \varepsilon^{-1} \ge \displaystyle\frac{T}{\mathfrak{M}} - T^{1/2} \log \varepsilon \ge A^{-3/2} T - (\log \varepsilon^{-1}) T^{1/2} \ge \displaystyle\frac{T_0}{2A^{3/2}},
		\end{flalign*}
	
		\noindent where we used the fact that $\mathfrak{m} \le |\mathsf{R} \setminus \partial \mathsf{R}|$ (as $\mathfrak{m}(\mathsf{v}) \ge 1$ for each $\mathsf{v}\in \mathsf{R} \setminus \partial \mathsf{R}$) in the first bound; the fact that $\mathfrak{M} \le k \cdot |\mathsf{R} \setminus \mathsf{R}| \le (\diam \mathsf{R})^3 = A^{3/2}$ (as $\mathfrak{m} (\mathsf{v}) \le k \le \diam \mathsf{R}$ for each $\mathsf{V} \in \mathsf{R} \setminus \partial \mathsf{R}$) in the second; and the fact that $T \ge T_0 \ge 4 A^3 (\log \varepsilon)^2$ in the third. It follows for $T \ge T_0$ that the number of active steps in these dynamics is at least $t_{\mix} (\varepsilon; M_{\fl})$, with probability at least $1 - \varepsilon$. Conditioning on this event, we find for any $\mathsf{H}_0 \in \mathscr{G} (\mathsf{h})$ that we may couple $M_{\cwf}^{\lceil T_0 \rceil} \mathsf{H}_0$ to coincide with a uniformly random element $\mathsf{F} \in \mathscr{G} (\mathsf{h})$ with probability $1 - \varepsilon$. Hence, by a union bound, we may couple $M_{\cwf}^{\lceil T_0 \rceil} \mathsf{H}_0$ to coincide with $\mathsf{F}$ with probability $1 - 2 \varepsilon$, from which the lemma follows, as $T_0 \le 4A^3 (\log \varepsilon^{-1})^2 \cdot \big( t_{\mix} (\varepsilon; M_{\fl}) + 1 \big)$.
	\end{proof}
	
	We further require a censored version of the region-flip dynamics from \Cref{rh}.

	\begin{definition} 
		
		\label{mcrf} 
		
		The \emph{censored region-flip dynamics}, denoted by $M_{\crf}$, is the discrete-time Markov chain on $\mathscr{G} (\mathsf{h})$ whose state $\mathsf{H}_{t+1}$ at time $t+1$ is defined from $\mathsf{H}_t$ as follows. First, let $\mathcal{X} = (X_1, X_2, \ldots) \in \mathbb{Z}_{\geq 1}$ denote the sequence of integer-valued random variables defined by first setting $\mathbb{P} (X_1 = i) = p_i$ for each $i \in [1, k]$. Then, given $X_r$, we define $X_{r + 1} \in \{ X_r + 1, X_r + 2, \ldots , X_r + k \}$ by setting $\mathbb{P} (X_{r + 1} = X_r + i) = p_j$, where $j \in [1, k]$ is such that $k$ divides $X_r + i - j$. 
		
		Now, let $s \geq 1$ denote the integer such that $(s - 1) A^{5} < t+1 \leq s A^{5}$ and let $i' \in [1, k]$ denote the integer such that $k t_0 + i' = s$ for some $t_0 \in \mathbb{Z}_{\geq 0}$. If $s \in \mathcal{X}$ and $t = (s - 1) A^{5}$, then select an interior $\mathsf{v} \in \mathsf{R}_{I'} \setminus \partial \mathsf{R}_{I'}$ uniformly at random, and let $\mathsf{H}_{t + 1}$ denote the random flip of $\mathsf{H}_t$ at $\mathsf{v}$. If instead $s \notin \mathcal{X}$ or $t - (s - 1) A^{5} > 0$, then set $\mathsf{H}_{t + 1} = \mathsf{H}_t$. 
		
	\end{definition} 
	
	The process $M_{\crf}$ censors any step in the region-flip dynamics $M_{\rf}$ from \Cref{rh} in the time interval $\big[ (s - 1) A^{5} + 2, s A^{5} \big]$, and also the step at time $(s - 1) A^{5}$ if $s \notin \mathcal{X}$. 		
	
	Denote the  maximal and minimal configurations of $\mathscr{G} (\mathsf{h})$ by $\mathsf H^{\rm top}, \mathsf H^{\rm btm}\in \mathscr{G} (\mathsf{h})$, which for any $\mathsf H\in \mathscr{G} (\mathsf{h})$ satisfy $\mathsf H^{\rm top}(\mathsf v)\geq \mathsf H(\mathsf v)\geq \mathsf H^{\rm btm}(\mathsf v)$, for each $\mathsf v\in \mathsf R$. Further let $\delta_{\mathsf H^{\rm top}}, \delta_{\mathsf H^{\rm btm}} \in \mathscr{P} \big(\mathscr{G} (\mathsf{h}) \big)$ denote the delta masses at $\mathsf H^{\rm top}$ and $\mathsf H^{\rm btm}$ respectively. 
	The following result from \cite{EUM} shows that the above censorings (weakly) increase the mixing time for these dynamics when started from the top configuration or the bottom one.\footnote{This statement was only explicitly made in \cite[Therorem 1.1]{EUM} for starting the dynamics from the top configuration, but the fact that it also holds when started from the bottom one follows by symmetry.}

	\begin{prop}[{\cite[Theorem 1.1]{EUM}}]
		
		\label{piriestimate}
		
		Letting $\rho$ denote the uniform measure on $\mathscr{G} (\mathsf{h})$, we have for any integer $t \ge 0$ that 
		\begin{flalign*} 
			& d_{\TV} (M_{\wf}^t \delta_{\mathsf{H}^{\ttop}}, \rho) \le d_{\TV} (M_{\cwf}^t \delta_{\mathsf{H}^{\ttop}}, \rho); \qquad d_{\TV} (M_{\wf}^t \delta_{\mathsf{H}^{\btm}}, \rho) \le d_{\TV} (M_{\cwf}^t \delta_{\mathsf{H}^{\btm}}, \rho); \\
			& d_{\TV} (M_{\rf}^t \delta_{\mathsf{H}^{\ttop}}, \rho) \le d_{\TV} (M_{\crf}^t \delta_{\mathsf{H}^{\ttop}}, \rho); \qquad d_{\TV} (M_{\rf}^t \delta_{\mathsf{H}^{\btm}}, \rho) \le d_{\TV} (M_{\crf}^t \delta_{\mathsf{H}^{\btm}}, \rho).
		\end{flalign*} 

	\end{prop}

	Recalling the notation from \eqref{e:mix}, we also denote for any irreducible Markov chain $M : \mathscr{P} \big( \mathscr{G} (h) \big) \rightarrow \mathscr{P} \big(\mathscr{G} (h) \big)$ the quantities (where below $\rho$ denotes the stationary measure of $M$)
	\begin{flalign*}
		&R (\varepsilon; M) = \min \Big\{ t \in \mathbb{Z}_{\geq 0}:  d_{\TV} (M^t \delta_{\mathsf H^{\rm top}}, \rho) < \varepsilon \Big\}; \\
		&S (\varepsilon; M) = \min \Big\{ t \in \mathbb{Z}_{\geq 0}:  d_{\TV} (M^t \delta_{\mathsf H^{\rm btm}}, \rho) < \varepsilon \Big\},
	\end{flalign*}

	The following lemma bounds the mixing times of weighted flip and region-flip dynamics by the associated $R$ and $S$.

\begin{lem}
	
	\label{l:Mrfmixing}
	
	For any $\varepsilon \in (0, 1)$ we have 
\begin{align}
	\label{twfrf}
t_{\mix} (\varepsilon, M_{\wf}) \le R \Big( \displaystyle\frac{\varepsilon}{4A^2}; M_{\wf} \Big) + S \Big( \displaystyle\frac{\varepsilon}{4A^2}; M_{\wf} \Big); \quad t_{\mix} (\varepsilon, M_{\rf}) \le R \Big( \displaystyle\frac{\varepsilon}{4A^2}; M_{\rf} \Big) + S \Big( \displaystyle\frac{\varepsilon}{4A^2}; M_{\rf} \Big).
\end{align}
\end{lem}
\begin{proof}
	
	We only establish the second statement of \eqref{twfrf}, as the proof of the first is entirely analogous. First observe that there exists a grand coupling the region-flip dynamics $M_{\rf}$ over all choices of initial data in $\mathscr{G} (\mathsf{h})$, by running them under the same choices of sequences $\mathcal{X} = (X_1, X_2, \ldots )$ and vertices $\mathsf{v}$ (at which each flip is made) from \Cref{mcrf}. It is quickly verified (see \cite[Proposition 25.7]{RT}, for example) that this coupling is \emph{monotone}, meaning that if for some $\mathsf{H}_1, \mathsf{H}_2 \in \mathscr{G} (\mathsf{h})$ we have $\mathsf{H}_1 (\mathsf{v}) \le \mathsf{H}_2 (\mathsf{v})$ for each $\mathsf{v} \in \mathsf{R}$, then it holds that $M_{\rf}^t \mathsf{H}_1 (\mathsf{v}) \le M_{\rf}^t \mathsf{H}_2 (\mathsf{v})$ for each $t \ge 0$ and $v \in \mathsf{R}$. 
	
	Observe that it suffices to show under these coupled dynamics that, with probability at least $1 - \varepsilon$, the models started at $\mathsf{H}^{\ttop}$ and at $\mathsf{H}^{\btm}$ coincide after time $R \big( \frac{\varepsilon}{4A^2},  M_{\rf} \big) + S \big( \frac{\varepsilon}{4A^2}, M_{\rf} \big)$, that is, 
	\begin{flalign}
		\label{mrft2} 
		\mathbb{P} [ M_{\rf}^T \mathsf{H}^{\ttop} = M_{\rf}^T \mathsf{H}^{\btm}] \ge 1 - \varepsilon, \qquad \text{if $T \ge R \Big( \displaystyle\frac{\varepsilon}{4A^2}; M_{\rf} \Big) + S \Big( \displaystyle\frac{\varepsilon}{4A^2}; M_{\rf} \Big)$}.
	\end{flalign}

	\noindent Indeed, given \eqref{mrft2}, it follows since $\mathsf{H}^{\btm} \le \mathsf{F} \le \mathsf{H}^{\ttop}$ for each $\mathsf{F} \in \mathscr{G}(\mathsf{h})$ that with probability at least $1 - \varepsilon$ the $M_{\rf}^T \mathsf{F}$ all, over every $\mathsf{F} \in \mathscr{G} (\mathsf{h})$, coincide for $T \ge R \big( \frac{\varepsilon}{4A^2},  M_{\rf} \big) + S \big( \frac{\varepsilon}{4A^2}, M_{\rf} \big)$. In particular, sampling $\mathsf{F}$ under the stationary measure $\rho$ for $M_{\rf}$, we deduce for any $\mathsf{H} \in \mathscr{G}(\mathsf{h})$ that one can couple $M_{\rf}^T \mathsf{H}$ to coincide with a height function sampled under $\rho$, with probability $1 - \varepsilon$; hence, $t_{\mix} (\varepsilon; M_{\rf}) \le R \big( \frac{\varepsilon}{4A^2},  M_{\rf} \big) + S \big( \frac{\varepsilon}{4A^2}, M_{\rf} \big)$, confirming the lemma.
	
	It remains to verify \eqref{mrft2}. Since $T \ge R \big( \frac{\varepsilon}{4A^2}; M_{\rf} \big)$, we have by \eqref{d02} that it is possible to couple $M_{\rf}^T \mathsf{H}^{\ttop}$ with a height function $\mathsf{F}$ sampled under the stationary measure $\rho$ of $M_{\rf}$ such that $M_{\rf}^T \mathsf{H}^{\ttop} = \mathsf{F}$ with probability at least $1 - \frac{\varepsilon}{4A^2}$. Moreover, since $M_{\rf}^T \mathsf{H}^{\ttop} (\mathsf{u}) = \mathsf{h} (\mathsf{u}) = \mathsf{F}(\mathsf{u})$ for each $\mathsf{u} \in \partial \mathsf{R}$ and since $\diam \mathsf{R} \le A$, it follows (as $\mathsf{H}$ is $1$-Lipschitz) that $\big| M_{\rf}^T \mathsf{H}^{\ttop} (\mathsf{v}) - \mathsf{F} (\mathsf{v}) \big| \le 2A$. Combining these two statements, we deduce that $\mathbb{E} \big[ M_{\rf}^T \mathsf{H}^{\ttop} (\mathsf{v}) \big] \le \mathbb{E} \big[ \mathsf{F} (\mathsf{v}) \big] + \frac{\varepsilon}{2A}$, for each $\mathsf{v} \in \mathsf{R}$. Similarly, we have $\mathbb{E} \big[ M_{\rf}^T \mathsf{H}^{\btm} (\mathsf{v}) \big] \le \mathbb{E} \big[ \mathsf{F} (\mathsf{v}) \big] - \frac{\varepsilon}{2A}$. 

	Therefore, $\mathbb{E} \big[ M_{\rf}^T \mathsf{H}^{\ttop} (\mathsf{v}) \big] \le \mathbb{E} \big[ M_{\rf}^T \mathsf{H}^{\btm} (\mathsf{v})| \big] + \frac{\varepsilon}{A}$ for any $\mathsf{v} \in \mathsf{R}$. Together with the above grand coupling satisfying $M_{\rf}^T \mathsf{H}^{\btm} \le M_{\rf}^T \mathsf{H}^{\ttop}$; the fact that any height function is integer-valued; and a Markov inequality, we deduce that $\mathbb{P} \big[ M_{\rf}^T \mathsf{H}^{\ttop} (\mathsf{v}) \ne M_{\rf}^T \mathsf{H}^{\btm} (\mathsf{v}) \big] \le \frac{\varepsilon}{A}$, under this grand coupling. A union bound over all $|\mathsf{R}| \le A$ vertices $\mathsf{v} \in \mathsf{R}$ then yields \eqref{mrft2} and thus the lemma.
\end{proof}
	
	Next we have the following lemma that compares the mixing times of the flip and censored region-flip dynamics.

	\begin{lem}
		
		\label{mfmcrf} 
		
		Adopting the notation of \Cref{r1r2estimate}, and fixing a real number $\varepsilon \in \big( 0, \frac{1}{2} \big]$, we have $t_{\mix} (8A^2 \varepsilon; M_{\crf}) \leq 8A^9 (\log \varepsilon^{-1})^2 \cdot \big( t_{\mix} (\varepsilon; M_{\fl}) + 1 \big)$.
		
	\end{lem}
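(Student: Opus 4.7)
The plan is to view the censored region-flip chain $M_{\crf}$ as the flip chain $M_{\fl}$ observed at a random, slowly-growing sequence of active moments, and then to exploit a deterministic lower bound on the number of active moments in a given window to transfer the mixing bound for $M_{\fl}$ to $M_{\crf}$.

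First I would identify each ``active'' update of $M_{\crf}$ with one step of $M_{\fl}$. By the construction of $\mathcal{X}$, the residue $X_r \bmod k$ is distributed as $(p_1,\dots,p_k)$ for every $r \geq 1$, and in fact $\mathbb{P}(X_{r+1} \bmod k = j \mid X_r) = p_j$, so the residues are i.i.d. Thus at each active update of $M_{\crf}$, a region $\mathsf{R}_i$ is picked with probability $p_i = |\mathsf{R}_i \setminus \partial \mathsf{R}_i|/|\mathsf{R} \setminus \partial \mathsf{R}|$, then a uniform interior vertex of $\mathsf{R}_i$ is selected and randomly flipped. Because $\sum_i p_i = 1$, the sets $\mathsf{R}_i \setminus \partial \mathsf{R}_i$ partition $\mathsf{R} \setminus \partial \mathsf{R}$, so the marginal law of the flipped vertex is uniform on $\mathsf{R} \setminus \partial \mathsf{R}$. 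Hence each active update is, in law, a single step of $M_{\fl}$; between active updates $M_{\crf}$ leaves the state unchanged by definition.

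Next I would count active updates in $T$ steps. Set $N(T) = |\{r \geq 1 : X_r \leq \lceil T/A^5\rceil\}|$. Since $X_{r+1}-X_r \in [1,k]$, induction yields $X_r \leq rk$ for all $r$, so
\begin{align*}
N(T) \;\geq\; \bigl\lfloor T/(kA^5) \bigr\rfloor \qquad \text{almost surely.}
\end{align*}
Taking $T = A^6\, t_{\mix}(\varepsilon;M_{\fl})$ and $A \geq C$ for a constant $C$ depending only on $k$ (which is fixed by the decomposition $(\mathsf{R}_i)$), this gives $N(T) \geq t_{\mix}(\varepsilon; M_{\fl})$ almost surely. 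Conditional on $\mathcal{X}$, the state of $M_{\crf}$ at time $T$ started from any $\nu \in \mathscr{P}(\mathscr{G}(\mathsf{h}))$ coincides in distribution with $M_{\fl}^{N(T)}\nu$, because the idle steps contribute nothing. The triangle inequality for total variation and the definition of $t_{\mix}(\varepsilon;M_{\fl})$ then give
\begin{align*}
d_{\TV}\bigl(M_{\crf}^T\nu,\rho\bigr) \;\leq\; \mathbb{E}_{\mathcal{X}}\bigl[d_{\TV}(M_{\fl}^{N(T)}\nu,\rho)\bigr] \;<\; \varepsilon,
\end{align*}
which yields $t_{\mix}(\varepsilon; M_{\crf}) \leq T = A^6\, t_{\mix}(\varepsilon; M_{\fl})$.

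The main obstacle is the bookkeeping in step one: verifying that the residues of $\mathcal{X}$ are i.i.d.\ with law $p$ and that this makes each active step of $M_{\crf}$ truly a step of $M_{\fl}$. Once that identification is in place, everything else is deterministic; in particular, no Hoeffding-style concentration for $N(T)$ is needed, because the bound $X_r \leq rk$ controls $N(T)$ pathwise and the surplus factor of $A^6 / A^5 = A \gg k$ in the time budget swallows the cost of the idle intervals.
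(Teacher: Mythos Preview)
Your proof is correct and follows essentially the same approach as the paper: both identify each active update of $M_{\crf}$ with a single step of $M_{\fl}$ via the observation that the residues $X_r \bmod k$ are i.i.d.\ with law $(p_1,\dots,p_k)$, and both use the deterministic bound $X_r \leq rk$ to control how many active updates occur in a given window, absorbing the factor $k$ into the extra factor of $A$ (the paper uses $k \leq |\mathsf{R}| \leq A$, you use $A \geq C(k)$, which is equivalent for the purposes of \Cref{r1r2estimate}). The only cosmetic difference is that the paper phrases the coupling ``forward'' (step $t$ of $M_{\fl}$ lands at time $(X_t-1)A^5+1 \leq ktA^5$ of $M_{\crf}$), whereas you phrase it ``backward'' by counting active steps $N(T)$ in a window of length $T$; these are the same bound read in opposite directions.
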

	
	\begin{proof}
		
		We first bound the mixing time of $M_{\crf}$ in terms of that of $M_{\wf}$. To that end, recall that the state $\mathsf{H}_t$ at time $t \geq 1$ under $M_{\wf}$ is defined from $\mathsf{H}_{t - 1}$ by performing a random flip at a vertex $\mathsf{v} \in \mathsf{R} \setminus \partial \mathsf{R}$ chosen with probability $\mathfrak{m} (\mathsf{v}) \cdot \mathfrak{M}^{-1}$. Observe that we equivalently sample $\mathsf{v}$ by first selecting an index $i \in [1, k]$ with probability $p_i$ and then selecting $\mathsf{v} \in \mathsf{R}_i \setminus \partial \mathsf{R}_i$ uniformly at random. Recalling the random sequence $\mathcal{X} = (X_1, X_2, \ldots )$ from \Cref{mcrf}, this is in turn equivalent to sampling $\mathsf{v} \in \mathsf{R}_{X_t} \setminus \partial \mathsf{R}_{X_t}$ uniformly at random, where we have denoted $\mathsf{R}_j = \mathsf{R}_i$ for $i \in [1, k]$ the integer such that $k$ divides $j - i$. 
		
		It follows that $M_{\fl}$ and $M_{\crf}$ can be coupled so that the former at time $t$ coincides with the latter at time $(X_t - 1) A^{5} \le k t A^{5}$, where in the last equality we used the fact that $X_t \leq kt$ (as $X_t - X_{t - 1} \leq k$). Hence, 
		\begin{flalign}
			\label{tcrf2} 
			t_{\mix} (8A^2 \varepsilon; M_{\crf}) \leq k A^{5} t_{\mix} (8A^2 \varepsilon; M_{\wf}) \le A^6 t_{\mix} (8 A^2 \varepsilon; M_{\wf}).
		\end{flalign}
	
		\noindent Moreover, we have 
		\begin{flalign*} 
			t_{\mix} (8 A^2 \varepsilon; M_{\wf}) \le R (2 \varepsilon; M_{\wf}) + S(2 \varepsilon; M_{\wf}) & \le 2 t_{\mix} (2\varepsilon; M_{\cwf}) \\
			& \le 8A^3 (\log \varepsilon^{-1})^2 \cdot \big( t_{\mix} (\varepsilon; M_{\fl}) + 1 \big),
		\end{flalign*} 
	
		\noindent where in the first inequality we applied \Cref{l:Mrfmixing}; in the second we applied \Cref{piriestimate}; and in the third we applied \Cref{tmcf2}. Combining this with \eqref{tcrf2} yields the lemma.
	\end{proof} 
	
	Given the above, we can quickly establish \Cref{rt3} and \Cref{rt2}.
	
	\begin{proof}[Proof of \Cref{rt3}]
		
		 By \Cref{l:Mrfmixing}, \Cref{piriestimate}, and \Cref{mfmcrf}, we have for sufficiently large $A$ that
		\begin{align*}
		t_{\mix} (e^{-2A}; M_{\rf}) 
		&\leq R \Big(\frac{e^{-2A}}{4A^2}, M_{\rf} \Big) + S \Big(\frac{e^{-2A}}{4A^2}, M_{\rf} \Big)\\
		&\leq R \Big(\frac{e^{-2A}}{4A^2}, M_{\crf} \Big) + S \Big(\frac{e^{-2A}}{4A^2}, M_{\crf} \Big)\\
		&\leq 2t_{\mix} \Big(\frac{e^{-2A}}{4A^2}, M_{\crf} \Big)\leq 16A^9 (3A)^2 \cdot \bigg( t_{\mix} \Big(\frac{e^{-2A}}{32A^4}; M_{\fl} \Big) + 1 \bigg),
		\end{align*}
	
		\noindent which yields the lemma.
	\end{proof} 
	
	\begin{proof}[Proof of \Cref{rt2}]
		
		First observe that for sufficiently large $A$ we have $t_{\mix} \big(\frac{e^{-2A}}{4A^2}; M_{\fl} \big) \leq \frac{A^5}{200}$, by \Cref{rt1}; thus, \Cref{rt3} implies that $t_{\mix} (e^{-2A}; M_{\rf}) \leq A^{16}$. So, to establish the lemma it suffices to couple the dynamics $M_{\alt}$ at time $t$ to coincide with $M_{\rf}$ at time $A^5 t$ for each $t \in [0, A^{11}]$, away from an event of probability at most $A^{11} e^{-2A} \leq e^{-A} - e^{-2A}$. 
		
		To that end, let $\mathsf{H}_t$ denote the state after $t \geq 0$ steps of the dynamics $M_{\rf}$. Furthermore, for any integer $s \geq 0$ and $i \in [1, k]$ such that $k$ divides $s - i + 1$, set $\mathsf{H}_s' = \mathsf{H}_{s A^5}$ and $\mathsf{h}_{s+1}' = \mathsf{H}_{s A^5} |_{\partial \mathsf{R}_i}$. Then, \Cref{rh} implies $\mathsf{H}_{s+1}' \in \mathscr{G} (\mathsf{h}_{s+1}')$ is obtained from $\mathsf{H}_s' \in \mathscr{G} (\mathsf{h}_s')$ from applying the flip dynamics $M_{\fl}$ on $\mathsf{R}_i$ for time $A^5$. Hence, since $t_{\mix} (e^{-2A}; M_{\fl})\leq t_{\mix} \big( \frac{e^{-2A}}{4A^2}; M_{\fl} \big) \leq A^5$, we may couple $\mathsf{H}_s' |_{\mathsf{R}_i}$ with a uniformly random element of $\mathscr{G} (\mathsf{h}_s')$, away from an event of probability at most $e^{-2A}$. 
		
		It follows that the sequence $\{ \mathsf{H}_0', \mathsf{H}_1', \ldots , \mathsf{H}_s' \}$ can be coupled with $s$ steps of the alternating dynamics $M_{\alt}$ with initial data $\mathsf{H}_0$, away from an event of probability at most $s e^{-2A}$. Taking $s = t_{\mix} (e^{-2A}; M_{\fl}) \leq A^{11}$ and recalling that $\mathsf{H}_s' = \mathsf{H}_{s A^5}$, we deduce the lemma. 
	\end{proof}

	\section{Tilted Height Functions and Comparison Estimates}
	
	\label{Profile1} 
	
	 In this section we discuss how height functions can be ``tilted'' in a specific way. Section \ref{HeightOmega} introduces the notion of a tilted height function and states results comparing tilted height functions to random tiling height functions; we prove the latter comparison results in \Cref{ProofHt1t2} and \Cref{ProofHt1t22}.
	 
	\subsection{Tilted Height Functions} 
	
	\label{HeightOmega}

	In this section we describe a way of ``tilting'' the height function of a random tiling that will enable us to apply \Cref{estimategamma} in an effective way. Throughout this section, we recall the notation from \Cref{p}, and more generally from \Cref{HeightLimit} and \Cref{Slopeft}; this includes the polygonal domain $\mathfrak{P}$ and associated boundary height function $h : \partial \mathfrak{P} \rightarrow \mathbb{R}$; the liquid region $\mathfrak{L} = \mathfrak{L} (\mathfrak{P})$ and arctic curve $\mathfrak{A} = \mathfrak{A} (\mathfrak{P})$ from \eqref{al}; the associated complex slope $f_t (x) = f(x, t)$ from \eqref{fh}; the polygonal domain $\mathsf{P} = n \overline{\mathfrak{P}} \cap \mathbb{T}^2$ and its associated boundary height function $\mathsf{h} : \mathsf{P} \rightarrow \mathbb{Z}$. For any $(x, s) \in \overline{\mathfrak{L}}$, we also set
	\begin{flalign}
		\label{omegasx} 
		\Omega_s (x) = \frac{1}{\pi} \Imaginary \displaystyle\frac{f_s (x)}{f_s (x) + 1}=\frac{1}{\pi}\frac{\Imaginary f_s (x)}{|f_s (x) + 1|^2}; \qquad \Upsilon_s (x) = \displaystyle\frac{f_s (x)}{\big( f_s (x) + 1 \big)^2},
	\end{flalign}

	\noindent and further set $\Omega_s (x) = 0$ when $(x, s) \notin \overline{\mathfrak{L}}$. The parameters $\Omega_s (x)$ and $\Upsilon_s (x)$ will quantitatively govern how height functions change under ``tilts,'' to be described further in \Cref{omegaxizeta} below.

	\begin{rem}
		\label{omegafsx}
		
		Observe that $\Omega_s (x) \leq 0$ since $f_s (x) \in \mathbb{H}^- \cup \mathbb{R}$. Moreover, if $(x, s) \in \mathfrak{A}$, then $f_s (x) \in \mathbb{R}$, so $\Upsilon_s (x) \in \mathbb{R}$. Then $\Upsilon_s (x) > 0$ holds if $f_s (x) > 0$, and $\Upsilon_s (x) < 0$ holds if $f_s (x) < 0$. The former implies $\arg^* f_s (x) = 0$, which by \eqref{fh} implies $\partial_x H^* (x, s) = 0$. Similarly, the latter implies $\partial_x H^* (x, s) = 1$.
		
	\end{rem}

	 Throughout this section, we fix real numbers $\mathfrak{t}_1 < \mathfrak{t}_2$ with $\mathfrak{t} = \mathfrak{t}_2 - \mathfrak{t}_1$; linear functions $\mathfrak{a}, \mathfrak{b} : [\mathfrak{t}_1, \mathfrak{t}_2] \rightarrow \mathbb{R}$, with slopes in $\{ 0, 1 \}$; and the domain $\mathfrak{D} = \mathfrak{D} (\mathfrak{a}, \mathfrak{b}; \mathfrak{t}_1, \mathfrak{t}_2)$ from \eqref{d} with boundaries \eqref{dboundary}, as in \Cref{EstimateDomain}. We view them all as independent from $n$. We will impose the following condition on $\mathfrak{D}$ concerning its relation to $\mathfrak{P}$ (see \Cref{dldomain} for possible depictions). 
	 
	 \begin{assumption}
	 	
	 \label{assumptiond}

	 Adopt the notation of \Cref{walkspconverge}, and suppose $\mathfrak{D} \subseteq \mathfrak{P}$, with $\mathsf{D} = n \mathfrak{D} \subseteq \mathbb{T}$. Assume that the second, third, and fourth constraints listed in \Cref{xhh} hold for $\mathfrak{D}$ (with respect to $H^*$). Further suppose that either $\partial_{\ea} (\mathfrak{D})$ is disjoint with $\overline{\mathfrak{L}}$ or that $\partial_{\ea} (\mathfrak{D}) \subset \partial \mathfrak{P}$; similarly, suppose that either $\partial_{\we} (\mathfrak{D})$ is disjoint with $\overline{\mathfrak{L}}$ or that $\partial_{\we} (\mathfrak{D}) \subset \partial \mathfrak{P}$. Additionally, fix a real number $\mathfrak{s} \in [\mathfrak{t}_1, \mathfrak{t}_2]$, and assume that no cusp or tangency location of $\mathfrak{A}$ in $\overline{\mathfrak{D}}$ is of the form $(x, y)$, with $x \in \mathbb{R}$ and $y \in \{ \mathfrak{t}_1, \mathfrak{s}, \mathfrak{t}_2 \}$.
	 
	 \end{assumption}

 	\begin{rem}
 	
 	\label{omegad}

 	Under \Cref{assumptiond}, $\Upsilon_t (x)$ is uniformly bounded away from $0$ and $\infty$, for any $(x, t) \in \mathfrak{A}$ with $t \in \{ \mathfrak{t}_1, \mathfrak{s}, \mathfrak{t}_2 \}$. This holds since $(f_t (x) + 1)/f_t (x)$ is the slope of the tangent line to $\mathfrak{A}$ at $(x, t)$ (by \Cref{lqq}), and since no tangency location of $\mathfrak{A}$ has $y$-coordinate in $\big\{ \mathfrak{t}_1, \mathfrak{s}, \mathfrak{t}_2 \}$. Moreover, if $u = (x, t) \in \mathfrak{L}$ satisfies $t \in \{ \mathfrak{t}_1, \mathfrak{s}, \mathfrak{t}_2 \}$, and we  denote $d = \dist (u, \mathfrak{A})$, then there exists a constant $c = c(\mathfrak{P}, \mathfrak{D}) > 0$ such that $c d^{1/2} \leq -\Omega_t (x) \leq c^{-1} d^{1/2}$. This follows from the square root decay of $\Imaginary f_t (x)$ around smooth points of $\mathfrak{A}$ (see \Cref{fdomain} below) and the fact that no cusp or tangency location of $\mathfrak{A}$ in $\overline{\mathfrak{D}}$ has $y$-coordinate in $\{ \mathfrak{t}_1, \mathfrak{s}, \mathfrak{t}_2 \}$.
 	
 	\end{rem}
 	
 	Next we state the following proposition, to be established in \Cref{WalkLimit} below, indicating how a height function can be ``tilted.'' Here, the parameters $\Omega_s (x)$ and $\Upsilon_s (x)$ from \eqref{omegasx} will govern how the height function and edge of the liquid region change under such a tilt, respectively. In what follows, all implicit constants (including notions of being ``sufficiently small'') will only depend on the parameters $\mathfrak{P}$, $\mathfrak{D}$, and $\varepsilon$ in the statement of the proposition. We also recall maximizers of $\mathcal{E}$ from \eqref{hmaximum}, and the liquid regions $\mathfrak{L} (\mathfrak{D}; h)$, $\mathfrak{L}_{\north} (\mathfrak{D}; h)$, and $\mathfrak{L}_{\so} (\mathfrak{D}; h)$ from \Cref{EstimateDomain}. 
	
	\begin{prop} 
		
	\label{omegaxizeta}
	 
	Fix $\varepsilon > 0$, and adopt \Cref{assumptiond}. Also let $\xi_1, \xi_2 \in \mathbb{R}$ be real numbers of the same sign (that is, $\xi_1 \xi_2 \geq 0$), with $|\xi_1|, |\xi_2|$ sufficiently small. Further assume that $ |\xi_2 - \xi_1| \geq \varepsilon \max \big\{ |\xi_1|, |\xi_2| \big\}$, and define the function $\omega: [\mathfrak{t}_1, \mathfrak{t}_2] \rightarrow \mathbb{R}$ interpolating $\xi_1$ and $\xi_2$:
	\begin{flalign}
		\label{omegay}
		\omega (y) = \xi_2 \displaystyle\frac{y - \mathfrak{t}_1}{\mathfrak{t}_2 - \mathfrak{t}_1} + \xi_1 \displaystyle\frac{\mathfrak{t}_2 - y}{\mathfrak{t}_2 - \mathfrak{t}_1}.
	\end{flalign}

	\noindent Then there exists a function $\widehat{h} : \partial \mathfrak{D} \rightarrow \mathbb{R}$ admitting an admissible extension to $\mathbb{R}^2$, such that the maximizer $\widehat{H}^* = \argmax_{F \in \Adm (\mathfrak{D}, \widehat{h})} \mathcal{E} (F)$ of $\mathcal{E}$ satisfies the following properties. In the below, we fix one of three real numbers $y \in \{ \mathfrak{t}_1, \mathfrak{s}, \mathfrak{t}_2 \}$, and we abbreviate $\widehat{\mathfrak{L}} = \widehat{\mathfrak{L}} (\mathfrak{D};  \widehat{h}) \cup \mathfrak{L}_{\north} (\mathfrak{D}; \widehat{h}) \cup \mathfrak{L}_{\so} (\mathfrak{D}; \widehat{h})$.

	\begin{enumerate}
		\item For any $(x, y) \in \mathfrak{L} \cap \widehat{\mathfrak{L}}$, we have  
		\begin{flalign}
			\label{hxyhxy1} 
			\big| \widehat{H}^* (x, y) - H^* (x, y) - \omega (y) \Omega_y (x) \big| = \mathcal{O} \big( |\xi_1|^{3/2} + |\xi_2|^{3/2} \big).
		\end{flalign}
	
		\item \label{lx} Suppose $\big\{ x: (x, y) \in \mathfrak{L} \big\}$ is a union of $k \geq 1$ disjoint open intervals $(x_1, x_1') \cup (x_2, x_2') \cup \cdots \cup (x_k, x_k')$. Then, $\big\{ x : (x, y) \in \widehat{\mathfrak{L}} \big\}$ is also a union of $k$ disjoint open intervals $(\widehat{x}_1, \widehat{x}_1') \cup (\widehat{x}_2, \widehat{x}_2') \cup \cdots \cup (\widehat{x}_k, \widehat{x}_k')$. Moreover, for any index $1\leq j\leq k$, we have
		\begin{flalign}
			\label{hxyhxy2}
			\widehat{x}_j - x_j = \omega (y) \Upsilon_y (x_j) + \mathcal{O} (\xi_1^2 + \xi_2^2); \qquad \widehat{x}_j' - x_j' = \omega (y) \Upsilon_y (x_j') + \mathcal{O} ( \xi_1^2 + \xi_2^2), 
		\end{flalign}
	
		\noindent and $\widehat{H}^* (x, y) = H^* (x, y)$ whenever $(x, y) \in \overline{\mathfrak{D}}$ and $(x, y) \notin \mathfrak{L} \cup \widehat{\mathfrak{L}}$. 
		
		\item Under the notation of Item \eqref{lx}, fix any endpoint $x \in \bigcup_{i = 1}^k \{ x_i, x_i' \}$; set $\widehat{x} = \widehat{x}_i$ or $\widehat{x} = \widehat{x}_i'$ if $x = x_i$ or $x = x_i'$, respectively. For any real number $\Delta$ with $|\Delta|$ sufficiently small, we have
		\begin{flalign}
			\label{hxdelta} 
			\widehat{H}^* (\widehat{x} + \Delta) - \widehat{H}^* (\widehat{x}) = H^* (x + \Delta) - H^* (x) + \mathcal{O} \big( (|\xi_1| + |\xi_2|) |\Delta|^{3/2} + \Delta^2 \big).
		\end{flalign}

		\item The domain $\mathfrak{D}$ satisfies five assumptions listed in \Cref{xhh}, with respect to $\widehat{H}^*$.

	\end{enumerate}

	\end{prop}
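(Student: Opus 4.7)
My plan is to build $\widehat{H}^*$ indirectly, by first constructing a tilted complex slope $\widehat{f}_t(x)$ via a deformation of the algebraic relation provided by \Cref{xhh}\,(5), and then defining $\widehat{h}$ as the boundary trace of the height function associated with $\widehat{f}$ through \eqref{fh} (with the frozen regions filled in by the usual three constant slopes). Under \Cref{assumptiond}, the original complex slope $f$ satisfies $Q\bigl(f,\, x - \tfrac{tf}{f+1}\bigr)=0$, and I would define $\widehat{f}_t(x)$ implicitly by the perturbed relation
\begin{flalign*}
Q\Bigl(\widehat{f},\; x - \tfrac{t\widehat{f}}{\widehat{f}+1} + \omega(t)\, g(\widehat{f}) + r(\widehat{f}, t)\Bigr) = 0,
\end{flalign*}
where $\omega$ is as in \eqref{omegay}; $g$ is a rational function pinned down so that, to leading order, the shift of the double root of the reduced polynomial at the arctic boundary matches \eqref{hxyhxy2}, forcing $g(f) = -\tfrac{f}{(f+1)^2}$ up to an overall sign convention; and $r$ is a higher-order correction of size $\mathcal{O}\bigl((|\xi_1|+|\xi_2|)^{3/2}\bigr)$ included to restore the complex Burgers equation \eqref{ftx} despite the time dependence of $\omega$.

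Once $\widehat{f}$ is constructed, items (1)--(3) of the proposition follow from standard local expansions. Implicit differentiation gives $\widehat{f} - f = \omega(t) g(f)\, \partial_x f + \mathcal{O}\bigl((|\xi_1|+|\xi_2|)^{3/2}\bigr)$ in the liquid region, and combining with \eqref{fh} together with a compatible expression for $\partial_t H^*$ yields the height expansion \eqref{hxyhxy1}. The endpoint shifts \eqref{hxyhxy2} are the defining property of $g$, derived by expanding the double-root condition $\partial_u[Q(u,\widehat{v}(u))]|_{u=\widehat{u}} = 0$ to first order in $\omega$. The local power expansion \eqref{hxdelta} is a consequence of the universal square-root behavior of $\Imaginary f$ around smooth, non-tangency points of $\mathfrak{A}$ (compare \Cref{omegad}), which is preserved under the small perturbation. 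That $\widehat{H}^*$ really is the maximizer $\argmax_{F\in \Adm(\mathfrak{D},\widehat{h})}\mathcal{E}(F)$ follows because \eqref{ftx} is precisely the Euler--Lagrange equation for $\mathcal{E}$ in complex-slope variables, so any admissible function satisfying it in its liquid region and admitting the usual three frozen slopes outside is automatically the unique maximizer \cite[Proposition 4.5]{MCFARS}. For item (4), conditions (1), (3), (4) of \Cref{xhh} are open and persist under small $C^1$ perturbations, condition (2) is preserved by extending $\widehat{f}$ via the same implicit relation up to time $\mathfrak{t}'$, and condition (5) holds with a perturbed scaling parameter $\widehat{\alpha}=\alpha+\mathcal{O}(|\xi_1|+|\xi_2|)$ that stays within $n^{-\delta}$ of $1$ for $|\xi_1|, |\xi_2|$ small enough.

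The main obstacle is the construction of the correction $r$ ensuring that $\widehat{f}$ solves \eqref{ftx} despite $\omega$ depending on $t$. A direct implicit differentiation shows that the linear term $\omega(t)g(\widehat{f})$ alone breaks \eqref{ftx} by a residue proportional to $\omega'(t)g(\widehat{f}) = \tfrac{\xi_2-\xi_1}{\mathfrak{t}_2-\mathfrak{t}_1}\,g(\widehat{f})$, and $r$ must cancel this order by order. This leads to a linear transport equation for $r$ along the characteristics $\tfrac{dx}{dt} = \tfrac{\widehat{f}}{\widehat{f}+1}$ of the complex Burgers flow, which is solvable inside $\overline{\mathfrak{D}}$ provided no characteristic exits transversally before $t = \mathfrak{t}_2$; this is ensured by the boundedness of $\mathfrak{t}_2-\mathfrak{t}_1$ by the small constant implicit in applications of \Cref{estimategamma}. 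If a clean explicit form for $r$ proves elusive, one may alternatively accept that $\widehat{f}$ satisfies \eqref{ftx} only up to an $\mathcal{O}\bigl((|\xi_1|+|\xi_2|)^{3/2}\bigr)$ residue and absorb the resulting deviation of the candidate $\widehat{H}^*$ from the true entropy maximizer into the same error using the strict concavity of $\mathcal{E}$ near $H^*$, which is exactly the error tolerance stated in \eqref{hxyhxy1}.
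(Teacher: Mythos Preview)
Your overall strategy---construct a perturbed complex slope satisfying \eqref{ftx}, read off $\widehat h$, and then verify (1)--(4) by local expansions---is the same as the paper's. However, you miss a key structural simplification and, as a consequence, have a genuine gap in item (4).

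\medskip

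\textbf{The simplification you miss.} Because the interpolant $\omega$ is \emph{linear}, it factors as $\omega(y)=(\alpha_0-1)(y-\mathfrak{t}_0)$ for explicit constants
\[
\mathfrak{t}_0=\frac{\xi_2\mathfrak{t}_1-\xi_1\mathfrak{t}_2}{\xi_2-\xi_1},\qquad \alpha_0=1+\frac{\xi_2-\xi_1}{\mathfrak{t}_2-\mathfrak{t}_1}.
\]
The paper exploits this by first time-shifting $f$ by $\mathfrak{t}_0$ and then scaling the initial data multiplicatively by $\alpha_0$. Concretely, one sets
\[
\mathcal{Q}_{0;\alpha}(u)=\frac{u+1}{\alpha^{-1}u+1}\,\mathcal{Q}_{0;1}(\alpha^{-1}u)
\]
and defines $\mathcal{F}_t(x;\alpha)$ as the root of $\mathcal{Q}_{0;\alpha}(\mathcal{F})=x(\mathcal{F}+1)-t\mathcal{F}$. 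Any such root solves the complex Burgers equation \emph{exactly} (this is an elementary computation: differentiating $\mathcal{Q}_0(\mathcal{F})=x(\mathcal{F}+1)-t\mathcal{F}$ in $x$ and $t$ gives $\partial_x\mathcal{F}=(\mathcal{F}+1)/(\mathcal{Q}_0'-x+t)$ and $\partial_t\mathcal{F}=-\mathcal{F}/(\mathcal{Q}_0'-x+t)$). Thus no correction term $r$ is needed at all, and the transport equation you anticipate never arises. Your proposed leading term $\omega(t)g(\widehat f)$ with $g(f)=-f/(f+1)^2$ is precisely the first-order Taylor expansion of this exact deformation, but stopping at first order is what creates the Burgers residue you then struggle to cancel.

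\medskip

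\textbf{The resulting gap in item (4).} Condition (5) of \Cref{xhh} is not a generic smallness statement: it demands that the algebraic curve governing $\widehat f$ be of the \emph{specific} form
\[
Q(u,v)=\frac{u+1}{\alpha^{-1}u+1}\,Q_{\mathfrak{L}(\mathfrak{P})}(\alpha^{-1}u,v)
\]
for some $|\alpha-1|<n^{-\delta}$. The paper's deformation is built to produce exactly this, with $\alpha=\alpha_0$. Your perturbation, once an unspecified correction $r(\widehat f,t)$ is added, gives no reason for $\widehat f$ to satisfy an algebraic relation of this shape; the claim that ``condition (5) holds with a perturbed scaling parameter $\widehat\alpha=\alpha+\mathcal{O}(|\xi_1|+|\xi_2|)$'' is therefore unjustified. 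Your fallback route---accept an $\mathcal{O}((|\xi_1|+|\xi_2|)^{3/2})$ Burgers residue and absorb it via concavity of $\mathcal{E}$---makes this worse: the true maximizer $\widehat H^*$ would then have a complex slope different from your $\widehat f$, and you would have no handle on the algebraic equation it satisfies, which is exactly what \Cref{xhh}(5) (and hence the downstream application of \Cref{estimategamma}) requires.
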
 

		Let us briefly comment on \Cref{omegaxizeta}. We view $\omega (y)$ as quantifying how ``tilted'' $\widehat{H}^*$ is with respect to $H^*$ along a fixed horizontal slice $y \in \{ \mathfrak{t}_1, \mathfrak{s}, \mathfrak{t}_2 \}$. In particular, $\xi_1$ and $\xi_2$ parameterize this tiltedness along the north and south boundaries of $\mathfrak{D}$, respectively, and \eqref{omegay} implies that this tiltedness linearly interpolates between these two boundaries. The first part of \Cref{omegaxizeta} quantifies how the height function in the liquid region tilts in terms of $\Omega_s$, and the second part quantify how the edges of the liquid region tilt in terms of $\Upsilon_s$. The tilted function $\widehat{H}^*$ will eventually be obtained by perturbing solutions of the complex Burgers equation \eqref{ftx}, and these functions $\Omega_s$ and $\Upsilon_s$ can be viewed as derivatives arising from this procedure; see \Cref{ftalphaft1} and \Cref{x0fq} below. The third part of \Cref{omegaxizeta} states that the gradient around the edges does not change too much under the tilting. The fourth part verifies properties of $\widehat{H}^*$ that will enable us to later apply \Cref{estimategamma}.

\begin{rem}
We will use notation such as $H^*$ and $\widehat H^*$ for deterministic height functions (which will be maximizers of $\mathcal{E}$), and notation such as $H$ and $\widehat H$ for random height functions (which are associated with random tilings).
\end{rem}

		In view of \eqref{hxyhxy1} and \eqref{hxyhxy2}, we introduce the following more precise notion of tiltedness. It will be useful to define it through estimates, instead of the close approximations provided by \Cref{omegaxizeta}.
		
	\begin{definition} 
		
	\label{lambdamud} 
	
	Suppose $\mathfrak{D} \subseteq \mathfrak{P}$; fix real numbers $\xi, \mu, \zeta \geq 0$; and fix an admissible function $H \in \Adm (\mathfrak{D})$. For any $(x, s) \in \overline{\mathfrak{D}}$, we say that $H$ is \emph{$(\xi; \mu)$-tilted with respect to $H^*$ at $(x, s)$} if (recalling that $\Omega_s (x) \leq 0$ by \Cref{omegafsx}) we have
	\begin{flalign*}
		\big| H (x, s) - H_s^* (x, s) \big| \leq \mu - \xi \Omega_s (x),
	\end{flalign*}
	
	\noindent We also say that \emph{the edge of $H$ is $\zeta$-tilted with respect to $H^*$ at level $s$} if the following two conditions hold. Here, we set $u = (x, s)$, and we let $(x_0, s) \in \mathfrak{A}$ denote any point on $\mathfrak{A}$ with $|x - x_0|$ minimal, so that $\partial_x H^* (x_0, s) \in \{ 0, 1 \}$ (that is, $H^*$ is frozen at $(x_0, s)$).
		
	\begin{enumerate} 
		\item If $u = (x, s) \notin \mathfrak{L}$ and $|x_0 - x| \geq \zeta \big| \Upsilon_s (x_0) \big|$, then $H (u) = H^* (u)$.
		\item Assume\footnote{The power $\zeta^{8/9}$ is taken for convenience and should not be viewed as optimal in any way.} $|x_0 - x| \leq \zeta^{8/9}$. If $\partial_x H^* (x_0, s) = 0$ (so $\Upsilon_s (x_0) > 0$ by \Cref{omegafsx}), then
		\begin{flalign*}
			H^* \big(x - \zeta \Upsilon_s (x_0), s \big)\leq H (x, s) \leq H^* \big( x  + \zeta \Upsilon_s (x_0), s \big).
		\end{flalign*} 
	
		\noindent If instead $\partial_x H^* (x_0, s) = 1$ (so $\Upsilon_s (x_0) < 0$ by \Cref{omegafsx}), then
		\begin{flalign*}
			H^* \big(x - \zeta \Upsilon_s (x_0), s \big) + \zeta \Upsilon_s (x_0) \leq H (x, s) \leq H^* \big( x + \zeta \Upsilon_s (x_0),s \big) - \zeta \Upsilon_s (x_0).
		\end{flalign*}
	\end{enumerate} 
	
	 \end{definition}
 
 	We sometimes refer to the former notion described in \Cref{lambdamud} as a ``bulk'' form of tiltedness, and the latter as an ``edge'' form. The bulk form imposes a bound on $|H - H^*|$ of a similar form to Item \eqref{lx} in \Cref{omegaxizeta}. The edge form constitutes two parts. The first is an estimate for the edge ponts of $H$, of a similar form to \eqref{hxyhxy2}; the second bounds $|H - H^*|$ near these edge points (this is eventually related to \eqref{hxdelta}). We will often view the tiltedness parameters $\xi, \mu, \zeta$ as small (decaying as a negative power of $n$), even though this was not needed to formulate \Cref{lambdamud}.
 	
 	 To proceed, for any real number $\delta > 0$, we require the ``reduced'' version of the liquid region
	\begin{flalign}
		\label{ldelta} 
		\mathfrak{L}_-^{\delta} = \mathfrak{L}_-^{\delta} (\mathfrak{P}) = \big\{ u \in \mathfrak{L} : \dist ( u, \mathfrak{A}) > n^{\delta - 2/3} \big\}.
	\end{flalign}
	
	\noindent Given this notation, we will state two results concerning the tiltedness of a random height function on $\mathsf{D}$ along a middle horizontal slice, given its tiltedness on the north and south boundaries of $\mathsf{D}$. Let us introduce the following notation and assumption to set this context. 
	
	\begin{assumption}
	
	\label{assumptiondomega} 
	
	Adopt \Cref{assumptiond}; fix $\varepsilon, \varsigma, \delta \in ( 0, 1/50)$; and suppose that $\mathfrak{s} \in [ \mathfrak{t}_1 + \varepsilon \mathfrak{t}, \mathfrak{t}_2 - \varepsilon \mathfrak{t}]$. Let $\widetilde{\mathsf{h}}: \partial \mathsf{D} \rightarrow \mathbb{Z}$ denote a boundary height function that is constant along the east and west boundaries of $\mathsf{D}$; if $\partial_{\north} (\mathfrak{D})$ is packed with respect to $h$, then we further assume that $\widetilde{\mathsf{h}} = \mathsf{h}$ along $\partial_{\north} (\mathfrak{D})$. Let $\widetilde{\mathsf{H}} : \mathsf{D} \rightarrow \mathbb{Z}$ denote a uniformly random element of $\mathscr{G} (\widetilde{\mathsf{h}})$, and define $\widetilde{H} \in \Adm(\mathfrak{D})$ by setting $\widetilde{H} (u) = n^{-1} \widetilde{\mathsf{H}} (nu)$ for each $u \in \overline{\mathfrak{D}}$. Further let $\xi_1, \xi_2, \zeta_1, \zeta_2, \mu \in [0, n^{\delta - 2/3}]$ be real numbers satisfying the inequalities $\min \big\{ \xi_1, \xi_2, |\xi_1 - \xi_2| \big\} \geq \varsigma (\xi_1 + \xi_2)$ and $\min \big\{ \zeta_1, \zeta_2, |\zeta_1 - \zeta_2| \big\} \geq \varsigma (\zeta_1 + \zeta_2)$. Assume the following two statements hold for each $j \in \{ 1, 2 \}$. 

	\begin{enumerate} 
		\item At each $(x, \mathfrak{t}_j) \in \mathfrak{L}_-^{\delta}$, we have that $\widetilde{H}$ is $(\xi_j; \mu)$-tilted with respect to $H^*$.
		\item The edge of $\widetilde{H}$ is $\zeta_j$-tilted with respect to $H^*$ at level $\mathfrak{t}_j$. 
	\end{enumerate} 
	
	\end{assumption}

	Observe that the latter two points in the above assumption are more constraints on the deterministic boundary data $\widetilde{\mathsf{h}}$ (equivalently, $\widetilde{h}$) than on the random height function $\widetilde{H}$. Indeed, the restriction of $\widetilde{H}$ to levels $\mathfrak{t}_1$ and $\mathfrak{t}_2$ is fully determined by $\widetilde{h}$, since these levels constitute the south and north boundaries of $\mathfrak{D}$, respectively. \\

	Now we state the following two results to be established in \Cref{ProofHt1t2} and \Cref{ProofHt1t22} below. Qualtitatively, they both state that the tiltedness of $\widetilde{H}$ along the intermediate horizontal slice $y = \mathfrak{s}$ lies between its tiltedness along the top and bottom boundaries of $\mathfrak{D}$. The two statements differ in that \Cref{hhestimate1} addresses both the bulk and edge forms of tiltedness, but imposes that its tiltedness parameters $\zeta_1, \zeta_2 \gg n^{-2/3}$; \Cref{hhestimate2} only addresses the bulk form of tiltedness, but allows for smaller tiltedness parameters $\xi_1, \xi_2 \gg n^{-1}$.
		
	\begin{prop} 
		
	\label{hhestimate1} 
	
	Adopt \Cref{assumptiondomega}, and set
	\begin{flalign}
		\label{zeta1} 
		 \zeta = \displaystyle\max \bigg\{ \displaystyle\frac{\varepsilon}{2} \zeta_1 + \Big( 1 - \displaystyle\frac{\varepsilon}{2} \Big) \zeta_2, \Big( 1 - \displaystyle\frac{\varepsilon}{2} \Big) \zeta_1 + \displaystyle\frac{\varepsilon}{2} \zeta_2 \bigg\}.
	\end{flalign}
	
	\noindent Assume that $\mu = 0$, and that $\xi_j \leq \zeta_j \leq n^{\delta / 2 - 2/3}$ and $\zeta_j \geq n^{\delta / 100 - 2/3}$ for each $j \in \{ 1, 2 \}$. Then, the following two statements hold with overwhelming probability. 
	\begin{enumerate} 
		\item At each $(x, \mathfrak{s}) \in \mathfrak{L}_-^{\delta}$, we have that $\widetilde{H}$ is $(\zeta; 0)$-tilted with respect to $H^*$.
		\item The edge of $\widetilde{H}$ is $\zeta$-tilted with respect to $H^*$ at level $\mathfrak{s}$. 
	\end{enumerate} 
		
	\end{prop}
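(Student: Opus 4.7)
The plan is to sandwich $\widetilde{\mathsf{H}}$ between two random tilings whose deterministic limit shapes are ``tilted'' versions of $H^*$, apply the concentration estimate \Cref{estimategamma} to these barriers, and use the monotone coupling \Cref{comparewalks} to read off the tiltedness of $\widetilde{H}$ at the intermediate level $\mathfrak{s}$. Fix a constant $\eta \in (0, \varepsilon \varsigma / 4)$ and invoke \Cref{omegaxizeta} twice: with $(\xi_1, \xi_2) = -(1+\eta)(\zeta_1, \zeta_2)$ to produce an upper-barrier pair $(\widehat{h}^+, \widehat{H}^{+,*})$, and with $(\xi_1, \xi_2) = +(1+\eta)(\zeta_1, \zeta_2)$ for the lower-barrier pair $(\widehat{h}^-, \widehat{H}^{-,*})$. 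Item~(4) of \Cref{omegaxizeta} ensures that \Cref{assumptiond} (and hence the hypotheses of \Cref{estimategamma}) continue to hold for both $\widehat{H}^{\pm,*}$.

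Since $\Omega \leq 0$ by \Cref{omegafsx}, \eqref{hxyhxy1} at $y = \mathfrak{t}_j$ gives $\widehat{H}^{\pm,*}(x, \mathfrak{t}_j) - H^*(x, \mathfrak{t}_j) = \pm (1+\eta) \zeta_j |\Omega_{\mathfrak{t}_j}(x)| + O(\zeta_j^{3/2})$ on the liquid part of $\partial_{\north}(\mathfrak{D}) \cup \partial_{\so}(\mathfrak{D})$, while the bulk-tiltedness hypothesis in \Cref{assumptiondomega} gives $|\widetilde{H}(x, \mathfrak{t}_j) - H^*(x, \mathfrak{t}_j)| \leq \zeta_j |\Omega_{\mathfrak{t}_j}(x)|$. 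The $\eta$-gap $\eta \zeta_j |\Omega|$ dominates the $O(\zeta_j^{3/2})$ error because \Cref{omegad} together with $\zeta_j \in [n^{\delta/100 - 2/3}, n^{\delta/2 - 2/3}]$ and $|\Omega_{\mathfrak{t}_j}| \geq c n^{\delta/2 - 1/3}$ on $\mathfrak{L}_-^\delta$ yields $\zeta_j^{1/2}/|\Omega_{\mathfrak{t}_j}| \leq c n^{-\delta/4}$. An analogous check near the tangency-free edge on $\partial_{\north}(\mathfrak{D}) \cup \partial_{\so}(\mathfrak{D})$ uses the edge-tiltedness assumption together with \eqref{hxyhxy2} and \eqref{hxdelta}, and along $\partial_{\ea}(\mathfrak{D}) \cup \partial_{\we}(\mathfrak{D})$ all three boundary functions agree (being constant, frozen, equal to $h$). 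Defining integer boundary data $\widehat{\mathsf{h}}^{\pm}$ within $O(1)$ of $n \widehat{h}^{\pm}$ by a standard admissibility-preserving discretization, we obtain $\widehat{\mathsf{h}}^- \leq \widetilde{\mathsf{h}} \leq \widehat{\mathsf{h}}^+$ on $\partial \mathsf{D}$; the $O(1)$ rounding is absorbed into the $\eta$-buffer since $n \eta \zeta_j |\Omega| \geq c \eta n^{(51/100)\delta} \gg 1$.

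Let $\widehat{\mathsf{H}}^{\pm} \in \mathscr{G}(\widehat{\mathsf{h}}^{\pm})$ be uniformly random. Applying \Cref{estimategamma} with tolerance $\delta' := \delta/1000$ yields, with overwhelming probability, $|\widehat{\mathsf{H}}^{\pm}(nu) - n\widehat{H}^{\pm,*}(u)| < n^{\delta'}$ for every $u \in \overline{\mathfrak{D}}$ and $\widehat{\mathsf{H}}^{\pm}(nu) = n\widehat{H}^{\pm,*}(u)$ for $u \notin \mathfrak{L}_+^{\delta'}(\mathfrak{D}; \widehat{h}^{\pm})$. Invoking \Cref{comparewalks} in the form of \Cref{heightcompare} couples the three tilings so that $\widehat{\mathsf{H}}^- \leq \widetilde{\mathsf{H}} \leq \widehat{\mathsf{H}}^+$ pointwise on $\mathsf{D}$. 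At $(x, \mathfrak{s}) \in \mathfrak{L}_-^\delta$, \eqref{hxyhxy1} gives
\[
\widehat{H}^{\pm,*}(x, \mathfrak{s}) - H^*(x, \mathfrak{s}) = \pm (1+\eta)\big(\beta \zeta_2 + (1-\beta)\zeta_1\big)\, |\Omega_\mathfrak{s}(x)| + O\big(\zeta^{3/2}\big),
\]
where $\beta = (\mathfrak{s} - \mathfrak{t}_1)/\mathfrak{t} \in [\varepsilon, 1-\varepsilon]$ by the location of $\mathfrak{s}$. A direct arithmetic check using $|\zeta_1 - \zeta_2| \geq \varsigma(\zeta_1 + \zeta_2)$, $\eta < \varepsilon \varsigma / 4$, and the definition \eqref{zeta1} yields $(1+\eta)(\beta\zeta_2 + (1-\beta)\zeta_1) \leq \zeta$. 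Combined with the sandwich and the negligibility of the $O(\zeta^{3/2})$ and $n^{\delta' - 1}$ errors relative to $\zeta|\Omega_\mathfrak{s}|$ on $\mathfrak{L}_-^\delta$, this gives the bulk $(\zeta; 0)$-tiltedness $|\widetilde{H}(x,\mathfrak{s}) - H^*(x,\mathfrak{s})| \leq \zeta|\Omega_\mathfrak{s}(x)|$. The edge $\zeta$-tiltedness at level $\mathfrak{s}$ follows by the same sandwich together with Items~(2)-(3) of \Cref{omegaxizeta}: the shifted edges of $\widehat{H}^{\pm,*}$ sit at $\widehat{x}_j' = x_j' + \omega(\mathfrak{s}) \Upsilon_\mathfrak{s}(x_j') + O(\zeta^2)$, within $\zeta|\Upsilon_\mathfrak{s}|$ of $x_j'$, and \eqref{hxdelta} controls $\widetilde{H}$ in the edge zone up to errors dwarfed by the $\eta$-buffer.

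The main obstacle is the simultaneous absorption of every error term — the $O(\zeta^{3/2})$ in \eqref{hxyhxy1}, the $O(\zeta^2)$ in \eqref{hxyhxy2}, the $O\big((\xi_1 + \xi_2)|\Delta|^{3/2} + \Delta^2\big)$ in \eqref{hxdelta}, the $n^{\delta'}$ concentration error of \Cref{estimategamma}, and the $O(1)$ rounding — into the principal tilt $\eta \zeta |\Omega|$ in the bulk and $\eta \zeta |\Upsilon|$ at the edge. The two-sided polynomial window $\zeta_j \in [n^{\delta/100 - 2/3}, n^{\delta/2 - 2/3}]$ and the separation hypothesis $|\zeta_1 - \zeta_2| \geq \varsigma(\zeta_1 + \zeta_2)$ are exactly what produce the polynomial-in-$n$ buffer that makes every absorption succeed.
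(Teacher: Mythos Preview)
Your proposal is correct and follows essentially the same approach as the paper: construct tilted barrier profiles via \Cref{omegaxizeta} with parameters $(1+\eta)\zeta_j$, verify the boundary ordering $\widehat{\mathsf h}^- \le \widetilde{\mathsf h} \le \widehat{\mathsf h}^+$ by case analysis (bulk via \eqref{hxyhxy1}, edge via \eqref{hxyhxy2}--\eqref{hxdelta}, east/west trivially), apply \Cref{estimategamma} to the barriers, and transfer via the monotone coupling of \Cref{heightcompare}. The paper treats only the upper barrier (the lower being symmetric), uses buffer $\theta\varepsilon$ with $\theta=1/100$ in place of your $\eta$, and isolates the boundary comparison as a separate lemma (\Cref{hh3}) with a more explicit case breakdown near the edge, but the architecture and all error absorptions are the same.
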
 

	\begin{prop} 
		
	\label{hhestimate2} 
	
		Adopt \Cref{assumptiondomega}, and set 
		\begin{flalign*}
			\xi = \displaystyle\max \bigg\{ \displaystyle\frac{\varepsilon}{2} \xi_1 + \Big( 1 - \displaystyle\frac{\varepsilon}{2} \Big) \xi_2, \Big( 1 - \displaystyle\frac{\varepsilon}{2} \Big) \xi_1 + \displaystyle\frac{\varepsilon}{2} \xi_2 \bigg\}.
		\end{flalign*}
	
		\noindent Assume that $\xi_j \leq n^{- 2/3}$ and $\zeta_j \leq n^{\delta / 50 - 2/3}$ for each $j \in \{ 1, 2 \}$. Further fix $U_0 = (X_0, \mathfrak{s}) \in \mathfrak{L}_-^{\delta}$, and assume that $\xi_j \geq n^{\delta / 4 - 1} \dist (U_0, \mathfrak{A})^{-1/2}$ for each $j \in \{ 1, 2 \}$. Then, $\widetilde{H}$ is $(\xi; \mu)$-tilted with respect to $H^*$ at $U_0$, with overwhelming probability.

	\end{prop}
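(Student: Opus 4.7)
The plan is to prove \Cref{hhestimate2} by a stochastic sandwich argument. Using \Cref{omegaxizeta}, I will construct two deterministic barrier limit shapes $\widehat{H}^{*,\pm}$ whose integer-rounded boundary data $\widehat{\mathsf{h}}^{\pm}$ pointwise dominate and are dominated by $\widetilde{\mathsf{h}}$ on $\partial\mathsf{D}$. Then \Cref{comparewalks} couples $\widetilde{\mathsf{H}}$ between the associated uniformly random barrier tilings $\widehat{\mathsf{H}}^{\pm}$, and \Cref{estimategamma} provides concentration $|\widehat{\mathsf{H}}^{\pm}(nU_0)-n\widehat{H}^{*,\pm}(U_0)| < n^{\delta_0}$ for any $\delta_0 > 0$, transferring the boundary sandwich to the interior point $nU_0$.

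The upper barrier is constructed by invoking \Cref{omegaxizeta} with tilt parameters $\xi_j^+ = -(1+\eta)\xi_j$, where $\eta = \varepsilon\varsigma/4$, and globally shifting the resulting integer boundary data upward by $\lceil n\mu\rceil + 1$. The sign is forced because $\Omega_y \le 0$ by \Cref{omegafsx}: with $\omega^+ \le 0$ the perturbation $\omega^+(y)\Omega_y(x) \ge 0$ places $\widehat{H}^{*,+}$ above $H^*$. A short computation using \eqref{omegay}, the gap hypothesis $\min\{\xi_1, \xi_2, |\xi_1 - \xi_2|\} \ge \varsigma(\xi_1 + \xi_2)$, and the interior constraint $\mathfrak{s}\in[\mathfrak{t}_1+\varepsilon\mathfrak{t},\mathfrak{t}_2-\varepsilon\mathfrak{t}]$ yields $|\omega^+(\mathfrak{s})| \le \xi - \tfrac{\varepsilon\varsigma}{8}\max(\xi_1, \xi_2)$, so that the first conclusion of \Cref{omegaxizeta} gives
\[
\widehat{H}^{*,+}(U_0) - H^*(U_0) \;\le\; \mu + \xi\,\bigl|\Omega_{\mathfrak{s}}(X_0)\bigr| - \tfrac{\varepsilon\varsigma}{8}\max(\xi_1,\xi_2)\,\bigl|\Omega_{\mathfrak{s}}(X_0)\bigr| + \mathcal{O}\bigl(\xi_j^{3/2}\bigr).
\]
Since $\xi_j \le n^{-2/3}$ gives $\xi_j^{3/2} \le n^{-1}$, the error is absorbed by the negative slack: by \Cref{omegad}, $|\Omega_{\mathfrak{s}}(X_0)| \asymp \dist(U_0, \mathfrak{A})^{1/2}$, and the hypothesis $\xi_j \ge n^{\delta/4 - 1}\dist(U_0, \mathfrak{A})^{-1/2}$ forces $\max(\xi_1, \xi_2)\,|\Omega_{\mathfrak{s}}(X_0)| \gtrsim n^{\delta/4 - 1}$, which dominates both $n^{-1}$ and $n^{\delta_0 - 1}$ for $\delta_0 \le \delta/8$. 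The lower barrier is constructed symmetrically with $\xi_j^- = (1+\eta)\xi_j$ and a downward shift. Combining the sandwich $\widehat{\mathsf{H}}^- \le \widetilde{\mathsf{H}} \le \widehat{\mathsf{H}}^+$ from \Cref{comparewalks} with \Cref{estimategamma} (whose \Cref{xhh} hypothesis is verified by part (4) of \Cref{omegaxizeta} and whose \Cref{xhh2} hypothesis is enforced by the integer rounding in the construction of $\widehat{\mathsf{h}}^{\pm}$) then yields the claimed $(\xi;\mu)$-tiltedness at $U_0$ with overwhelming probability.

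The main obstacle is verifying the pointwise boundary domination $\widehat{\mathsf{h}}^+ \ge \widetilde{\mathsf{h}}$ on all of $\partial\mathsf{D}$. On the east and west sides, both functions are constant (by \Cref{assumptiondomega} and part (4) of \Cref{omegaxizeta}), and \Cref{assumptiond} places $\partial_{\ea,\we}(\mathfrak{D})$ either outside $\overline{\mathfrak{L}}$ or in $\partial\mathfrak{P}$, reducing the comparison to a global integer shift. On the top and bottom, the delicate region is a narrow strip around $\mathfrak{A} \cap \{y = \mathfrak{t}_j\}$ whose width is determined by $\max(n^{\delta/2-2/3}, \zeta_j^{8/9})$, where neither the bulk tiltedness (valid only on $\mathfrak{L}_-^{\delta/2}$) nor the edge tiltedness (valid only within distance $\zeta_j^{8/9}$ of the edge) directly controls $\widetilde{\mathsf{h}}$. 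I would handle this by partitioning the strip according to the sub-cases of \Cref{lambdamud} and exploiting the fact that the barrier's edge shift $|\omega^+|\,|\Upsilon| \lesssim \xi_j \le n^{-2/3}$ is much smaller than the relevant widths: in each sub-region the barrier is either already frozen at the common value $H^*(x_j')$ (which majorizes $\widetilde{\mathsf{h}}$ by monotonicity of $H^*$) or lies well inside the deep liquid (where the excess $\eta\,\xi_j\,|\Omega|$ combined with the integer shift $\lceil n\mu\rceil + 1$ dominates the bulk tilt of $\widetilde{\mathsf{h}}$). Reconciling the four qualitative sub-regions — far frozen, edge transition, newly-frozen-for-barrier, and deep liquid — while keeping track of the additive shift is the technical heart of the argument.
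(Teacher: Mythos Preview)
Your overall architecture is exactly the paper's: invoke \Cref{omegaxizeta} with negative tilt parameters to build a barrier limit shape, add a global additive shift, verify pointwise boundary domination, couple via \Cref{comparewalks}, and conclude with \Cref{estimategamma}. The paper in fact does not use your multiplicative slack $(1+\eta)$ at all; it takes $\xi_j^+ = -\xi_j$ directly and relies entirely on a larger additive shift $\mu + n^{\delta/20-1}$ in place of your $\mu + \mathcal{O}(n^{-1})$.

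That difference is where your argument breaks. In the edge-transition strip on $\partial_{\north}(\mathfrak{D})\cup\partial_{\so}(\mathfrak{D})$, specifically for $x$ with $\widehat{x}_0 > x > x_0 - \zeta_j^{8/9}$ (barrier already liquid but still within the edge-tiltedness window for $\widetilde H$), your dichotomy ``barrier frozen at $H^*(x_0)$ \emph{or} deep liquid'' fails: the barrier is liquid there, yet $|\Omega_{\mathfrak t_j}(x)|\lesssim \zeta_j^{4/9}$ is small, so your multiplicative excess $\eta\,\xi_j\,|\Omega_{\mathfrak t_j}(x)|$ is useless. Meanwhile the edge-tiltedness hypothesis only gives $\widetilde H(x,\mathfrak t_j)\le H^*(x+\zeta_j\Upsilon,\mathfrak t_j)$, which can exceed your barrier value $\approx H^*(x+(1+\eta)\xi_j\Upsilon,\mathfrak t_j)$ by order $|x_0-x|^{1/2}(\zeta_j-\xi_j)\lesssim \zeta_j^{13/9}$, and in any case the discrepancy $\widetilde H - H^*$ there is controlled only by $\zeta_j^{3/2}\le n^{3\delta/100-1}$. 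Your shift of $\lceil n\mu\rceil+1$, i.e.\ $\mu+\mathcal{O}(n^{-1})$ in continuum units, cannot absorb $n^{3\delta/100-1}$. The paper's shift $n^{\delta/20-1}$ is chosen precisely to swallow this edge error, and at the final step the slack $(\xi-|\omega(\mathfrak s)|)\,|\Omega_{\mathfrak s}(X_0)|\gtrsim n^{\delta/4-1}$ (from the hypothesis $\xi_j\ge n^{\delta/4-1}\dist(U_0,\mathfrak A)^{-1/2}$) still dominates $n^{\delta/20-1}$. Replace your $+1$ by $+\lceil n^{\delta/20}\rceil$ and drop the unnecessary $(1+\eta)$, and your proof becomes the paper's.
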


	\begin{rem}
		
		\label{northsouthd}
		
		By rotating $\mathfrak{D}$, \Cref{hhestimate1} and \Cref{hhestimate2} also hold if in \Cref{assumptiond} the constraints on $\partial_{\north} (\mathfrak{D})$ are instead imposed on $\partial_{\so} (\mathfrak{D})$.
		
	\end{rem}

	\subsection{Proof of \Cref{hhestimate1}} 
	
	\label{ProofHt1t2}
	
	In this section we establish \Cref{hhestimate1}. Throughout this section, we adopt the notation of that proposition. For any $u = (x, \mathfrak{s}) \in \mathfrak{D}$, with $(x_0, \mathfrak{s}) \in \mathfrak{A}$ denoting a point with $|x - x_0|$ minimal, it suffices to show with overwhelming probability that 
		\begin{flalign}
			\label{huzetaomega}
			\begin{aligned}
			H^* (u) + \zeta \Omega_{\mathfrak{s}} (x) \leq \widetilde{H} (u) \leq H^* (u) - \zeta \Omega_{\mathfrak{s}} (x), \qquad & \text{if $u \in \mathfrak{L}_-^{\delta}$}; \\
			\widetilde{H} (u) = H^* (u), \qquad & \text{if $u \notin \mathfrak{L}$ and $|x - x_0| \geq \zeta \big| \Upsilon_{\mathfrak{s}} (x_0) \big|$},
			\end{aligned} 
		\end{flalign} 
	
		\noindent and, if $|x - x_0| \leq \zeta^{8/9}$, that
		\begin{flalign}
			\label{hhxx0}
			\begin{aligned} 
			H^* \big( x - \zeta \Upsilon_{\mathfrak{s}} (x_0), \mathfrak{s}) \leq \widetilde{H} (u) \leq H^* \big( x + \zeta \Upsilon_{\mathfrak{s}} (x_0), \mathfrak{s} \big), \qquad \text{if $\partial_x H^* (x_0, \mathfrak{s}) = 0$}; \\
			H^* \big( x - \zeta \Upsilon_{\mathfrak{s}} (x_0), \mathfrak{s} \big) + \zeta \Upsilon_{\mathfrak{s}} (x_0) \leq \widetilde{H} (u) \leq H^* \big( x + \zeta \Upsilon_{\mathfrak{s}} (x_0), \mathfrak{s} \big) - \zeta \Upsilon_{\mathfrak{s}} (x_0), \qquad \text{if $\partial_x H^* (x_0, \mathfrak{s}) = 1$}.
			\end{aligned} 
		\end{flalign}
		
		\noindent We only establish the upper bounds in \eqref{huzetaomega} and \eqref{hhxx0}, as the proofs of the lower bounds are entirely analogous. In what follows, we will assume that $\zeta_1 \geq \zeta_2$, as the proof in the complementary case $\zeta_1 < \zeta_2$ is entirely analogous. Let us also fix a small real number $\theta \in ( 0, 1/50)$ (it suffices to take $\theta = 1/100$), and we define slightly larger versions of $\zeta_1, \zeta_2$ by 
		\begin{flalign}
		\label{zetajzetaj}
			\zeta_1' = (1 + \theta \varepsilon) \zeta_1; \qquad \zeta_2' = (1 + \theta \varepsilon) \zeta_2.
		\end{flalign}
	
		\noindent Throughout, we further set $\widetilde{h} = \widetilde{H} |_{\partial \mathfrak{D}}$ from \Cref{assumptiondomega}. 
	
		Before continuing, let us briefly outline how we will proceed. First, we use \Cref{omegaxizeta} to obtain a ``$(-\zeta_1', -\zeta_2')$-tilted'' boundary function $\widehat{h}: \partial \mathfrak{D} \rightarrow \mathbb{R}$, with associated maximizer $\widehat{H}^* \in \Adm (\mathfrak{D}; \widehat{h})$ of $\mathcal{E}$; properties of this tilting from \Cref{omegaxizeta} will imply $\widehat{h} \geq  \widetilde{h}$. Next, we consider a tiling of $\mathsf{D}$ whose (scaled) boundary height function is given by $\widehat{h}$. Appyling \Cref{estimategamma}, we will deduce that the (scaled) height function $\widehat{H}$ associated with this tiling is close to $\widehat{H}^*$. Together with the bound $\widehat{h}\geq \widetilde{h}$ and the monotonicity result \Cref{comparewalks}, this will esentially imply that $\widehat{H}^* \approx \widehat{H} \geq \widetilde{H}$. This, with the fact (implied by \Cref{omegaxizeta}) that $\widehat{H}^*$ is approximately $\zeta$-tilted with respect to $H^*$, will yield the upper bounds in \eqref{huzetaomega} and \eqref{hhxx0}.   
		
		Now let us implement this procedure in detail. Apply \Cref{omegaxizeta} with the $(\xi_1, \xi_2)$ there equal to $(-\zeta_1', -\zeta_2')$ here. This yields a function $\widehat{h} : \partial \mathfrak{D} \rightarrow \mathbb{R}$ and its associated maximizer $\widehat{H}^* \in \Adm (\mathfrak{D}; \widehat{h})$ of $\mathcal{E}$ satisfying the four properties listed there. Define its discretization $\widehat{\mathsf{h}} : \partial \mathsf{D} \rightarrow \mathbb{Z}$ of $\widehat{h}$ by setting $\widehat{\mathsf{h}} (nv) = \big\lfloor n \widehat{h} (v) \big\rfloor$, for each $v \in \partial \mathfrak{D}$. 
		
		\begin{lem} 
			
		\label{hh3} 
		
		For each $\mathsf{v} \in \partial \mathsf{D}$, we have that $\widetilde{\mathsf{h}} (\mathsf{v}) \leq \widehat{\mathsf{h}} (\mathsf{v})$.
		
		\end{lem}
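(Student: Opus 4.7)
The plan is to partition $\partial \mathfrak{D}$ into four types of pieces and verify a strict pointwise inequality $n \widehat{h}(v) \geq n\widetilde{h}(v) + 1$ on each, which absorbs the rounding loss in $\widehat{\mathsf{h}}(nv) = \lfloor n\widehat{h}(v)\rfloor$ and yields the desired $\widetilde{\mathsf{h}} \leq \widehat{\mathsf{h}}$. The required margin is produced by the strict enlargement $\zeta_j' = (1+\theta\varepsilon)\zeta_j$ from \eqref{zetajzetaj}: since $\widehat{h}$ was obtained by applying \Cref{omegaxizeta} at the parameters $(\xi_1,\xi_2) = (-\zeta_1', -\zeta_2')$, the tilt built into $\widehat{H}^*$ (governed by $\zeta_j'$) strictly dominates the tilt allowed for $\widetilde{H}$ (governed by $\xi_j \leq \zeta_j$ in the bulk and $\zeta_j$ near the edge), and the gap $\zeta_j' - \zeta_j \geq \theta\varepsilon\zeta_j$ supplies the pointwise cushion.

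The partition consists of: (i) the lateral sides $\partial_{\we}(\mathfrak{D}) \cup \partial_{\ea}(\mathfrak{D})$; (ii) the points of $\partial_{\so}(\mathfrak{D}) \cup \partial_{\north}(\mathfrak{D})$ lying in $\mathfrak{L}_-^{\delta}$; (iii) an ``edge strip'' on $\partial_{\so} \cup \partial_{\north}$ within distance $\zeta^{8/9}$ of $\mathfrak{A}$; and (iv) the far-frozen portions beyond distance $\zeta_j|\Upsilon|$ from $\mathfrak{A}$. On (i), both $\widetilde{h}$ and $\widehat{h}$ are constant (the former by \Cref{assumptiondomega}; the latter because part (4) of \Cref{omegaxizeta} implies $\widehat{H}^*$ satisfies \Cref{xhh}), and I will use the freedom to shift $\widehat{h}$ by a global additive constant to arrange the comparison directly. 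On (ii), part (1) of \Cref{omegaxizeta} gives $\widehat{H}^*(x,\mathfrak{t}_j) - H^*(x,\mathfrak{t}_j) = \zeta_j' |\Omega_{\mathfrak{t}_j}(x)| + \mathcal{O}\bigl((\zeta_1')^{3/2} + (\zeta_2')^{3/2}\bigr)$, while the bulk tilt of $\widetilde{H}$ with $\mu = 0$ yields $\widetilde{H}(x,\mathfrak{t}_j) - H^*(x,\mathfrak{t}_j) \leq \xi_j |\Omega_{\mathfrak{t}_j}(x)|$; subtracting and combining with the lower bound $|\Omega_{\mathfrak{t}_j}(x)| \gtrsim \dist(u,\mathfrak{A})^{1/2}$ from \Cref{omegad} produces the required margin. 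On (iii), the edge tiltedness of $\widetilde{H}$ sandwiches it between translates $H^*(x \pm \zeta_j\Upsilon_{\mathfrak{t}_j}(x_0), \mathfrak{t}_j)$ of $H^*$ (with an appropriate integer correction when the adjacent slope equals $1$), and parts (2) and (3) of \Cref{omegaxizeta} give an analogous sandwich for $\widehat{H}^*$ with $\zeta_j'$ in place of $\zeta_j$; the strict ordering $\zeta_j' > \zeta_j$ then converts into $\widehat{h} \geq \widetilde{h}$ modulo the quadratic and $|\Delta|^{3/2}$ error terms in parts (2), (3). Finally, on (iv), edge tiltedness forces $\widetilde{h} = h$ exactly, and part (2) of \Cref{omegaxizeta} (combined with the sign of $\Upsilon_{\mathfrak{t}_j}(x_0)$ from \Cref{omegafsx}) gives $\widehat{h} = h$ as well.

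The main obstacle is the transition between regimes (ii) and (iii). At points of $\mathfrak{L}_-^{\delta}$ close to its inner boundary, $|\Omega|$ is only of order $n^{\delta/2 - 1/3}$, and the gain $\theta\varepsilon\zeta_j|\Omega|$ becomes comparable to the $\mathcal{O}(\zeta^{3/2})$ error in part (1) of \Cref{omegaxizeta}; making the margin strictly positive uniformly in the permitted range of $\zeta_j$ will require a careful exploitation of the hypotheses $n^{\delta/100 - 2/3} \leq \zeta_j \leq n^{\delta/2 - 2/3}$ and $\delta \in (0,1/50)$, or a refinement of the expansion in \Cref{omegaxizeta} whose error decays with $|\Omega|$ as one approaches $\mathfrak{A}$. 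Coordinating the global shift chosen on the lateral sides with the comparisons on the top and bottom sides, and verifying that the edge strip and far-frozen regimes cover all of $(\partial_{\so} \cup \partial_{\north}) \setminus \mathfrak{L}_-^{\delta}$ without a gap in the overlap $n^{\delta - 2/3} \leq \dist(u,\mathfrak{A}) \leq \zeta^{8/9}$, is the remaining bookkeeping.
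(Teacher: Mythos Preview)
Your four-region decomposition and the use of the gap $\zeta_j' - \zeta_j = \theta\varepsilon\zeta_j$ to manufacture a cushion are exactly the paper's strategy. Two points in the execution need to be fixed.

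On the lateral sides there is no ``freedom to shift $\widehat{h}$ by a global additive constant.'' The function $\widehat{h}$ is already determined by \Cref{omegaxizeta}, and in particular part~(2) there forces $\widehat{H}^* = H^*$ throughout the frozen region, which pins the additive constant. The paper's argument is simpler: the four corners of $\mathfrak{D}$ lie outside $\overline{\mathfrak{L}}$ by \Cref{assumptiond}, and since the edge of both $\widetilde{H}$ and $\widehat{H}^*$ is at most $n^{\delta-2/3}$-tilted, the corners also lie outside $\mathfrak{L}\cup\widehat{\mathfrak{L}}$. Hence $\widetilde{h}=h=\widehat{h}$ exactly at the corners, and constancy along $\partial_{\ea}\cup\partial_{\we}$ propagates this. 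Because $nh(v)=\mathsf{h}(nv)$ is already an integer there, one gets $\widehat{\mathsf{h}}(\mathsf v)=\widetilde{\mathsf{h}}(\mathsf v)$ on the lateral sides (and likewise on the far-frozen portion (iv)); no strict margin is available, and none is needed. So your uniform target $n\widehat{h}\geq n\widetilde{h}+1$ is too strong: it is achieved only on the bulk and edge-strip pieces, while exact integer equality handles (i) and (iv).

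Your ``main obstacle'' disappears once the threshold between bulk and edge is placed at $|x-x_0|=(\zeta_j')^{8/9}$ rather than at the boundary of $\mathfrak{L}_-^{\delta}$. Since $\zeta_j\leq n^{\delta/2-2/3}$ and $\delta<1/50$, one has $(\zeta_j')^{8/9}\geq n^{\delta}\zeta_1$, hence by \Cref{omegad} the bulk bound $-\Omega_{\mathfrak{t}_j}(x)\gtrsim |x-x_0|^{1/2}\geq n^{\delta/2}\zeta_1^{1/2}$ holds, and the cushion $\theta\varepsilon\zeta_j|\Omega_{\mathfrak{t}_j}(x)|$ strictly dominates the $\mathcal{O}(\zeta_1^{3/2})$ error from part~(1) of \Cref{omegaxizeta}. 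Moreover $(\zeta_j')^{8/9}\geq n^{\delta-2/3}$, so every such point lies in $\mathfrak{L}_-^{\delta}$ and the bulk-tiltedness hypothesis on $\widetilde{H}$ is available. No refinement of \Cref{omegaxizeta} is needed.
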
  
	
		\begin{proof} 
			
		Let us first verify the lemma when $v = n^{-1} \mathsf{v} \in \partial_{\ea} (\mathfrak{D}) \cup \partial_{\we} (\mathfrak{D})$. In this case, \Cref{assumptiond} implies that the four corners $\big\{ \big( \mathfrak{a} (\mathfrak{t}_1), \mathfrak{t}_1 \big), \big( \mathfrak{b} (\mathfrak{t}_1), \mathfrak{t}_1 \big), \big( \mathfrak{a} (\mathfrak{t}_2), \mathfrak{t}_2 \big), \big( \mathfrak{b} (\mathfrak{t}_2), \mathfrak{t}_2 \big) \big\}$ of $\mathfrak{D}$ are outside of $\overline{\mathfrak{L}}$; they are thus bounded away from $\overline{\mathfrak{L}}$ (recall we view $\mathfrak{D}$ and $\mathfrak{P}$ as fixed with respect to $n$). Since $\zeta_1, \zeta_2 \leq n^{\delta  -  2/ 3}$, the edge of $\widetilde{H}$ is $n^{\delta - 2/3} \ll 1$ tilted with respect to $H^*$ at levels $\mathfrak{t}_1$ and $\mathfrak{t}_2$. Hence $\widetilde{H} (v) = H^* (v)$ at these four corners, so $\widetilde{\mathsf{h}} (nv) = \widetilde{\mathsf{H}} (nv) = n H^* (v) = n h (v) = \mathsf{h} (n v)$ there. The second part of \Cref{omegaxizeta}, together with the fact that $\zeta_j \Upsilon_{\mathfrak{t}_j} (x) + \mathcal{O} (\zeta_1^2) = \mathcal{O} (n^{\delta -2 / 3})$, implies that the edge of $\widehat{H}^*$ is also $n^{\delta - 2/3} \ll 1$ tilted with respect to $H^*$. So, similar reasoning gives $\widehat{h} (v) = h(v)$ at these four corners, yielding $\widehat{\mathsf{h}} (nv) = \mathsf{h} (nv) = \widetilde{\mathsf{h}} (nv)$ there. Since $\widetilde{\mathsf{h}}$ is constant along the east and west boundaries of $\mathsf{D}$, it follows that $\widehat{\mathsf{h}} (\mathsf{v}) = \widetilde{\mathsf{h}} (\mathsf{v}) = \widetilde{\mathsf{h}} (\mathsf{v})$ there. 
		
		We next verify $\widetilde{\mathsf{h}} (\mathsf{v}) \leq \widehat{\mathsf{h}} (\mathsf{v})$ when $v = n^{-1} \mathsf{v} \in \partial_{\north} (\mathfrak{D}) \cup \partial_{\so} (\mathfrak{D})$. Set $v = (x, \mathfrak{t}_j)$ and let $v_0 = (x_0, \mathfrak{t}_j) \in \partial \mathfrak{A}$ denote a point with $|x - x_0|$ minimal. 
		
		First suppose $|x - x_0| \geq (\zeta_j')^{8/ 9}$ and $v \notin \overline{\mathfrak{L}}$. Then, by \Cref{omegad} and the bound $|\zeta_j| = \mathcal{O} (n^{\delta - 2/3})$, we have $|x - x_0| \geq \zeta_j' \big| \Upsilon_{\mathfrak{t}_j} (x_0) \big|$ for sufficiently large $n$. Since the edges of $\widetilde{H}$ and $\widehat{H}^*$ are $\zeta_j'$-tilted with respect to $H^*$ at level $\mathfrak{t}_j$ (as $\zeta_j \leq \zeta_j'$), we have $\widetilde{h} (v) = h(v) = \widehat{h} (v)$, so $\widehat{\mathsf{h}} (\mathsf{v}) = \widetilde{\mathsf{h}} (\mathsf{v})$. 
		
		Next, suppose that $|x - x_0| \leq (\zeta_j')^{8/9}$. Then, $v_0 = (x_0, \mathfrak{t}_j)$ is either a left or right endpoint of $\mathfrak{A}$, and $\partial_x H^* (v_0) \in \{ 0, 1 \}$. We assume in what follows that $v_0$ is a right endpoint of $\mathfrak{A}$ and that $\partial_x H^* (v_0) = 0$, as the alternative cases are entirely analogous. Define $\widehat{x}_0 \in \mathbb{R}$ such that $\widehat{v}_0 = (\widehat{x}_0, \mathfrak{t}_j)$ denotes an endpoint of $ \mathfrak{L}_{\so} (\mathfrak{D}; \widehat{h})$ or $ \mathfrak{L}_{\north} (\mathfrak{D}; \widehat{h})$ (depending on whether $j = 1$ or $j = 2$, respectively), such that $|\widehat{x}_0 - x_0|$ is minimal. By \eqref{hxyhxy2}, we have $\widehat{x}_0 - x_0 + \zeta_j' \Upsilon_{\mathfrak{t}_j} (x_0) = \mathcal{O} (\zeta_1^2 + \zeta_2^2)$ (recall that the $\xi_j$ there is $-\zeta_j$ here). In particular, since $\Upsilon_{\mathfrak{t}_j} (x_0)$ is positive (by \Cref{omegafsx}) and bounded away from $0$ (by \Cref{omegad}), we have $\widehat{x}_0 \leq x_0$ and $x_0 - \widehat{x}_0 = \mathcal{O} (\zeta_1 + \zeta_2)$. 
		
		If $x \geq \widehat{x}_0$, then $v = (x, \mathfrak{t}_j) \notin \widehat{\mathfrak{L}}$ (as $(\widehat{x}_0, \mathfrak{t}_j)$ must be a right endpoint of $\widehat{\mathfrak{L}}$). Since $x_0 \geq \widehat{x}_0$ and $\partial_x \widehat{H}^* (x', \mathfrak{t}_j) = \partial_x H^* (x_0, \mathfrak{t}_j) = 0$ for $x' > \widehat{x}_0$, this implies $\widehat{H}^* (x, \mathfrak{t}_j) = \widehat{H}^* (x_0, \mathfrak{t}_j) = H^* (x_0, \mathfrak{t}_j)$ (where the last equality follows from Item \eqref{lx} of \Cref{omegaxizeta}, since $(x_0, \mathfrak{t}_j) \notin \mathfrak{L} \cup \widehat{\mathfrak{L}}$). Hence, $\widehat{h} (v) = h(x_0, \mathfrak{t}_j)$. Since the edge of $\widetilde{H}$ is $\zeta_j$-tilted with respect to $H^*$ and $\zeta_j = \mathrm{o}(n^{1/2})$, we also have $\widetilde{h} (v) \leq \widetilde{h} (x + n^{-1/2}, \mathfrak{t}_j) = H^* (x + n^{-1/2}, \mathfrak{t}_j) = H^* (x_0, \mathfrak{t}_j) = h (x_0, \mathfrak{t}_j)$. Thus $\widetilde{h} (v) \leq h(v) = \widehat{h} (v)$, meaning $\widehat{\mathsf{h}} (\mathsf{v}) \geq \widetilde{\mathsf{h}} (\mathsf{v})$ if $x \geq \widehat{x}_0$. 
		
		If instead $x < \widehat{x}_0$, then
		\begin{flalign}
			\label{hxtj1}
			\begin{aligned}
			\widehat{H}^* (x, \mathfrak{t}_j) & \geq \widehat{H}^* (\widehat{x}_0, \mathfrak{t}_j) + H^* (x + x_0 - \widehat{x}_0, \mathfrak{t}_j) - H^* (x_0, \mathfrak{t}_j) + \mathcal{O} (\zeta_1^{16 / 9}) \\
			& = H^* (x + x_0 - \widehat{x}_0, \mathfrak{t}_j) + \mathcal{O} (\zeta_1^{16 / 9}) \geq H^* \big(x + \zeta_j' \Upsilon_{\mathfrak{t}_j} (x_0), \mathfrak{t}_j \big) + \mathcal{O} (\zeta_1^{16 / 9}).
			\end{aligned} 
		\end{flalign}
		
		\noindent Here the first statement follows from \eqref{hxdelta} and the fact that $|x - \widehat{x}_0| = \mathcal{O} \big( |x - x_0| + |x_0 - \widehat{x}_0| \big) = \mathcal{O} (\zeta_1^{8/9})$; the second from the fact that $\widehat{H}^* (\widehat{x}_0, \mathfrak{t}_j) = \widehat{H}^* (x_0, \mathfrak{t}_j) = H^* (x_0, \mathfrak{t}_j)$, which holds by the equality $\partial_x \widehat{H}^* (\widehat{x}_0, \mathfrak{t}_j) = \partial_x H^* (x_0, \mathfrak{t}_j) = 0$ (as $x_0 \geq \widehat{x}_0$) and the second part of \Cref{omegaxizeta}; and the third from the facts that $H^*$ is $1$-Lipschitz and that $\widehat{x}_0 - x_0 + \zeta_j' \Upsilon_{\mathfrak{t}_j} (x_0) = \mathcal{O} (\zeta_1^2)$, by \eqref{hxyhxy2}. 
		
		 Next, observe that $(x, \mathfrak{t}_j) \in \mathfrak{L} \cap \widehat{\mathfrak{L}}$, since $x < \widehat{x}_0 \leq x_0$ and $|x - \widehat{x}_0| = \mathcal{O} (\zeta_1^{8/9})$. By \Cref{derivativeh} concerning the square root decay of $\partial_x H^*$ around $\mathfrak{A}$, and the fact that $\partial_x H^* (x_0, t) = 0$, we therefore deduce the existence of a constant $c = c(\mathfrak{P}, \mathfrak{D}, \theta) > 0$ such that
		\begin{flalign}
			\label{hxzeta}
			H^* \big(x + \zeta_j' \Upsilon_{\mathfrak{t}_j} (x_0), \mathfrak{t}_j \big) \geq H^* \big( x_0 + \zeta_j \Upsilon_{\mathfrak{t}_j} (x_0), \mathfrak{t}_j \big) + c \zeta_j^{3/2},
		\end{flalign}
	
		\noindent where we have used the fact \eqref{zetajzetaj} that $\zeta_j' - \zeta_j = \theta \zeta_j$ (as well as the fact that $\Upsilon_{\mathfrak{t}_j} (x_0)$ is bounded away from $0$, from \Cref{omegad}). Inserting \eqref{hxzeta} into \eqref{hxtj1} yields
		\begin{flalign*} 
			\widehat{H}^* (x, \mathfrak{t}_j) & \geq H^* \big( x + \zeta_j \Upsilon_{\mathfrak{t}_j} (x_0), \mathfrak{t}_j \big) + c \zeta_j^{3/2} + \mathcal{O} (\zeta_1^{16 / 9}) \geq \widetilde{H} (x, \mathfrak{t}_j) + c \zeta_j^{3/2} + \mathcal{O} (n^{-1})  \geq \widetilde{H} (x, \mathfrak{t}_j),
		\end{flalign*}
	
		\noindent where the second inequality holds since $|\zeta_1| \leq n^{\delta - 2/3}$, and the third follows from the facts that the edge of $\widetilde{H}$ is $\zeta_j$-tilted and that $\zeta_j \leq n^{\delta / 100 - 2/3}$. Hence $\widehat{h} (v) \geq \widetilde{h} (v)$, meaning $\widehat{\mathsf{h}} (\mathsf{v}) \geq \widetilde{\mathsf{h}} (\mathsf{v})$.
		
		It thus remains to consider the case when $|x - x_0| \geq (\zeta_j')^{8/9}$ and $v = (x, \mathfrak{t}_j) \in \mathfrak{L}$. In particular, $|x - x_0| \geq \zeta_j' \Upsilon_{\mathfrak{t}_j} (x_0) + \mathcal{O} (\zeta_j^2)$, which implies by \eqref{hxyhxy2} that $v = (x, \mathfrak{t}_j) \in \mathfrak{L} \cap \widehat{\mathfrak{L}}$. Thus, \eqref{hxyhxy1} yields
		\begin{flalign*}
			\widehat{H}^* (v) & \geq H^* (v) - \zeta_j' \Omega_{\mathfrak{t}_j} (x) + \mathcal{O} \big( (\zeta_1' + \zeta_2')^{3/2} \big) \geq \widetilde{H} (v) - \theta \zeta_j \Omega_{\mathfrak{t}_j} (x) + \mathcal{O} (\zeta_1^{3/2}),
		\end{flalign*}
	
		\noindent where to deduce the last inequality we used the facts that $\zeta_j' = (1 + \theta) \zeta_j$ and that $\widetilde{H}$ is $(\zeta_j; 0)$-tilted at $v$ (since $v \in \mathfrak{L}_-^{\delta}$, as $|x - x_0| \geq (\zeta_j')^{8 / 9} \geq n^{\delta - 2/3}$ and $\xi_j \leq \zeta_j$). By \Cref{omegad}, there exists a constant $c = c(\mathfrak{P}) > 0$ such that $-\Omega_{\mathfrak{t}_j} (x) \geq c  |x - x_0|^{1/2}$. In particular, since $|x - x_0| \geq (\zeta_j')^{8 / 9} \geq n^{\delta} \zeta_1$ (as $\zeta_j \leq n^{\delta / 2 - 2 / 3}$ and $\delta < 1/50$), we deduce that $-\Omega_{\mathfrak{t}_j} (x) \geq n^{\delta / 3} \zeta_1^{1/2}$. So,  
		\begin{flalign*}
			\widehat{H}^* (v) \geq \widetilde{H} (v) + c \theta \zeta_j |x - x_0|^{1/2} + \mathcal{O} (\zeta_1^{3/2}) \geq \widetilde{H} (u),
		\end{flalign*}
	
		\noindent which once again implies that $\widehat{h} (v) \geq \widetilde{h} (v)$, so that $\widehat{\mathsf{h}} (\mathsf{v}) \geq \widetilde{\mathsf{h}} (\mathsf{v})$. This verifies the lemma in all cases. 
		\end{proof} 
	
		Given this lemma, we can establish \Cref{hhestimate1}.

		\begin{proof}[Proof of \Cref{hhestimate1}] 
		
		Let $\widehat{\mathsf{H}}: \mathsf{D} \rightarrow \mathbb{Z}$ denote a uniformly random element of $\mathscr{G} (\widehat{\mathsf{h}})$. By \Cref{hh3} and \Cref{comparewalks} (alternatively, \Cref{heightcompare}), we may couple $\widehat{\mathsf{H}}$ with $\widetilde{\mathsf{H}}$ such that $\widehat{\mathsf{H}}(\mathsf{u}) \geq \widetilde{\mathsf{H}} (\mathsf{u})$, for each $\mathsf{u} \in \mathsf{D}$. In particular, denoting $\widehat{H} : \overline{\mathfrak{D}} \rightarrow \mathbb{R}$ by $\widehat{H} (u) = n^{-1} \widehat{\mathsf{H}} (nu)$ for each $u \in \overline{\mathfrak{D}}$, we have $\widehat{H} (u) \geq \widetilde{H} (u)$.
		
		Apply \Cref{estimategamma}, with the $(h; H^*; \mathsf{H}; \delta)$ there equal to $\big( \widehat{h}; \widehat{H}^*; \widehat{\mathsf{H}}; \delta/500 \big)$ here. By the fourth part of \Cref{omegaxizeta}, \Cref{xhh} applies. Moreover, \Cref{xhh2} applies, since $\widehat{\mathsf{h}} (nv) = \big\lfloor n \widehat h(v) \big\rfloor$ for each $v \in \partial \mathfrak{D}$. Then, letting $\mathscr{E}$ denote the event on which 
		\begin{flalign}
			\label{hh2} 
			\begin{aligned}
			\big| \widehat{H} (u) - \widehat{H}^* (u) \big| < n^{\delta / 500 - 1}, \qquad & \text{for each $u \in \overline{\mathfrak{D}}$}; \\
			\widehat{H} (u) = \widehat{H}^* (u), \qquad & \text{if $u \notin \widehat{\mathfrak{L}}$ and $\dist (u, \partial \widehat{\mathfrak{L}}) \geq n^{\delta / 500 - 2/3}$},
			\end{aligned}
		\end{flalign} 
	
		\noindent \Cref{estimategamma} implies that $\mathscr{E}$ holds with overwhelming probability. In what follows, let us fix $u = (x, \mathfrak{s}) \in \overline{\mathfrak{D}}$, and let $u_0 = (x_0, \mathfrak{s}) \in \mathfrak{A}$ denote a point with $|x - x_0|$ minimal. We will show that the upper bounds in \eqref{huzetaomega} and \eqref{hhxx0} hold on $\mathscr{E}$.
		
		First assume that $u \in \mathfrak{L}_-^{\delta}$, in which case $u \in \mathfrak{L}$ and $|x - x_0| \geq n^{\delta - 2/3} \geq (\zeta_1 + \zeta_2) \big| \Upsilon_{\mathfrak{s}} (x) \big|$. Hence, \eqref{hxyhxy2} implies that $u \in \widehat{\mathfrak{L}}$, and so \eqref{hxyhxy1} applies and gives
		\begin{flalign}
			\label{hx1s} 
			\widehat{H} (x, \mathfrak{s}) \leq \widehat{H}^* (x, \mathfrak{s}) + n^{\delta / 500 - 1} \leq H^* (x, \mathfrak{s}) - \omega (\mathfrak{s}) \Omega_{\mathfrak{s}} (x) + n^{\delta / 500 - 1} + \mathcal{O} (\zeta_1^{3/2}),
		\end{flalign}
	
		\noindent where $\omega: [\mathfrak{t}_1, \mathfrak{t}_2] \rightarrow \mathbb{R}$ is given by
		\begin{flalign*} 
			\omega (y) = \zeta_2' \displaystyle\frac{y - \mathfrak{t}_1}{\mathfrak{t}_2 - \mathfrak{t}_1} + \zeta_1' \displaystyle\frac{\mathfrak{t}_2 - y}{\mathfrak{t}_2 - \mathfrak{t}_1}. 
		\end{flalign*} 
		
		\noindent In particular, since $(1 - \varepsilon) \mathfrak{t}_1 + \varepsilon \mathfrak{t}_2 = \mathfrak{t}_1 + \varepsilon \mathfrak{t} \leq \mathfrak{s} \leq \mathfrak{t}_2 - \varepsilon \mathfrak{t} = \varepsilon \mathfrak{t}_1 + (1 - \varepsilon) \mathfrak{t}_2$, we have (recalling the definition \eqref{zeta1} of $\zeta$, as well as the bounds $\zeta_1 \geq \zeta_2$,  $\zeta_1 - \zeta_2 \geq \varsigma (\zeta_1 + \zeta_2)$  and $\theta < 1/50$) that 
		\begin{flalign}
			\label{omegas} 
			\omega (\mathfrak{s}) \leq (1 - \varepsilon) \zeta_1' + \varepsilon \zeta_2' \leq (1 - 3 \theta \varepsilon) \zeta.
		\end{flalign}
		
		\noindent Together, \eqref{hx1s} and \eqref{omegas} (with the fact that $n^{\delta / 500 - 1} \leq \zeta_1^{3/2}$) yield on $\mathscr{E}$ that
		\begin{flalign*}
			\widetilde{H} (u) \leq \widehat{H} (x, \mathfrak{s}) \leq H^* (x, \mathfrak{s}) - (1 - 3 \theta \varepsilon) \zeta \Omega_{\mathfrak{s}} (x) + \mathcal{O} (\zeta_1^{3/2}) \leq H^* (x, \mathfrak{s}) - \zeta \Omega_{\mathfrak{s}} (x) = H^* (u) - \zeta \Omega_{\mathfrak{s}} (x),
		\end{flalign*}
	
		\noindent where the third inequality follows from the fact that \Cref{omegad} implies $-\Omega_{\mathfrak{s}} (x) \geq |x - x_0|^{1/2}$ and $|x - x_0| \geq n^{\delta - 2/3} \geq n^{\delta / 4} (\zeta_1 + \zeta_2)$ (as $\zeta_1, \zeta_2 \leq n^{\delta / 2 - 2/3}$). This verifies the upper bound in the first statement of \eqref{huzetaomega}. 
		
		 To verify the second, assume that  $u \notin \mathfrak{L}$ and $|x - x_0| \geq \zeta \big| \Upsilon_{\mathfrak{s}} (x) \big|$. Since $\zeta_1, \zeta_2 \geq n^{\delta / 100 - 2/3}$, we deduce $\dist (u, \mathfrak{A}) \gg n^{\delta / 500 - 2/3}$; by \eqref{hh2} this implies $\widehat{H} (u) = \widehat{H}^* (u)$ on the event $\mathscr{E}$. Additionally, the second of \Cref{omegaxizeta} implies that $\widehat{H}^* (u) = H^* (u)$ if $|x - x_0| \geq \omega (\mathfrak{s}) \big| \Upsilon_{\mathfrak{s}} (x_0) \big| + \mathcal{O} (\zeta_1^2)$. Since \eqref{omegas} yields $\omega (\mathfrak{s}) \leq (1 - 3 \theta \varepsilon) \zeta$, we obtain $|x - x_0| \geq \zeta \big| \Upsilon_{\mathfrak{s}} (x_0) \big| \geq \omega (\mathfrak{s}) \big| \Upsilon_{\mathfrak{s}} (x_0) \big| + \mathcal{O} (\zeta_1^2)$, and so this condition is satisfied. Hence, on $\mathscr{E}$, we have $\widetilde{H} (u) \leq \widehat{H} (u) = H^* (u)$, and so the upper bound in the second statement of \eqref{huzetaomega} holds. 
		 
		 It thus remains to assume $|x - x_0| \leq \zeta^{8 / 9}$ and verify that the upper bound in \eqref{hhxx0} holds on $\mathscr{E}$. To that end, we assume in what follows that $(x_0, \mathfrak{s})$ is a right endpoint of $\mathfrak{A}$ and that $\partial_x H^* (x_0, \mathfrak{s}) = 0$, as the proofs in all other cases are entirely analogous. Then, let $(\widehat{x}_0, \mathfrak{s}) \in \partial \widehat{\mathfrak{L}}$ denote the point such that $|x - \widehat{x}_0|$ is minimal. By \eqref{hxyhxy2}, we have $x_0 - \widehat{x}_0 = \omega (\mathfrak{s}) \Upsilon_{\mathfrak{s}} (x_0) + \mathcal{O} (\zeta_1^2)$. In particular, since $\Upsilon_{\mathfrak{s}} (x_0) > 0$ (by \Cref{omegad}), it follows that $x_0 \geq \widehat{x}_0$ and $x_0 - \widehat{x}_0 = \mathcal{O} (\zeta)$.  
		 
		 Let us first assume that $x \geq \widehat{x}_0 - \theta \varepsilon \zeta \Upsilon_{\mathfrak{s}} (x_0)$. Then, since $\partial_x \widehat{H}^* (x', \mathfrak{s}) = \partial_x H^* (x_0, \mathfrak{s})$ for $x' \geq \widehat{x}_0$, we have 
		 \begin{flalign}
		 	\label{hxs} 
		 	\widehat{H}^* (x, \mathfrak{s}) \leq \widehat{H}^* (\widehat{x}_0, \mathfrak{s}) \leq \widehat{H}^* (x_0, \mathfrak{s}) = H^* (x_0, \mathfrak{s}) \leq H^* \big( x + \zeta \Upsilon_{\mathfrak{s}} (x_0), \mathfrak{s} \big). 
		 \end{flalign} 
	 
	 	\noindent Here, the first inequality follows from the fact that either $x \leq \widehat{x}_0$ (in which case $\widehat{H}^* (x, \mathfrak{s}) \leq \widehat{H}^* (\widehat{x}_0, \mathfrak{s})$) or $x > \widehat{x}_0$ (in which case $\widehat{H}^* (x, \mathfrak{s}) = \widehat{H}^* (\widehat{x}_0, \mathfrak{s})$); the second inequality holds since $x_0 \geq \widehat{x}_0$. The equality follows from the second statement of \eqref{omegaxizeta}, and the last inequality follows from the fact that 
	 	\begin{flalign*} 
	 		x + \zeta \Upsilon_{\mathfrak{s}} (x_0) \geq \widehat{x}_0 + (1 - \theta \varepsilon) \zeta \Upsilon_{\mathfrak{s}} (x_0) \geq \widehat{x}_0 + \omega (\mathfrak{s}) \Upsilon_{\mathfrak{s}} (x_0) + \mathcal{O} (\zeta_1^2) \geq x_0,
	 	\end{flalign*} 
 	
 		\noindent where we have used \eqref{hxyhxy2}.  Additionally, \eqref{hh2} implies that $\widehat{H} (x, \mathfrak{s}) = \widehat{H}^* (x, \mathfrak{s})$, since $|x - \widehat{x}_0| \gg \zeta^{1 + \delta / 500} \gg n^{\delta / 500 - 2/3}$. Together with \eqref{hxs}, this implies that on $\mathscr{E}$ we have $\widetilde{H} (u) \leq \widehat{H} (x, \mathfrak{s}) = \widehat{H}^* (x, \mathfrak{s}) = H^* \big( x + \zeta \Upsilon_{\mathfrak{s}} (x_0), \mathfrak{s} \big)$, thereby verifying the upper bound in \eqref{hhxx0}.
		 
		 So, let us instead assume that $x < \widehat{x}_0 - \theta \varepsilon \zeta \Upsilon_{\mathfrak{s}} (x_0)$. Then the bound $x_0 - \widehat{x}_0 = \omega (\mathfrak{s}) \Upsilon_{\mathfrak{s}} (x_0) + \mathcal{O} (\zeta^2)$, together with \eqref{hxdelta} and the fact that $H^*$ is $1$-Lipschitz, implies
		\begin{flalign}
			\label{hu1} 
			\begin{aligned} 
			\widehat{H}^* (u) & \leq \widehat{H}^* (\widehat{x}_0, \mathfrak{s}) + H^* (x + x_0 - \widehat{x}_0, \mathfrak{s}) -  H^* (x_0, \mathfrak{s}) + \mathcal{O} (\zeta_1^2) \\
			& \leq H^* \big( x + \omega (\mathfrak{s}) \Upsilon_{\mathfrak{s}} (x_0), \mathfrak{s} \big) + \widehat{H}^* (\widehat{x}_0, \mathfrak{s}) - H^* (x_0, \mathfrak{s}) + \mathcal{O} (\zeta^2) \\
			& = H^* \big( x + \omega (\mathfrak{s}) \Upsilon_{\mathfrak{s}} (x_0), \mathfrak{s} \big) + \mathcal{O} (\zeta^2).
			\end{aligned} 
		\end{flalign} 
	
		\noindent Here, to deduce the last equality we used the fact that $\widehat{H}^* (\widehat{x}_0, \mathfrak{s}) = \widehat{H}^* (x_0, \mathfrak{s}) = H^* (x_0, \mathfrak{s})$, which holds since $x_0 \geq \widehat{x}_0$, since $\partial_x \widehat{H}^* (x', \mathfrak{s}) = \partial_x H^* (x_0, \mathfrak{s}) = 0$ for $x' \geq \widehat{x}_0$, and by the second statement of \Cref{omegaxizeta}. Next, since $\omega (\mathfrak{s}) \leq (1 - 3 \varepsilon \theta) \zeta$ (by \eqref{omegas}), the square root decay of $\partial H^*$ around $\mathfrak{A}$ (see \Cref{derivativeh}) yields a constant $c = c (\mathfrak{P}, \mathfrak{D}, \varepsilon, \theta) > 0$ such that
		\begin{flalign*}
			H^* \big( x + \omega (\mathfrak{s}) \Upsilon_{\mathfrak{s}} (x_0), \mathfrak{s} \big) \leq  H^* \big( x + \zeta \Upsilon_{\mathfrak{s}} (x_0), \mathfrak{s} \big) - c \zeta^{3/2}.
		\end{flalign*}
	
		\noindent This, together with \eqref{hh2} and \eqref{hu1}, implies on $\mathscr{E}$ that
		\begin{flalign*}
			\widetilde{H}(u) \leq \widehat{H} (u) \leq \widehat{H}^* (u) + n^{\delta / 500 - 1} & \leq H^* \big( x + \zeta \Upsilon_{\mathfrak{s}} (x_0), \mathfrak{s} \big) - c \zeta^{3/2} + n^{\delta / 500 - 1} + \mathcal{O} (\zeta^2) \\
			& \leq H^* \big( x_0 + \zeta \Upsilon_{\mathfrak{s}} (x_0), \mathfrak{s} \big),
		\end{flalign*}
		
		\noindent where for the last bound we used the fact that $\zeta \geq n^{\delta / 100 - 2/3}$. Thus, the upper bound in \eqref{hhxx0} holds in $\mathscr{E}$. As mentioned earlier, the proofs of all lower bounds are entirely analogous and therefore omitted; this establishes the proposition.
		\end{proof}

		\subsection{Proof of \Cref{hhestimate2}}
	
		\label{ProofHt1t22} 
		
		In this section we establish \Cref{hhestimate2}.
		
		\begin{proof}[Proof of \Cref{hhestimate2} (Outline)]

		Since the proof of this proposition is similar to that of \Cref{hhestimate1}, we only outline it. It suffices to show that, with overwhelming probability, we have
		\begin{flalign}
			\label{omegaomegahh}
			 H^* (X_0, \mathfrak{s}) + \xi \Omega_{\mathfrak{s}} (X_0) - \mu \leq H(X_0, \mathfrak{s}) \leq  H^* (X_0, \mathfrak{s}) - \xi \Omega_{\mathfrak{s}} (X_0) + \mu.
		\end{flalign}
	
		\noindent We only establish the upper bound in \eqref{omegaomegahh}, as the proof of the lower bound is entirely analogous. Throughout, we set $\widetilde{h} = \widetilde{H} |_{\partial \mathfrak{D}}$ from \Cref{assumptiondomega}. 
		
		To do this, we apply \Cref{omegaxizeta} with the $(\xi_1, \xi_2)$ there equal to $(-\xi_1, -\xi_2)$ here. This yields a function $\widehat{h} : \partial \mathfrak{D} \rightarrow \mathbb{R}$ and its associated maximizer $\widehat{H}^* \in \Adm (\mathfrak{D}; \widehat{h})$ of $\mathcal{E}$ satisfying the four properties listed there. Define $\breve{h} : \partial \mathfrak{D} \rightarrow \mathbb{R}$; the associated maximizer $\breve{H}^* \in \Adm (\mathfrak{D}; \breve{h})$ of $\mathcal{E}$; and the discretization $\breve{\mathsf{h}} : \partial \mathsf{D} \rightarrow \mathbb{Z}$ of $\breve{h}$ by setting
		\begin{flalign}
			\label{hh1}
			\breve{h} (v) = \widehat{h} (v) + \mu + n^{\delta / 20 - 1}, \quad \text{so that} \quad  \breve{H}^* (u) = \widehat{H}^* (u) + \mu + n^{\delta / 20 - 1}, \quad \text{and} \quad \breve{\mathsf{h}} (nv) = \big\lfloor n \breve{h} (v) \big\rfloor,
		\end{flalign}
		
		\noindent for each $u \in \overline{\mathfrak{D}}$ and $v \in \partial \mathfrak{D}$. We claim that $\widetilde{\mathsf{h}} (\mathsf{v}) \leq \breve{\mathsf{h}} (\mathsf{v})$, for each $\mathsf{v} \in \partial \mathsf{D}$. 
			
		The proof that this holds when $v = n^{-1} \mathsf{v} \in \partial_{\ea} (\mathfrak{D}) \cup \partial_{\we} (\mathfrak{D})$ is very similar to that in the proof of \Cref{hh3}, so it is omitted. Thus, suppose that $v = (x, \mathfrak{t}_j) \in \partial_{\north} (\mathfrak{D}) \cup \partial_{\so} (\mathfrak{D})$. If $v \notin \overline{\mathfrak{L}}$ and $\dist (v, \mathfrak{A}) \geq \zeta_j^{8/9}$, then the proof is again entirely analogous to that in the proof of \Cref{hh3}. 
		
		So, let us first assume that $v = (x, \mathfrak{t}_j) \in \mathfrak{L}_-^{\delta}$. Since $\widetilde{H}$ is $(\xi_j; \mu)$-tilted at $v$, we have
		\begin{flalign*}
			\widetilde{H} (v) \leq H^* (v) - \xi_j \Omega_{\mathfrak{t}_j} (x) + \mu.
		\end{flalign*} 
	
		\noindent This, together with \eqref{hxyhxy1}, \eqref{hh1}, and the bound $\xi_j \leq n^{-2/3}$  gives
		\begin{flalign}
			\label{h2}  
			\begin{aligned}
			\breve{H}^* (v) = \widehat{H}^* (v) + \mu + n^{\delta / 20 - 1} & \geq H^* (v) - \xi_j \Omega_{\mathfrak{t}_j} (x) + \mu + n^{\delta / 20 - 1} + \mathcal{O} (\xi_j^{3/2}) \\
			& \geq \widetilde{H} (v) + n^{\delta / 20 - 1} + \mathcal{O} (\xi_j^{3/2}) \geq \widetilde{H} (v) + n^{- 1}.
			\end{aligned}
		\end{flalign}
		
		\noindent Hence, $n \breve{h} (v) \geq n \widetilde{h} (v) + 1$, and so  $\breve{\mathsf{h}} (\mathsf{v}) \geq \widetilde{\mathsf{h}} (\mathsf{v})$ whenever $v = n^{-1} \mathsf{v} \in \mathfrak{L}_-^{\delta}$. 
		
		Thus, assume instead $v \notin \mathfrak{L}_-^{\delta}$ and $\dist (v, \mathfrak{A}) \leq \zeta_j^{8/9}$, and let $v_0 = (x_0, \mathfrak{t}_j) \in \mathfrak{A}$ be such that $|x - x_0|$ is minimal. We suppose that $v_0$ is a right endpoint of $\mathfrak{A}$ and that $\partial_x H^* (v_0) = 0$, as the alternative cases are entirely analogous; then, $\Upsilon_{\mathfrak{t}_j} (x_0) > 0$ by \Cref{omegafsx}. The fact that the edge of $\widetilde{H}$ is $\zeta_j$-tilted with respect to $H^*$ at level $\mathfrak{t}_j$ implies that $\widetilde{H} (x, \mathfrak{t}_j) \leq H^* \big( x + \zeta_j \Upsilon_{\mathfrak{t}_j} (x_0) , \mathfrak{t}_j \big)$. Applying the square root decay of $\partial_x H^*$ around $\mathfrak{A}$ from \Cref{derivativeh} (and the fact that $\partial_x H^* (x_0, \mathfrak{t}_j) = 0$), we therefore deduce the existence of a constant $c = c (\mathfrak{P}, \mathfrak{D}) > 0$ such that 
		\begin{flalign}
			\label{hxtj3} 
			\widetilde{H} (x, \mathfrak{t}_j) \leq H^* \big( x + \zeta_j \Upsilon_{\mathfrak{t}_j} (x_0) , \mathfrak{t}_j \big) \leq H^* (x, \mathfrak{t}_j) + c \zeta_j^{3/2} \leq H^* (v) + \mathcal{O} (n^{\delta / 30 - 1}).
		\end{flalign}
		
		\noindent Now let us compare $\widehat{H}^* (v)$ and $H^* (v)$. To that end, define  $\widehat{x}_0 \in \mathbb{R}$ such that $\widehat{v}_0 = (\widehat{x}_0, \mathfrak{t}_j)$ is an endpoint of $\partial_{\so} (\mathfrak{L})$ or $\partial_{\north} (\mathfrak{L})$ and $|\widehat{x}_0 - x_0|$ is minimal. By \eqref{hxyhxy2}, we have $\widehat{x}_0 - x_0 = - \xi_j' \Upsilon_{\mathfrak{t}_j} (x_0) + \mathcal{O} (\xi_1^2 + \xi_2^2)$. In particular, $\widehat{x}_0 \leq x_0$  and $x_0 - \widehat{x}_0  = \mathcal{O} (\xi_j)$, the former of which implies that $v_0 = (x_0, \mathfrak{t}_j) \notin \mathfrak{L} \cup \widehat{\mathfrak{L}}$. Hence, the second statement of \Cref{omegaxizeta} implies that $\widehat{H}^* (x_0, \mathfrak{t}_j) = \widehat{H}^* (\widehat{x}_0, \mathfrak{t}_j)$. Since $\partial_x \widehat{H}^* (x, \mathfrak{t}_j) = 0$ for $x \geq \widehat{x}_0$,  we find that $\widehat{H}^* (\widehat{x}_0, \mathfrak{t}_j) = \widehat{H}^* (x_0, \mathfrak{t}_j) = H^* (x_0, \mathfrak{t}_j)$, and so \eqref{hxdelta} yields
		\begin{flalign}
			\label{hxtj2}
			\begin{aligned}  
			\widehat{H}^* (x, \mathfrak{t}_j) & = \widehat{H}^* (\widehat{x}_0, \mathfrak{t}_j) + H^* (x - \widehat{x}_0 + x_0, \mathfrak{t}_j) - H^* (x_0, \mathfrak{t}_j) + \mathcal{O} \big( (\xi_1 + \xi_2) (x - \widehat{x}_0)^{3/2} + (x - \widehat{x}_0)^2 \big) \\
			& = H^* (x - \widehat{x}_0 + x_0, \mathfrak{t}_j) + \mathcal{O} (\xi_1^2)  \geq H^* (x, \mathfrak{t}_j) - c (\widehat{x}_0 - x_0)^{3/2} - n^{-1} \geq H^* (v) + \mathcal{O} (n^{- 1}). 
			\end{aligned}  
		\end{flalign}

		\noindent after decreasing $c$ if necessary. Here, to deduce the third statement we used \Cref{derivativeh}, and to deduce the fourth we used the facts that $x_0 - \widehat{x}_0 = \mathcal{O} (\xi_j) = \mathcal{O} (n^{- 2/3})$. Combining \eqref{h2}, \eqref{hxtj3}, and \eqref{hxtj2} then gives
		\begin{flalign*}
			\breve{h} (v) = \breve{H}^* (v) = \widehat{H}^* (v) + \mu + n^{\delta / 20 - 1} & \geq H^* (v) + n^{\delta / 20 - 1} + \mathcal{O} (n^{\delta / 30 - 1}) \\
			& \geq \widetilde{H} (v) + n^{\delta / 20 - 1} + \mathcal{O} (n^{\delta / 30 - 1}) \\
			&  \geq \widetilde{H} (v) + n^{-1} = \widetilde{h} (v) + n^{-1},
		\end{flalign*}
	
		\noindent which again gives $\breve{\mathsf{h}} (\mathsf{v}) = \big\lfloor n \breve{h} (v) \big\rfloor \geq \big\lfloor n \widetilde{h} (v)\big\rfloor = \widetilde{\mathsf{h}} (\mathsf{v})$. This verifies $\breve{\mathsf{h}} (\mathsf{v}) \geq \widetilde{\mathsf{h}} (\mathsf{v})$, for each $\mathsf{v} \in \partial \mathsf{D}$.  
		
		Now let $\breve{\mathsf{H}} : \mathsf{D} \rightarrow \mathbb{Z}$ denote a uniformly random element of $\mathscr{G} (\breve{\mathsf{h}})$, and set $\breve{H} (u) = n^{-1} \breve{\mathsf{H}} (nu)$ for each $u \in \overline{\mathfrak{D}}$. By \Cref{comparewalks} (and \Cref{heightcompare}), we may couple $\breve{\mathsf{H}} (\mathsf{u}) \geq \widetilde{\mathsf{H}} (\mathsf{u})$, for each $\mathsf{u} \in \mathsf{D}$; thus, under this coupling we have $\breve{H} (u) \geq \widetilde{H} (u)$ for each $u \in \overline{\mathfrak{D}}$.
			
		Let us next apply \Cref{estimategamma}, with the $(h; H^*; \mathsf{H}; \delta)$ there equal to the $\big( \breve{h}; \breve{H}^*; \breve{\mathsf{H}};\delta/30 \big)$. This yields with overwhelming probability that 
		\begin{flalign}
			\label{hu3} 
			\widetilde{H} (U_0) \leq \breve{H} (U_0) \leq \breve{H}^* (X_0, \mathfrak{s}) + n^{\delta / 30 - 1} \leq \widehat{H}^* (X_0, \mathfrak{s}) + \mu + 2n^{\delta / 20 - 1}.
		\end{flalign}
	
		Defining $\omega: [\mathfrak{t}_1, \mathfrak{t}_2] \rightarrow \mathbb{R}$ as in \eqref{omegay}, the first property listed there yields  
		\begin{flalign}
			\label{hu2}
			\widehat{H}^* (X_0, \mathfrak{s}) \leq  H^* (X_0, \mathfrak{s}) - \omega (\mathfrak{s}) \Omega_{\mathfrak{s}} (X_0) + \mathcal{O} \big( \xi_1^{3/2} + \xi_2^{3/2} \big).
		\end{flalign}
	
		\noindent The hypotheses of the proposition and \Cref{omegad} together imply (after decreasing $c$ if necessary) that
		\begin{flalign*}
			\omega (\mathfrak{s}) \leq \displaystyle\max \big\{ (1 - \varepsilon) & \xi_1 + \varepsilon \xi_2, \varepsilon \xi_1 + (1 - \varepsilon) \xi_2 \big\} < \bigg(1 - \displaystyle\frac{\varepsilon}{10} \bigg) \xi \leq \bigg( 1 -  \displaystyle\frac{\varepsilon}{10} \bigg) n^{\delta / 10 - 1} \dist (U_0, \mathfrak{A})^{-1/2}; \\
			& - \Omega_{\mathfrak{s}} (X_0) \geq c \dist (U_0, \mathfrak{A})^{1/2}; \qquad \xi_1^{3/2} + \xi_2^{3/2} = \mathcal{O} (n^{\delta / 30 - 1}).
		\end{flalign*}
		
		\noindent Together with \eqref{hu3} and \eqref{hu2}, this gives
		\begin{flalign*}
			\widetilde{H} (U_0) & \leq H^* (X_0, \mathfrak{s}) - \xi \Omega_{\mathfrak{s}} (X_0) + \mu + \big( \xi - \omega (\mathfrak{s}) \big) \Omega_{\mathfrak{s}} (X_0) + \mathcal{O} (n^{\delta / 30 - 1}) \\
			& \leq H^* (X_0, \mathfrak{s}) - \xi \Omega_{\mathfrak{s}} (X_0) + \mu - \displaystyle\frac{c \varepsilon}{10} n^{\delta / 10 - 1} + \mathcal{O} (n^{\delta / 30 - 1}) \leq H^* (U_0) - \xi \Omega_{\mathfrak{s}} (X_0) + \mu,
		\end{flalign*}
		
		\noindent which confirms the upper bound in \eqref{omegaomegahh}.
		\end{proof}

	\section{Proof of Concentration Estimate on Polygons} 
	
	\label{ProofH} 
	
	In this section we establish \Cref{mh}. Before proceeding, let us briefly outline the proof; we adopt the notation of \Cref{mh}  throughout this section. 
	
	Since the preliminary concentration result \Cref{estimategamma} is in itself too restrictive to this end, we will first decompose our polygonal subset $\mathfrak{P} = \bigcup_{i = 1}^k \mathfrak{R}_i$ into subregions, such that each $\mathfrak{R}_i$ is either frozen (outside the liquid region of $\mathfrak{P}$) or is a ``double-sided trapezoid'' $\mathfrak{D} (\mathfrak{a}, \mathfrak{b}; \mathfrak{t}_1, \mathfrak{t}_2)$ from \eqref{d}. Scaling by $n$, this induces a decomposition $\mathsf{P} = \bigcup_{i = 1}^k \mathsf{R}_i$ on our (tileable) polygonal domain. We then apply the alternating dynamics from \Cref{Dynamics} to this decomposition. Each step corresponds to a resampling of our tiling on some $\mathsf{R}_i$ (conditioned on its restriction to $\mathsf{P} \setminus \mathsf{R}_i$), to which \Cref{estimategamma} applies and shows that its tiling height function is within $n^{\delta}$ of its limit shape. Unfortunately, \Cref{r1r2estimate} shows that these dynamics only mix after about $n^{22}$ steps, which could in principle allow the previously mentioned $n^{\delta}$ error to accumulate macroscopically. 
	
	To remedy this, we use the notion of tiltedness from \Cref{HeightOmega}. In particular, we introduce parameters quantifying the tiltedness of certain horizontal levels $\mathsf{P}$ (that include the north and south boundaries of any $\mathsf{R}_i$). Then \Cref{hhestimate1} and \Cref{hhestimate2} will imply that, under any step of the alternating dynamics to some $\mathsf{R}_i$, the tiltedness along a middle row of $\mathsf{R}_i$ is likely bounded between those along its north and south boundaries. Since the tiltedness along $\partial \mathsf{P}$ is zero, we will be to show in this way that ``small tiltedness'' is preserved under the alternating dynamics with high probability. Running these dynamics until they mix, this indicates that the uniformly random tiling height function $\mathsf{H} : \mathsf{P} \rightarrow \mathbb{Z}$ has small tiltedness, which will establish \Cref{mh}.
		
		\subsection{Decomposition of \texorpdfstring{$\mathsf{P}$}{}}
		
		\label{Decomposition} 
		
		In this section we explain a decomposition of $\mathsf{P} = n \mathfrak{P}$ into subregions that are either frozen or where \Cref{estimategamma} will eventually apply. Recall that we have adopted the notation of \Cref{mh}; let us abbreviate the liquid region $\mathfrak{L} = \mathfrak{L} (\mathfrak{P})$ and arctic boundary $\mathfrak{A} = \mathfrak{A} (\mathfrak{P})$. By \Cref{pa}, there exists an axis $\ell$ of $\mathbb{T}$ such that no line connecting two distinct cusp singularities of $\mathfrak{A}$ is parallel to $\ell$. By rotating $\mathfrak{P}$ if necessary, we may assume that $\ell$ is the $x$-axis. We also distinguish a tangency location of $\mathfrak{A}$ to be \emph{horizontal} if the tangent line to $\mathfrak{A}$ through it is parallel the $x$-axis (has slope $0$). In what follows, we recall the trapezoid $\mathfrak{D} = \mathfrak{D} (\mathfrak{a}, \mathfrak{b}; \mathfrak{t}_1, \mathfrak{t}_2)$ from \eqref{d}, and its boundaries \eqref{dboundary}.  
		
		We begin with the following definition that will (partially) constrain what types of regions can be our decomposition; observe that the assumptions below are similar to \Cref{xhh}.
		
		\begin{definition}
			\label{dh} 
		
		 A trapezoid $\mathfrak{D}$ is \emph{adapted} to $H^*$ if the following five conditions hold. 
		
		\begin{enumerate} 
			\item The boundary $\partial_{\ea} (\mathfrak{D})$ is disjoint with $\overline{\mathfrak{L}}$, unless $\partial_{\ea} (\mathfrak{D}) \subset \partial \mathfrak{P}$ and $\mathfrak{A}$ is tangent to $\partial_{\ea} (\mathfrak{D})$; the same must hold for $\partial_{\we} (\mathfrak{D})$. 
			\item The function $H^*$ is constant along $\partial_{\ea} (\mathfrak{D})$ and along $\partial_{\we} (\mathfrak{D})$. 
			\item There exists $\widetilde{\mathfrak{t}} \in [\mathfrak{t}_1, \mathfrak{t}_2]$ such that one of the following two conditions holds. 
			\begin{enumerate} 
				\item For $t \in [\mathfrak{t}_1, \widetilde{\mathfrak{t}}]$, the set $I_t$ consists of one nonempty interval, and for $t \in (\widetilde{\mathfrak{t}}, \mathfrak{t}_2]$ the set $I_t$ consists of two nonempty dijsoint intervals.
				\item For $t \in [\mathfrak{t}_1, \widetilde{\mathfrak{t}})$, the set $I_t$ consists of two nonempty disjoint intervals, and for $t \in (\widetilde{\mathfrak{t}}, \mathfrak{t}_2]$ the set $I_t$ consists of one nonempty interval.
			\end{enumerate} 
			\item  Any tangency location of $\mathfrak{A} \cap \overline{\mathfrak{D}}$ is of the form $\max I_t$ or $\min I_t$, for some $t \in (\mathfrak{t}_1, \mathfrak{t}_2)$. Moreover, at most one is of the form $\max I_t$, and at most one is of the form $\min I_t$.
			\item We have $\mathfrak{t}_1 - \mathfrak{t}_2 \leq \mathfrak{c}$, where $\mathfrak{c} = \mathfrak{c} (\mathfrak{P}) > 0$ is given by \Cref{estimategamma}.
		\end{enumerate} 
	
		\end{definition} 
	
		\begin{figure}
			
			\begin{center}		
				
				\begin{tikzpicture}[
					>=stealth,
					auto,
					style={
						scale = .52
					}
					]
					
					\draw[black, ultra thick] (-7.5, -.866) -- (-3.5, 2.584)-- (5.45, 2.584) node[right]{$\partial \mathfrak{P}$}-- (5.45, -.866);
					
					\filldraw[fill = gray!40!white] (-3.25, 1.75) -- (4.5, 1.75) -- (4.5, -.25) -- (-5.25, -.25) -- (-3.25, 1.75); 
					
					\draw[black] (-3, -.866) arc (180:120:3.464);
					\draw[black] (3.928, -.866) arc (0:60:3.464);
					\draw[black] (-1.278, 2.134) arc (120:60:3.464);
					\draw[black] (-1, -.866) arc (-60:0:1.732);
					\draw[black] (.75, -.866) arc (240:180:1.732);

					\filldraw[fill = black] (-.125, .75) circle [radius = .1] node[above, scale = .7]{$u$};
					
					\draw[thick] (-3.25, 1.75) -- (4.5, 1.75) node[above, scale = .7]{$\mathfrak{D} (u)$} -- (4.5, -.25) -- (-5.25, -.25) -- (-3.25, 1.75);
					
					\draw[] (-.125, -1) circle[radius = 0] node[]{$\vdots$};
					
					\draw[dashed] (-4.5, .75) node[left, scale = .7]{$\ell_0$}-- (5, .75); 
					
					\filldraw[fill = black] (-2.6, .75) circle [radius = .075] node[left = 2, above, scale = .6]{$u_1$};
					\filldraw[fill = black] (3.52, .75) circle [radius = .075] node[right = 2, above, scale = .6]{$u_2$};
					
				\end{tikzpicture}
				
			\end{center}
			
			\caption{\label{rj1} Shown above is an example of the trapezoid $\mathfrak{D} (u)$ from \Cref{udomain}; only part of the polygon $\mathfrak{P}$ and its liquid region $\mathfrak{L}$ are depicted.}
			
		\end{figure}
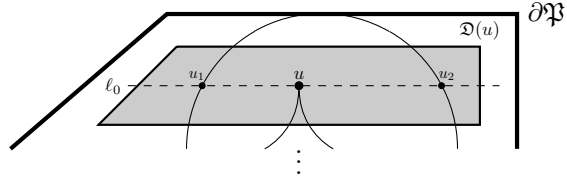

		\begin{lem} 
			
		\label{udomain} 
		
		If $u \in \overline{\mathfrak{L}}$ is not a tangency location of $\mathfrak{A}$, then there exists a trapezoid $\mathfrak{D} (u)$ adapted to $H^*$, containing $u$ in its interior. 
		
		\end{lem}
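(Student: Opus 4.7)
The plan is to construct the trapezoid $\mathfrak{D}(u)=\mathfrak{D}(\mathfrak{a},\mathfrak{b};\mathfrak{t}_1,\mathfrak{t}_2)$ containing $u=(x_u,y_u)$ by first selecting a short horizontal strip $[\mathfrak{t}_1,\mathfrak{t}_2]$ and then placing the east and west boundary lines. The main inputs are \Cref{pla}, which gives that $\mathfrak{A}$ is algebraic with only cusps as singularities (tacnodes being excluded by \Cref{pa}(1)), and \Cref{pa}(3), which under the rotation fixing the horizontal axis as $\ell$ implies that distinct cusps of $\mathfrak{A}$ have distinct $y$-coordinates. The finiteness of cusps and tangencies then follows from algebraicity.

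The first step is to pick $\mathfrak{t}_1<y_u<\mathfrak{t}_2$ with $\mathfrak{t}_2-\mathfrak{t}_1\leq \mathfrak{c}$ (with $\mathfrak{c}$ from \Cref{estimategamma}) such that the horizontal lines $y=\mathfrak{t}_1,\mathfrak{t}_2$ avoid every cusp and tangency location of $\mathfrak{A}$, and such that the strip $\mathbb{R}\times[\mathfrak{t}_1,\mathfrak{t}_2]$ contains at most one cusp of $\mathfrak{A}$ and at most one tangency of each max- and min-type along $\mathfrak{A}$. Such a choice is possible by shrinking the strip, since the features are finite in number and the cusps lie at distinct heights. With this choice, the slice $I_t$ of $\overline{\mathfrak{L}}$ can only change topology as $t$ crosses a cusp height (two intervals merging or one splitting) or a tangency height (an interval being born or dying). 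Setting $\widetilde{\mathfrak{t}}$ to be the unique cusp height in the strip (if any) then yields condition (3) of \Cref{dh}, and condition (4) follows because the only interior features permitted are extrema of $I_t$.

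The second step is to place the east and west boundaries. Let $\Lambda_R=\sup\{x:(x,y)\in\overline{\mathfrak{L}},\,y\in[\mathfrak{t}_1,\mathfrak{t}_2]\}$. Just to the right of $\Lambda_R$ within the strip, $\nabla H^*$ takes a single constant value $v\in\{(0,0),(1,0),(1,-1)\}$ by \Cref{pla}(1). I would choose the east boundary direction to match $v$: take $\mathfrak{b}'=0$ (a vertical line) if $v\in\{(0,0),(1,0)\}$, along which $\partial_y H^*=0$; take $\mathfrak{b}'=1$ (a slope-$1$ line) if $v=(1,-1)$, along which $\partial_x H^*+\partial_y H^*=0$. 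Either way $H^*$ is constant along the east boundary, giving condition (2) of \Cref{dh}. If $\Lambda_R$ meets $\partial\mathfrak{P}$ (for example when $u$ is near the right side of $\mathfrak{P}$), then by \Cref{pa}(4) the meeting point must be a tangency of $\mathfrak{A}$, in which case the east boundary is placed along the corresponding side of $\partial\mathfrak{P}$ using the exception clause in condition (1). The west boundary $\mathfrak{a}$ is constructed symmetrically.

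The main obstacle is verifying that the east and west boundary lines, chosen locally near $y=y_u$, remain disjoint from $\overline{\mathfrak{L}}$ for every $y\in[\mathfrak{t}_1,\mathfrak{t}_2]$ and do not drift into a different frozen component. Either failure would require $\mathfrak{A}$ to touch or cross the boundary line inside the strip, that is, a cusp or tangency of $\mathfrak{A}$ lying on the chosen line, and both can be ruled out by shrinking $\mathfrak{t}_2-\mathfrak{t}_1$ further (using that we have already excluded the strip boundary heights from containing cusps or tangencies, and that the extremal $x$-coordinates of $\mathfrak{A}$ vary continuously in $y$ away from cusps). Once this is done, conditions (1) (disjointness from $\overline{\mathfrak{L}}$, or tangent placement on $\partial\mathfrak{P}$) and (5) (size) are immediate, and combining with the earlier steps gives a trapezoid $\mathfrak{D}(u)$ adapted to $H^*$ with $u$ in its interior.
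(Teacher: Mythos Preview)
Your construction has a genuine gap in verifying condition (3) of \Cref{dh}. You place the east and west boundaries at the global horizontal extremes $\Lambda_R$, $\Lambda_L$ of $\overline{\mathfrak{L}}$ within the strip, so your trapezoid contains \emph{every} component of $I_t=\{x:(x,t)\in\overline{\mathfrak{L}}\}$ for each $t\in[\mathfrak{t}_1,\mathfrak{t}_2]$. But for a general polygonal $\mathfrak{P}$ the slice $I_{y_u}$ at the height of $u$ itself can already have three or more components: several cusps lying above and below $y_u$ can create multiple frozen pockets at that single height. Shrinking $\mathfrak{t}_2-\mathfrak{t}_1$ does nothing to reduce the component count at $y_u$, so condition (3), which requires each $I_t$ to be one or two intervals, can fail outright. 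Your argument that $I_t$ only changes topology at cusp or tangency heights is correct, but it controls transitions, not the starting count.

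The paper repairs this by placing the east and west boundaries \emph{locally} rather than globally: it takes $x_1<x_0<x_2$ to be the nearest points of $\mathfrak{A}$ on $\{y=y_0\}$ to the left and right of $u=(x_0,y_0)$, and places the boundary segments in the frozen pockets just beyond $x_1$ and $x_2$ (or on $\partial\mathfrak{P}$ if $u_1$ or $u_2$ lies there). This confines $\mathfrak{D}(u)$ to the single local piece of $\mathfrak{L}$ around $u$; since at most one of $u,u_1,u_2$ can be a cusp (by \Cref{pa}(3), no two cusps share a height), the local structure around that single cusp gives exactly the one-then-two or two-then-one interval pattern required by condition (3). Your idea of selecting the boundary direction to match the frozen slope so that $H^*$ is constant along it is the right one and is exactly what the paper does once the boundary is placed in the correct (local) frozen pocket.
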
 
	
		\begin{proof} 
			
			Let $u = (x_0, y_0)$ and $\ell_0 = \{ y = y_0 \} \subset \mathbb{R}^2$ denote the line through $u$ parallel to the $x$-axis. To create $\mathfrak{D} (u) = \mathfrak{D} (\mathfrak{a}, \mathfrak{b}; \mathfrak{t}_1, \mathfrak{t}_2)$, we will first specify segments containing its east and west boundaries, and then specify $(\mathfrak{t}_1, \mathfrak{t}_2)$ to make it sufficiently ``short'' (that is, with $\mathfrak{t}_2 - \mathfrak{t}_1$ small). 
			
			To that end, first assume that $u$ is a cusp of $\mathfrak{A}$; we refer to \Cref{rj1} for a depiction. Let $x_1 \in \mathbb{R}$ be maximal and $x_2 \in \mathbb{R}$ be minimal such that $x_1 < x_0 < x_2$; $u_1 = (x_1, y_0) \in \mathfrak{A}$; and $u_2 = (x_2, y_0) \in \mathfrak{A}$. By \Cref{pa}, neither $u_1$ nor $u_2$ is a cusp of $\mathfrak{A}$. If $u_1 \in \partial \mathfrak{P}$, then it is a (non-horizontal) tangency location of $\mathfrak{A}$, so it lies along a side of $\partial \mathfrak{P}$ with slope $1$ or $\infty$. We may then let this side contain the west boundary of $\mathfrak{D}(u)$. If instead $u_1 \notin \partial \mathfrak{P}$, then there exists a real number $\varepsilon = \varepsilon (\mathfrak{P}, u) \in (0, 1)$ such that $(x, y_0) \in \mathfrak{P} \setminus \overline{\mathfrak{L}}$ for each $x_1 - \varepsilon^{1/2} < x < x_1$, and such that either $\big( (x_1 - \varepsilon^{1/2}, x_2 + \varepsilon^{1/2}) \times (y_0, y_0 + \varepsilon) \big) \cap \mathfrak{L}$ or $\big( (x_1 - \varepsilon^{1/2}, x_2 + \varepsilon^{1/2}) \times (y_0 - \varepsilon, y_0) \big) \cap \mathfrak{L}$ is connected (see \Cref{rj1}). 
			
			Next recall from the first statement in \Cref{pla} that, on $\mathfrak P\setminus \mathfrak L(\mathfrak P)$, $\nabla H^*$ is piecewise constant, taking values in $\big\{(0,0), (1,0), (1,-1) \big\}$.
			If $(x_1,y_0)\in \fA$ is a continuity point of $\nabla H^*$,  then (upon decreasing $\varepsilon$ if necessary) there exists $\lambda = \lambda (\mathfrak{P}, u) > 0 \in (0, \varepsilon)$ such that the disk $\mathfrak{B}_{\lambda} (x_1 - \varepsilon^{1/2}, y_0)$ does not intersect $\overline{\mathfrak{L}}$, and $\nabla H^* (x_1, y_0) $ is constant on $\mathfrak{B}_{\lambda} (x_1 - \varepsilon^{1/2}, y_0)$. Then, depending on whether $\nabla H^* (x_1, y_0) = (1, -1)$ or $\nabla H^* (x_1, y_0) \in \big\{ (0, 0), (1, 0) \big\}$, the west boundary of $\mathfrak{D} (u)$ is contained in the segment obtained as the intersection between $\mathfrak{B}_{\lambda} (x_1 - \varepsilon^{1/2}, y_0)$ and the line passing through $(x_1 - \varepsilon^{1/2}, y_0)$ with slope $1$ or $\infty$, respectively; then, $H^*$ is constant along this line.
			
			If $(x_1,y_0)\in \fA$ is a discontinuity point of $\nabla H^*$ then, by \Cref{pa}, $(x_1,y_0)\in \fA$ is a tangency location. From our choice of $x_1$, $(x_1,y_0)$ cannot be a horizontal tangency location. Thus its tangent line has slope $1$ or $\infty$. For $\varepsilon$ small enough, the part of the tangent line between $y=y_0-\varepsilon$ and $y=y_0+\varepsilon$ is contained in the $\mathfrak P\setminus \mathfrak L(\mathfrak P)$. By the relations \eqref{fh} between $\nabla H^*$ and the complex slope, and those \eqref{lqq} between the complex slope and the slope of the tangent line of the arctic curve, if the tangent line has slope $1$ then $\nabla H^*\in \big\{ (0,0),(1,-1) \big\}$, and if the tangent line has slope $\infty$, then $\nabla H^*\in \big\{ (0,0), (1,0) \big\}$. In both cases $H^*$ is constant along the tangent line. This again specifies a segment containing the west boundary of $\mathfrak{D} (u)$ where $H^*$ is a constant along it, and one containing its east boundary can be specified similarly. 
						
			In either case (whether $(x_1, y_0)$ is a continuity or discontinuity point of $\nabla H^*$), we let $\mathfrak{t}_1 = y_0 - \lambda_0$ and $\mathfrak{t}_2 = y_0 + \lambda_0$, where $\lambda_0$ is chosen sufficiently small so that the east and west boundary of $\mathfrak{D} (u)$ are contained in the segments specified above, and so that $\mathfrak{D}$ satisfies the third, fourth, and fifth conditions of \Cref{dh}. This determines $\mathfrak{D} (u)$, which contains $u$ in its interior and is quickly seen to be adapted to $H^*$.
			
			The proof is similar if instead $u$ is not a cusp of $\mathfrak{A}$, so we only outline it. If $u \in \mathfrak{L}$, then the above reasoning applies, unless either $u_1$ or $u_2$ is a cusp of $\mathfrak{A}$. Assuming for example that the former is, there exists a trapzeoid $\mathfrak{D} (u_1)$ adapted to $H^*$ that contains $u_1$ in its interior. Then $\mathfrak{D} (u_1)$ must also contain $u$ in its interior, since $\ell_0$ pases through $u$ before intersecting $\mathfrak{A}$; so, we may set $\mathfrak{D} (u) = \mathfrak{D} (u_1)$. If instead $u \in \mathfrak{A}$ and is not a cusp, then the above reasoning (in the case when $u$ is cusp) again applies, with the mild modification that we allow $u = u_1$ or $u = u_2$, depending on whether $u$ is a left or right boundary point of $\mathfrak{A}$, respectively.
		\end{proof} 
		
		Now, for each point $u \in \overline{\mathfrak{P}}$, in the following, we define an open subset $\mathfrak{R} (u) \subset \mathfrak{P}$ such that $u \in \mathfrak{R} (u)$ if $u \in \mathfrak{P}$, and $u \in \overline{\mathfrak{R} (u)}$ if $u \in  \partial \mathfrak{P}$. In each case, $\mathfrak{R} (u)$ will be the union of at most two trapezoids (intersected with $\mathfrak{P}$). We always assume (by applying a small shift if necessary) that the north and south boundaries of any such trapezoid does not contain any cusps or tangency locations of $\mathfrak{A}$, except for possibly if $u$ is a horizontal tangency location.

		\begin{enumerate}
			\item If $u \in \overline{\mathfrak{L}}$ is not a horizontal tangency location of $\mathfrak{A}$, then let $\mathfrak{R} (u) \subseteq \mathfrak{P}$ denote a trapezoid adapted to $H^*$, containing $u$ in its interior.
			\item If $u \in \mathfrak{A}$ is horizontal tangency location on $\mathfrak{A} \cap \partial \mathfrak{P}$, then let $\mathfrak{R} (u) \subseteq \mathfrak{P}$ denote a trapezoid adapted to $H^*$, such that $u$ is in the interior of either $\partial_{\north} (\mathfrak{D})$ or $\partial_{\so} (\mathfrak{D})$. 
			\item If $u \in \mathfrak{A}$ is a horizontal tangency location on $\mathfrak{A}$ not in $\partial \mathfrak{P}$, then let $\mathfrak{R} (u) = \mathfrak{D}_1 (u) \cup \mathfrak{D}_2 (u)$. Here, $\mathfrak{D}_1 = \mathfrak{D}_1 (u)$ and $\mathfrak{D}_2 = \mathfrak{D}_2 (u)$ are trapzeoids adapted to $H^*$, such that $u$ is in the interior of $\partial_{\north} (\mathfrak{D}_2)$ and of $\partial_{\so} (\mathfrak{D}_1)$, and such that either $\mathfrak{D}_1$ or $\mathfrak{D}_2$ is disjoint with $\mathfrak{L}$.
			\item If $u \notin \overline{\mathfrak{L}}$, then let $\mathfrak{R} (u) = \mathfrak{D} (u) \cap \mathfrak{P}$, for some trapezoid $\mathfrak{D} (u) \subset \mathbb{R}^2$ containing $u$ in its interior, such that $\mathfrak{D} (u)$ is disjoint with $\overline{\mathfrak{L}}$.
		
		\end{enumerate}

		\begin{figure}
			
			\begin{center}		
				
				\begin{tikzpicture}[
					>=stealth,
					auto,
					style={
						scale = .475
					}
					]

					\filldraw[fill = gray!40!white, thick] (2, 0) -- (6, 0) -- (6.5, .5) -- (2, .5) -- (2, 0);
					\filldraw[fill = gray!40!white, thick] (10.5, 1.25) -- (10.5, 2) -- (11.25, 2.75) -- (15, 2.75) -- (14.25, 2) -- (14, 2) -- (14, 1.25) -- (10.5, 1.25);
					\draw[fill = gray!40!white, thick] (23.65, 1.5) -- (23.65, 2.75) -- (26, 2.75) -- (25.25, 2) -- (24, 2) -- (24, 1.5) -- (23.65, 1.5);
					\draw[dashed] (23.65, 1.5) -- (23.65, 2.75) -- (26, 2.75) -- (24.75, 1.5) -- (23.65, 1.5);
					
					\draw[black, ultra thick] (.5, 2) node[left]{$\partial \mathfrak{P}$}-- (.5, 0) -- (6, 0) -- (8, 2);
					
					\draw[black] (1.25, 1.2) node[above, scale = .7]{$\mathfrak{A}$} arc (-135:-45:4);
					
					\draw (5, 1.9) circle[radius = 0] node[]{$\vdots$};
					\filldraw[fill = black] (4, 0) circle [radius = .1] node[below, scale = .7]{$u$};

					\draw[black, ultra thick] (14, -1) -- (14, 2) -- (17, 2) -- (18, 3) node[right]{$\partial \mathfrak{P}$};
					\draw[black, very thick, dashed] (10, 2) -- (14, 2);
					\draw[] (13.45, -1) node[left, scale = .7]{$\mathfrak{A}$} arc (-45:150:1.75);
					
					\draw[] (12, -.8) circle[radius = 0] node[]{$\vdots$};
					\draw[] (15.5, 3.125) circle[radius = 0] node[]{$\vdots$};
					\filldraw[fill = black] (12.25, 2) circle [radius = .1] node[above, scale = .7]{$u$};
					
					\draw[black, ultra thick] (24, -1) -- (24, 2) -- (27, 2) -- (28, 3) node[right]{$\partial \mathfrak{P}$};
					
					\draw[black] (23.45, -1) node[left, scale = .7]{$\mathfrak{A}$} arc (-45:150:1.75);
										
					\draw[] (22.125, -.75) circle[radius = 0] node[]{$\vdots$};
					\draw[] (26.25, 3.125) circle[radius = 0] node[]{$\vdots$};
					\filldraw[fill = black] (24, 2) circle [radius = .1] node[above, scale = .7]{$u$};	
					
				\end{tikzpicture}
				
			\end{center}
			
			\caption{\label{rj2} Shown to the left, middle, and right are examples of the $\mathfrak{R} (u)$ (shaded) when $u \in \partial \mathfrak{P}$ is a horizontal tangency location of $\mathfrak{A}$, when $u \notin \mathfrak{A}$ is a horizontal tangency location of $\mathfrak{A}$, and when $u \notin \overline{\mathfrak{L}}$, respectively. In all cases, only part of the polygon $\mathfrak{P}$ and its liquid region $\mathfrak{L}$ are depicted.}
			
		\end{figure}
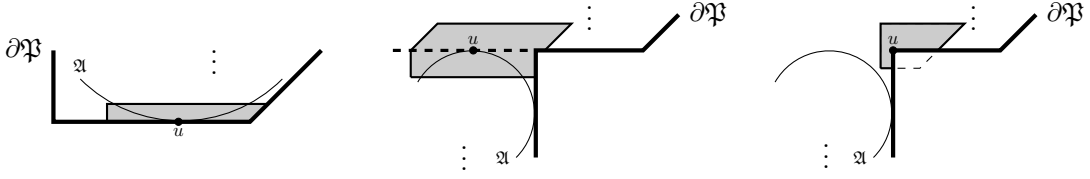

		\noindent We may further assume (after applying a small shift, if necessary) that $n \mathfrak{R} (u) \subset \mathbb{T}$, for each $u \in \mathfrak{P}$. The existence of these regions $\mathfrak{R}_i$ follows from \Cref{udomain} in the first case and is quickly verified from the definitions in all other cases.\footnote{For the third, we are using the fact that no horizontal tangency location of $\mathfrak{A}$ is also a cusp of $\mathfrak{A}$ (and that $\mathfrak{A}$ has no tacnode singularities), as stipulated by \Cref{pa}.} We refer to \Cref{rj1} for a depiction in the first case and to \Cref{rj2} for depictions in the remaining three cases.
		
		Since $\overline{\mathfrak{P}}$ is compact, the $\mathfrak{R} (u)$ are open, and $\bigcup_{u \in \mathfrak{P}} \overline{\mathfrak{R} (u)} = \overline{\mathfrak{P}}$, there exists a finite subcover $\bigcup_{i = 1}^k \overline{\mathfrak{R}_i} = \overline{\mathfrak{P}}$; here, each $\mathfrak{R}_i = \mathfrak{R} (u_i)$ for some $u_i \in \overline{\mathfrak{P}}$. In what follows, we fix such a cover and let $\mathfrak{t}_0 < \mathfrak{t}_1 < \cdots < \mathfrak{t}_m$ denote all real numbers for which either a north or south boundary of some $\mathfrak{R}_i$ lies along a line $\{ y = \mathfrak{t}_j \}$. Observe that, if such a boundary of some $\mathfrak{R}_i$ lies along $\{ y = \mathfrak{t}_0 \}$ or $\{ y = \mathfrak{t}_m \}$, then it must lie along $\partial \mathfrak{P}$. Moreover, since the $\mathfrak{t}_j$ are pairwise distinct, there exists a constant $\varepsilon_0 = \varepsilon_0 (\mathfrak{P}) > 0$ such that, for each $1\leq j\leq m$, 
		\begin{flalign}
			\label{tj1tjtj1} 
			\displaystyle\min_{1\leq j\leq m} (\mathfrak{t}_j -  \mathfrak{t}_{j - 1}) \geq \varepsilon_0 (\mathfrak{t}_m - \mathfrak{t}_0).
		\end{flalign}

		Further let $\mathsf{R}_i = n \mathfrak{R}_i \subset \mathbb{T}$ for each $1\leq i\leq k$. Observe, since the $\mathfrak{R}_i$ are open and cover $\mathfrak{P}$, that any interior vertex of $\mathsf{P}$ is an interior vertex of some $\mathsf{R}_i$. Thus, we may consider the alternating dynamics (from \Cref{r1r2r}) on $\mathsf{P}$ with respect to $(\mathsf{R}_1, \mathsf{R}_2, \ldots , \mathsf{R}_k)$. In particular, let us fix the height function $\mathsf{H}_0 \in \mathscr{G} (\mathsf{h})$ by setting $\mathsf{H}_0 (u) = \big\lfloor n H^* (n^{-1} u) \big\rfloor$, for each $u \in \mathsf{P}$; observe that $\mathsf{H}_0 (u) = nH^* ( n^{-1} u)$ for each $u \in \mathsf{P} \setminus (n \cdot \mathfrak{L})$, by \Cref{p:frozenr}. Then, run the alternating dynamics on $\mathscr{G} (\mathsf{h})$ with initial state $\mathsf{H}_0$. For each integer $r \geq 0$, let $\mathsf{H}_r \in \mathscr{G} (\mathsf{h})$ denote the state of this Markov chain at time $r$; define its scaled version $H_r \in \Adm (\mathfrak{D}; h)$ by $H_r (u) = n^{-1} \mathsf{H}_r (nu)$ for each $u \in \overline{\mathfrak{D}}$.

		\subsection{Proof of \Cref{mh}} 
		
		\label{TilingEstimate} 
		
		In this section we establish \Cref{mh}. To that end, recalling the notation from \Cref{Decomposition}, we define for any integer $r \geq 0$ the events
		\begin{flalign}
			 \label{fr1} 
			 \begin{aligned} 
			\mathscr{F}_r & = \bigcap_{u \in \overline{\mathfrak{P}}} \Big\{ \big| H_r (u) - H^* (u) \big| < n^{\delta - 1} \Big\} \cap \bigcap_{u \in \overline{\mathfrak{P}} \setminus \mathfrak{L}_+^{\delta} (\mathfrak{P})} \big\{ H_r (u) = H^* (u)\big\}; \\
			\mathscr{F}_{\infty} & = \bigcap_{u \in \overline{\mathfrak{P}}} \Big\{ \big| \mathsf{H} (n u) - n H^* (u) \big| < n^{\delta} \Big\} \cap \bigcap_{u \in \overline{\mathfrak{P}} \setminus \mathfrak{L}_+^{\delta} (\mathfrak{P})} \big\{ \mathsf{H} (nu) = n H^* (u)\big\}.
			\end{aligned} 
		\end{flalign}
		
		\noindent Then, \Cref{mh} indicates that $\mathscr{F}_{\infty}$ should hold with overwhelming probability. As $r$ becomes large, the alternating dynamics tend to stationarity, so $n H_r (u)$ converges in law to $\mathsf{H} (nu)$; hence, it instead suffices to show that $\mathscr{F}_r$ likely holds for large $r$. We would like to proceed inductively, by showing that $\mathscr{F}_r$ implies $\mathscr{F}_{r + 1}$ with high probability. It is not transparent to us how to do this directly; we instead prove a stronger version of this implication involving the notion of tilting, which we recall from \Cref{lambdamud}. 
		
		We first require some notation. Recall that $\varepsilon_0 = \varepsilon_0 (\mathfrak{P}) > 0$ satisfies \eqref{tj1tjtj1}, and set 
		\begin{flalign}
			\label{nui} 
			\nu_i = 1 - \bigg( \displaystyle\frac{\varepsilon_0}{10} \bigg)^{i + 1}, \qquad 0\leq i\leq m.
		\end{flalign} 
	
		\noindent Also recall the subset $\mathfrak{L}_-^{\delta} = \mathfrak{L}_-^{\delta} (\mathfrak{P}) \subset \mathfrak{L}$ from \eqref{ldelta}. 
		
		For any real number $z \geq n^{-1}$, define the functions
		\begin{flalign}
			\label{functionx} 
			\varkappa (z) = n^{\delta / 12 -1} z^{-1}; \qquad \varpi (z) = n^{\delta / 12 - 1} \log (nz).
		\end{flalign}
		
		\noindent The explicit forms of $\varkappa$ and $\varpi$ above will not be central for our purposes, but a useful point will be that $A \varkappa (z) + \varpi (z)$ is minimized when $z = A$.
		
		Now, recall the subset $\mathfrak{L}_-^{\delta} = \mathfrak{L}_-^{\delta} (\mathfrak{P}) \subset \mathfrak{L}$ from \eqref{ldelta}, and for any integers $r \geq 0$ and $0\leq i\leq m$ define the events 
		\begin{flalign}
			\label{ei1ei2r}
			\begin{aligned} 
			\mathscr{E}_r^{(1)} (i) & = \big\{ \text{The edge of $H_r$ is $\nu_i n^{\delta / 240 - 2/3}$-tilted with respect to $H^*$ at level $\mathfrak{t}_i$} \big\}; \\
			\mathscr{E}_r^{(2)} (i) & = \bigcap_{(x, \mathfrak{t}_i) \in \mathfrak{L}_-^{\delta / 4}} \big\{ \text{At $(x, \mathfrak{t}_i)$, $H_r$ is $(\nu_i n^{\delta / 240 - 2/3}; 0)$-tilted with respect to $H^*$} \big\}.
			\end{aligned} 
		\end{flalign}
		
		\noindent For any $x \in \mathbb{R}$ such that $(x, \mathfrak{t}_i) \in \mathfrak{L}_-^{\delta / 4}$, further define the event 
		\begin{flalign}
			\label{ei3x0} 
			\begin{aligned}
				& \mathscr{E}_r^{(3)} (i; x) =  \bigg\{ \displaystyle\sup_{z \geq n^{-1}} \Big( H^* (x, \mathfrak{t}_i) + \nu_i \varkappa (z) \Omega_{\mathfrak{t}_i} (x) - \varpi (z) \Big) \\
				& \qquad \qquad \qquad \qquad \qquad \leq H_r (x, \mathfrak{t}_i) \leq \displaystyle\inf_{z  \geq n^{-1}} \Big( H^* (x, \mathfrak{t}_j) - \nu_i \varkappa (z) \Omega_{\mathfrak{t}_i} (x) + \varpi (z) \Big) \bigg\},
			\end{aligned} 
		\end{flalign}
		
		\noindent where we recall $\Omega_s (x)$ from \eqref{omegasx}, and let 
		\begin{flalign*} 
			\mathscr{E}_r^{(3)} (i) = \bigcap_{(x, \mathfrak{t}_i) \in \mathfrak{L}_-^{\delta / 4}} \mathscr{E}_r^{(3)} (i; x).
		\end{flalign*}
		
		\noindent Then define the events 
		\begin{flalign}
			\label{afe}
			\mathscr{E}_r (i) & = \mathscr{E}_r^{(1)} (i) \cap \mathscr{E}_r^{(2)} (i) \cap \mathscr{E}_r^{(3)} (i); \qquad \mathscr{E}_r = \bigcap_{i = 0}^m \mathscr{E}_r (i); \qquad \mathscr{A}_r = \mathscr{E}_r \cap \mathscr{F}_r
		\end{flalign} 
	
		Under this notation, we have the following proposition. 
		
		\begin{prop}
		
		\label{ar1ar} 
		
		For any real number $D > 1$, there exists a constant $C = C(\mathfrak{P}, D) > 1$ such that the following holds whenever $n > C$. For any integer $r \geq 0$, we have $\mathbb{P} (\mathscr{A}_{r + 1}) \geq \mathbb{P} (\mathscr{A}_r) - n^{-D}$. 
		\end{prop}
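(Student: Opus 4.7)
Condition on $\mathsf{H}_r \in \mathscr{A}_r$; by a union bound it suffices to show $\mathbb{P}(\mathscr{A}_{r+1}^c \mid \mathsf{H}_r) \le n^{-D-1}$. Let $i \in [1, k]$ be the index so that $\mathsf{H}_{r+1}$ is obtained by uniformly resampling $\mathsf{H}_r$ on $\mathsf{R}_i$ with boundary data $\mathsf{H}_r|_{\partial \mathsf{R}_i}$, and $\mathsf{H}_{r+1} = \mathsf{H}_r$ off $\mathsf{R}_i$. If $\mathfrak{R}_i$ is disjoint from $\overline{\mathfrak{L}}$ the boundary data is frozen and $\mathsf{H}_{r+1} = \mathsf{H}_r$, so assume instead that $\mathfrak{R}_i$ is a trapezoid (or pair of stacked trapezoids) adapted to $H^*$. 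Let $\mathfrak{t}_{j_1} < \mathfrak{t}_{j_2}$ denote the $y$-coordinates of $\partial_{\so}(\mathfrak{R}_i), \partial_{\north}(\mathfrak{R}_i)$. For $j \notin (j_1, j_2)$ the restriction of $H_{r+1}$ to $\{y = \mathfrak{t}_j\}$ equals that of $H_r$, so $\mathscr{E}_{r+1}(j)$ holds directly from $\mathscr{E}_r(j)$; the work is to verify $\mathscr{E}_{r+1}(j)$ for $j_1 < j < j_2$ and $\mathscr{F}_{r+1}$.

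\emph{Intermediate levels.} Apply \Cref{assumptiondomega} with $\mathfrak{D} = \mathfrak{R}_i$, $(\mathfrak{t}_1, \mathfrak{t}_2, \mathfrak{s}) = (\mathfrak{t}_{j_1}, \mathfrak{t}_{j_2}, \mathfrak{t}_j)$, and $\varepsilon = \varepsilon_0 / 2$; the constraint $\mathfrak{s} \in [\mathfrak{t}_1 + \varepsilon \mathfrak{t}, \mathfrak{t}_2 - \varepsilon \mathfrak{t}]$ holds by \eqref{tj1tjtj1}, while the events $\mathscr{E}_r(j_1) \cap \mathscr{E}_r(j_2)$ supply the tiltedness hypotheses with $\zeta_a = \nu_{j_a} n^{\delta/240 - 2/3}$ (and $\xi_a$ chosen below). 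For $\mathscr{E}_{r+1}^{(1)}(j) \cap \mathscr{E}_{r+1}^{(2)}(j)$, invoke \Cref{hhestimate1} with $\mu = 0$ to obtain output tiltedness $\zeta' = \max\{(\varepsilon/2)\zeta_1 + (1-\varepsilon/2)\zeta_2,\ (1-\varepsilon/2)\zeta_1 + (\varepsilon/2)\zeta_2\}$. A short algebraic check using \eqref{nui} shows $\zeta' \le \nu_j n^{\delta/240 - 2/3}$: writing $\eta = \varepsilon_0/10$, this reduces to $(1 - \varepsilon/2)\eta^{j_2+1} + (\varepsilon/2) \eta^{j_1 + 1} \ge \eta^{j+1}$, which holds because $(\varepsilon/2)\eta^{j_1 + 1} \ge (\varepsilon_0/4) \eta^{j_1 + 1} \gg \eta^{j+1}$ for $j > j_1$. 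For $\mathscr{E}_{r+1}^{(3)}(j; x)$, discretize $z \in [n^{-1}, \diam \mathfrak{P}]$ on a polynomially fine grid and for each discretized $z$ apply \Cref{hhestimate2} with $\xi_a = \nu_{j_a} \varkappa(z)$ and $\mu = \varpi(z)/2$; the same convexity gives $\xi \le \nu_j \varkappa(z)$ at $(x, \mathfrak{t}_j)$. When $z$ is outside the range where \Cref{hhestimate2} applies ($\xi_a$ too small or too large), the target bound is either dominated by the $\varpi(z)$ term (trivially true) or follows from $\mathscr{E}_r^{(2)}$ applied via \Cref{hhestimate1}.

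\emph{The global event $\mathscr{F}_{r+1}$.} Invoke \Cref{omegaxizeta} with $(\xi_1, \xi_2) \approx (\nu_{j_1} n^{\delta/240-2/3}, \nu_{j_2} n^{\delta/240 - 2/3})$ to produce a tilted profile $\widetilde H^*$ whose boundary data approximates that inherited from $\mathsf{H}_r$, and whose liquid region differs from $\mathfrak{L}$ by $O(n^{\delta/240 - 2/3})$. This tilted profile verifies item (5) of \Cref{xhh}. Apply \Cref{estimategamma} to the resampled tiling on $\mathsf{R}_i$ with this tilted profile: \Cref{xhh2} is verified from $\mathscr{F}_r$. The conclusion yields $|H_{r+1} - \widetilde H^*| < n^{\delta/2 - 1}$ on $\mathsf{R}_i$ and frozen agreement outside a small augmentation of the tilted liquid region, both with overwhelming probability. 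Combining with the estimates $|\widetilde H^* - H^*| = O(n^{\delta/240 - 2/3})$ from \eqref{hxyhxy1} and the edge estimate \eqref{hxyhxy2}, and taking a union bound over $u \in \mathsf{R}_i$, yields $|H_{r+1} - H^*| < n^{\delta - 1}$ on $\overline{\mathfrak{P}}$ and $H_{r+1} = H^*$ off $\mathfrak{L}_+^\delta(\mathfrak{P})$, verifying $\mathscr{F}_{r+1}$. A final union bound over $j, x$, and the discretized $z$ absorbs the polynomial losses.

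\emph{Main obstacle.} The central difficulty is the bookkeeping of tiltedness parameters across each application of \Cref{hhestimate1} and \Cref{hhestimate2}: one must arrange that the output parameter $\nu_j$ at the intermediate level always strictly majorizes the $\varepsilon$-convex interpolation of $\nu_{j_1}, \nu_{j_2}$. The geometric choice $\nu_i = 1 - (\varepsilon_0 / 10)^{i+1}$ is designed exactly to absorb this loss, which fails if $\nu_i$ were chosen, e.g., arithmetically increasing. A secondary issue is handling the refined $\varkappa, \varpi$ bound across the full range $z \ge n^{-1}$, where \Cref{hhestimate2} applies only in a restricted window; outside this window the bound must be obtained trivially or via a fallback to \Cref{hhestimate1}.
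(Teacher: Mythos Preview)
Your overall strategy---condition on $\mathscr{A}_r$, use \Cref{hhestimate1} and \Cref{hhestimate2} for the $\mathscr{E}_{r+1}(j)$ events at intermediate levels, then separately verify $\mathscr{F}_{r+1}$---matches the paper's. The convexity check $\nu' < \nu_{i_0}$ via \eqref{nui} is also correct. But there are two genuine problems.

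\textbf{The argument for $\mathscr{F}_{r+1}$ fails.} You propose to tilt by $\xi \sim n^{\delta/240 - 2/3}$, apply \Cref{estimategamma} relative to the tilted profile $\widetilde H^*$, and then use the triangle inequality with $|\widetilde H^* - H^*| = O(n^{\delta/240 - 2/3})$. But $n^{\delta/240 - 2/3} \gg n^{\delta - 1}$, so this cannot yield $|H_{r+1} - H^*| < n^{\delta - 1}$. Relatedly, your claim that ``\Cref{xhh2} is verified from $\mathscr{F}_r$'' is false: \Cref{xhh2} with parameter $\delta$ requires $|\mathsf{h}_r(nv) - nh(v)| < n^{\delta/2}$, which is strictly stronger than the $n^{\delta}$ bound $\mathscr{F}_r$ supplies. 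The paper avoids tilting entirely here and applies \Cref{estimategamma} with the \emph{original} $H^*$. The needed boundary bound $|H_r - H^*| < n^{\delta/2-1}$ comes not from $\mathscr{F}_r$ but from $\mathscr{E}_r^{(3)}$: taking $z = -\Omega_{\mathfrak{t}_i}(x)$ in \eqref{ei3x0} gives $|H_r - H^*| \le n^{\delta/12-1}(1 + \log(n|\Omega|)) \le 2n^{\delta/12-1}\log n < n^{\delta/2-1}$. This is precisely the purpose of the $(\varkappa,\varpi)$ machinery: it encodes a bound sharp enough to feed back into \Cref{xhh2} without the exponent doubling that would otherwise accumulate under the dynamics.

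\textbf{The discretization of $z$ for $\mathscr{E}_{r+1}^{(3)}$ is unnecessary and the fallback is incomplete.} The bound in \eqref{ei3x0} is an infimum over $z$, and $A\varkappa(z) + \varpi(z)$ with $A = -\nu'\Omega_{\mathfrak{t}_{i_0}}(x)$ is minimized at the single point $z = A$. Thus one application of \Cref{hhestimate2} at $\lambda = -\nu'\Omega_{\mathfrak{t}_{i_0}}(x)$ (which depends on the target point $x$) suffices; the paper verifies that the hypotheses of \Cref{hhestimate2} hold at this $\lambda$. Your grid approach instead requires controlling all $z$, and the claimed fallback outside the admissible window has gaps: for example, when $\varkappa(z)$ lies just above $n^{-2/3}$ (so the upper bound $\xi_j \le n^{-2/3}$ in \Cref{hhestimate2} fails) but below $n^{\delta/240-2/3}$ (so the bound from $\mathscr{E}_{r+1}^{(2)}$ via \Cref{hhestimate1} is not tight enough), neither of your proposed routes closes the estimate.
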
 
		
		Given \Cref{ar1ar}, we can quickly establish \Cref{mh}.

		\begin{proof}[Proof of \Cref{mh}] 
			
			First observe that $\mathscr{A}_0$ holds deterministically, since $H_0 (u) = n^{-1} \mathsf{H}_0 (nu)$ and $\mathsf{H} (nu) = \big\lfloor n H^* (u) \big\rfloor$ hold for each $u \in \overline{\mathfrak{D}}$ (and $H_0 (u) = H^* (u)$ for each $u \in \mathfrak{P} \setminus \mathfrak{L}$, by \Cref{p:frozenr}). Then, inductively applying \Cref{ar1ar}, with the $D$ there equal to $2D + 15$ here, yields $\mathbb{P} \big( \mathscr{A}_{n^{40}} \big) \geq 1 - n^{-2D}$ for sufficiently large $n$. By the definition \eqref{afe} of $\mathscr{A}_r$, this implies $\mathbb{P} \big( \mathscr{F}_{n^{40}} \big) \geq 1 - n^{-2D}$. Since $\diam \mathsf{R} = n \diam \mathfrak{R}$, the bound \Cref{r1r2estimate} on the mixing time for the alternating dynamics (together with the definitions \eqref{fr1} of $\mathscr{F}_r$ and $\mathscr{F}_{\infty}$) gives 
			\begin{flalign*} 
				\mathbb{P} ( \mathscr{F}_{\infty}) \geq \mathbb{P} \big( \mathscr{F}_{n^{40}} \big) - e^{-n} \geq 1 - n^{-2D} - e^{-n} \geq 1 - n^{-D},
			\end{flalign*}
		
			\noindent for sufficiently large $n$. Since this holds for any $D$ (if $n$ is sufficiently large), this implies the theorem.
		\end{proof}

		\subsection{Proof of \Cref{ar1ar}}
		
		\label{ProofEstimateHr}

		In this section we establish \Cref{ar1ar}.

		\begin{proof}[Proof of \Cref{ar1ar}]
			
			Throughout this proof, we restrict to the event $\mathscr{A}_r$; it suffices to show that $\mathscr{A}_{r + 1}$ holds with overwhelming probability. In what follows, we will frequently use the equality
			\begin{flalign}
				\label{hruhru} 
				H_r (u) = H^* (u), \qquad \text{if $u \in \overline{\mathfrak{P}} \setminus \overline{\mathfrak{L}}$ is bounded away from $\mathfrak{L}$},
			\end{flalign}
		
			\noindent which holds since we have restricted to the event $\mathscr{F}_r \subseteq \mathscr{A}_r$ from \eqref{fr1}. 
			
			Updating $\mathsf{H}_r$ to $\mathsf{H}_{r + 1}$ involves resampling it on a subdomain $\mathsf{R}_j = n \mathfrak{R}_j$, for some index $1\leq j\leq m$, in the decomposition $\mathsf{P} = \bigcup_{j = 1}^k \mathsf{R}_j$. Recall from \Cref{Decomposition} that $\mathfrak{R}_j = \mathfrak{R} (u_j)$, for some $u_j \in \overline{\mathfrak{P}}$, and that there are four possible cases for $\mathfrak{R} (u_j)$, depending on whether $u_j \in \overline{\mathfrak{L}}$ is not a tangency location of $\mathfrak{A}$; $u_j \in \mathfrak{A}$ is a tangency location of $\mathfrak{A}$ that lies on $\partial \mathfrak{P}$; $u_j \in \mathfrak{A}$ is a tangency location that does not lie on $\partial \mathfrak{P}$; or $u \notin \overline{\mathfrak{L}}$ is outside the liquid region. We will address each of these cases.
			
			To that end, first consider the fourth case when $u_j \notin \overline{\mathfrak{L}}$. Then, $\mathfrak{R}_j \subseteq \mathfrak{P} \setminus \overline{\mathfrak{L}}$; in particular, it is bounded away from $\overline{\mathfrak{L}}$, so \eqref{hruhru} implies $\mathsf{H}_r (nu) = n H^* (u)$ for each $u \in \mathfrak{R}_j$. Since $\nabla H^* (u) \in \big\{ (1, 0), (1, -1), (1, -1) \big\}$ for almost every $u \notin \mathfrak{L}$ (by the first statement of \Cref{pla}), it follows that there is only one height function on $\mathsf{R}_j$ with boundary data $\mathsf{H}_r |_{\partial \mathsf{R}_j}$, that is, this domain is frozen. This implies that $\mathsf{H}_{r + 1} = \mathsf{H}_r$; so, each of the estimates involved in the definitions of the events in \eqref{fr1}, \eqref{ei1ei2r}, \eqref{ei3x0} for $H_{r + 1}$ follow from their counterparts for $H_r$ guaranteed by $\mathscr{A}_r$. Thus, $\mathscr{A}_{r + 1}$ holds deterministically in this case. 
			
			Next, we consider the first or second case, namely, when $u_j \in \overline{\mathfrak{L}}$ and is not a horizontal tangency location outside of $\partial \mathfrak{P}$. If $u_j$ is a horizontal tangency location of $\mathfrak{A}$, then we assume $u_j \in \partial_{\north} (\mathfrak{R}_j)$, as the proof when $u_j \in \partial_{\so} (\mathfrak{R}_j)$ is entirely analogous by rotation (see \Cref{northsouthd}). Then $\mathfrak{R}_j = \mathfrak{R} (u_j)$ is a double-sided trapezoid satisfying the conditions of \Cref{assumptiond}, since it is adapted with respect to $H^*$ (recall \Cref{dh}). By \eqref{afe}, suffices to show that $\mathscr{E}_{r + 1}$ and $\mathscr{F}_{r + 1}$ both hold with overwhelming probability. 
			
			We begin with the former. Fix an index $1\leq i_0\leq m$; we must show $\mathscr{E}_{r + 1} (i_0)$ holds with overwhelming probability. Define indices $1\leq i, i' \leq m$ so that $\partial_{\so} (\mathfrak{R}_j)$ and $\partial_{\north} (\mathfrak{R}_j)$ are contained in the horizontal lines $\{ y = \mathfrak{t}_i \}$ and $\{ y = \mathfrak{t}_{i'} \}$, respectively. Without loss of generality, we assume that  $i < i'$. Since the update from $H_r$ to $H_{r + 1}$ only affects its restriction to $\mathfrak{R}_j \subset \big\{ (x, y) \in \mathfrak{P} : \mathfrak{t}_i < y < \mathfrak{t}_{i'} \}$, for $i_0 \notin (i, i')$ the event $\mathscr{E}_{r + 1} (i_0)$ holds deterministically if $\mathscr{E}_r (i_0)$ does. Hence, we may assume that $i < i_0 < i'$. In what follows, we further denote the restrictions $h_r = H_r |_{\mathfrak{R}_j}$ and $\mathsf{h}_r = \mathsf{H}_r |_{\mathsf{R}_j}$, 
			
			We first verify that $\mathscr{E}_{r + 1}^{(1)} (i_0)$ and $\mathscr{E}_{r+1}^{(2)} (i_0)$ from \eqref{ei1ei2r} both hold with overwhelming probability, by suitably applying \Cref{hhestimate1}. To that end, observe that \Cref{assumptiondomega} holds with the parameters $(\varepsilon, \varsigma, \delta; \mathfrak{D}, \mathsf{D}; \mathfrak{s}; \mathfrak{t}_1, \mathfrak{t}_2; \widetilde{\mathsf{h}}, \widetilde{\mathsf{H}}; \xi_1, \xi_2; \zeta_1, \zeta_2; \mu)$ there equal to 
			\begin{flalign}
				\label{1parameters}
				\begin{aligned} 
			 \bigg( \varepsilon_0, \Big( \displaystyle\frac{\varepsilon_0}{20} \Big)^{m + 1}, \frac{\delta}{4}; \mathfrak{R}_j, \mathsf{R}_j; \mathfrak{t}_{i_0}; \mathfrak{t}_i, \mathfrak{t}_{i'}; \mathsf{h}_r, \mathsf{H}_{r + 1}; & \nu_i n^{\delta / 240 - 2/3}, \nu_{i'} n^{\delta / 240 - 2/3}; \\
			& \nu_i n^{\delta / 240 - 2/3}, \nu_{i'} n^{\delta / 240 - 2 / 3}; 0 \bigg),
			\end{aligned} 
				\end{flalign} 
			
			\noindent here. To see this, first observe that $\mathsf{h}_r$ is constant along the east and west boundaries of $\partial \mathsf{R}_j$. Indeed, \eqref{hruhru} and the fact that $\partial_{\ea} (\mathfrak{R}_j)$ and $\partial_{\we} (\mathfrak{R}_j)$ are either subsets of $\partial \mathfrak{P}$ or bounded away from $\overline{\mathfrak{L}}$, together imply that $H_r = H^*$ along $\partial_{\ea} (\mathfrak{R}_j) \cup \partial_{\we} (\mathfrak{R}_j)$. In particular, $\mathsf{h}_r (nv) = n h_r (v) = n h(v) = n H^* (v)$ holds for each $v \in \partial_{\ea} (\mathfrak{R}_j) \cup \partial_{\we} (\mathfrak{R}_j)$. So, since $\mathfrak{R}_j$ is adapted to $H^*$, $h$ is constant both $\partial_{\ea} (\mathfrak{R}_j)$ and $\partial_{\we} (\mathfrak{R}_j)$; thus, $h_r$ is as well. Next, the inequalities on $(\mathfrak{t}_1, \mathfrak{s}, \mathfrak{t}_2)$ with respect to $\varepsilon$, and on $(\xi_1, \xi_2)$ and $(\zeta_1, \zeta_2)$ with respect to $\varsigma$, in \Cref{assumptiondomega} follow from \eqref{tj1tjtj1} and \eqref{nui}. Moreover, the edge-tiltedness for $H_r$ with respect to $H^*$ along $\partial_{\so} (\mathfrak{R}_j) \cup \partial_{\north} (\mathfrak{R}_j)$ is a consequence of our restriction to the event $\mathscr{E}_r^{(1)} (i) \cap \mathscr{E}_r^{(1)} (i') \subseteq \mathscr{A}_r$.  Similarly, the bulk-tiltedness of $H_r$ with respect to $H^*$ at each $(x, \mathfrak{t}_i), (x, \mathfrak{t}_{i'}) \in \mathfrak{L}_-^{\delta / 4}$ holds follows from our restriction to $\mathscr{E}_r^{(2)} (i) \cap \mathscr{E}_r^{(2)} (i') \subseteq \mathscr{A}_r$. 
			
			Thus, \Cref{hhestimate1} applies. Under the choice of parameters \eqref{1parameters}, the $\zeta$ in \Cref{hhestimate1} equals 
			\begin{flalign}
				\label{nu1} 
				\nu' = \displaystyle\frac{\varepsilon_0}{2} \nu_i + \bigg( 1 - \displaystyle\frac{\varepsilon_0}{2} \bigg) \nu_{i'} < \nu_{i_0},
			\end{flalign}
	
			\noindent where to deduce the last inequality we used \eqref{nui} and the fact that $i < i_0 < i'$. In particular, \Cref{hhestimate1} implies with overwhelming probability that the edge of $H_{r + 1}$ is $\nu_{i_0} n^{\delta / 240 - 2/3}$-tilted with respect to $H^*$ at level $\mathfrak{t}_{i_0}$, and that $H_{r + 1}$ is $(\nu_{i_0} n^{\delta / 240 - 2/3}; 0)$-tilted with respect to $H^*$ at any $(x, \mathfrak{t}_{i_0}) \in \mathfrak{L}_i^{\delta / 4}$. So, by \eqref{ei1ei2r}, $\mathscr{E}_{r + 1}^{(1)} (i_0)$ and $\mathscr{E}_{r + 1}^{(2)} (i_0)$ both hold with overwhelming probability. 
			
			Next let us show that $\mathscr{E}_{r + 1}^{(3)} (i_0)$ holds with overwhelming probability. To that end, we fix $x \in \mathbb{R}$ such that $(x, \mathfrak{t}_{i_0}) \in \mathfrak{L}_-^{\delta / 4}$, and set
			\begin{flalign}
				\label{lambda}  
				\lambda = - \nu' \Omega_{\mathfrak{t}_{i_0}} (x) \geq n^{-1}
			\end{flalign} 
		
			\noindent where the latter inequality follows from \Cref{omegad} (and we recall $\nu'$ from \eqref{nu1}). We will first use \Cref{hhestimate2} to show with overwhelming probability that
			\begin{flalign}
			\label{hx0i0lambda}
				H^* (x, \mathfrak{t}_{i_0}) + \nu' \varkappa (\lambda) \Omega_{\mathfrak{t}_{i_0}} (x) - \varpi (\lambda) \leq H_{r + 1} (x, \mathfrak{t}_{i_0}) \leq H^* (x, \mathfrak{t}_{i_0}) - \nu' \varkappa (\lambda) \Omega_{\mathfrak{t}_{i_0}} (x) + \varpi (\lambda),
			\end{flalign}
		
			\noindent which since $A \varkappa (z) + \varpi (z)$ is minimized at $z = A$ implies 
			\begin{flalign}
				\label{hx0i0x} 
				\begin{aligned}
				\displaystyle\sup_{z \geq n^{-1}} \Big( H^* & (x, \mathfrak{t}_{i_0}) + \nu' \varkappa (z) \Omega_{\mathfrak{t}_{i_0}} (x) - \varpi (z) \Big) \\
				& \leq H_{r + 1} (x, \mathfrak{t}_{i_0}) \leq \displaystyle\inf_{z \geq n^{-1}} \Big( H^* (x, \mathfrak{t}_{i_0}) + \nu' \varkappa (z) \Omega_{\mathfrak{t}_{i_0}} (x) - \varpi (z) \Big).
				\end{aligned} 
			\end{flalign}
		
			\noindent We will then deduce that the event $\mathscr{E}_{r + 1}^{(3)} (i_0)$ likely holds by taking a union bound over $x$. 
			
			To implement this, first observe that \Cref{assumptiondomega} applies, with the parameters 
			\begin{flalign*} 
				(\varepsilon, \varsigma, \delta; \mathfrak{D}, \mathsf{D}; \mathfrak{s}; \mathfrak{t}_1, \mathfrak{t}_2; \widetilde{\mathsf{h}}, \widetilde{\mathsf{H}}; \xi_1, \xi_2; \zeta_1, \zeta_2; \mu),
			\end{flalign*} 
		
			\noindent there equal to 
			\begin{flalign}
				\label{parameters1} 
					\bigg( \varepsilon_0, \Big( \displaystyle\frac{\varepsilon_0}{20} \Big)^{m + 1}, \frac{\delta}{4}; \mathfrak{R}_j, \mathsf{R}_j; \mathfrak{t}_{i_0}; \mathfrak{t}_i, \mathfrak{t}_{i'}; \mathsf{h}_r, \mathsf{H}_{r + 1}; & \nu_i \varkappa (\lambda), \nu_{i'} \varkappa (\lambda); \nu_i n^{\delta / 240 - 2/3}, \nu_{i'} n^{\delta / 240 - 2 / 3}; \varpi (\lambda) \bigg),
			\end{flalign}
			
			\noindent here, where we recall that $\lambda = - \nu' \Omega_{\mathfrak{t}_{i_0}} (x) \geq n^{-1}$. The verification that this assumption holds is very similar to that in the previous setting, except that the $(\xi; \mu)$-tiltedness condition now follows from the fact that we restricted to the event $\mathscr{E}_r^{(3)} (i) \cap \mathscr{E}_r^{(3)} (i') \subseteq \mathscr{A}_r$. To verify that the parameters \eqref{parameters1} satisfy the inequalities stipulated in \Cref{hhestimate2}, let $d = \min \big\{ |x - x'| : (x', \mathfrak{t}_{i_0}) \in \mathfrak{A} \big\}$; then \Cref{omegad} implies the existence of a constant $c = c(\mathfrak{P}) > 0$ such that $cd^{1/2} < \lambda < c^{-1} d^{1/2}$. Under our choice \eqref{lambda} of $\lambda$ and the definitions \eqref{functionx} of $\varkappa$ and $\varpi$, we have
			\begin{flalign*} 
				\varkappa (\lambda) \geq c n^{\delta / 12 - 1} d^{-1/2} & \geq n^{\delta / 16 - 2/3} d^{-1/2}; \qquad \varkappa (\lambda) \leq c^{-1} n^{\delta / 12 - 1} d^{-1/2} \leq n^{ - 2/3},
			\end{flalign*} 
		
			\noindent where in the second inequality we used the fact that $d \geq n^{\delta / 4 - 2/3}$, since $(x, \mathfrak{t}_{i_0}) \in \mathfrak{L}_-^{\delta / 4}$. 
			
			Thus, we may apply \Cref{hhestimate2} to deduce that $H_{r + 1}$ is $\big( \nu' \varkappa (\lambda), \varpi (\lambda)\big)$-tilted at $(x, \mathfrak{t}_{i_0})$ with overwhelming probability, where $\nu'$ is given by \eqref{nu1}. This implies with overwhelming probability that \eqref{hx0i0lambda} holds, which as mentioned above yields \eqref{hx0i0x} with overwhelming probability. This applies to a single $x$, so from a union bound, it follows that \eqref{hx0i0x} holds simultaneously for all $x \in n^{-2} \mathbb{Z}$ such that $(x, \mathfrak{t}_{i_0}) \in \mathfrak{L}_-^{\delta / 4}$. From the $1$-Lipschitz property of $H_{r + 1}$, we deduce with overwhelming probability that, for all $x \in \mathbb{R}$ such that $(x, \mathfrak{t}_{i_0}) \in \mathfrak{L}_-^{\delta / 4}$, we have  
			\begin{flalign*}
				\displaystyle\sup_{z \geq n^{-1}} \Big( H^* & (x, \mathfrak{t}_{i_0}) + \nu' \varkappa (z) \Omega_{\mathfrak{t}_{i_0}} (x) - \varpi (z) - n^{-2} \Big) \\
				& \leq H_{r + 1} (x, \mathfrak{t}_{i_0}) \leq \displaystyle\inf_{z \geq n^{-1}} \Big( H^* (x, \mathfrak{t}_{i_0}) + \nu' \varkappa (z) \Omega_{\mathfrak{t}_{i_0}} (x) - \varpi (z) + n^{-2} \Big).
			\end{flalign*} 
	
			\noindent Using the fact \eqref{nu1} that $\nu' < \nu_{i_0}$ and that $\Omega_{\mathfrak{t}_{i_0}} (x) \geq n^{-1}$ for $(x, \mathfrak{t}_{i_0}) \in \mathfrak{L}_-^{\delta / 4}$, it follows that $\mathscr{E}_{r + 1}^{(3)} (i_0)$ holds with overwhelming probability. 
			
			This shows that the event $\mathscr{E}_r^{(1)} (i_0) \cap \mathscr{E}_r^{(2)} (i_0) \cap \mathscr{E}_r^{(3)} (i_0)$ holds with overwhelming probability, for any index $0\leq i_0 \leq m$. By a union bound, we deduce that the event $\mathscr{E}_r$ (from \eqref{afe}) holds with overwhelming probability. So, it remains to show that $\mathscr{F}_r$ does as well.			
			
			This will follow from an application of \Cref{estimategamma}; let us verify that $(h, H^*; \mathsf{h}_r; \mathfrak{R}_j, \mathsf{R}_j)$ satisfies the constraints on $(h, H^*;\mathsf{h}; \mathfrak{D}, \mathsf{D})$ listed in \Cref{xhh} and \Cref{xhh2}. First observe, if $u_j \in \partial_{\north} (\mathfrak{R}_j)$ is a tangency location for $\mathfrak{A}$, then $\partial_{\north} (\mathfrak{R}_j) \subset \partial \mathfrak{P}$. Thus, $\mathsf{H}_{r + 1} (nv) = n h(v)$, and $\partial_{\north} (\mathfrak{R}_j)$ is packed with respect to $h$. Otherwise, $\mathfrak{L}$ extends beyond $\partial_{\north} (\mathfrak{R}_j)$ and $\partial_{\so} (\mathfrak{R}_j)$, and so $H^*$ admits an extension beyond the north and south boundaries of $\mathfrak{R}_j$. This shows that $\mathfrak{R}_j$ satisfies the second condition on $\mathfrak{D}$ in \Cref{xhh}. The first, third, and fourth  follow from the adaptedness of $\mathfrak{R}_j$ to $H^*$; the fifth follows from taking the $(\mathfrak{P}, 1)$ there equal to $(\mathfrak{P}, 1)$ here.
						
			To verify that it satisfies \Cref{xhh2}, observe by \eqref{hruhru}, the fact that $\partial_{\ea} (\mathfrak{R}_j)$ and $\partial_{\we} (\mathfrak{R}_j)$ are either subsets of $\partial \mathfrak{P}$ or bounded away from $\overline{\mathfrak{L}}$ gives $H_r = H^*$ along $\partial_{\ea} (\mathfrak{R}_j) \cup \partial_{\we} (\mathfrak{R}_j)$. In particular, $\mathsf{h}_r (nv) = n h_r (v) = n h(v)$ for each $v \in \partial_{\ea} (\mathfrak{R}_j) \cup \partial_{\we} (\mathfrak{R}_j)$. Since $\mathfrak{R}_j$ is adapted to $H^*$, $h$ is constant both $\partial_{\ea} (\mathfrak{R}_j)$ and $\partial_{\we} (\mathfrak{R}_j)$.	Further observe since we have restricted to $\mathscr{E}_r^{(1)} (i) \cap \mathscr{E}_r^{(1)} (i')$ that the edge of $H_r$ is $n^{\delta / 240 - 2/3}$-tilted with respect to $H^*$. This implies that $\mathsf{h}_r (nv) = n h_r (v) = n h(v)$ for any $v \notin \mathfrak{L}$ and $\dist (v, \mathfrak{A}) \geq \mathcal{O} (n^{\delta / 200 - 2/3})$. In particular, this holds for any $v \notin \mathfrak{L}_+^{\delta / 2}$, thereby verifying the second property listed in \Cref{xhh2}; the verification of the third is very similar.
			
			It remains to verify the first one, namely that $\big| \mathsf{h}_r (nv) - n h (nv) \big| \leq n^{\delta / 2}$ for each $v \in \partial \mathfrak{R}_j$. This is already implied by the second property for $v \in \partial_{\ea} (\mathfrak{R}_j) \cup \partial_{\we} (\mathfrak{R}_j)$, so we must show it for $v \in \partial_{\north} (\mathfrak{R}_j) \cup \partial_{\so} (\mathfrak{R}_j)$. We only consider $v \in \partial_{\so} (\mathfrak{R}_j)$, since the case when $v \in \partial_{\north} (\mathfrak{R}_j)$ is entirely analogous. To that end, observe since since we restricted to the event $\mathscr{E}_r^{(3)} (i)$ that 
			\begin{flalign*}
				 \displaystyle\sup_{z \geq n^{-1}} & \Big( H^* (x, \mathfrak{t}_i) + \nu_i \varkappa (z) \Omega_{\mathfrak{t}_i} (x) - \varpi (z) \Big) \\
				& \qquad \qquad \qquad \leq H_r (x, \mathfrak{t}_i) \leq \displaystyle\inf_{z  \geq n^{-1}} \Big( H^* (x, \mathfrak{t}_j) - \nu_i \varkappa (z) \Omega_{\mathfrak{t}_i} (x) + \varpi (z) \Big),
			\end{flalign*}
		
			\noindent holds for each $(x, \mathfrak{t}_i) \in \mathfrak{L}_-^{\delta / 4}$. In particular, we my take $z = -\Omega_{\mathfrak{t}_i} (x) \geq n^{-1}$, which by \eqref{functionx} gives
			\begin{flalign*}
				 \big| H_r (x, \mathfrak{t}_i) - H^* (x, \mathfrak{t}_i) \big| \leq n^{\delta / 12 - 1} + n^{\delta / 12 - 1} \log n \leq 2n^{\delta / 12 - 1} \log n \leq n^{\delta / 2 - 1},
			\end{flalign*}
			
			\noindent for any $(x, \mathfrak{t}_i) \in \partial_{\rm so}(\mathfrak{R}_j)$. This yields the first property listed in \Cref{xhh2}, so that assumption holds. In particular, \Cref{estimategamma} applies (with the $(\mathsf{h}, \mathsf{H})$ there equal to $(\mathsf{h}_r, \mathsf{H}_{r + 1})$ here), implying that $\mathscr{F}_{r + 1}$ holds with overwhelming probability. Hence, $\mathscr{E}_{r + 1} \cap \mathscr{F}_{r + 1} = \mathscr{A}_{r + 1}$ holds with overwhelming probabilty upon restricting to $\mathscr{A}_r$, thereby establishing the proposition if $\mathfrak{R}_j$ is of the first or second type listed in \Cref{Decomposition}. 
			
			It remains to consider the case when $\mathfrak{R}_j = \mathfrak{R} (u_j)$ is of the third type listed in \Cref{Decomposition}, that is, when $u_j$ is a horizontal tangency location of $\mathfrak{A}$ that does not lie on $\partial \mathfrak{P}$. Then, $\mathfrak{R}_j = \mathfrak{D}_1 \cup \mathfrak{D}_2$ for two trapezoids $\mathfrak{D}_1 = \mathfrak{D}_1 (u_j)$ and $\mathfrak{D}_2 (u_j)$ that contain $u_j$ in the interiors of their south and north boundaries, respectively; set $\mathsf{D}_1 = n \mathfrak{D}_1 \subset \mathbb{T}$ and $\mathsf{D}_2 = n \mathfrak{D}_2 \subset \mathbb{T}$. Either $\mathfrak{D}_1$ or $\mathfrak{D}_2$ is disjoint with $\mathfrak{L}$; let us assume the former is, as the proof in the alternative case is entirely analogous. 
			
			Then, the top three boundaries $\partial_{\north} (\mathfrak{D}_1) \cup \partial_{\ea} (\mathfrak{D}_1) \cup \partial_{\ea} (\mathfrak{D}_1)$ of $\mathfrak{D}_1$ are bounded away from $\overline{\mathfrak{D}}$. By \eqref{hruhru}, this yields $h_r (v) = H^* (v)$ for any $v \in \partial_{\north} (\mathfrak{D}) \cup \partial_{\ea} (\mathfrak{D}) \cup \partial_{\we} (\mathfrak{D})$. Moreover $\nabla H^*$ is constant,\footnote{This holds since $\mathfrak{A}$ has no singularities that are simultaneously cusps and tangency locations; recall \Cref{pa}.} and an element of $\{ (0, 0), (1, 0), (1, -1) \}$, along $\partial_{\north} (\mathfrak{D}) \cup \partial_{\ea} (\mathfrak{D}) \cup \partial_{\we} (\mathfrak{D})$. Hence, the same holds for $\nabla H_r$, from which it follows that there is only one height function on $\mathsf{D}_1$ with boundary data $\mathsf{H}_r |_{\mathsf{D}_1}$. Hence, $\mathsf{H}_{r + 1} (nv) = n H^* (v) = \mathsf{H}_r (nv)$, for each $v \in \mathfrak{D}_1$. The inequalities \eqref{fr1}, \eqref{ei1ei2r}, and \eqref{ei3x0} hold deterministically inside $\mathfrak{D}_1$.
			
			Furthermore, the fact that $\mathsf{H}_r (nv) = n H^* (v)$ for $v \in \mathfrak{D}_1$ implies that the $\partial_{\so} (\mathfrak{D}_1)$ is packed with respect to $h_r$; thus, the same statement holds for $\partial_{\north} (\mathfrak{D}_2)$. Therefore, the same reasoning as applied above in the second case for $\mathfrak{R}_j$ listed in \Cref{Decomposition} (when $u$ is a horizontal tangency location of $\mathfrak{A}$ that lies on $\partial \mathfrak{P}$) applies to show that the inequalities \eqref{fr1}, \eqref{ei1ei2r}, and \eqref{ei3x0} hold with overwhelming probability inside $\mathfrak{D}_2$. It follows that these inequalities hold with overwhelming probability on $\mathfrak{D}_1 \cup \mathfrak{D}_2 = \mathfrak{R}_j$. This implies that $\mathscr{A}_{r + 1}$ holds with overwhelming probability, which yields the proposition.
		\end{proof}

	\section{Existence of Tilted Height Functions}

	\label{WalkLimit}
	
	In this section we establish \Cref{omegaxizeta}. Instead of directly producing the tilted height function $\widehat{H}^*$ as described there, we first in \Cref{Functionfalpha} define its complex slope as a solution to the equation \eqref{q0f}, with the function $Q_0$ there modified in an explicit way. In \Cref{Compareg1galpha} we solve this equation perturbatively, and compare its solution to the original one. We then define the tilted height function from its complex slope using \eqref{fh} and establish \Cref{omegaxizeta} in \Cref{ProofFunction}.

	\subsection{Modifying \texorpdfstring{$Q$}{}}
	
	\label{Functionfalpha}
	
	In this section we introduce a function $\mathcal{F}_t (x; \alpha_0)$ that will eventually be the complex slope for our tilted height profile. Throughout this section, we adopt the notation from \Cref{omegaxizeta}, recalling in particular the complex slope $f_t (x)$ associated with $H^*$, defined for $(x, t) \in \mathfrak{L} = \mathfrak{L} (\mathfrak{P})$.
	
	The function $\mathcal{F}_t (x; \alpha_0)$ may be interpreted as the solution to the complex Burgers equation
		\begin{flalign}
			\label{ftxalpha0} 
			\partial_t \mathcal{F}_t (x; \alpha_0) + \partial_x \mathcal{F}_t (x; \alpha_0) \displaystyle\frac{\mathcal{F}_t (x; \alpha_0)}{\mathcal{F}_t (x; \alpha_0) + 1} = 0, \qquad \text{with initial data $\mathcal{F}_0 (x; \alpha_0) = \alpha f_{\mathfrak{t}_0} (x)$},
		\end{flalign}
		
		\noindent for suitable real numbers $\alpha_0 = 1 + \mathcal{O} \big( |\xi_1 + \xi_2| \big)$ and $\mathfrak{t}_0$; stated alternatively, we first time-shift the solution $f_t (x)$ of \eqref{ftx} by $\mathfrak{t}_0$ and then multiply its initial data by a ``drift'' $\alpha_0$. However, making this precise would involve justifying the existence and uniqueness of a solution to \eqref{ftxalpha0}. To circumvent this, we instead define $\mathcal{F}_t (x; \alpha_0)$ as the solution to an ``$\alpha_0$-deformation'' of the equation \eqref{q0alphaf}. 
	
		To implement this, we define real numbers $\mathfrak{t}_0$ and $\alpha_0$ by
	\begin{flalign}
		\label{t0alpha} 
		\mathfrak{t}_0 = \displaystyle\frac{\xi_2 \mathfrak{t}_1 - \xi_1 \mathfrak{t}_2}{\xi_2 - \xi_1}; \qquad \alpha_0 = \displaystyle\frac{\xi_2 - \xi_1}{\mathfrak{t}_2 - \mathfrak{t}_1} + 1 = \displaystyle\frac{\xi_1}{\mathfrak{t}_1 - \mathfrak{t}_0} + 1 = \displaystyle\frac{\xi_2}{\mathfrak{t}_2 - \mathfrak{t}_0} + 1,
	\end{flalign} 

	\noindent so that $\alpha_0 = 1 + \mathcal{O} \big( |\xi_1 + \xi_2| \big)$. We next introduce a time-shifted variant of $f_t (x)$ given by
	\begin{flalign}
		\label{fsx1} 
		\mathcal{F}_s (x; 1) = f_{s + \mathfrak{t}_0} (x), \qquad \text{whenever $(x, s + \mathfrak{t}_0) \in \overline{\mathfrak{L}}$}.
	\end{flalign}

	\noindent Although the complex slope for the tilted height function $\widehat{H}^*$ from \Cref{omegaxizeta} will eventually be related to an $\alpha_0$-deformation of $\mathcal{F}_s (x; 1)$ with $\alpha_0$ given explicitly by \eqref{t0alpha}, it will be useful to define this deformation (denoted by $\mathcal{F}_t (x; \alpha)$) for any $\alpha \in \mathbb{R}$ with $|\alpha - 1|$ sufficiently small. To that end, recalling the rational function $Q: \mathbb{C}^2 \rightarrow \mathbb{C}$ satisfying \eqref{e:qfh} with respect to $f_t$, we also define its time-shift $\mathcal{Q}_1 $and $\alpha$-deformation $\mathcal{Q}_{\alpha}$, for any $\alpha \in \mathbb{R}$, by
	\begin{flalign*}
		\mathcal{Q}_1 (u, v) = Q \bigg( u, v - \displaystyle\frac{\mathfrak{t}_0 u}{u + 1} \bigg); \qquad \mathcal{Q}_{\alpha} (u, v) = \displaystyle\frac{u + 1}{\alpha^{-1} u + 1} \mathcal{Q}_1 (\alpha^{-1} u, v),
	\end{flalign*}
	
	\noindent observing in particular that \eqref{e:qfh} and \eqref{fsx1} together imply 
	\begin{flalign} 
		\label{q1equation} 
		\mathcal{Q}_1 \bigg( \mathcal{F}_t (x; 1), x - \frac{t \mathcal{F}_t (x; 1)}{\mathcal{F}_t (x; 1) + 1}\bigg) = 0. 
	\end{flalign} 
	
	If $|\alpha - 1|$ is sufficiently small (in a way only dependent on $\mathfrak{P}$), this implies the existence of an analytic function $\mathcal{F}_t (x; \alpha)$, defined for $(x, t)$ in an open subset of $\mathfrak{L}$, that is continuous in $\alpha$ and satisfies
	\begin{flalign}
		\label{qalphaequation}
		\mathcal{Q}_{\alpha} \bigg( \mathcal{F}_t (x; \alpha), x - \displaystyle\frac{t \mathcal{F}_t (x; \alpha)}{\mathcal{F}_t (x; \alpha) + 1} \bigg) = 0.
	\end{flalign}
	
	\noindent For example, recall from the third part of \Cref{pa1} that there exists a real analytic function $\mathcal{Q}_{0; 1}$, locally defined around any solution of \eqref{q1equation} (obtained by solving \eqref{q1equation}), such that 
	\begin{flalign*}
		\mathcal{Q}_{0; 1} \big( \mathcal{F}_t (x; 1) \big) = x \big( \mathcal{F}_t (x; 1) + 1 \big) - t \mathcal{F}_t (x; 1).
	\end{flalign*} 
	
	\noindent Then, we may set
	\begin{flalign}
		\label{q0alphaz} 
		\mathcal{Q}_{0; \alpha} (u) = \displaystyle\frac{u + 1}{\alpha^{-1}u + 1} \mathcal{Q}_{0; 1} (\alpha^{-1} u),
	\end{flalign}
	
	\noindent for $u$ in the domain of $\mathcal{Q}_{0; 1}$ and let $\mathcal{F}_t (x; \alpha)$ denote the root of 
	\begin{flalign}
		\label{q0alphaf}
		\mathcal{Q}_{0; \alpha} \big( \mathcal{F}_t (x; \alpha) \big) = x \big( \mathcal{F}_t (x; \alpha) + 1 \big) - t \mathcal{F}_t (x; \alpha),
	\end{flalign}
	
	\noindent  chosen so that it is continuous in $\alpha$; it is directly verified that it satisfies \eqref{qalphaequation}. Such a root is well-defined on a nonempty open subset of $\mathfrak{L}$ that contains no double root of \eqref{qalphaequation}, or equivalently of \eqref{q0alphaf} (such a subset exists for $|\alpha - 1|$ sufficiently small). Since any double root of \eqref{qalphaequation} is real, we may extend $\mathcal{F}_t (x; \alpha)$ to the $\alpha$-deformed liquid region $\mathfrak{L}_{\alpha}$ and its arctic curve $\mathfrak{A}_{\alpha}$, defined by
	\begin{flalign*}
		\mathfrak{L}_{\alpha} = \big\{ (x, t) \in \mathbb{R}^2 : \mathcal{F}_t (x; \alpha) \in \mathbb{H}^- \big\}; \qquad \mathfrak{A}_{\alpha} = \partial \mathfrak{L}_{\alpha},
	\end{flalign*}
	
	\noindent where we observe for $|\alpha - 1|$ sufficiently small that $\mathfrak{L}_{\alpha}$ is simply connected since $\mathfrak{L}$ (and thus $\mathfrak{L}_1$) is. 
	
	We next have the below lemma that, given a solution of an equation of the type \eqref{q0alphaf}, evaluates its derivatives with respect to $x$ and $t$, and shows that it satisfies the complex Burgers equation. Its proof essentially follows from a Taylor expansion and will be provided in \Cref{Equation} below.

	\begin{lem}
		
		\label{fderivativeq} 
		
		Fix $(x_0, t_0) \in \mathbb{R}^2$;  let $\mathcal{F}_t (x) $ denote a function that is real analytic in a neighborhood of $(x_0, t_0)$, and let $\mathcal{Q}_0$ denote a function that is real analytic in a neighborhood of $\mathcal{F}_{t_0} (x_0)$. Assume in a neighborhood of $(x_0, t_0)$ that 
		\begin{flalign}
			\label{q0f2}
			\mathcal{Q}_0 \big( \mathcal{F}_t (x) \big) = x \big( \mathcal{F}_t (x) + 1 \big) - t \mathcal{F}_t (x).
		\end{flalign}
		
		\noindent Then for all $(x ,t)$ in a neighborhood of $(x_0, t_0)$ we have 
		\begin{flalign}
			\label{ftxderivativeq}
			\partial_x \mathcal{F}_t (x) = \displaystyle\frac{\mathcal{F}_t (x) + 1}{\mathcal{Q}_0' \big( \mathcal{F}_t (x) \big) - x + t}; \qquad \partial_t \mathcal{F}_t (x) = - \displaystyle\frac{\mathcal{F}_t (x)}{\mathcal{Q}_0' \big( \mathcal{F}_t (x) \big) - x + t},
		\end{flalign}
		\noindent and in particular
		\begin{flalign}
			\label{xfequation} 
			\partial_t \mathcal{F}_t (x) + \partial_x \mathcal{F}_t (x) \displaystyle\frac{\mathcal{F}_t (x)}{\mathcal{F}_t (x) + 1} = 0.
		\end{flalign}
	\end{lem}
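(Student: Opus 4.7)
The plan is to perform a first-order Taylor expansion of the hypothesis \eqref{q0f2} around $(x_0, t_0)$ and read off the partial derivatives directly; the complex Burgers equation \eqref{xfequation} will then follow by a one-line algebraic substitution. Concretely, I would first evaluate \eqref{q0f2} at a nearby $(x,t)$, subtract its value at $(x_0, t_0)$, and rearrange the difference of right-hand sides to obtain
\begin{equation*}
\mathcal{Q}_0\big(\mathcal{F}_t(x)\big) - \mathcal{Q}_0\big(\mathcal{F}_{t_0}(x_0)\big) = (x-t)\big(\mathcal{F}_t(x) - \mathcal{F}_{t_0}(x_0)\big) + (x-x_0)\big(\mathcal{F}_{t_0}(x_0)+1\big) - (t-t_0)\mathcal{F}_{t_0}(x_0).
\end{equation*}

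Next, I would Taylor-expand the left-hand side as $\mathcal{Q}_0'(\mathcal{F}_{t_0}(x_0))(\mathcal{F}_t(x)-\mathcal{F}_{t_0}(x_0)) + \mathcal{O}\bigl(|\mathcal{F}_t(x)-\mathcal{F}_{t_0}(x_0)|^2\bigr)$, and use real analyticity of $\mathcal{F}_t(x)$ to expand $\mathcal{F}_t(x) - \mathcal{F}_{t_0}(x_0) = (x-x_0)\partial_x \mathcal{F}_{t_0}(x_0) + (t-t_0)\partial_t \mathcal{F}_{t_0}(x_0) + o(|x-x_0|+|t-t_0|)$. Matching the coefficients of $(x-x_0)$ and $(t-t_0)$ on both sides yields
\begin{equation*}
\bigl[\mathcal{Q}_0'(\mathcal{F}_{t_0}(x_0)) - x_0 + t_0\bigr] \partial_x \mathcal{F}_{t_0}(x_0) = \mathcal{F}_{t_0}(x_0) + 1, \quad \bigl[\mathcal{Q}_0'(\mathcal{F}_{t_0}(x_0)) - x_0 + t_0\bigr]\partial_t \mathcal{F}_{t_0}(x_0) = -\mathcal{F}_{t_0}(x_0),
\end{equation*}
and dividing by the common bracketed factor gives both identities in \eqref{ftxderivativeq}. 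Since $(x_0, t_0)$ was arbitrary within the domain of real analyticity, these formulas extend to the whole neighborhood. The complex Burgers equation \eqref{xfequation} is then immediate: substituting the two expressions, the common denominator and the factor $\mathcal{F}_t(x)+1$ cancel, and the numerator collapses to $-\mathcal{F}_t(x) + \mathcal{F}_t(x) = 0$.

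The only subtle point is nonvanishing of the denominator $\mathcal{Q}_0'(\mathcal{F}_{t_0}(x_0)) - x_0 + t_0$, so that the final division is legitimate. This is automatic: if the denominator vanished at $(x_0, t_0)$, then the two bracketed identities above would simultaneously force $\mathcal{F}_{t_0}(x_0) = -1$ and $\mathcal{F}_{t_0}(x_0) = 0$, a contradiction. Alternatively, by the third statement of \Cref{pa1}, the denominator vanishes precisely when $\mathcal{F}_{t_0}(x_0)$ is a double root of \eqref{q0f2} — that is, when $(x_0, t_0)$ lies on the arctic boundary — so the formulas apply throughout the liquid region. There is no real obstacle here; the entire argument should fit in a few lines and is purely algebraic once the expansions have been set up.
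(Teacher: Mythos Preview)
Your proposal is correct and takes essentially the same approach as the paper: subtract the defining relation \eqref{q0f2} at two nearby points, Taylor-expand $\mathcal{Q}_0$, and read off the partial derivatives. The paper varies $x$ and $t$ separately (only writing out the $\partial_x$ case), whereas you vary both at once and match coefficients; your added remark that a vanishing denominator would force $\mathcal{F}_{t_0}(x_0)=-1$ and $\mathcal{F}_{t_0}(x_0)=0$ simultaneously is a clean justification the paper leaves implicit (your alternative appeal to \Cref{pa1} is less apt here, since the lemma is stated for abstract $\mathcal{F},\mathcal{Q}_0$).
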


	\subsection{Comparison Between \texorpdfstring{$\mathcal{F}_t (x; \alpha)$}{} and \texorpdfstring{$\mathcal{F}_t (x; 1)$}{}}

	\label{Compareg1galpha} 
	
	In this section we provide two estimates comparing $\mathcal{F}_t (x; \alpha)$ and $\mathcal{F}_t (x; 1)$. The first (given by \Cref{ftalphaft1} below) compares their logarithms, where in what follows, we take the branch of the logarithm to be so that $\Imaginary \log u \in [-\pi, \pi)$. The second (given by \Cref{x0fq} below) compares the endpoints of their arctic boundaries. These results will eventually be the sources of the quantities $\Omega$ and $\Upsilon$ from \eqref{omegasx}.
	
	\begin{lem} 
		
		\label{ftalphaft1} 
		
		Suppose $|\alpha - 1|$ is sufficiently small, and $v = (x, t) \in \mathfrak{L}_1 \cap \mathcal{L}_{\alpha}$ is any point bounded away from a cusp or tangency location of $\mathfrak{A}_1$. Setting $d = \dist (v, \mathfrak{A}_1)$ we have the following estimates, where the implicit constants in the error below only depend on the first three derivatives of $\mathcal{Q}_{0; 1}$ at $\mathcal{F}_t (x; 1)$. 
		
		\begin{enumerate} 
			\item If $d \geq |\alpha - 1|$, then 
		\begin{flalign*}
			 \log \mathcal{F}_t (x; \alpha) - \log \alpha - \log \mathcal{F}_t (x; 1) = t (1 - \alpha) \partial_x \bigg( \displaystyle\frac{\mathcal{F}_t (x; 1)}{\mathcal{F}_t (x; 1) + 1} \bigg) \Big( 1 + \mathcal{O} \big( d^{-1} |\alpha - 1| + |\alpha - 1|^{1/2} \big) \Big).
		\end{flalign*}
		
		\item For any $d > 0$, we have $\big| \log \mathcal{F}_t (x; \alpha) - \log \alpha - \log \mathcal{F}_t (x; 1) \big| = \mathcal{O} \big( |\alpha - 1|^{1/2} \big)$.
		\end{enumerate} 
		
	\end{lem}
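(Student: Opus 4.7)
The plan is to substitute $G_\alpha = \alpha^{-1} \mathcal{F}_t(x;\alpha)$ into the defining equation \eqref{q0alphaf} so as to reduce everything to a single analytic equation in $G_\alpha$, and then solve this equation perturbatively around $G_1 = \mathcal{F}_t(x;1)$. Using the definition \eqref{q0alphaz} of $\mathcal{Q}_{0;\alpha}$ and the identity
\[
\frac{\alpha G_\alpha (G_\alpha + 1)}{\alpha G_\alpha + 1} = G_\alpha + \frac{(\alpha-1) G_\alpha}{\alpha G_\alpha + 1},
\]
a direct manipulation of \eqref{q0alphaf} gives
\[
\Psi(G_\alpha) = -t \cdot \frac{(\alpha-1) G_\alpha}{\alpha G_\alpha + 1}, \qquad \text{where} \qquad \Psi(w) := \mathcal{Q}_{0;1}(w) - x(w+1) + tw.
\]
By the $\alpha = 1$ case we have $\Psi(G_1) = 0$, so Taylor expanding $\Psi$ around $G_1$ and writing $\Delta = G_\alpha - G_1$ turns this into the quadratic-order identity
\[
\Psi'(G_1) \Delta + \tfrac{1}{2} \mathcal{Q}_{0;1}''(G_1) \Delta^2 + \mathcal{O}(\Delta^3) \; = \; -\frac{t(\alpha-1) G_1}{G_1 + 1} + \mathcal{O}\bigl((\alpha-1) \Delta\bigr).
\]

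The next step is to identify $\Psi'(G_1)$ via \Cref{fderivativeq}: since $\mathcal{F}_t(x;1)$ satisfies \eqref{q0f2} with $\mathcal{Q}_0 = \mathcal{Q}_{0;1}$, we obtain
\[
\Psi'(G_1) \;=\; \mathcal{Q}_{0;1}'(G_1) - x + t \;=\; \frac{G_1 + 1}{\partial_x G_1}.
\]
Combined with $\partial_x(G_1/(G_1+1)) = \partial_x G_1 / (G_1+1)^2$, this means the linear-in-$(\alpha-1)$ solution of the displayed identity is
\[
\Delta_{\mathrm{lin}} \;=\; -\frac{t(\alpha-1) G_1}{(G_1+1) \Psi'(G_1)} \;=\; -t(\alpha-1) G_1 \cdot \partial_x\!\left(\frac{G_1}{G_1 + 1}\right),
\]
which gives the main term claimed in part (1), since $\log \mathcal{F}_t(x;\alpha) - \log \alpha - \log \mathcal{F}_t(x;1) = \log(G_\alpha / G_1) = \Delta/G_1 + \mathcal{O}(\Delta^2)$ and $G_1$ is bounded away from $0$ and $\infty$ by the hypothesis that $v$ avoids tangency locations of $\mathfrak{A}_1$.

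The main obstacle is to control the error in the regime where $d = \dist(v, \mathfrak{A}_1)$ is small. The point is that on $\mathfrak{A}_1$, by the third part of \Cref{pa1}, $G_1$ is a double root of the defining equation, so $\Psi'(G_1^*) = 0$ for $(x^*, t) \in \mathfrak{A}_1$; combined with $\mathcal{Q}_{0;1}''$ being bounded away from $0$ (which uses the assumed distance from cusps), a reprise of the square-root asymptotics carried out in the proof of \Cref{gammaikx0t0} gives $|\Psi'(G_1)| \asymp d^{1/2}$. This forces a quadratic analysis: solving the displayed identity as a quadratic in $\Delta$ by the quadratic formula yields
\[
\Delta \;=\; \frac{-\Psi'(G_1) \pm \sqrt{\Psi'(G_1)^2 - 2 \mathcal{Q}_{0;1}''(G_1) \cdot t(\alpha-1) G_1 / (G_1+1)}}{\mathcal{Q}_{0;1}''(G_1)} + (\text{cubic corrections}),
\]
where the correct branch is selected by continuity in $\alpha$. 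In the regime $d \geq |\alpha - 1|$ one has $\Psi'(G_1)^2 \gtrsim |\alpha - 1|$, so expanding the square root reproduces $\Delta_{\mathrm{lin}}$ with a relative error of size $\mathcal{O}(d^{-1} |\alpha - 1|)$ coming from the quadratic term and $\mathcal{O}(|\alpha - 1|)$ from replacing $\alpha G_\alpha + 1$ by $G_1 + 1$; combined with the $\mathcal{O}(|\alpha - 1|^{1/2})$ price of expanding $\log(1 + \Delta/G_1)$ and $\log \alpha$ around $1$ one obtains part (1). In the general regime of part (2), the same quadratic formula directly gives the crude bound $|\Delta| = \mathcal{O}(|\alpha - 1|^{1/2})$ regardless of $d$, since the square root of a complex number of modulus at most $C|\alpha-1|$ has modulus $\mathcal{O}(|\alpha-1|^{1/2})$; passing to $\log(G_\alpha / G_1)$ yields the second assertion.
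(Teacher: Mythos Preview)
Your proof is correct and follows essentially the same approach as the paper: both introduce $\widetilde{\mathcal{F}} = \alpha^{-1}\mathcal{F}_t(x;\alpha)$ (your $G_\alpha$), reduce \eqref{q0alphaf} to an equation for $\widetilde{\mathcal{F}}$ in terms of $\mathcal{Q}_{0;1}$ alone, Taylor expand around $\mathcal{F}_t(x;1)$ to obtain the same quadratic identity (your displayed equation matches the paper's \eqref{qffestimate}), identify the linear coefficient $\mathcal{Q}_{0;1}'(\mathcal{F}) - x + t$ with $(\mathcal{F}+1)/\partial_x\mathcal{F}$ via \Cref{fderivativeq}, and then control the error using $|\Psi'(G_1)| \asymp d^{1/2}$ near the boundary. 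The only cosmetic difference is that you invoke the quadratic formula explicitly where the paper argues by a case split on whether $d$ is small (so $\mathcal{Q}_{0;1}''$ is bounded below) or large (so $\Psi'$ is bounded below).
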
 
	
	\begin{proof}
		
	Letting $\mathcal{F} = \mathcal{F}_t (x; 1)$ and $\mathcal{F}' = \mathcal{F}_t (x; \alpha)$, we have by \eqref{q0alphaf} that 
	\begin{flalign}
		\label{q01f0alphaf}
		\mathcal{Q}_{0; 1} (\mathcal{F}) = x (\mathcal{F} + 1) - t \mathcal{F}; \qquad \mathcal{Q}_{0; \alpha} (\mathcal{F}') = x (\mathcal{F}' + 1) - t\mathcal{F}'.
	\end{flalign}

	\noindent Next, \eqref{q0alphaz} implies for $( u, x - tu/(u + 1) )$ in a neighborhood of $( \mathcal{F}', x - t \mathcal{F}'/(\mathcal{F}' + 1))$ that
	\begin{flalign*}
		\mathcal{Q}_{0; \alpha} (u) = \displaystyle\frac{u + 1}{\alpha^{-1} u + 1} \mathcal{Q}_{0; 1} (\alpha^{-1} u).
	\end{flalign*}
	
	\noindent Letting $\widetilde{\mathcal{F}} = \alpha^{-1} \mathcal{F}'$, we deduce from the second statement of \eqref{q01f0alphaf} that 
	\begin{flalign*}
		\mathcal{Q}_{0; 1} (\widetilde{\mathcal{F}}) + \displaystyle\frac{(\alpha - 1) \widetilde{\mathcal{F}}}{\widetilde{\mathcal{F}} + 1} \mathcal{Q}_{0; 1} (\widetilde{\mathcal{F}}) = \mathcal{Q}_{0; \alpha} (\mathcal{F}') = x (\widetilde{\mathcal{F}} + 1) - t \widetilde{\mathcal{F}}+ (\alpha - 1) (x - t) \widetilde{\mathcal{F}}.
	\end{flalign*}

	\noindent Together with the first statement of \eqref{q01f0alphaf}, this gives 
	\begin{flalign}
		\label{qfqf2} 
		\mathcal{Q}_{0; 1} (\widetilde{\mathcal{F}}) - \mathcal{Q}_{0; 1} (\mathcal{F}) = (\widetilde{\mathcal{F}} - \mathcal{F})(x - t) + \displaystyle\frac{(1 - \alpha) \widetilde{\mathcal{F}}}{\widetilde{\mathcal{F}} + 1} \mathcal{Q}_{0; 1} (\widetilde{\mathcal{F}}) + (\alpha - 1) (x - t) \widetilde{\mathcal{F}}.
	\end{flalign}
	
	\noindent By a Taylor expansion, we have 
	\begin{flalign*} 
		\mathcal{Q}_{0; 1} (\widetilde{\mathcal{F}}) - \mathcal{Q}_{0; 1} (\mathcal{F}) = (\widetilde{\mathcal{F}} - \mathcal{F}) \mathcal{Q}_{0; 1}' (\mathcal{F}) + \frac{1}{2} (\widetilde{\mathcal{F}} - \mathcal{F})^2 \mathcal{Q}_{0; 1}'' (\mathcal{F}) + \mathcal{O} \big( |\widetilde{\mathcal{F}} - \mathcal{F}|^3 \big),	
	\end{flalign*} 

	\noindent and which by \eqref{qfqf2} implies, since $\mathcal{F}$ is bounded away from $\{ -1, 0, \infty \}$ (as $(x, t)$ is bounded away from a tangency location of $\mathfrak{A}_1$), that
	\begin{flalign}
		\label{qffestimate} 
		\begin{aligned}
		(\widetilde{\mathcal{F}} - \mathcal{F}) \big( \mathcal{Q}_{0; 1}' (& \mathcal{F} )- x + t \big) + \displaystyle\frac{1}{2} (\widetilde{\mathcal{F}} - \mathcal{F})^2 \mathcal{Q}_{0; 1}'' (\mathcal{F}) \\
		& = (\alpha - 1) \bigg( (x - t) \mathcal{F} - \displaystyle\frac{\mathcal{F}}{\mathcal{F} + 1} \mathcal{Q}_{0; 1} (\mathcal{F}) \bigg) + \mathcal{O} \big( |\alpha - 1| |\widetilde{\mathcal{F}} - \mathcal{F}| + |\widetilde{\mathcal{F}} - \mathcal{F}|^3 \big) \\
		& = \displaystyle\frac{(1 - \alpha) t\mathcal{F}}{\mathcal{F} + 1} + \mathcal{O} \big( |\alpha - 1| |\widetilde{\mathcal{F}} - \mathcal{F}| + |\widetilde{\mathcal{F}} - \mathcal{F}|^3 \big),
		\end{aligned} 
	\end{flalign}
	
	\noindent where to deduce the last equality we used the first statement of \eqref{q01f0alphaf}. 
	
	Now, if $d < \delta$ for some $\delta = \delta (\mathfrak{P}) > 0$, then $\mathcal{Q}_{0; 1}'' (\mathcal{F})$ is bounded away from $0$, since $(x, t)$ is bounded away from a cusp of $\mathfrak{A}_1$; so, \eqref{qffestimate} implies $|\widetilde{\mathcal{F}} - \mathcal{F}| = \mathcal{O} \big( |\alpha - 1|^{1/2} \big)$. If instead $d \geq \delta$, then $\Imaginary \mathcal{Q}_{0; 1}' (\mathcal{F})$ is bounded away from $0$, which implies that $|\widetilde{\mathcal{F}} - \mathcal{F}| = \mathcal{O} \big( |\alpha - 1| \big)$. In either case, this gives $|\widetilde{\mathcal{F}} - \mathcal{F}| = \mathcal{O} \big( |\alpha - 1|^{1/2} \big)$, which verifies the second statement of the lemma.
	
	Next, observe from the first statement of \eqref{ftxderivativeq} that 
	\begin{flalign}
		\label{qf2} 
		\frac{1}{\mathcal{Q}_{0; 1}' (\mathcal{F}) - x + t} = \displaystyle\frac{\partial_x \mathcal{F}}{\mathcal{F} + 1}.
	\end{flalign}	

	By the square root decay of $\mathcal{F}_t (x)$ as $(x, t)$ nears $\mathfrak{A}_1$ (see \Cref{derivativeh}  below), there exist constants $c = c(\mathfrak{P}) > 0$ and $C = C(\mathfrak{P}) > 1$ such that $c d^{-1/2} < \partial_x \mathcal{F}_t (x; 1) < C d^{-1/2}$. After decreasing $c$ and increasing $C$ if necessary, \eqref{qf2} then implies
	\begin{flalign*} 
		c d^{1/2} < \mathcal{Q}_{0; 1}' (\mathcal{F}) - x + t < C d^{1/2}.
	\end{flalign*} 
	
	\noindent This, together with \eqref{qffestimate}; the bound $|\widetilde{\mathcal{F}} - \mathcal{F}| = \mathcal{O} \big( |\alpha - 1|^{1/2} \big)$; and the fact that $\mathcal{Q}_{0; 1}'' (\mathcal{F})$ is bounded away from $0$ for $(x, t)$ sufficiently close to $\mathfrak{A}_1$ quicky gives for $d \geq |\alpha - 1|$ that
	\begin{flalign*}
		\frac{\widetilde{\mathcal{F}} - \mathcal{F}}{\mathcal F} = t (1 - \alpha) \displaystyle\frac{\partial_x \mathcal{F}}{(\mathcal{F} + 1)^2} \Big( 1 + \mathcal{O} \big( d^{-1} |\alpha - 1| + |\alpha - 1|^{1 / 2} \big) \Big).
	\end{flalign*}

	\noindent It follows that
	\begin{flalign*}
		\log \bigg( \displaystyle\frac{\widetilde{\mathcal{F}}}{\mathcal{F}} \bigg) & =  \displaystyle\frac{\widetilde{\mathcal{F}}}{\mathcal{F}} - 1 + \mathcal{O} \big( (\widetilde{\mathcal{F}} - \mathcal{F})^2 \big) \\
		& = t (1 - \alpha) \Bigg( \displaystyle\frac{\partial_x \mathcal{F}_t (x; 1)}{\big(\mathcal{F}_t (x; 1) + 1 \big)^2} \Bigg) \Big( 1 + \mathcal{O} \big( d^{-1} |\alpha - 1| + |\alpha - 1|^{1/2} \big) \Big),
	\end{flalign*}

	\noindent which implies the first statement of the lemma.
	\end{proof}

Next we show that if no cusp or tangency location of $\mathcal{\fA}_1$ is of the form $(x,t)$ with $x\in \mathbb R$, then the time slices $\big\{ x: (x, t) \in \mathfrak{L}_1 \big\}$ and $\big\{ x: (x, t) \in \mathfrak{L}_\alpha \big\}$ contain the same number of intervals; we also estimate the distance between their endpoints. 

	\begin{lem}
		
		\label{x0fq}
		
		The following holds for $|\alpha - 1|$ sufficiently small. 
		\begin{enumerate}
		\item
		Let $(x_0, t) \in \mathfrak{A}_1$ denote a right (or left) boundary point of $\mathfrak{A}_1$, bounded away from a cusp or horizontal tangency location of $\mathfrak{A}_1$. Then, there exists $(x_0', t) \in \mathfrak{A}_{\alpha}$, which is a right (or left, respectively) boundary point of $\mathfrak{A}_{\alpha}$ so that 
		\begin{flalign}\label{e:x0change}
			x_0' - x_0 = t (\alpha - 1) \displaystyle\frac{\mathcal{F}_t (x_0)}{\big( \mathcal{F}_t (x_0) + 1\big)^2} + \mathcal{O} \big( |\alpha - 1|^2 \big).
		\end{flalign}
		\item
			 Fix $t \in \mathbb{R}$; suppose that $\big\{ x: (x, t) \in \mathfrak{L}_1 \big\}$ is a union of $k \geq 1$ disjoint open intervals $(x_1, x_1') \cup (x_2, x_2') \cup \cdots \cup (x_k, x_k')$, and that it is bounded away from a cusp or horizontal tangency location of $\mathfrak{A}_1$. 
		  Then, $\big\{ x : (x, t) \in \mathfrak{L}_\alpha\big\}$ is also a union of $k$ disjoint open intervals $(\widehat{x}_1, \widehat{x}_1') \cup (\widehat{x}_2, \widehat{x}_2') \cup \cdots \cup (\widehat{x}_k, \widehat{x}_k')$. Moreover, for any index $j \in [1, k]$, we have
		\begin{flalign}\begin{split}
			\label{xjdiff}
		&\widehat{x}_j - x_j = t (\alpha - 1) \displaystyle\frac{\mathcal{F}_t (x_j)}{\big( \mathcal{F}_t (x_j) + 1\big)^2} + \mathcal{O} \big( |\alpha - 1|^2 \big)\\
		& \widehat{x}_j' - x_j' = t (\alpha - 1) \displaystyle\frac{\mathcal{F}_t (x'_j)}{\big( \mathcal{F}_t (x'_j) + 1\big)^2} + \mathcal{O} \big( |\alpha - 1|^2 \big), 
		\end{split}\end{flalign}
\end{enumerate}

	\end{lem}

	\begin{proof} 
	For the first statement, we will only prove the case that $(x_0,t)$ is a left boundary point of $\fA_1$, as the proof of the case that it is a right boundary point is entirely analogous.
		 Observe by the second and third parts of \Cref{pa1} that $(x, t) \in \mathfrak{A}_{\alpha}$ if and only if 
		\begin{flalign*}
			\mathcal{Q}_{0; \alpha} \big( \mathcal{F}_t (x; \alpha) \big) = x \big( \mathcal{F}_t (x; \alpha) + 1 \big) - t \mathcal{F}_t (x; \alpha); \qquad \mathcal{Q}_{0; \alpha}' \big( \mathcal{F}_t (x; \alpha) \big) = x - t,
		\end{flalign*}
	
		\noindent that is, if and only if 
		\begin{flalign}
			\label{q0alphaa} 
			\mathcal{Q}_{0; \alpha} \big( \mathcal{F}_t (x; \alpha) \big) = \big( \mathcal{F}_t (x; \alpha) + 1 \big) \mathcal{Q}_{0; \alpha}' \big( \mathcal{F}_t (x; \alpha) \big) + t; \qquad x = \mathcal{Q}_{0; \alpha}' \big( \mathcal{F}_t (x; \alpha) \big) + t.
		\end{flalign}
		
		\noindent In particular, \eqref{q0alphaa} holds if $(x, \alpha) = (x_0, 1)$. Let us produce a solution to \eqref{q0alphaa} for $\alpha$ close to $1$ by perturbing around $x_0$. To that end, abbreviate $\mathcal{F} = \mathcal{F} (x_0; 1)$; let $x$ be close to $x_0$; and abbreviate $\mathcal{F}' = \mathcal{F} (x; \alpha)$. Then recall from the first statement of \eqref{q0alphaf} and \eqref{q0alphaz} that for $|\alpha - 1|$ sufficiently small and $\big( u, x -tu/(u + 1) \big)$ in a neighborhood of $( \mathcal{F}, x - t \mathcal{F}/(\mathcal{F} + 1))$, we have
	\begin{flalign*}
		\mathcal{Q}_{0; \alpha} (u) = \displaystyle\frac{u + 1}{\alpha^{-1} u + 1} \mathcal{Q}_{0; 1} (\alpha^{-1} u) = \alpha \mathcal{Q}_{0; 1} (\alpha^{-1} u) + \displaystyle\frac{\alpha (1 - \alpha)}{u + \alpha} \mathcal{Q}_{0; 1} (\alpha^{-1} u),
	\end{flalign*} 

	\noindent so that
	\begin{flalign}
		\label{q0alphaderivative}
		\mathcal{Q}_{0; \alpha}' (u) = \mathcal{Q}_{0; 1}' (\alpha^{-1} u) + \displaystyle\frac{\alpha - 1}{u + \alpha} \bigg( \displaystyle\frac{\alpha}{u + \alpha} \mathcal{Q}_{0; 1} (\alpha^{-1} u) - \mathcal{Q}_{0; 1}' (\alpha^{-1} u) \bigg).
	\end{flalign}
	
	\noindent Thus, denoting $\widetilde{\mathcal{F}} = \alpha^{-1} \mathcal{F}'$, the first statement of \eqref{q0alphaa} holds if and only if 
	\begin{flalign*}
		\displaystyle\frac{\alpha \widetilde{\mathcal{F}} + 1}{\widetilde{\mathcal{F}} + 1} \mathcal{Q}_{0; 1} (\widetilde{\mathcal{F}}) = (\alpha \widetilde{\mathcal{F}} + 1) \Bigg( \mathcal{Q}_{0; 1}' (\widetilde{\mathcal{F}}) + \displaystyle\frac{\alpha - 1}{\alpha \widetilde{\mathcal{F}} + \alpha} \bigg( \displaystyle\frac{1}{\widetilde{\mathcal{F}} + 1} \mathcal{Q}_{0; 1} (\widetilde{\mathcal{F}}) - \mathcal{Q}_{0; 1}' (\widetilde{\mathcal{F}}) \bigg) \Bigg) + t.
	\end{flalign*}
	
	\noindent This is equivalent to 
	\begin{flalign*}
		\mathcal{Q}_{0; 1} (\widetilde{\mathcal{F}}) & = (\widetilde{\mathcal{F}} + \alpha^{-1}) \mathcal{Q}_{0; 1}' (\widetilde{\mathcal{F}}) + \displaystyle\frac{\alpha - 1}{\alpha \widetilde{\mathcal{F}} + \alpha} \mathcal{Q}_{0; 1} (\widetilde{\mathcal{F}}) + \displaystyle\frac{t (\widetilde{\mathcal{F}} + 1)}{\alpha \widetilde{\mathcal{F}} + 1} \\
		& = (\widetilde{\mathcal{F}} + 1) \mathcal{Q}_{0; 1}' (\widetilde{\mathcal{F}}) + t + (\alpha - 1) \Bigg( \displaystyle\frac{\mathcal{Q}_{0; 1} (\widetilde{\mathcal{F}})}{\alpha \widetilde{\mathcal{F}} + \alpha} - \displaystyle\frac{t \widetilde{\mathcal{F}}}{\alpha \widetilde{\mathcal{F}} + 1} - \alpha^{-1} \mathcal{Q}_{0; 1}' (\widetilde{\mathcal{F}}) \Bigg),
	\end{flalign*}
	
	\noindent which upon subtracting from the $(x, \alpha) = (x_0, 1)$ case of \eqref{q0alphaa} is true if and only if  
	\begin{flalign}
		\label{q01fq0alphaf2}
		\begin{aligned}
		\mathcal{Q}_{0; 1} (\mathcal{F}) - (\mathcal{F} + 1) \mathcal{Q}_{0; 1}' (\mathcal{F}) - \mathcal{Q}_{0; 1} ( & \widetilde{\mathcal{F}}) - (\widetilde{\mathcal{F}} + 1) \mathcal{Q}_{0; 1}' (\widetilde{\mathcal{F}}) \\
		& = (1 - \alpha) \Bigg( \displaystyle\frac{\mathcal{Q}_{0; 1} (\widetilde{\mathcal{F}})}{\alpha \widetilde{\mathcal{F}} + \alpha} - \displaystyle\frac{t \widetilde{\mathcal{F}}}{\alpha \widetilde{\mathcal{F}} + 1} - \alpha^{-1} \mathcal{Q}_{0; 1}' (\widetilde{\mathcal{F}}) \Bigg). 
		\end{aligned} 
	\end{flalign}
	
	Now observe that the derivative of $\mathcal{Q}_{0; 1} (z) - (z + 1) \mathcal{Q}_{0; 1}' (z)$ is $-(z + 1) \mathcal{Q}_{0; 1}'' (z)$, which is bounded away from $0$ for $z = \mathcal{F}$, since $(x, t)$ is bounded away from a horizontal tangency location or singularity of $\mathfrak{A}$. Hence, the implicit function theorem implies that, for $|\alpha - 1|$ sufficiently small, \eqref{q01fq0alphaf2} admits a solution with $|\widetilde{\mathcal{F}} - \mathcal{F}| = \mathcal{O} \big( |\alpha - 1| \big)$. Taylor expanding then gives
	\begin{flalign}
		\label{falphaf1} 
		\begin{aligned} 
		(\widetilde{\mathcal{F}} - \mathcal{F}) (\mathcal{F} + 1) \mathcal{Q}_{0; 1}'' (\mathcal{F}) & = \displaystyle\frac{1 - \alpha}{\mathcal{F} + 1} \big( \mathcal{Q}_{0; 1} (\mathcal{F}) - (\mathcal{F} + 1) \mathcal{Q}_{0; 1}' (\mathcal{F}) - t\mathcal{F} \big) + \mathcal{O} \big( |\alpha - 1|^2 \big) \\
		& = \displaystyle\frac{t (\alpha - 1) (\mathcal{F} - 1)}{\mathcal{F} + 1} + \mathcal{O} \big( |\alpha - 1|^2 \big),
		\end{aligned} 
	\end{flalign}
	
	\noindent where in the last equality we used the first statement of \eqref{q0alphaa}. Inserting this into the second statement of \eqref{q0alphaa} and applying \eqref{q0alphaderivative} yields
	\begin{flalign*}
		x - x_0 & = \mathcal{Q}_{0; \alpha}' (\mathcal{F}') - \mathcal{Q}_{0; 1}' (\mathcal{F}) \\
		& = \mathcal{Q}_{0; 1}' (\widetilde{\mathcal{F}}) - \mathcal{Q}_{0; 1}' (\mathcal{F}) + \displaystyle\frac{\alpha - 1}{\alpha \widetilde{\mathcal{F}} + \alpha} \bigg( \displaystyle\frac{\mathcal{Q}_{0; 1} (\widetilde{\mathcal{F}})}{\widetilde{\mathcal{F}} + 1} - \mathcal{Q}_{0; 1}' (\widetilde{\mathcal{F}}) \bigg) \\
		& = (\widetilde{\mathcal{F}} - \mathcal{F}) \mathcal{Q}_{0; 1}'' (\mathcal{F}) + \displaystyle\frac{\alpha - 1}{\mathcal{F} + 1} \bigg( \displaystyle\frac{\mathcal{Q}_{0; 1} (\mathcal{F})}{\mathcal{F} + 1} - \mathcal{Q}_{0; 1}' (\mathcal{F}) \bigg) + \mathcal{O} \big( |\alpha - 1|^2 \big) \\
		& = \displaystyle\frac{t (\alpha - 1) \mathcal{F}}{(\mathcal{F} + 1)^2} + \mathcal{O} \big( |\alpha - 1|^2 \big),
	\end{flalign*}

	\noindent where in the last equality we applied \eqref{falphaf1} and first statement of \eqref{q0alphaa}. 
	Setting $x_0' = x$ implies the first statement of \eqref{x0fq}.
	
	The second statement of \Cref{x0fq} that $\big\{ x : (x, y) \in \mathfrak{L}_\alpha\big\}$ is a union of $k$ disjoint open intervals follows from the first statement and the fact that $\mathcal L_\alpha$ is continuous in $\alpha$. The estimates \eqref{xjdiff} follow from \eqref{e:x0change} and the assumption that $\big\{ x: (x, t) \in \mathfrak{L}_1 \big\}$ is bounded away from a cusp or horizontal tangency location of $\mathfrak{A}_1$.
	\end{proof}

	\subsection{Proof of \Cref{omegaxizeta}}
	
	\label{ProofFunction} 
	
	In this section we establish \Cref{omegaxizeta}; throughout, we recall the notation from that proposition and from \Cref{Functionfalpha}. In particular, $\mathfrak{t}_0$ and $\alpha_0$ are given by \eqref{t0alpha}. We first define a height function $H_{\alpha}$ from $\mathcal{F}_t (x; \alpha)$ for $\alpha \in \mathbb{R}$ with $|\alpha - 1|$ sufficiently small, whose complex slope is given by $\mathcal{F}_{t - \mathfrak{t}_0} (x; \alpha)$. The eventual tilted height function $\widehat{H}^* : \overline{\mathfrak{D}} \rightarrow \mathbb{R}$ given by \Cref{omegaxizeta} will be defined by setting $\widehat{H}^* = H_{\alpha_0}$. 
	
	To that end, first set $H_{\alpha} (v_0) = H^* (v_0)$ for $v_0 = \big( \mathfrak{a} (\mathfrak{t}_1), \mathfrak{t}_1 \big)$ equal to the southwest corner of $\partial \mathfrak{D}$. To define $H_{\alpha}$ on the remainder of $\overline{\mathfrak{D}}$, it suffices to fix its gradient almost everywhere. Similarly to in \eqref{fh}, for any $u = (x, s) \in \overline{\mathfrak{D}}$ such that $(x, s - \mathfrak{t}_0) \in \mathfrak{L}_{\alpha}$, define $\nabla H_{\alpha} (u) = \big( \partial_x H_{\alpha} (u), \partial_y H_{\alpha} (u) \big)$ by setting
		\begin{flalign}
			\label{halphau}
			\partial_x H_{\alpha} (u) = - \pi^{-1} \arg^* \mathcal{F}_{s - \mathfrak{t}_0} (x; \alpha); \qquad \partial_y H_{\alpha} (u) = \pi^{-1} \arg^* \big( \mathcal{F}_{s - \mathfrak{t}_0} (x; \alpha) + 1 \big),
		\end{flalign}
	
		\noindent where we observe that we are implementing the same shift by $\mathfrak{t}_0$ as in \eqref{fsx1}. 
		
		If instead $u = (x, s) \in \overline{\mathfrak{D}}$ satisfies $(x, s - \mathfrak{t}_0) \notin \mathfrak{L}_{\alpha}$, then define $\nabla H_{\alpha} (u)$ as follows. 
		\begin{enumerate} 
			\item For $\alpha = 1$, set $\nabla H_{\alpha} (u) = \nabla H^* (u) \in \big\{ (0, 0), (1, 0), (1, -1) \big\}$.
			\item For $\alpha \ne 1$ with $|\alpha - 1|$ sufficiently small, define $\nabla H_{\alpha} (u)$ so that it remains among the three frozen slopes $\big\{ (0, 0), (1, 0), (1, -1) \big\}$, and is continuous in $\alpha$ at almost every $u \in \overline{\mathfrak{D}}$.
		\end{enumerate} 
	
		For $|\alpha - 1|$ sufficiently small, the existence of $\nabla H_{\alpha} (u)$ satisfying these properties follows from the fact that $\mathfrak{A}_{\alpha} = \partial \mathfrak{L}_{\alpha}$ deforms continuously in $\alpha$ (by \Cref{x0fq}); alternatively, it can be viewed as a consequence of \cite[Remark 8.6]{DMCS}. This determines $\nabla H_{\alpha} (u)$ for almost all $u \in \overline{\mathfrak{D}}$, fixing $H_{\alpha} : \overline{\mathfrak{D}} \rightarrow \mathbb{R}$.
		
		We then define $\widehat{H}^* : \overline{\mathfrak{D}} \rightarrow \mathbb{R}$ and $\widehat{h} : \partial \mathfrak{D} \rightarrow \mathbb{R}$ by setting 
		\begin{flalign*} 
			\widehat{H}^* = H_{\alpha_0}; \qquad \widehat{h} = \widehat{H}^* |_{\partial \mathfrak{D}}.
		\end{flalign*} 
		
		\noindent By \Cref{fderivativeq}, the complex slope $\mathcal{F}_t (x; \alpha_0)$ associated with $\widehat{H}^*$ satisfies the complex Burgers equation \eqref{ftx}. 
		We will later use \cite[Remark 8.6]{DMCS} or \cite[Theorem 8.3]{DMCS} to show that $\widehat{H}^* \in \Adm (\mathfrak{D}; \widehat{h})$ is a maximizer of $\mathcal{E}$.

				 Recalling the liquid region $\widehat{\mathfrak{L}}$ associated with $\widehat{H}^*$ from \Cref{omegaxizeta}, observe that 
		\begin{flalign}
			\label{ll} 
			\widehat{\mathfrak{L}} = \big\{ u \in \overline{\mathfrak{D}} : u - (0, \mathfrak{t}_0) \in \mathfrak{L}_{\alpha} \big\}.
		\end{flalign} 
	
		Given these points, we can now establish \Cref{omegaxizeta}. In the below, we will frequently use the identity  
		\begin{flalign} 
			\label{omegayalphat} 
			(\alpha_0 - 1) (y - \mathfrak{t}_0) = \omega (y), \qquad \text{for any $y \in \mathbb{R}$},
		\end{flalign}
	
		\noindent which follows from \eqref{omegay} and \eqref{t0alpha}.
		
		\begin{proof}[Proof of \Cref{omegaxizeta}]
			
			We will verify that $\widehat{H}^*$ satisfies the second, third, first, and fourth statement of the proposition, in this order. Throughout this proof, we fix $y \in \{ \mathfrak{t}_1, \mathfrak{s}, \mathfrak{t}_2 \}$.

		We first establish the second statement of the proposition. To show \eqref{hxyhxy2}, observe from the $(t, \alpha) = (y - \mathfrak{t}_0, \alpha)$ case of the second statement of \Cref{x0fq} that
		the time slice $\{x:(x,y-\mathfrak t_0)\in \mathfrak L_\alpha\}$ 
		is also a union of $k$ disjoint open intervals $(\widehat{x}_1, \widehat{x}_1') \cup (\widehat{x}_2, \widehat{x}_2') \cup \cdots \cup (\widehat{x}_k, \widehat{x}_k')$. Moreover, for any index $1\leq i\leq k$, we have
		\begin{flalign}
			\label{xialphaxi1} 
			\begin{aligned}
			\widehat x_i  - x_i & = (y - \mathfrak{t}_0) (\alpha - 1) \displaystyle\frac{\mathcal{F}_{y - \mathfrak{t}_0} (x_i; 1)}{\big( \mathcal{F}_{y - \mathfrak{t}_0} (x_i; 1) + 1 \big)^2} + \mathcal{O} \big( |\alpha - 1|^2 \big) \\
			& = (y - \mathfrak{t}_0) (\alpha - 1) \displaystyle\frac{f_y (x_i)}{\big( f_y (x_i) + 1 \big)^2} + \mathcal{O} \big( |\alpha - 1|^2 \big) = (y - \mathfrak{t}_0) (\alpha - 1) \Upsilon_{y} (x_i) + \mathcal{O} \big( |\alpha - 1|^2 \big),
			\end{aligned} 
		\end{flalign}
		
		\noindent where to deduce the second equality we used \eqref{fsx1} and to deduce the third we used the definition \eqref{omegasx} of $\Upsilon_y$. Implementing similar reasoning for the difference of $\widehat x_i'-x_i'$; setting $\alpha = \alpha_0$ in \eqref{xialphaxi1}; applying \eqref{omegayalphat} and \eqref{ll}; and using the fact that $|\alpha_0 - 1| = \mathcal{O} \big( |\xi_1| + |\xi_2| \big)$, we deduce \eqref{hxyhxy2}. 
		
		We next show $\widehat{H}^* (u) = H^* (u)$ for $u \in \mathfrak{D}$ with $u \notin \mathfrak{L} \cup \widehat{\mathfrak{L}}$. We will more generally prove $H_{\alpha} (u) = H^* (u)$ for $(x, y) = u \in \mathfrak{D}$ satisfying $(x, y - \mathfrak{t}_0) \notin \mathfrak{L}_1 \cup \mathfrak{L}_{\alpha}$, for $|\alpha - 1|$ sufficiently small. From this, $\widehat{H}^* (u) = H^* (u)$ would follow by taking $\alpha = \alpha_0$.
		
		First, we verify that $H_{\alpha} (u)$ is constant for $u \in \partial_{\ea} (\mathfrak{D})$ and for $u \in \partial_{\we} (\mathfrak{D})$; we only consider the case $u \in \partial_{\ea} (\mathfrak{D})$ as the proof is entirely analogous if $u \in \partial_{\we} (\mathfrak{D})$. By \Cref{assumptiond}, $\partial_{\ea} (\mathfrak{D})$ is either disjoint with $\overline{\mathfrak{L}}$ or is a subset of $\partial \mathfrak{P}$. In the former case, $\partial_{\ea} (\mathfrak{D})$ is bounded away from $\overline{\mathfrak{L}}$, so the first statement in \Cref{x0fq}  implies for $|\alpha - 1|$ sufficiently small that $\partial_{\ea} (\mathfrak{D})$ is disjoint from $\mathfrak{L}_{\alpha} + (0, \mathfrak{t}_0)$. Since $H_{\alpha}$ is Lipschitz; $\nabla H_1 (u) = \nabla H^* (u)$; the gradient $\nabla H_{\alpha} (u) \big \{ (0, 0), (1, 0), (1, -1) \}$ and is continuous in $\alpha$ for almost every $u \notin \mathfrak{L}_{\alpha} + (0, \mathfrak{t}_0)$; and $H^*$ is constant along $\partial_{\ea} (\mathfrak{D})$, it follows that $H_{\alpha}$ is also constant along $\partial_{\ea} (\mathfrak{D})$. 
		
		If instead $\partial_{\ea} (\mathfrak{D})$ and $\overline{\mathfrak{L}}$ are not disjoint, then $\partial_{\ea} (\mathfrak{D}) \subset \partial \mathfrak{P}$, so $\partial_{\ea} (\mathfrak{D})$ is tangent to $\overline{\mathfrak{L}}$ at some point $(x_0, y_0)$. In particular, the shift $\partial_{\ea} (\mathfrak{D}) - (0, \mathfrak{t}_0)$ is tangent to $\mathfrak{A}_1$ at $(x_0, y_0 - \mathfrak{t}_0)$. Then, it is quickly verified from \eqref{q0alphaz} and \eqref{q0alphaf} that $\partial_{\ea} (\mathfrak{D}) - (0, \mathfrak{t}_0)$ remains tangent to $\mathfrak{A}_{\alpha}$. In particular, $\partial_{\ea} (\mathfrak{D})$ is disjoint from the open set $\mathfrak{L}_{\alpha} + (0, \mathfrak{t}_0)$. Then following the same reasoning as above gives that $H_{\alpha}$ is constant along $\partial_{\ea} (\mathfrak{D})$.

		This verifies that $H_{\alpha}$ and $H^*$ are constant along $\partial_{\ea} (\mathfrak{D})$; by similar reasoning, they are also constant along $\partial_{\we} (\mathfrak{D})$. This, together with the fact that they coincide at the southwest vertex of $\overline{\mathfrak{D}}$, implies that $H_{\alpha} = H^*$ along $\partial_{\we} (\mathfrak{D})$. Since the definition of $\nabla H_{\alpha}$ implies that $\nabla H_{\alpha} (x, y) = \nabla H^* (x, y)$ whenever $(x, y - \mathfrak{t}_0) \notin \mathfrak{L}_1 \cup \mathfrak{L}_{\alpha}$, to show $H_{\alpha} (x, y) = H^* (x, y)$ for $(x, y - \mathfrak{t}_0) \notin \mathfrak{L}_1 \cup \mathfrak{L}_{\alpha}$, it suffices to show that height differences are ``conserved along liquid regions'' of $\partial_{\north} (\mathfrak{D})$ and $\partial_{\so} (\mathfrak{D})$. More specifically, let us fix a left and right endpoint $\big( E_1 (\alpha), y - \mathfrak{t}_0 \big), \big( E_2 (\alpha), y - \mathfrak{t}_0 \big) \in \mathfrak{A}_{\alpha}$, respectively, such that $E_j (\alpha)$ is continuous in $\alpha$ for each $j \in \{ 1 , 2\}$ and such that $\big( x, y - \mathfrak{t}_0 \big) \in \mathfrak{L}_{\alpha}$ for each $E_1 (\alpha) < x < E_2 (\alpha)$. Abbreviating $E_1 = E_1 (1)$ and $E_2 = E_2 (1)$, we must show that 
		\begin{flalign}
			\label{halphae1e2} 
			\begin{aligned}
			& \Big( H_{\alpha} \big( E_2 (\alpha), y \big) - H_{\alpha} \big( E_1 (\alpha), y \big) \Big) - \big( H^* (E_2, y) - H^* (E_1, y) \big) \\
			& \qquad \qquad \qquad = \big( E_2 (\alpha) - E_2 \big) \partial_x H^* (E_2, y) - \big( E_1 (\alpha) - E_1 \big) \partial_x H^* (E_1, y). 
			\end{aligned} 
		\end{flalign}
		
		\noindent Indeed, the equality $H_{\alpha} (u) = H^* (u)$ for $(x, y) = u \in \overline{\mathfrak{D}}$ satisfying $(x, y - \mathfrak{t}_0) \notin \mathfrak{L}_1 \cup \mathfrak{L}_{\alpha}$ would then follow from the fact that $\nabla H_{\alpha} (u) = \nabla H^* (u)$ for all such $u$ and the fact that $H_{\alpha}$ and $H^*$ coincide at the northwest and southwest corners of $\overline{\mathfrak{D}}$. 
		
		Observe that \eqref{halphae1e2} holds at $\alpha = 1$, since $H_1 = H^*$. Thus, it suffices to show that the derivatives with respect to $\alpha$ of both sides of \eqref{halphae1e2} are equal, namely, 
		\begin{flalign}
			\label{halphae1e2derivative} 
			\partial_{\alpha} \Big( H_{\alpha} \big( E_2 (\alpha), y \big) - H_{\alpha} \big( E_1 (\alpha), y \big) \Big) = \partial_x H^* (E_2, y) \partial_{\alpha} E_2 (\alpha) - \partial_x H^* (E_1, y) \partial_{\alpha} E_1 (\alpha).
		\end{flalign}
	
		\noindent To do this, observe from \eqref{halphau} that the left side of \eqref{halphae1e2derivative} is given by
		\begin{flalign*} 
			\partial_{\alpha} \Big( H_{\alpha} \big( E_2 (\alpha), y \big) - H_{\alpha} \big( E_1 (\alpha), y \big) \Big) & = \partial_{\alpha} \displaystyle\int_{E_1 (\alpha)}^{E_2 (\alpha)} \partial_x H_{\alpha} (x, y) \mathrm{d} x \\
			 & = - \pi^{-1} \partial_{\alpha} \bigg( \displaystyle\int_{E_1 (\alpha)}^{E_2 (\alpha)} \Imaginary \log \mathcal{F}_{y - \mathfrak{t}_0} (x; \alpha) \mathrm{d} x \bigg).
		\end{flalign*} 
	
		\noindent Thus,
		\begin{flalign}
			\label{derivativehualpha}
			\begin{aligned} 
			&\partial_{\alpha} \Big( H_{\alpha} \big( E_2 (\alpha), y \big) - H_{\alpha} \big( E_1 (\alpha), y \big) \Big)  = - \pi^{-1} \Imaginary \displaystyle\int_{E_1 (\alpha)}^{E_2 (\alpha)}  \partial_{\alpha}  \log \mathcal{F}_{y - \mathfrak{t}_0} (x; \alpha)  \mathrm{d} x\\
			&+ \pi^{-1} \Imaginary  \Big( \log \mathcal{F}_{y - \mathfrak{t}_0} \big( E_1 (\alpha); \alpha \big) \Big) \partial_{\alpha} E_1 (\alpha)  - \pi^{-1} \Imaginary \Big( \log \mathcal{F}_{y - \mathfrak{t}_0} \big( E_2 (\alpha); \alpha \big) \Big) \partial_{\alpha} E_2 (\alpha). 
			\end{aligned} 
		\end{flalign} 
	
		\noindent Letting $|\alpha - 1|$ tend to $0$ in the first statement of \Cref{ftalphaft1} gives
		\begin{flalign*}
			\partial_{\alpha} \log \mathcal{F}_{y - \mathfrak{t}_0} (x; \alpha) = (y - \mathfrak{t}_0) \partial_x \bigg( \displaystyle\frac{\mathcal{F}_t (x; \alpha)}{\mathcal{F}_t (x; \alpha) + 1} \bigg) + 1.
		\end{flalign*}
	
		\noindent Inserting this into \eqref{derivativehualpha}; using the fact that $\mathcal{F}_{y - \mathfrak{t}_0} \big( E_i (\alpha); \alpha \big) \in \mathbb{R}$; using the equality (following from \eqref{halphau} and the continuity of $\nabla H_{\alpha}$ almost everywhere in $\mathfrak{L}_{\alpha} + (0, \mathfrak{t}_0)$) 
		\begin{flalign*} 
			\pi^{-1} \Imaginary \log \mathcal{F}_{y - \mathfrak{t}_0} \big( E_i (\alpha); \alpha \big) = - \partial_x H_{\alpha} \big( E_i (\alpha), y \big) = - \partial_x H^* (E_i, y);
		\end{flalign*} 
	
		\noindent and using the fact that 
		\begin{flalign*}
			\Imaginary \displaystyle\int_{E_1 (\alpha)}^{E_2 (\alpha)} & \Bigg( (y - \mathfrak{t}_0) \partial_x \bigg( \displaystyle\frac{\mathcal{F}_{y - \mathfrak{t}_0} (x; \alpha)}{\mathcal{F}_{y - \mathfrak{t}_0} (x; \alpha) + 1} \bigg) + 1 \Bigg) \mathrm{d}x \\
			&  = (y - \mathfrak{t}_0) \Imaginary \Bigg( \displaystyle\frac{\mathcal{F}_{y - \mathfrak{t}_0} \big( E_2 (\alpha) \big)}{\mathcal{F}_{y - \mathfrak{t}_0} \big( E_2 (\alpha) \big) + 1} - \displaystyle\frac{\mathcal{F}_{y - \mathfrak{t}_0} \big( E_1 (\alpha) \big)}{\mathcal{F}_{y - \mathfrak{t}_0} \big( E_1 (\alpha) \big) + 1} + E_2 (\alpha) - E_1 (\alpha) \Bigg) = 0,
		\end{flalign*}
	
		\noindent then yields \eqref{halphae1e2derivative}. As mentioned above, this implies \eqref{halphae1e2} and hence that $\widehat{H}^* (x, y) = H^* (x, y)$ for $(x, y) \in \overline{\mathfrak{D}}$ with $(x, y) \notin \mathfrak{L} \cup \widehat{\mathfrak{L}}$. This verifies that $\widehat{H}^*$ satisfies the second statement of the proposition. 
		
		To show that $\widehat{H}^*$ satisfies the third statement, we must verify \eqref{hxdelta} for sufficiently small $\Delta$. Let us assume in what follows that $(x, y)$ and $(\widehat{x}, y)$ are right endpoints of $\mathfrak{L}$ and $\widehat{\mathfrak{L}}$, respectively, as the proof in the case when they are left endpoints is entirely analogous. Then we may assume that $\Delta \leq 0$, for otherwise the fact that $\partial_x H^* (x, y) = \partial_x \widehat{H}^* (\widehat{x}, y)$ (which holds by the almost everywhere continuity of $\nabla H_{\alpha}$ in $\alpha$, together with the facts that $(x, y) \notin \mathfrak{L}$ and $(\widehat{x}, y)	\notin \widehat{\mathfrak{L}}$) implies $H^* (x + \Delta, y) - H^* (x, y) = \Delta \partial_x H^* (x, y) = \Delta \partial_x \widehat{H}^* (\widehat{x}, y) = \widehat{H}^* (\widehat{x} + \Delta, y) - \widehat{H}^* (\widehat{x}, y)$. 
				
		So we must show \eqref{hxdelta} for $\Delta \leq 0$. Since it holds at $\Delta = 0$, and since $|\alpha - 1| = \mathcal{O} \big( |\xi_1| + |\xi_2| \big)$, it suffices to show that 
		\begin{flalign*}
		 \partial_x \widehat{H}^* (\widehat{x} + \Delta, y) = \partial_x H^* (x + \Delta, y) + \mathcal{O} \big( |\Delta|^{1/2} |\alpha_0 - 1| + |\Delta| \big),
		\end{flalign*}  
	
		\noindent for sufficiently small $\Delta$. In view of \eqref{halphau}, this is equivalent to showing that
		\begin{flalign}
			\label{fyt01} 
			\begin{aligned} 
			\bigg| \Imaginary \displaystyle\frac{\mathcal{F}_{y - \mathfrak{t}_0} (\widehat{x} + \Delta; \alpha_0)}{ \mathcal{F}_{y - \mathfrak{t}_0} (\widehat{x}; \alpha_0)}  -  \Imaginary \displaystyle\frac{\mathcal{F}_{y - \mathfrak{t}_0} (x + \Delta; 1)}{\mathcal{F}_{y - \mathfrak{t}_0} (x; 1)} \bigg| = \mathcal{O} \big( |\Delta|^{1/2} |\alpha_0 - 1| + |\Delta| \big),
			\end{aligned} 
		\end{flalign}
		
		\noindent To that end, observe from \eqref{ff0} that 
		\begin{flalign}
			\label{fyt02} 
			\begin{aligned} 
			 \mathcal{F}_{y - \mathfrak{t}_0} (x + \Delta; 1) - \mathcal{F}_{y - \mathfrak{t}_0} (x; 1)  & = \bigg( \displaystyle\frac{2 \big( \mathcal{F}_{y - \mathfrak{t}_0} (x; 1) + 1\big)}{\mathcal{Q}_{0; 1}'' \big( \mathcal{F}_{y - \mathfrak{t}_0} (x; 1) \big)} \bigg)^{1/2} |\Delta|^{1/2} + \mathcal{O} \big( |\Delta|^{3/2} \big); \\
			 \mathcal{F}_{y - \mathfrak{t}_0} (\widehat{x} + \Delta; \alpha_0) - \mathcal{F}_{y - \mathfrak{t}_0} (\widehat{x}; \alpha_0) & =  \bigg( \displaystyle\frac{2 \big( \mathcal{F}_{y - \mathfrak{t}_0} (\widehat{x}; \alpha_0) + 1\big)}{\mathcal{Q}_{0; \alpha_0}'' \big( \mathcal{F}_{y - \mathfrak{t}_0} (\widehat{x}; \alpha_0) \big)} \bigg)^{1/2} |\Delta|^{1/2} + \mathcal{O} \big( |\Delta|^{3/2} \big).
			\end{aligned} 
		\end{flalign}
	
		\noindent  Now observe, for uniformly bounded $u \in \mathbb{C}$, we have
		\begin{flalign}
			\label{fy1} 
			\big| \mathcal{F}_{y - \mathfrak{t}_0} (\widehat{x}; \alpha_0) - \mathcal{F}_{y - \mathfrak{t}_0} (x; 1) \big| = \mathcal{O} \big( |\alpha_0 - 1| \big); \qquad \big| \mathcal{Q}_{0; \alpha_0}'' (u) - \mathcal{Q}_{0; 1}'' (u) \big| = \mathcal{O} \big( |\alpha_0 - 1| \big),
		\end{flalign}
		
		\noindent where the former estimate follows from \eqref{falphaf1} and the latter from \eqref{q0alphaz}. Applying the two approximations in \eqref{fyt02} and \eqref{fy1} (with the fact that $\mathcal{F}_{y - \mathfrak{t}_0} (x, 1)$ and $\mathcal{Q}_{0; 1}'' \big( \mathcal{F}_{y - \mathfrak{t}_0} (x; 1) \big)$ are bounded away from $0$) then yields \eqref{fyt01}. As mentioned above, this implies \eqref{hxyhxy2}, thereby verifying that $\widehat{H}^*$ satisfies the third statement of the proposition. 
		
		We next show that $\widehat{H}^*$ satisfies the first statement of the proposition. To that end, observe from the second and third statements of the proposition (together with \Cref{derivativeh}), it suffices to show for any $(x, y), (x', y) \in \mathfrak{L} \cap \widehat{\mathfrak{L}}$ such that $(z, y) \in \mathfrak{L} \cap \widehat{\mathfrak{L}}$ for each $z \in [x, x']$ that 
		\begin{flalign}
			\label{hxy1} 
		\widehat{H}^* (x', y) - \widehat{H}^* (x, y) = H^* (x', y) - H^* (x, y) + \omega (y) \big( \Omega_y (x') - \Omega_y (x) \big) + \mathcal{O} \big( |\alpha_0 - 1|^{3/2} \big).
		\end{flalign}
	
		\noindent This will follow from a suitable application of \Cref{ftalphaft1}. Indeed, we have
		\begin{flalign*}
			\big( \widehat{H}^* (x', y) - H^* & (x', y) \big) - \big( \widehat{H}^* (x, y) - H^* (x, y) \big) \\
			& = \displaystyle\int_x^{x'} \big( \partial_z \widehat{H}^* (z, y) - \partial_z H^* (z, y) \big) \mathrm{d} z \\ 
			& = \pi^{-1} \displaystyle\int_x^{x'} \Imaginary \big( \log \mathcal{F}_{y - \mathfrak{t}_0} (z; 1) - \log \mathcal{F}_{y - \mathfrak{t}_0} (z; \alpha_0) \big) \mathrm{d} z \\
			& = \pi^{-1} (y - \mathfrak{t}_0) (\alpha_0 - 1) \Imaginary \displaystyle\int_x^{x'} \partial_z \bigg( \displaystyle\frac{\mathcal{F}_{y - \mathfrak{t}_0} (z; 1)}{\mathcal{F}_{y - \mathfrak{t}_0} (z; 1) + 1} \bigg) \mathrm{d} z + \mathcal{O} \big( |\alpha_0 -1|^{3/2} \big) \\
			& = \pi^{-1} \omega (y) \Imaginary \bigg( \displaystyle\frac{f_y ({x'})}{f_y ({x'}) + 1} - \displaystyle\frac{f_y (x)}{f_y (x) + 1} \bigg) + \mathcal{O} \big( |\alpha_0 - 1|^{3/2} \big) \\
			& = \omega (y) \big( \Omega_y ({x'}) - \Omega_y (x) \big) + \mathcal{O} \big( |\alpha_0 - 1|^{3/2} \big),
		\end{flalign*}
	
		\noindent where the second equality follows from \eqref{halphau}; the third from \Cref{ftalphaft1}; the fourth from \eqref{fsx1} and \eqref{omegayalphat}; and the fifth from \eqref{omegasx}. This confirms \eqref{hxy1} and thus the first statement of the proposition.
		
		To establish the fourth statement of the proposition, we must verify that $\mathfrak{D}$ satisfies the five assumptions listed in \Cref{xhh} with respect to $\widehat{H}^*$. We have already verified the first, namely, that $\widehat{H}^*$ is constant along $\partial_{\ea} (\mathfrak{D})$ and $\partial_{\we} (\mathfrak{D})$. That the third holds follows from \Cref{x0fq}, together with the fact that $\mathfrak{D}$ satisfies the third assumption in \Cref{xhh} with respect to $H^*$ (by \Cref{assumptiond}). That the fourth holds follows from the continuity of $\mathcal{F}$ in $\alpha$, together with the fact that $\mathfrak{D}$ satisfies the fourth assumption in \Cref{xhh} with respect to $H^*$. The fifth follows from the fact that $\mathcal{F}_t (x; \alpha_0)$ was defined to satisfy \eqref{qalphaequation}. To verify the second, first suppose that $\partial_{\north} (\mathfrak{D}) \cap \mathfrak{L}$ is connected and non-empty. Then, $\mathfrak{L}$ admits an extension beyond the north boundary of $\mathfrak{D}$, and so $H^*$ admits an extension to some time $\mathfrak{t}' > \mathfrak{t}_2$. That $\widehat{H}^*$ does as well then follows from \Cref{x0fq} and the continuity of $H_{\alpha}$ in $\alpha$. 
		
		If instead $\partial_{\north} (\mathfrak{D})$ is packed with respect to $h$, then we must show that $\widehat{h} (v) = H^* (v)$ for each $v \in \partial_{\north} (\mathfrak{D})$. To that end, observe since the northwest corner $\big( \mathfrak{a} (\mathfrak{t}_2), \mathfrak{t}_2 \big)$ and northeast corner $\big( \mathfrak{b} (\mathfrak{t}_2), \mathfrak{t}_2 \big)$ of $\overline{\mathfrak{D}}$ are outside of (and thus bounded away from) $\overline{\mathfrak{L}}$, it follows from the second statement of the proposition that $H^*$ and $\widehat{H}^*$ coincide on them. Since $\partial_{\north} (\mathfrak{D})$ is packed with respect to $h$, we must have that $\partial_x H^* (v) = 1$ for each $v \in \partial_{\north} (\mathfrak{D})$. Since $\widehat{H}^*$ is $1$-Lipschitz, it follows that $\widehat{H}^* (v) = H^* (v)$ for each $v \in \partial_{\north} (\mathfrak{D})$, implying that $\partial_{\north} (\mathfrak{D})$ is packed with respect to $\widehat{h}$. This shows that $\widehat{H}^*$ satisfies the four properties listed by the proposition.

		It remains to show that $\widehat{H}^* \in \Adm (\mathfrak{D}; \widehat{h})$ is a maximizer of $\mathcal{E}$. This follows from checking the  frozen star ray property in \cite[Definition 8.2]{DMCS}, from which the claim follows by \cite[Remark 8.6]{DMCS} or \cite[Theorem 8.3]{DMCS}. We only briefly outline this verification here, as it is very similar to what was done in \cite{DMCS}.	Item i), ii) and iii) of the frozen star ray property in \cite[Definition 8.2]{DMCS} are quickly verified in our setting, since the domain $\mathfrak{D}$ satisfies \Cref{xhh}. Item iv) in \cite[Definition 8.2]{DMCS} may fail to hold in our setting since, if we parametrize the leftmost and rightmost arctic boundaries of $\widehat H^*$ by $\widehat E_1(t)=\inf \big\{x: (x,t)\in \widehat{\mathfrak L} \big\}$ by $\widehat E_2(t)=\sup \big\{x: (x,t)\in \widehat{\mathfrak L} \big\}$, then the two frozen regions $\big\{(x,t): \mathfrak a(t)\leq x \leq \widehat E_1(t) \big\}$ and $\big\{(x,t):  \widehat E_1(t)\leq x \leq \mathfrak b(t)\big\}$ may not be covered by the family of rays as in \cite[Definition 8.2]{DMCS}.  However, in this case the proof of \cite[Theorem 8.3]{DMCS} still applies. Indeed, in the region not covered by those rays, one can extend the function $\Phi(z)$ appearing in \cite[Proposition 8.1]{DMCS} continuously to be a constant function, and the proofs of \cite[Proposition 8.1, Theorem 8.3, and Remark 8.6]{DMCS} still continue to go through. This idea was already applied in a completely analogous way in the short proof of \cite[Theorem 8.4]{DMCS} (where the same phenomenon arose), so we refer there for a more detailed discussion.
	\end{proof}

\appendix
\section{Complex Burgers Equation}

\label{Equation} 

In this section we collect some quantitative results on the complex Burgers equation and establish \Cref{lqq} and  \Cref{fderivativeq}. We recall the polygonal domain $\mathfrak P$ from Definition \ref{p}; the associated boundary height function $h: \partial \mathfrak P\mapsto \mathbb R$; the maximizer $H^* \in \Adm (\mathfrak{P}; h)$ of $\mathcal{E}$ from \eqref{hmaximum}; the complex slope $f_t (x)$ associated with $H^*$ through \eqref{fh}; and the liquid region $\mathfrak{L} = \mathfrak{L} (\mathfrak{P}; h)$ and arctic boundary $\mathfrak{A} = \mathfrak{A} (\mathfrak{P}; h)$ from \eqref{al}. We further recall that there exists an analytic function $Q_0$ satisfying \eqref{q0f}. If $(x, t) \in \mathfrak{A}$ and $f_t (x)$ is a triple root of\eqref{q0f}, then $(x, t)$ is a \emph{cusp} of $\mathfrak{A}$. If the tangent line through a point $u \in \mathfrak{A}$ to $\mathfrak{A}$ has slope in the set $\{ 0, 1, \infty \}$, then $u$ is a \emph{tangency location} of $\mathfrak{A}$; in this case, the \emph{slope} of $u$ is the slope of this tangent line.

	Given this notation, the following result provides the behavior of the complex slope $f_t (x)$ along generic points of $\mathfrak{A}$. It follows quickly from \cite[Section 1.6]{LSCE} and \cite[Theorem 1.8(c)]{DMCS}, together with the analyticity of $f (x, t)$, and it can be also deduced directly from \eqref{q0f} by a Taylor expansion; so, its proof is omitted. 
	
	\begin{lem}[{\cite{LSCE,DMCS}}]
	
	\label{fdomain}
	
	For any fixed $(x_0, t) \in \mathfrak{A}$ which is not a tangency location or cusp of $\mathfrak{A}$, there exists a constant $C = C (x_0, t, \mathfrak{P}) > 0$ and a neighborhood $\mathfrak{U} \subset \overline{\mathfrak{P}}$ of $(x_0, t)$ such that the following holds. For $(x, t) \in \mathfrak{U} \cap \mathfrak{L}$, we have
			\begin{flalign*} 
				f_t (x) = f_t (x_0) + (C x - C x_0)^{1/2} + \mathcal{O} \big( |x - x_0|^{3/2} \big),
			\end{flalign*} 
		
			\noindent where the implicit constant in the error only depends on $\mathfrak{P}$ and the distance from $(x, t)$ to a tangency location or cusp of $\mathfrak{A}$. Here, the branch of the root is so that it lies in $\mathbb{H}^-$.

	\end{lem}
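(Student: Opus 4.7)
The plan is to derive this local expansion directly from the algebraic equation \eqref{q0f} that $f_t$ satisfies, via a Taylor expansion around the arctic point $(x_0, t)$. This essentially amounts to redoing, in slightly greater generality, the perturbative calculation already carried out in the proof of \Cref{gammaikx0t0} (see \eqref{ff0}). Fix $(x_0, t) \in \mathfrak{A}$ that is neither a cusp nor a tangency location, and abbreviate $f_0 = f_t(x_0) \in \mathbb{R}$ (real because $(x_0,t) \in \mathfrak{A}$, recall \Cref{pa2}) and $f = f_t(x)$. Applying Part (2) of \Cref{pa1} at both $(x_0, t)$ and $(x, t)$ and subtracting yields
\begin{align*}
Q_0(f) - Q_0(f_0) = (x - x_0)(f+1) + (x_0 - t)(f - f_0).
\end{align*}

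Next, I will Taylor expand the left-hand side around $f_0$, yielding
\begin{align*}
(f - f_0) Q_0'(f_0) + \tfrac{1}{2}(f - f_0)^2 Q_0''(f_0) + \mathcal{O}\bigl(|f - f_0|^3\bigr) = (x - x_0)(f+1) + (x_0 - t)(f - f_0).
\end{align*}
By Part (3) of \Cref{pa1}, the fact that $(x_0, t) \in \mathfrak{A}$ means $f_0$ is a double root of \eqref{q0f}, so differentiating \eqref{q0f} with respect to $f$ at $f = f_0$ and using that the derivative vanishes there gives $Q_0'(f_0) = x_0 - t$ (as already noted and used in the proof of \Cref{gammaikx0t0}). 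Cancelling these terms and substituting $f = f_0 + (f - f_0)$ on the right produces
\begin{align*}
\tfrac{1}{2}(f - f_0)^2 Q_0''(f_0) = (x - x_0)(f_0 + 1) + \mathcal{O}\bigl(|f - f_0|^3 + |x - x_0| \cdot |f - f_0|\bigr).
\end{align*}

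The hypotheses that $(x_0, t)$ is not a cusp and not a tangency location ensure the two key non-vanishing facts that drive the rest of the argument. Specifically, since $(x_0, t)$ is not a cusp, $f_0$ is a \emph{double}, not triple, root of \eqref{q0f}, so $Q_0''(f_0) \neq 0$; and since $(x_0, t)$ is not a tangency location, \Cref{lqq} (which identifies the slope of the tangent line at $(x_0,t)$ in terms of $f_0$) yields $f_0 \notin \{0, -1, \infty\}$, hence $f_0 + 1 \neq 0$ and the above equation becomes non-degenerate. A bootstrap then gives $|f - f_0| = \mathcal{O}(|x - x_0|^{1/2})$, so the error term $\mathcal{O}\bigl(|f-f_0|^3 + |x-x_0||f-f_0|\bigr)$ is $\mathcal{O}(|x - x_0|^{3/2})$. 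Setting $C = 2(f_0 + 1)/Q_0''(f_0) \in \mathbb{R}$ (and absorbing a sign into $C$ if necessary, noting that the sign is forced by the condition $f \in \mathbb{H}^-$ for $(x,t) \in \mathfrak{L}$), we extract the square root, selecting the branch so that the image lies in $\mathbb{H}^-$, to conclude
\begin{align*}
f_t(x) = f_0 + (C x - C x_0)^{1/2} + \mathcal{O}\bigl(|x - x_0|^{3/2}\bigr).
\end{align*}

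The main technical wrinkle --- rather than a conceptual obstacle --- is uniformity of the implicit error constant. By hypothesis this only depends on the first three derivatives of $Q_0$ at $f_0$ and on the size of $(f_0 + 1)^{-1}$ and $Q_0''(f_0)^{-1}$, both of which are locally bounded functions of $(x_0, t)$ that degenerate precisely at the forbidden singularities. Consequently the implicit constant can be controlled in terms of $\mathfrak{P}$ and the distance from $(x_0, t)$ to any cusp or tangency location of $\mathfrak{A}$, as claimed. The other subtle point is the branch selection: since $C \in \mathbb{R}$, the argument of $Cx - Cx_0$ is constant on each side of $x_0$, and one fixes the branch of the square root uniquely by the requirement that $f(x,t) \in \mathbb{H}^-$ on the liquid side (which is non-empty in any neighborhood of $(x_0,t)$ because $(x_0,t) \in \partial \mathfrak{L}$).
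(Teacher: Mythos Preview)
Your proof is correct and follows precisely the approach the paper indicates: the paper omits the proof but states that the lemma ``can be also deduced directly from \eqref{q0f} by a Taylor expansion,'' which is exactly what you do (and indeed is the same computation as \eqref{ff0} in the proof of \Cref{gammaikx0t0}, as you observe). One minor remark: your identification $C = 2(f_0+1)/Q_0''(f_0)$ need not be positive, while the lemma asserts $C>0$; this is a slight looseness in the stated lemma rather than in your argument, and your comment about absorbing the sign into the branch choice handles it adequately.
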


	\begin{rem}
		
		\label{derivativeh} 
		
		Suppose $u = (x, t) \in \mathfrak{L}$ is bounded away from a cusp or tangency location of $\mathfrak{A}$; let $d_u = \inf \big\{ |x - x_0| : (x_0, t) \in \mathfrak{A} \big\}$. Then, the fact that $f_t (x_0) \in \mathbb{R}$ if $(x_0, t) \in \mathfrak{A}$ (from \Cref{fh}), \Cref{fdomain}, and \eqref{fh} together imply that there exists a small constant $\mathfrak{c} > 0$ such that $\mathfrak{c} d^{1/2} < \big| \partial_x H^* (x, t) - \partial_x H^* (x_0, t) \big| < \mathfrak{c}^{-1} d^{1/2}$.
		
	\end{rem}

	Now let us show \Cref{lqq}.

\begin{proof}[Proof of \Cref{lqq}]
	
	Throughout this proof, let $(\widetilde{x}, \widetilde{t}) \in \mathfrak{A}$ be some point on the arctic boundary $\mathfrak{A} = \mathfrak{A} (\mathfrak{P})$ near $(x, t)$. Abbreviating $f = f_t (x)$ and $\widetilde{f} = f_{\widetilde{t}} (\widetilde{x})$, \eqref{q0f} and the third part of \Cref{pa1} together imply 
	\begin{flalign}
		\label{q0fq0f}
		Q_0 (f) = x (f + 1) - tf; \quad Q_0' ( f )= x - t; \quad Q_0 (\widetilde{f}) = x (\widetilde{f} + 1) - t \widetilde{f}; \quad Q_0' (\widetilde{f}) = x - t.
	\end{flalign}
	
	\noindent From this, we deduce
	\begin{flalign*}
		Q_0(f) = (f + 1) Q_0' (f) + t; \qquad Q_0(\widetilde{f}) = (\widetilde{f} + 1) Q_0' (\widetilde{f}) + \widetilde{t},
	\end{flalign*}
	
	\noindent which together imply
	\begin{flalign}
		\label{qfqf} 
		Q_0(\widetilde{f}) - Q_0(f) = (f + 1) \big( Q_0' (\widetilde{f}) - Q_0' (f) \big) + (\widetilde{f} - f) Q_0' (\widetilde{f}) + \widetilde{t} - t.
	\end{flalign} 
	
	We will first use \eqref{qfqf} to approximately solve for $\widetilde{f}$ in terms of $(f, \widetilde{t}, t)$ and then use \eqref{q0fq0f} to solve for $\widetilde{x}$ in terms of $(x, f, \widetilde{t}, t)$. To that end, applying a Taylor expansion in \eqref{qfqf} (and using the fact that $(\widetilde{x}, \widetilde{t})$ is close to $(x, t)$, which implies that $\widetilde{f}$ is close to $f$ by the analyticity of $f_t (x)$ for $(x, t) \in \mathfrak{L} (\mathfrak{P})$) gives 
	\begin{flalign*}
		& (\widetilde{f} - f) Q_0' (f) + \displaystyle\frac{(\widetilde{f} - f)^2}{2} Q_0'' (f) + \mathcal{O} \big( |\widetilde{f} - f|^3 \big) \\
		& \qquad \qquad \quad  = (f + 1) \Big( (\widetilde{f} - f) Q_0'' (f) + \displaystyle\frac{Q_0''' (f)}{2} (\widetilde{f} - f)^2 + \mathcal{O} \big( |\widetilde{f} - f|^3 \big) \Big) \\
		& \qquad \qquad \qquad \quad + (\widetilde{f} - f) Q_0' (f) + (\widetilde{f} - f)^2 Q_0'' (f) + \mathcal{O} \big( |\widetilde{f} - f|^3 \big) + \widetilde{t} - t.
	\end{flalign*} 
	
	\noindent This implies 
	\begin{flalign*}
		(\widetilde{f} - f) (f + 1) Q_0'' (f) + \displaystyle\frac{(\widetilde{f} - f)^2}{2} \big( Q_0'' (f) + (f + 1) Q_0''' (f) \big) + \mathcal{O} \big( |\widetilde{f} - f|^3 \big)= t - \widetilde{t}.	
	\end{flalign*}
	
	\noindent In particular, it follows that $\widetilde{f} - f = \mathcal{O} \big( |\widetilde{t} - t| \big)$ and, more precisely, that
	\begin{flalign}
		\label{ftft}
		\widetilde{f} - f = \displaystyle\frac{t - \widetilde{t}}{(f + 1) Q_0'' (f)} - \displaystyle\frac{(t - \widetilde{t})^2}{2 (f + 1)^3 Q_0'' (f)^3} \big( Q_0'' (f) + (f + 1) Q_0''' (f) \big) + \mathcal{O} \big( |t - \widetilde{t}|^3 \big).
	\end{flalign}
	
	Since \eqref{q0fq0f} indicates that 
	\begin{flalign*} 
		x = Q_0' (f) + t; \qquad \widetilde{x} = Q_0' (\widetilde{f}) + \widetilde{t},
	\end{flalign*} 
	
	\noindent subtracting; applying \eqref{ftft}; and Taylor expanding again gives
	\begin{flalign*}
		\widetilde{x} - x = Q_0' (\widetilde{f}) - Q_0' (f) + \widetilde{t} - t & = (\widetilde{f} - f) Q_0'' (f) + (\widetilde{f} - f)^2 \displaystyle\frac{Q_0''' (f)}{2} + \widetilde{t} - t + \mathcal{O} \big( |\widetilde{f} - f|^3 \big) \\
		& = \displaystyle\frac{f (\widetilde{t} - t)}{f + 1} -  \displaystyle\frac{(\widetilde{t} - t)^2}{2 (f + 1)^3 Q_0'' (f)} + \mathcal{O} \big( |\widetilde{t} - t|^3 \big),
	\end{flalign*}
	
	\noindent which implies the lemma upon matching with \eqref{xyql}. 
\end{proof}

Next we establish \Cref{fderivativeq}.

\begin{proof}[Proof of \Cref{fderivativeq}]
	
	It suffices to establish \eqref{ftxderivativeq}, as \eqref{xfequation} follows from it. Since the derivation of both statements of \eqref{ftxderivativeq} are similar, we only establish the former. To that end, let $(x', t)$ be some point close to $(x, t)$, and set $\mathcal{F} = \mathcal{F}_t (x')$. Then, \eqref{q0f2} implies
	\begin{flalign*}
		\mathcal{Q}_0 (\mathcal{F}) = x (\mathcal{F} + 1) - t \mathcal{F}; \qquad \mathcal{Q}_0 (\mathcal{F}') = x' (\mathcal{F}' + 1) - t \mathcal{F}',
	\end{flalign*}
	
	\noindent from which it follows that 
	\begin{flalign*}
		\mathcal{Q}_0 (\mathcal{F}') - \mathcal{Q}_0 (\mathcal{F}) = (\mathcal{F}' - \mathcal{F}) (x - t) + (x' - x) (\mathcal{F}' + 1).
	\end{flalign*}
	
	\noindent From a Taylor expansion, we deduce
	\begin{flalign*}
		(\mathcal{F}' - \mathcal{F}) \big( \mathcal{Q}_0' (\mathcal{F}) - x + t) = (x' - x) (\mathcal{F} + 1) + \mathcal{O} \big( |\mathcal{F}' - \mathcal{F}|^2 + |x' - x| |\mathcal{F}' - \mathcal{F}| \big).
	\end{flalign*} 
	
	\noindent Letting $|x' - x|$ tend to $0$, it follows that  
	\begin{flalign*} 
		\partial_x \mathcal{F}_t (x) = \displaystyle\lim_{x' \rightarrow x} \displaystyle\frac{\mathcal{F}' - \mathcal{F}}{x' - x} = \displaystyle\frac{\mathcal{F} + 1}{\mathcal{Q}_0' (\mathcal{F}) - x + t},
	\end{flalign*}  
	
	\noindent which yields the first statement of \eqref{ftxderivativeq}.
\end{proof}

\section{Proof of \Cref{p:frozenr}}

\label{HeightInteger} 

In this section we establish \Cref{p:frozenr}; throughout, we adopt the notation from that proposition. Let us begin by recalling some results from \cite{MCFARS,DMCS}. There exist two functions $m, M\in \Adm (\mathfrak{P}; h)$ (sometimes called \emph{obstacles}) such that
\begin{flalign*} 
	m(z) \le H(z) \le M(z), \qquad \text{for any $H\in \Adm (\mathfrak{P})$ and $z \in \mathfrak P$}.
\end{flalign*} 

\noindent They are explicitly given by (see \cite[Equation (2.23)]{DMCS}) 
\begin{align}\label{e:defmM}
m(v)=\max_{u\in \partial \mathfrak{P}} \bigg(-\max_{p\in \overline {\mathcal T}}\langle p, u-v\rangle+h(u) \bigg),\quad M(v)=\min_{u\in \partial \mathfrak{P}} \bigg( \max_{p\in \overline {\mathcal T}}\langle p, v-u\rangle+h(u) \bigg),
\end{align} 
where we recall the triangle $\mathcal T$ from \eqref{t}. The following result due to \cite{MCFARS} indicates continuity properties for the gradient $\nabla H^*$ of the maximizer $H^*$ of $\mathcal{E}$ on $\mathfrak{P}$, as well as convexity properties for its arctic boundary. In what follows, for any direction $\omega \in \mathbb{R}^2 \setminus \big\{ (0, 0) \big\}$, the graph $G \subset \mathbb{R}^2$ of a (possibly discontinuous) function (whose domain is possibly disconnected or empty) is said to be convex (or concave) in the $\omega$ direction, if the following holds. Let $\rho_{\omega} : \mathbb{R}^2 \rightarrow \mathbb{R}^2$ denote the rotation such that $\rho_{\omega} (\omega) \in \mathbb{R}_{> 0} \cdot (0, 1)$ points vertically upwards. Then, each connected component of $\rho_{\omega} (G)$ is convex (or concave, respectively). 

\begin{prop}[{\cite[Theorem 1.3 and 4.2]{MCFARS}}]\label{p:H*reg}
	
	The following two statements hold. 
	
\begin{enumerate}
\item The gradient $\nabla H^*$ exists and is continuous on the set $\big\{ z \in \mathfrak P: m(z)<H^*(z)<M(z) \big\}$.
\item Fix a real number $c \in \mathbb{R}$; a vertex $p_0 \in \big\{ (0, 0), (1, 0), (1, -1) \big\} \in \overline{\mathcal{T}}$; and a direction $\omega \in \mathbb{R}^2 \setminus \big\{ (0, 0) \big\}$ such that
\begin{align}
\omega\cdot(p-p_0)>0, \quad \text{for all } p \in \overline{\mathcal T}\setminus \{p_0\}.
\end{align}

\noindent Let $S$ denote the interior of the set $\{ z \in \mathfrak P: H^*(z)=c+p_0\cdot z\}$; assume that $S\neq \emptyset$. Then $\partial S\cap \mathfrak P$ consists of the union of a convex graph (by above) and a concave graph (by below) in the $\omega$ direction.

\end{enumerate}
\end{prop}

The following lemma, which is a quick consequence of \Cref{p}, provides integrality properties for the boundary height function $h : \partial \mathfrak{P} \rightarrow \mathbb{R}$. 

\begin{lem} 

\label{vh2} 	

The following two statements hold. 

\begin{enumerate}
	
	\item \label{heightxyinteger} For any $(x, t) \in (n^{-1} \cdot \mathbb{Z})^2$, we have $h(x, t) \in n^{-1} \cdot \mathbb{Z}$. 
	\item \label{height1} Along any edge of $\partial \mathfrak{P}$ of slope $0$, we have $\partial_x h = 1$. 
\item \label{height00} Along any edge of $\partial \mathfrak{P}$ of slope $\infty$ or $1$, $h$ is constant and takes value $h \in n^{-1} \cdot \mathbb{Z}$.
\end{enumerate}

\end{lem} 

\begin{proof} 
	
	Since (by \Cref{p}) $\mathfrak{P}$ is a polygonal domain and $(0, 0) \in \partial \mathfrak{P}$, each corner vertex of the polygon $\mathfrak{P}$ lies on $(n^{-1} \cdot \mathbb{Z})^2$. The fact that $h$ is constant along any edge of $\partial \mathfrak{P}$ of slope $\infty$ or $1$, and that $\partial_x h = 1$ along any edge of $\partial \mathfrak{P}$ of slope $0$, follows from the fact that $\mathfrak{P}$ is polygonal (together with the conventions relating tilings to height functions described in \Cref{FunctionWalks}); this confirms \Cref{height1} and the first part of \Cref{height00} of the lemma. Using this, with the facts that $h(0,0) = 0$ and that each corner vertex of $\mathfrak{P}$ lies on $(n^{-1} \cdot \mathbb{Z})^2$, it follows that $h (x, t) \in n^{-1} \cdot \mathbb{Z}$ whenever $(x, t) \in (n^{-1} \cdot \mathbb{Z})^2$. This confirms \Cref{heightxyinteger} of the lemma. Then the second part of \Cref{height00} of the lemma follows from the fact that $h$ is constant along each edge of $\partial \mathfrak{P}$ with slope $\infty$ or $1$, and again the fact that each corner vertex of $\mathfrak{P}$ lies on $(n^{-1} \cdot \mathbb{Z})^2$. 
\end{proof}

The below lemma states, at lattice points $v \in (n^{-1} \cdot \mathbb{Z})^2$, that $n \cdot m(v)$ and $n \cdot M(v)$ are integers.

\begin{lem}\label{l:int}
For any point $v\in \overline{\mathfrak P}\cap (n^{-1} \cdot \bZ)^2$, we have $n \cdot m(v) \in \mathbb{Z}$ and $n \cdot M(v)\in \bZ$.
\end{lem}

\begin{proof}
	
The proof follows from analyzing the two formulas \eqref{e:defmM}. The proofs are the same for $m$ and $M$, so we will only give the proof that $n \cdot M(v)\in \bZ$. By \eqref{e:defmM},
\begin{align}
	\label{e:Mv2}
M(v)=\min_{u\in \partial \mathfrak{P}} \bigg( \max_{p\in \overline {\mathcal T}}\langle p, v-u\rangle+h(u) \bigg).
\end{align} 

\noindent Let the minimum over $u \in \partial \mathfrak{P}$ in \eqref{e:Mv2} be attained at $u^* \in \partial \mathfrak{P}$ (if there are multiple such $u^*$, then we select one arbitrarily). Further observe that the maximum over ${p\in \overline {\mathcal T}}$ in \eqref{e:Mv2} is attained at a vertex $p^* \in \big\{ (0, 0), (1, 0), (1, -1) \big\}$ of the triangle $\mathcal T$. Without loss of generality (for the proofs in the remaining cases are entirely analogous), we assume that 
\begin{flalign*} 
	p^*=(0,0), \qquad \text{so that} \qquad M(v) = h(u^*). 
\end{flalign*} 

\noindent Then, 
\begin{flalign*}
	\displaystyle\max \Big\{ \big\langle (1,0), v-u^* \big\rangle, \big\langle (1,-1), v-u^* \big\rangle \Big\} \le  \langle p^*, v-u^* \rangle = 0,
\end{flalign*} 

\noindent so that 
\begin{flalign} 
	\label{vuu} 
	v-u^* \in \big\{ (x, y) \in \mathbb{R}_{\le 0} \times \mathbb{R} : x - y \le 0 \big\}.
\end{flalign}

Now let us consider several cases, depending on the side $\ell = \ell (u^*)$ of $\partial \mathfrak{P}$ that $u^*$ lies on. If the slope of $\ell$ is either $1$ or $\infty$, then \Cref{height00} of \Cref{vh2} yields $M(v)=h(u^*)\in n^{-1} \cdot \bZ$. Otherwise, $\ell$ has slope $0$ (that is, $\ell$ is a horizontal edge of $\partial \mathfrak{P}$), and $u^*$ is not a corner vertex of $\mathfrak{P}$. Observe in this case that $u^* \cdot (0, 1) \in n^{-1} \cdot \mathbb{Z}$, that is, the second coordinate of $u^*$ is in $n^{-1} \cdot \mathbb{Z}$ (as the second coordinate of any horizontal edge of $\partial \mathfrak{P}$ is in $n^{-1} \cdot \mathbb{Z}$). 

We claim that either $v - u^* \in \mathbb{R}_{\ge 0} \cdot (0, 1)$ or $v-u^* \in \mathbb{R}_{\ge 0} \cdot (-1, 1)$. Otherwise, by \eqref{vuu}, there would exist some $u' \in \big\{ u + \mathbb{R}_{< 0} \cdot (1,0) \big\} \cap \ell$ (that is, on $\ell$ and to the left of $u$), such that $v - u' \in \big\{ (x, y) \in \mathbb{R}_{\le 0} \times \mathbb{R} : x-y \le 0 \big\}$. Since $\big\langle p, (-1, 0) \big\rangle \le 0$ for each $p \in \overline{\mathcal{T}}$, it follows that $\max_{p \in \overline{\mathcal{T}}} \langle p, v-u' \rangle \le \max_{p \in \overline{\mathcal{T}}} \langle p, v-u^* \rangle = 0$. Together with the fact that $h(u') = h(u^*) + (1, 0) \cdot (u' - u) < h(u^*)$ (by \Cref{height1} of \Cref{vh2}), this yields 
\begin{flalign*}
	\max_{p \in \overline{\mathcal{T}}} \langle p, v-u' \rangle + h(u') < \max_{p \in \overline{\mathcal{T}}} \langle p, v-u^* \rangle + h(u^*),
\end{flalign*} 

\noindent which is a contradiction. Hence $v - u^* \in \mathbb{R}_{\ge 0} \cdot (0, 1)$ or $v - u^* \in \mathbb{R}_{\ge 0} \cdot (-1, 1)$. 

In the first situation $v-u^* \in \mathbb{R}_{\ge 0} \cdot (0, 1)$, the first coordinates of $v$ and $u^*$ coincide. Thus, the first coordinate of $v$ is in $n^{-1} \cdot \mathbb{Z}$; the same therefore holds for $u^*$. Together with the fact that $u^* \cdot (0, 1) \in n^{-1} \cdot \mathbb{Z}$, it follows that $u^* \in (n^{-1} \cdot \mathbb{Z})^2$, from which \Cref{heightxyinteger} of \Cref{vh2} yields $M(v) = h(u^*) \in n^{-1} \cdot \bZ$. In the second situation $v - u^* \in \mathbb{R}_{\ge 0} \cdot (0, 1)$, we have $u^* \cdot (1, 1) = v \cdot (1, 1) \in n^{-1} \cdot \mathbb{Z}$. Again together with the fact that $u^* \cdot (0, 1) \in n^{-1} \cdot \mathbb{Z}$, this yields $u^* \in (n^{-1} \cdot \mathbb{Z})^2$, and so \Cref{heightxyinteger} of \Cref{vh2} gives $M(v) = h(u^*) \in n^{-1} \cdot \mathbb{Z}$. This finishes the proof of \Cref{l:int}.
\end{proof}

Now we can establish \Cref{p:frozenr}.

\begin{proof}[Proof of \Cref{p:frozenr}]
	
	We begin by establishing the first statement of the proposition; observe that it suffices to address the case when $(x,t) \in \mathfrak P\setminus \overline{\mathfrak L}$, for then the extension to the case $(x,t) \in \partial \mathfrak{L}$ would follow from the continuity of $H^*$. To that end, recall from \Cref{pla} that $\nabla H^* (x, t) \in \big\{ (0,0), (1,0), (1,-1) \big\}$, as $(x, t) \in \mathfrak{P} \setminus \overline{\mathfrak{L}}$. Since $\nabla H^*$ is continuous at $(x, t)$, there exists a small neighborhood $\mathcal N(x, t) \subset \mathfrak{P}$ of $(x,t)$ such that 
 $\nabla H^*$ is constant on $\mathcal N(x, t)$, on which it takes one of the values $\big\{ (0,0), (1,0), (1,-1) \big\}$. We assume in what follows that $\nabla H^*(u)=(0,0)$ for each $u\in \mathcal N(x, t)$, for the other cases can be proven in an entirely analogous way.

Let $S$ denote the interior of the connected component of $\big\{u: H^*(u)=H^*(x, t) \big\}$ containing $(x, t)$; then $S$ is nonempty, and we may assume (after taking a subset of $\mathcal{N}(x,t)$ if necessary) that $\mathcal N(x, t)\subseteq S$. Take any direction $\omega \in \mathbb{R}^2$ such that
\begin{align}
\omega \cdot p>0, \quad \text{for all } p \in \overline{\mathcal T}\setminus \big\{ (0,0) \big\}.
\end{align}
This is equivalent to the argument of $\omega$ being in $(-\pi/2, \pi/4)$; we may assume in what follows that the argument of $\omega$ is in a small neighborhood of $-\pi/8$. Then $H^*$ is non-decreasing in the $\omega$ direction, and $S$ is bounded (in the $\omega$ direction) between two Lipschitz curves $\mathcal C_{\rm top}$ and $\mathcal C_{\rm btm}$. We can assume that the left and right boundary of $S$ (in $\omega$ direction) are given by points $l$ and $r$ respectively (as if they are given by segments in $\omega$ direction, we can slightly perturb $\omega$ to avoid such non-generic situation). In this way, the two curves $\mathcal C_{\rm top}$ and $\mathcal C_{\rm btm}$ are from $l$ to $r$.

First observe that $\mathcal C_{\rm top}, \mathcal C_{\rm btm} \not\subset \mathfrak P$. Indeed, the second statement of \Cref{p:H*reg} would otherwise imply that $\mathcal C_{\rm top}$ is convex and $\mathcal C_{\rm btm}$ is concave in the $\omega$ direction. Hence, $\mathcal{C}_{\mathrm{top}}$ lies below the line connecting $l$ to $r$ in the $\omega$ direction, while $\mathcal{C}_{\mathrm{top}}$ lies above this line, which is impossible since $\mathcal{C}_{\mathrm{top}}$ bounds $S$ from above and $\mathcal{C}_{\mathrm{\btm}}$ bounds $S$ from below in the $\omega$ direction.  Thus, we must instead have $\mathcal C_{\rm top} \cup \mathcal C_{\rm btm}\not\subset \mathfrak P$, meaning that there exists some point $q \in \mathcal (\mathcal C_{\rm top}\cup \mathcal C_{\rm btm})\cap\partial \mathfrak P\neq \emptyset$, with $q \notin \mathcal{C}_{\mathrm{top}} \cap \mathcal{C}_{\mathrm{btm}}$. If $q$ belongs to a vertical boundary edge or a boundary edge with slope $1$, then $H^*(x, t)=h(q)\in n^{-1} \cdot \bZ$ by \Cref{height00} of \Cref{vh2}. 

Otherwise, $q$ belongs to a horizontal edge $\ell$ of $\partial \mathfrak P$, but it is not a corner vertex of $\mathfrak P$. Without loss of generality, we assume (by rotating $\mathfrak{P}$ if necessary) that $q\in \mathcal C_{\rm btm}$, meaning that $q \notin \mathcal{C}_{\mathrm{top}}$. We can take a small $\varepsilon>0$ such that the short interval $\big[q-(\varepsilon,0), q+(\varepsilon,0) \big]$ also belongs to the same edge $\ell$ of $\partial \mathfrak P$. Since $q \in \mathcal{C}_{\mathrm{btm}} \setminus (\mathcal{C}_{\mathrm{btm}} \cap \mathcal{C}_{\mathrm{top}})$, the point $q$ is in the interior of $\mathcal{C}_{\mathrm{btm}}$, and so we can take $\varepsilon$ small enough so that there exists some $r\geq 0$ such that $q+(\varepsilon,0)+r\omega\in S$; see \Cref{f:regionS} for a depiction. This yields a contradiction, because
 \begin{align*}
H^* \big(q+(\varepsilon,0)+r\omega \big) \geq H^* \big(q+(\varepsilon,0) \big)=h^* \big(q+(\varepsilon,0) \big) >h^*(q)=H^* \big(q+(\varepsilon,0)+r\omega \big),
\end{align*}
where in the first statement we used the fact that, in the $\omega$ direction, $H^*$ is non-decreasing; in the second, we used that $\big[ q-(\varepsilon,0), q+(\varepsilon,0) \big]$  belongs to a horizontal boundary edge of $\mathfrak P$; in the third, we used \Cref{height1} of \Cref{vh2}; and in the fourth, we used that $q,  q+(\varepsilon,0)+r\omega\in \overline{S}$. This confirms that $H^* (x, t) \in n^{-1} \cdot \mathbb{Z}$ and finishes the proof of the first statement in \Cref{p:frozenr}.

\begin{figure}[t]
\includegraphics[width=1.2\textwidth, trim=0cm 4cm 0cm 1cm]{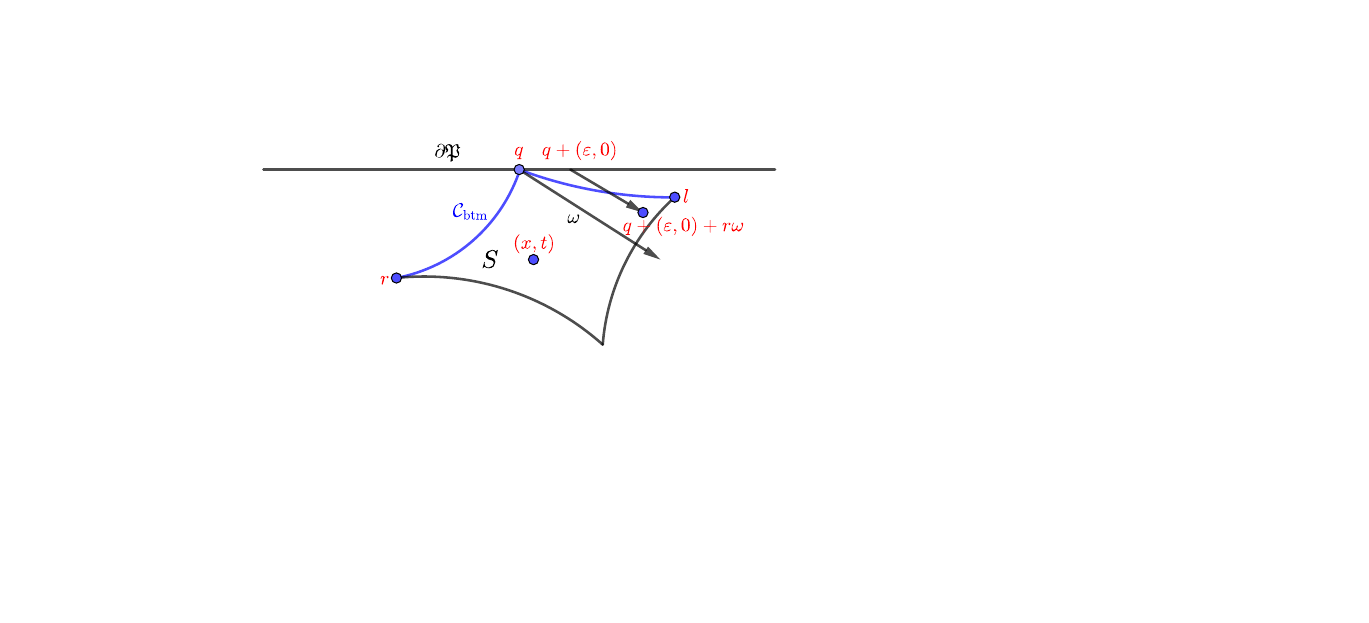}
\caption{If $q\in \mathcal C_{\rm btm}\cap \partial \fP$ belongs to a horizontal boundary edge of $\fP$, then we can take $\varepsilon$ small enough so that there exists some $r\geq 0$ such that $q+(\varepsilon,0)+r\omega\in S$.}
\label{f:regionS}
\end{figure}

To establish the second statement, we set $\Lambda= \big\{u\in \overline{\mathfrak P}: H^*(u)=m(u) \text{ or } H^*(u)=M(u) \big\}$. If $(x,t) \in \Lambda$ then it follows from \Cref{l:int} that $n \cdot H^*(x,t)\in \bZ$. Otherwise $(x,t)\in \mathfrak P\setminus(\overline{\mathfrak L}\cup \Lambda)$, and so the first statement in \Cref{p:H*reg} implies that $\nabla H^*$ is continuous at $(x,t)$.  Setting $\nabla H^* (x,t) = (s, r) \in \big\{ (0, 0), (1, 0), (1, -1) \big\}$, the first statement of \Cref{p:frozenr} implies that $n (H^*(x,t)-sx-rt)\in \bZ$. By our assumption $(x,t) \in (n^{-1} \cdot \mathbb{Z})^2$, so $n(sx+rt)\in \bZ$ and we conclude that $n \cdot H^*(x,t)\in \bZ$. This finishes the second statement of \Cref{p:frozenr}.

\end{proof}

\newcommand{\etalchar}[1]{$^{#1}$}

\end{document}